\documentclass[10pt,reqno]{amsart}

\usepackage{amsmath,amsfonts,amssymb,amscd,amsthm,amsbsy,calc,enumerate,color,graphicx,verbatim,cite}
\usepackage{color}

\usepackage{float}

\usepackage[colorlinks,citecolor=blue,linkcolor=blue]{hyperref}

\usepackage[shortalphabetic,initials]{amsrefs}

\renewcommand{\MultipleCiteKeyWarning}[2]{}

\usepackage[margin=1in]{geometry}

\theoremstyle{plain}
\newtheorem{theorem}{Theorem}[section]
\newtheorem{definition}[theorem]{Definition}
\newtheorem{lemma}[theorem]{Lemma}
\newtheorem{claim}[theorem]{Claim}
\newtheorem{corollary}[theorem]{Corollary}
\newtheorem{proposition}[theorem]{Proposition}
\theoremstyle{definition}

\theoremstyle{remark}
\newtheorem{remark}[theorem]{Remark}

\newcommand{\R}{{\mathbb R}}
\newcommand{\e}{\varepsilon}

\newcommand{\abs}[1]{\left| #1 \right|}
\newcommand{\pth}[1]{\left( #1 \right)}

\newcommand{\set}[1]{\left\{ #1 \right\}}

\newcommand{\norm}[1]{\lVert#1\rVert}

\definecolor{darkgreen}{rgb}{0,0.4,0}

\newcommand{\cl}[1]{\overline{#1}}
\newcommand{\Rn}{{\mathbb{R}^n}}
\newcommand{\supp}{\operatorname{supp}}
\newcommand{\interior}{\operatorname{int}}
\newcommand{\divo}{\operatorname{div}}
\newcommand{\strictordwrt}[1]{\prec_{#1}}

\newcommand{\halflimsup}{\operatorname*{limsup^*}}
\newcommand{\halfliminf}{\operatorname*{liminf_*}}
\newcommand{\alphlist}{\renewcommand{\labelenumi}{(\alph{enumi})}}

\newcommand{\Per}{\operatorname{Per}}
\newcommand{\compl}{\mathsf{c}}

%

\numberwithin{equation}{section}
\newcommand{\vecb}{\vec{b}}


\newcommand{\ubar}[1]{\text{\b{$#1$}}}

\title{Singular limit of the porous medium equation with a drift}

\author[I. Kim]{Inwon Kim}
\address[I. Kim]{Department of Mathematics, UCLA, LA CA 90095 USA.}
\email{ikim@math.ucla.edu}

\author[N. Po\v{z}\'{a}r]{Norbert Po\v{z}\'{a}r}
\address[N. Po\v{z}\'{a}r]{Faculty of Mathematics and Physics, Institute of Science and Engineering, Kanazawa University,
Kakuma town, Kanazawa, Ishikawa 920-1192, Japan.}
\email{npozar@se.kanazawa-u.ac.jp}

\author[B. Woodhouse]{Brent Woodhouse}
\address[B. Woodhouse]{Department of Mathematics, UCLA, LA CA 90095 USA.}
\email{bwoodhouse729@math.ucla.edu}

\date{\today}

\begin{document}

\begin{abstract}
 We study the ``stiff pressure limit'' of a nonlinear drift-diffusion equation, where the density
 is constrained to stay below the maximal value one. The challenge lies in the presence of a drift and the consequent lack of monotonicity in time.  In the limit
 a Hele-Shaw-type free boundary problem emerges, which describes the evolution of the congested zone where density equals one.  We discuss pointwise convergence of the densities as well as the $BV$ regularity of the limiting free boundary.
\end{abstract}

\maketitle

\section{Introduction}

Let $\rho_m(x,t)$ solve the drift-diffusion problem
\begin{equation}\label{pme}
\rho_t - \Delta (\rho^m) + \divo (\rho\vec{b})=f\rho \quad \hbox{ in } Q:=\R^n\times (0,\infty),
\end{equation}
with initial data $\rho^0_m \in L^1(\R^n)$ and exponent $m > 1$. The
nonlinear diffusion term in \eqref{pme} represents an anti-congestion effect, and
has been used in many physical applications including fluids, biological aggregation and population dynamics
(\cite{BGHP,BH,Mur,HW,W,TBL}) and more recently in the context of tumor growth (\cite{PQV}).
\medskip

It is instructive to write \eqref{pme} in the form of a continuity equation,
\begin{equation}\label{pressure00}
\rho_t - \divo(\rho(\nabla p_m -\vec{b}))= f\rho,
\end{equation}
where $p_m$ denotes the pressure variable, $p_m = P_m(\rho_m) := \frac{m}{m-1} (\rho_m)^{m-1}$.  $p_m$ satisfies the \emph{pressure-form equation}
\begin{align}
  \label{pmepressure}
  p_t - (m - 1)p(\Delta p + f - \divo \vec b)- \nabla p\cdot(\nabla p - \vec b) = 0.
\end{align}
In the above equations, the operators $\Delta, \divo, \nabla$ are taken in the space variable.

\medskip

We are interested in identifying the behavior of $\rho_m$ in the ``stiff pressure" limit $m \to
\infty$. The motivation for studying this limit comes from various physical applications, we refer to \cite{CF87} and to more recent articles \cite{MRS}, \cite{PQV}. The limit problem can be interpreted as imposing a maximum value
constraint $\rho\leq 1$ on the density $\rho$ while it is transported by the vector field $\vec{b}$ and created
by the source $f$. As we will discuss below,  the limit of the pressure variable $p_m$ plays the
role of a Lagrange multiplier for the constraint, and it is supported in the {\it congested
zone } where the limiting density achieves the maximum value $1$.  Thus in the limit $m\to\infty$
we are led to a free boundary problem that describes the evolution of the congested zone in terms of the pressure.

\medskip

To introduce the limiting free boundary problem, some assumptions are in order. First we assume that the
drift and source terms are sufficiently regular, which allows pointwise description of the free
boundary movement. We assume that $\vec{b}(x,t) :Q\to \R^n$ is a $C^2$ vector field, and $f: Q \to
\R$ is
continuous. In addition we assume that
\begin{align}
  \label{F}
  F := f - \divo \vec{b} >0.
\end{align}
This last assumption yields certain monotonicity properties of the limit density along the streamlines of $\vec{b}$;  we will discuss more on \eqref{F} below.

\medskip

Before summarizing the main results, let us introduce the class of initial data for the limiting
problem that we consider (Figure~\ref{fig:nucleation}). Denoting by $\chi_A$ the characteristic function of the set $A$, we say  $\rho^0\in L^1(\R^n)$ is \emph{regular} if it is of the form
\begin{equation}\label{initial}
  \rho^0 = \max(\chi_{\Omega^0}, \rho^{E,0}),
\end{equation}
where $\Omega^0 \subset \Rn$ is a compact set such that $\Omega^0 =
\overline{\interior \Omega^0}$ and $\rho^{E, 0} \in C_c(\Rn)$ with $0 \leq \rho^{E, 0} <1$.
\medskip

Note that our initial data includes any continuous initial data between zero and one with compact support, as well as any characteristic function of a regular open, bounded set.

\medskip

 In view of \eqref{pmepressure} we can
perform a formal calculation, similar to the one in Section~1 of  \cite{KP},  to conclude
that the limiting pressure $p \geq 0$ solves a quasi-static, Hele-Shaw-type problem
\begin{align}
  \label{hs}
\left\{\begin{aligned}
-\Delta p &= F &&\text{ in } \{p>0\}, \\
V &= \pth{-\dfrac{\nabla p}{(1-\rho^E)_+} + \vec{b}} \cdot \nu  &&\text{ on } \partial\{p>0\}.\\
\end{aligned}\right.
\end{align}
Here $V=V_{x,t}$ is the normal velocity of the set $\{p>0\}$ at $(x,t)\in \partial\{p>0\}$,
$\nu=\nu(x,t)$ denotes the unit outer spatial normal at the \emph{free boundary} $\partial\{p>0\}$. The \emph{external density}
$\rho^E= \rho^E(x,t)$ corresponds to the expected limit density outside of the congested zone $\{p>0\}$,  and solves the transport equation
\begin{align}
  \label{te}
\rho_t + \divo(\rho\vec{b})=f\rho \quad \hbox{ in } Q, \qquad \rho(\cdot,0) = \rho^{E,0},
\end{align}
where the initial data $\rho^{E,0}$ is given in \eqref{initial}.  The notation $\frac1{(s)_+}$
denotes $\frac1s$ when $s>0$ and $+\infty$ otherwise.

\medskip

Now we are ready to state our main theorem.

\begin{theorem}\label{main}

Let $\rho^0$ be regular in the sense of \eqref{initial}. Let $\rho_m$ and $p_m = P_m(\rho_m)$ be the solutions of \eqref{pme} with initial data $\rho_m^0$ such that $\rho_m^0\to \rho^0$ in $L^1(\R^n)$. Then the following holds:

\begin{itemize}
\item[(a)]  (Corollary~\ref{density:gen}) $\rho_m$ converges in $L^1_{loc}(\R^n\times [0,\infty))$ to  $\rho$ given by
\begin{align}
  \label{limit-rho}
\rho:= \chi_{\Omega} + \rho^E\chi_{\Omega^\compl},
\end{align}
where $\rho^E$ is the solution of \eqref{te} with initial data $\rho^{E,0}$, $\Omega:= \overline{\{p>0\}}$ and $p$ is a viscosity solution of \eqref{hs}  with initial density $\rho^0$, defined in
Definition~\ref{def:visc-sol}. The congested zone $\Omega$ is unique while $p$ may not be (see Corollary~\ref{cor:unique}). \\

\item[(b)] (Corollary~\ref{density}) Suppose in addition that $\rho_m^0$ converges to $\rho^0$ in terms
  of semi-continuous envelopes, i.e., they satisfy \eqref{initial:convergence}. Then
  $\rho_m$ converges to $\rho$  locally uniformly in $(\R^n\times[0,\infty)) \setminus \partial{\{p>0\}}$. \\

\item[(c)] (Corollary~\ref{pressure}) For $p$ and $\rho_m^0$ as given in $(a)$--$(b)$, the pressure variable $p_m$ uniformly converges to $p$ in any local neighborhood $\mathcal{N}$ of $\Omega$ where $\Omega$ has sufficiently regular (e.g. Lipschitz) boundaries.\\

\item[(d)] (Proposition~\ref{le:perimeter}) If $\rho^E$ is strictly below $1$ outside of $\{p(\cdot,t)>0\}$ in a local neighborhood $\mathcal{N}$, then the perimeter of the congested zone in $\mathcal{N}$, \,$Per(\{p(\cdot,t)>0\}, \mathcal{N})$, is finite.
\end{itemize}

\end{theorem}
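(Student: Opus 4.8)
The plan is to transfer the perimeter bound from the approximating densities $\rho_m$ to the limit via lower semicontinuity of the total variation, after first establishing a uniform-in-$m$ BV estimate for $\rho_m(\cdot,t)$ in the neighborhood $\mathcal N$. The starting point is the observation that, since $\rho^E < 1$ strictly away from $\{p(\cdot,t)>0\}$ in $\mathcal N$, the level set $\{\rho(\cdot,t) > \lambda\}$ for $\lambda$ close to $1$ coincides (inside $\mathcal N$) with the congested zone $\{p(\cdot,t)>0\}$ up to a set of measure zero; so it suffices to bound $\Per(\{\rho(\cdot,t)>\lambda\}, \mathcal N)$ for such $\lambda$, and by the coarea formula this follows from a bound on $\int_{\mathcal N} |\nabla \rho(\cdot,t)|$, i.e. a local BV bound on the limit density. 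By the $L^1_{loc}$ convergence $\rho_m \to \rho$ from part (a) and lower semicontinuity of total variation, it is enough to produce a bound on $\int_{\mathcal N'} |\nabla \rho_m(\cdot,t)|$ uniform in $m$, for a slightly larger neighborhood $\mathcal N'$.

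Next I would derive that uniform BV bound for $\rho_m$. The natural route is to differentiate the porous medium equation \eqref{pme} in space: writing $w = \partial_{x_i}\rho_m$, one gets a (degenerate) parabolic equation for $w$ whose zeroth-order coefficients involve $f$, $\nabla f$, $\vec b$, and up to second derivatives of $\vec b$, all of which are bounded by the $C^2$ assumption on $\vec b$ and continuity of $f$ (localized to $\mathcal N'$). Testing against $\operatorname{sgn}(w)$, or more precisely working with a smooth approximation of the sign and then passing to the limit, yields a Gronwall-type inequality $\frac{d}{dt}\int |\nabla \rho_m| \le C \int |\nabla \rho_m| + C$, where $C$ depends on $\|\vec b\|_{C^2}$, $\|f\|_{C^1}$ and the geometry of $\mathcal N'$ but not on $m$; the $m$-dependent diffusion terms have a favorable sign after integration by parts and drop out. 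One must be careful near $\partial \mathcal N'$: this is handled by inserting a spatial cutoff $\zeta$ supported in $\mathcal N'$ and equal to $1$ on $\mathcal N$, which generates extra terms controlled by $\int |\nabla \rho_m^m| \, |\nabla \zeta|$; these are absorbed using the local $L^1$ bound on $\nabla p_m$ that comes from integrating \eqref{hs}-type estimates (or directly from the equation $-\Delta p_m \le F + $ lower order, valid in the support of $p_m$), noting that $p_m \to p$ locally uniformly near a Lipschitz portion of $\partial\Omega$ by part (c). Integrating the Gronwall inequality from an initial time slightly before $t$ where $\rho_m^0$ has bounded variation (the regular initial data \eqref{initial} has $\rho^0 \in BV_{loc}$, and one may assume $\rho_m^0$ chosen with uniformly bounded local variation) gives the uniform bound, hence $\rho(\cdot,t) \in BV(\mathcal N)$ and in particular $\Per(\{p(\cdot,t)>0\},\mathcal N) < \infty$.

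The main obstacle I anticipate is the control of the boundary/cutoff terms in the BV estimate, i.e. making rigorous that the flux $\nabla \rho_m^m = \rho_m^{m-1}\nabla p_m \cdot \frac{m-1}{m}$ through $\partial\mathcal N'$ is bounded uniformly in $m$. Near the free boundary this is delicate because $p_m$ may have steep gradients there; the resolution is to choose $\mathcal N'$ so that $\partial \mathcal N'$ stays in the region where $\rho^E < 1$ strictly, where the densities $\rho_m$ are uniformly bounded away from $1$ and hence $\rho_m^{m-1} \to 0$ exponentially, killing the flux regardless of the size of $\nabla p_m$; combined with a comparison-principle bound on $\rho_m$ (so that $\rho_m(\cdot,t) < 1 - \delta$ on $\partial\mathcal N'$ for $m$ large, using the hypothesis and stability of the transport equation \eqref{te}), this closes the estimate. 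A secondary technical point is justifying the formal differentiation of \eqref{pme} and the use of $\operatorname{sgn}$ as a test function for the degenerate equation; this is standard and can be done by the usual regularization ($\rho_m \mapsto \rho_m + \e$, smooth sign approximation, then $\e \to 0$), so I would only sketch it.
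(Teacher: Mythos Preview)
Your approach is genuinely different from the paper's, and it is worth comparing the two.  The paper never attempts a uniform-in-$m$ BV estimate on $\rho_m$.  Instead, Proposition~\ref{le:perimeter} works entirely at the limit level: it perturbs the initial data to $\rho_0^r(x)=(1+r)\sup_{B_r(x)}\rho^0$, obtains by Lemma~\ref{supersolution} and Theorem~\ref{th:comparison_strict} an outer approximation $\Omega_{r,t}\supset\Omega_t$ with the exterior ball property of radius $\sim r e^{-Lt}$, and uses the $L^1$ contraction for the \emph{limit} densities (Corollary~\ref{cor:contraction}) together with the hypothesis $\rho^E<1-\delta$ to bound $|\Omega_{r,t}\setminus\Omega_t|\le C\delta^{-1}r$.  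A Vitali covering argument on the dyadic scales $r_k=2^{-k}$ then bounds $\mathcal H^{n-1}(\partial\Omega_{r_k,t})$ uniformly, and lower semicontinuity of perimeter finishes.  This argument needs only $f$ continuous and the mild assumption \eqref{conv-assumption} on $\rho^0$.

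Your route---differentiate \eqref{pme}, test against $\operatorname{sgn}(\partial_i\rho_m)=\operatorname{sgn}(\partial_i\rho_m^m)$, use Kato's inequality for the diffusion, and Gronwall---does give a global estimate $\int_{\Rn}|\nabla\rho_m(\cdot,t)|\le e^{Ct}\big(\int|\nabla\rho_m^0|+C\big)$ with $C=C(\|\vec b\|_{C^2},\|f\|_{C^1})$, and then coarea on the interval $\lambda\in(1-\delta,1)$ indeed yields $\Per(\Omega_t,\mathcal N)\le \delta^{-1}|D\rho|(\mathcal N)$.  Two points deserve attention.  First, this requires $f\in C^1$ (the term $\partial_i f\cdot\rho$), which the paper does not assume; with only $f$ continuous your Gronwall constant is not available, and approximating $f$ by smooth $f_k$ loses uniformity.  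Second, your localization via a cutoff $\zeta$ is circular as written: the boundary contribution you identify is $\int m\rho_m^{m-1}|\partial_i\rho_m|\,|\Delta\zeta|$, and arguing that $m\rho_m^{m-1}\to 0$ on $\operatorname{supp}\nabla\zeta$ still leaves you multiplying by $\int_{\operatorname{supp}\nabla\zeta}|\nabla\rho_m|$, which is precisely the unknown.  This is unnecessary: since $\operatorname{supp}\rho_m(\cdot,t)$ is bounded uniformly in $m$ (Lemma~\ref{le:pressure_uniform_bound}), the global estimate on $\Rn$ already suffices and no cutoff is needed.

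In summary, if one is willing to assume $f\in C^1$ and choose $\rho_m^0$ uniformly in $BV$, your strategy gives a shorter and more quantitative proof (indeed it yields global $BV$ regularity of $\rho$, which the paper lists as open in its setting); the paper's covering argument is more involved but works with the weaker hypothesis $f\in C$.
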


\medskip

Note that $\rho^E$ may reach the value $1$ away from the existing congested zone from previous times, nucleating new regions of congestion, see Figure~\ref{fig:nucleation}. When $\rho^E$ reaches $1$ at the free boundary, the velocity law in \eqref{hs} indicates that the boundary will
move with an infinite speed or even discontinuously in time.
\begin{figure}
  \centering
  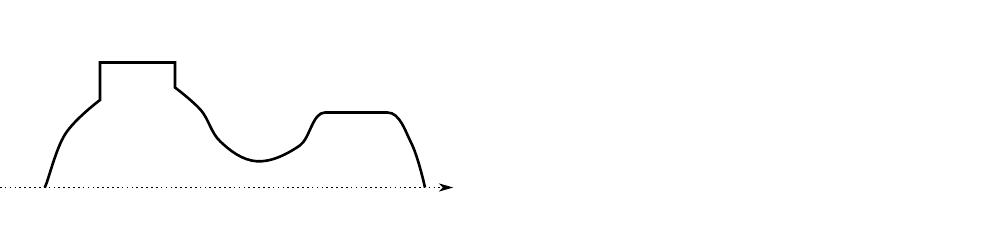
  \caption{Left: Regular initial data. Right: Nucleation of the congested zone (thick line).}
  \label{fig:nucleation}
\end{figure}
Thus for \eqref{hs}  a nucleation of the
pressure zone as well as an infinite speed of propagation are generic phenomena.  This leads to
interesting singularities and the necessity of weak solutions (in our case, viscosity solutions)
that allow both the description of the free boundary evolution as well as the discontinuity. Let us
also mention that with certain drift fields, for instance the gradient of a scalar function in the
neighborhood of its saddle point, the pressure zone may experience neck-pinching, adding to the diversity of topological singularities.

\medskip

\subsection*{Literature} Our discussion here focuses on articles addressing the limit $m\to\infty$. For the case $\vec{b}=0$, there is a vast literature for different weak solutions and regularity theory of \eqref{pme} : we refer to the book \cite{Vazquez} and the references therein. Until recently, the limit $m\to\infty$ has been studied only when both $\vec{b}$
and $f$ are zero. In this case, the problem \eqref{hs} reduces to the classical Hele-Shaw problem. The limit was first considered in
\cite{CF87,EHKO}, see also \cite{BC}, on $\Rn$. See \cite{GQ01,GQ03,K03} for results
on a subset of $\Rn$ with fixed boundary when $\rho^0$ is a {\it patch}, i.e., when it is a characteristic function of a
compact set. In this case there is zero external density $\rho^E$ in \eqref{hs}, i.e., there is
only congested zones evolving in time. This yields the finite speed of propagation property of the congested zone which makes \eqref{hs} more stable and easier to analyze.
\medskip

 The limit problem with positive source ($f>0$ and $\vec{b}=0$) has been studied first in \cite{PQV} in the
context of a mechanical tumor growth model. In this setting, the characterization of
the problem with \eqref{hs}, in the presence of the external density, is shown in \cite{KP} and in \cite{MPQ}
around the same time. All of these articles strongly use the fact that $(\rho_m)_t\geq 0$, which is
a consequence of the zero drift and a positive source term in \eqref{pme}. In particular this leads to an
Aronson--B\'{e}nilan-type semi-convexity estimate on $p_m$, which yields compactness for the pressure variables and their supports as $m\to\infty$. The lack of such estimate appears to be the main challenge in the study of regularity properties of the interface $\{\rho_m>0\}$ when there is a drift.  The monotonicity of solutions is also essential in the
viscosity solutions approach employed in \cite{KP}; we will discuss the approach taken in \cite{KP}
in more detail below.

\medskip

Let us also mention that, when $f=0$ and $\vec{b}$ is a potential velocity field, \eqref{pme} can be posed in the setting of
 gradient flows in the Wasserstein space of probability measures (\cite{O}).
 The limiting problem in this case can be also posed as the gradient flow solution of the transport
 equation \eqref{te} with $L^\infty$ constraint on the density, $\rho \leq 1$ (see \cite{MRS} where
 the problem is introduced in the setting of crowd motion). See \cite{AKY,CKY} where the limit density is characterized with \eqref{hs} in the case of compressive potential and patch solutions.

\medskip


\subsection*{Main challenges and ingredients of the proof}



As in \cite{KP}, our main strategy is to use viscosity solutions theory to both show the existence
of the limit as $m\to\infty$ and to verify that the limit problem is a solution of
\eqref{hs}--\eqref{te} by considering the half-limits of the pressure and density variables defined in \eqref{half_limits}.  Due to the lack of uniform semi-convexity property of $\rho_m$, the finer steps we need to take however are different and much more complex
compared to the standard procedure. Below are the two main ingredients in the convergence proof
that are new in this paper.




\medskip


 First, we will perturb the radial solutions given in \cite{KP} to
construct the barriers to act as test functions for the limit densities.
The construction of such barriers is one of the challenges in passing in the limit $m \to \infty$.
In particular, it is difficult to capture the behavior of solutions as $m \to \infty$ near $\rho_m
\sim 1$, which corresponds to the scenario in \eqref{hs} where $\rho^E$ reaches $1$ from below.
 The ability to perturb the barriers of the simpler problem in \cite{KP} is a crucial advantage of the viscosity
solution method.

Second and more importantly,  our assumption \eqref{F} is used to conclude that
 a streamline cannot leave the congested region for the limit density $\{\rho = 1\}$, it
can only possibly enter it from the ``exterior'' region $\{\rho<1\}$. In particular this way the
pressure does not have an effect on the evolution of the density along the streamline in the
exterior region, and therefore the transport equation \eqref{te} determines $\rho=\rho^E$ outside of $\{\rho=1\}$.
This is related to the monotonicity of the limit problem along the streamlines and
replaces the in-time monotonicity that was important for the zero drift case. Unfortunately we are
not able to fully obtain this property until the full convergence result has been established, and thus we
need to start with a weaker version of the property to proceed in Section~\ref{sec:almost-cp}.

\medskip

 In terms of the structure, the most notable difference from the standard viscosity solutions approach lies in that the viscosity solution property for the pressure
half-limits $\bar{p}$ and $\ubar{p}$, Lemma~\ref{viscosity_limit}, are fully proved only after showing a
comparison principle for the limits (Theorem~\ref{th:almost_cp}) and obtaining the $L^1$ convergence for the density variable (Lemma~\ref{approximation}). Indeed
the general viscosity solution theory is only used in our analysis for the characterization of the limit
density in terms of \eqref{hs}--\eqref{te}. The comparison principle as well as the convergence argument, as
mentioned above, are built upon certain monotonicity properties of the density half-limits
$\bar\rho$, $\ubar\rho$ along characteristic paths (Lemma~\ref{le:char}).

\medskip

\subsection*{Open questions}

\medskip

$\circ$ {\it Removing \eqref{F}:}\,\,  When $F$ changes sign, we no longer expect $\rho^E$ to solve \eqref{te} entirely in terms of the initial data. Hence a new description of the limit problem, as well as new ideas, is necessary to investigate the limit $m\to\infty$ in terms of the evolution of the congested zone.

\medskip

$\circ$ {\it General initial density:} \,\, Here we assume that the initial density $\rho^0$ is regular in the sense of \eqref{initial}. Relaxing
this assumption is plausible but some generalizations are beyond the scope of the framework given
in the paper. For instance when the region $\{\rho^0=1\}$ is not compactly supported, there is an
additional issue of dealing with the growth of the pressure variable at infinity as the solution
evolves. This is likely to be a technical difficulty but we do not investigate it. A more interesting
question arises with the initial data that is larger than $1$ at some points. In such cases there
is a jump in the solution at $t = 0$ in the limit $m \to \infty$ which adds another challenge in
the analysis. In fact the result in \cite{CF87} indicates that the portion of the initial density
over 1 gets spread out immediately to transform into the ``nearest density" under the constraint
$\rho \leq 1$. We do not pursue this interesting aspect of the problem in this paper.

\medskip

$\circ$ {\it Free boundary regularity:} \,\,  Regularity properties of the interfaces $\partial\{p_m>0\}$ and $\partial\{p>0\}$ stays open except for the case $\vec{b}, f=0$ (\cite{CJK,KKV,Vazquez}) and for particular cases of traveling wave solutions with a shear flow (\cite{MNR}). We also mention a numerical result in \cite{Mon} which shows singularity formulation on $\partial\{p_m>0\}$ with a smooth choice of vector field $\vec{b}$.

\medskip

$\circ$ {\it BV regularity for the limit density:} \,\, In general we expect the limit density to be $BV$-regular as indicated by the gradient-flow based analysis of \cite{DPMSV}, but this remains open in our setting. Our result in section 5 only establishes this in the case of external density $\rho^E$ strictly below $1$.

\medskip

\subsection*{Outline}
Before we begin the analysis, let us  give a brief outline of the paper.
Section~\ref{sec:preliminaries} contains notions and preliminary results to be used in the rest of the paper, including the $L^1$ contraction properties for weak solutions of \eqref{pme}.

\medskip

In Section~\ref{sec:almost-cp} we proceed by first showing a comparison result (``almost comparison'') between two
pressure half-limits with strictly ordered initial data (Theorem~\ref{th:almost_cp}). Many properties of the density half-limits are derived in this section, including some monotonicity properties.

\medskip

Section~\ref{sec:convergence} builds on the comparison result to obtain main convergence results.
In Section~\ref{sec:density-convergence} using
the $L^1$~contraction argument we deduce that the density $\rho_m(\cdot,t)$ converges to the limit
density $\rho(\cdot,t)$ in $L^1(\R^n)$ for all $t>0$, as $m\to\infty$. In
Section~\ref{sec:characterization}  we establish
that the congested zones and the pressure supports from each half-limits all coincide, \eqref{support}, which is essential in showing that the congested zone evolves by the free boundary problem \eqref{hs} where $p$ can be identified with the pressure limit. It follows that $\rho_m$ locally uniformly converges to $\rho$ away from the boundary of the congested zone $\{\rho=1\}$. Lastly we invoke Perron's method to show that the congested zone can be characterized as the unique support for any viscosity solution of \eqref{hs}, paired with the corresponding initial data (Corollary~\ref{cor:unique}).

\medskip

In Section~\ref{sec:bv} we focus on the regularity of the set $\{\rho=1\}$ in a local neighborhood where the
external density $\rho^E$ given by the transport equation \eqref{te} stays strictly below $1$. In
such settings we show that $\{\rho=1\}$ has finite perimeter, and thus it follows that $\rho_m$
locally uniformly converges to $\rho$ except on a set of lower dimension. Our assumption on
$\rho^E$ leaves out the more singular scenario when $\rho^E$ is allowed to nucleate an additional
congested zone by increasing to $1$. The last part of the section discusses two examples where
$\rho^E\equiv 0$ after finite time, outside of the congested zone.

\medskip

The appendix deals with the construction of test functions, including the perturbed radial test
functions necessary for understanding the behavior of solutions at density $1$.

\medskip

\section{Preliminaries}
\label{sec:preliminaries}

We will often take a closure of a space-time set and then its time-slice. We use the notation
\begin{align*}
  \cl A_t := \set{x: (x,t) \in \cl A}, \qquad t \in \R,\ A \subset \Rn\times\R.
\end{align*}

We need to discuss here which solutions of \eqref{pme} we consider and what is known about them, as well as initial data.

\subsection{Half-relaxed limits and the initial data}

Let us review the notation first. The \emph{half-relaxed limits} or \emph{half-limits} $\halfliminf$ and $\halflimsup$
of a sequence of locally bounded functions $u_m = u_m(x, t)$ are defined as
\begin{align}\label{half_limits}
  \halflimsup_{m \to \infty} u_m(x, t) := \limsup_{\substack{m\to\infty\\(y,s)\to(x,t)}} u_m(y,s),
  \qquad
  \halfliminf_{m \to \infty} u_m(x, t) := \liminf_{\substack{m\to\infty\\(y,s)\to(x,t)}} u_m(y,s).
\end{align}
It is well-known that $\halflimsup u_m$ is upper semi-continuous (USC) and $\halfliminf u_m$ is lower
semi-continuous (LSC).

Throughout the paper, we will assume that the initial data $\rho^0_m$ for \eqref{pme} converge in
the sense of the half-relaxed limits, that is,
  \begin{equation}\label{initial:convergence}
(\rho^0)_* \leq \halfliminf_{m\to\infty} \rho_m^0, \qquad \halflimsup_{m \to\infty}\rho_m^0 \leq
(\rho^0)^*,
\end{equation}
Here $u^*$ and $u_*$ respectively
denote the upper and lower semi-continuous envelopes: our condition \eqref{initial:convergence} is
a generalization of the uniform convergence for discontinuous initial data $\rho^0$.

\subsection{Notion of solutions for the porous medium equation}

We use the notion of weak solutions of \eqref{pme}, similar to \cite[Section~5.2]{Vazquez}:

Let $Q := \mathbb{R}^n \times (0, \infty)$, and $\rho_0$ take the form \eqref{initial}.  We say that $\rho_m = \rho$ defined on $Q$ is a weak solution of \eqref{pme} if
\begin{itemize}
\item[(i)] $\rho \in L^1(Q)$ and $\rho^m \in L^1(0, \infty; W_0^{1, 1}(\mathbb{R}^n))$,

\item[(ii)] $\rho$ satisfies the identity
$$\iint\limits_{Q} \left\{- \rho \eta_t + \left(\nabla \left(\rho^m\right) - \rho \vec{b}\right) \cdot \nabla \eta \right\} \, dx \, dt = \int_{\mathbb{R}^n} \rho_0(x) \eta(x, 0) \, dx + \iint\limits_{Q} f \rho \eta \, dx \, dt$$
for any function $\eta \in C^1(\cl Q)$ with compact support.

\end{itemize}

Existence can be shown by following \cite[Section~5.4]{Vazquez}.  Uniqueness follows from the $L^1$ contraction property below.  Note that the solution is classical whenever it is positive, due to the regularity of $\vec{b}$ and $f$.

The following lemma can be checked with a parallel proof to \cite[Section~3.2.3]{Vazquez}, we write the proof here for completeness.

\begin{lemma}[$L^1$ contraction]
\label{le:contraction}
  Let $\rho_m$ and $\tilde \rho_m$ be two nonnegative solutions of \eqref{pme} with given initial
  data and source terms $f$, $\tilde f$. Then
\begin{equation}
  \label{contraction}
  \begin{aligned}
  \|\rho_m(\cdot,t) - \tilde{\rho}_m(\cdot,t) \|_{L^1 (\R^n)} &\leq e^{t
  \max(\norm{f}_\infty,\norm{\tilde f}_\infty)}\bigg(\|\rho_m(\cdot,0) -
    \tilde{\rho}_m(\cdot,0)\|_{L^1(\R^n)} \\
    &\quad+ \norm{(\tilde f - f)_+}_\infty \frac{e^{t
  \norm{f}_\infty}}{\norm{f}_\infty}\norm{\rho_m(\cdot,0)}_{L^1(\Rn)} + \norm{(f - \tilde
f)_+}_\infty \frac{e^{t \norm{\tilde f}_\infty}}{\norm{\tilde f}_\infty}\norm{\tilde
\rho_m(\cdot,0)}_{L^1(\Rn)}\bigg)
  \end{aligned}
\end{equation}
for all $t > 0$, where $\frac{e^{t\lambda}}{\lambda} := t$ for $\lambda = 0$.
\end{lemma}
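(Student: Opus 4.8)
The plan is to adapt the classical doubling-of-variables / Kruzhkov-type argument for the $L^1$ contraction of porous medium type equations, as in \cite[Section~3.2.3]{Vazquez}, keeping careful track of the zeroth-order terms $f\rho$, $\tilde f\tilde\rho$ and of the drift $\divo(\rho\vec b)$. Write $w := \rho_m - \tilde\rho_m$ and observe that, subtracting the two weak formulations, $w$ satisfies in the sense of distributions
\begin{align*}
  w_t - \Delta\big(\rho_m^m - \tilde\rho_m^m\big) + \divo\big(w\,\vec b\big) = f\rho_m - \tilde f\tilde\rho_m.
\end{align*}
The key algebraic point is that $\rho_m^m - \tilde\rho_m^m = a(x,t)\,w$ with $a := \tfrac{\rho_m^m - \tilde\rho_m^m}{\rho_m - \tilde\rho_m}\ge 0$ (set $a=0$ where $w=0$), so $\Delta(\rho_m^m-\tilde\rho_m^m)$ has the same sign as $\Delta$ applied to something with the same sign as $w$; this is exactly what makes the diffusion term drop out after testing against $\operatorname{sgn}(w)$. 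To make this rigorous one tests the equation against $\operatorname{sgn}_\delta(\rho_m^m - \tilde\rho_m^m)\,\zeta(x)$ for a smooth approximation $\operatorname{sgn}_\delta$ of the sign function and a cutoff $\zeta$, uses that $\operatorname{sgn}_\delta(\rho_m^m-\tilde\rho_m^m)\to\operatorname{sgn}(w)$ a.e., and passes $\delta\to 0$; the diffusion contribution is $-\int \operatorname{sgn}_\delta'(\cdot)\,|\nabla(\rho_m^m-\tilde\rho_m^m)|^2\zeta \le 0$ plus a term controlled by $\nabla\zeta$ that vanishes once $\zeta\uparrow 1$ (here one uses the decay of $\rho_m,\tilde\rho_m$ and $\rho_m^m,\tilde\rho_m^m\in L^1(0,T;W^{1,1}_0)$).

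After these manipulations one is left, in the limit, with the differential inequality
\begin{align*}
  \frac{d}{dt}\|w(\cdot,t)\|_{L^1(\R^n)} \le \int_{\R^n} \operatorname{sgn}(w)\,(f\rho_m - \tilde f\tilde\rho_m)\,dx,
\end{align*}
the drift term having disappeared because $\int \operatorname{sgn}(w)\,\divo(w\vec b) = \int \operatorname{sgn}_\delta(\rho_m^m-\tilde\rho_m^m)\,\divo(w\vec b)$ integrates by parts into a term of the form $-\int \operatorname{sgn}_\delta'(\cdot)\nabla(\rho_m^m-\tilde\rho_m^m)\cdot\vec b\,w$, which vanishes as $\delta\to 0$ since $w \operatorname{sgn}_\delta'(\rho_m^m-\tilde\rho_m^m) = w\,\tfrac{1}{a}\operatorname{sgn}_\delta'(w\,/\,\text{stuff})\cdots$ — more cleanly, one splits $a\nabla w$ appropriately; in any case this cancellation of the transport term under the $L^1$ pairing is standard for conservative drifts and requires no sign condition on $\divo\vec b$. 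For the reaction term, write $f\rho_m - \tilde f\tilde\rho_m = f\,w + (f - \tilde f)\tilde\rho_m$ and estimate $\operatorname{sgn}(w)\,f\,w \le \|f\|_\infty |w|$, and $\operatorname{sgn}(w)(f-\tilde f)\tilde\rho_m \le (f-\tilde f)_+\,\tilde\rho_m$ on $\{w>0\}$ and $\le (\tilde f - f)_+\rho_m$ on $\{w<0\}$ — actually one symmetrizes by writing $f\rho_m - \tilde f\tilde\rho_m = \tfrac12[(f+\tilde f)(\rho_m-\tilde\rho_m) + (f-\tilde f)(\rho_m+\tilde\rho_m)]$ or simply bounds both ways and takes the better constant, producing the two inhomogeneous terms $\|(\tilde f-f)_+\|_\infty\|\rho_m(\cdot,0)\|_{L^1}$ and $\|(f-\tilde f)_+\|_\infty\|\tilde\rho_m(\cdot,0)\|_{L^1}$ after one more step below.

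The final step is Gronwall. Since total mass satisfies $\tfrac{d}{dt}\|\rho_m(\cdot,t)\|_{L^1} = \int f\rho_m \le \|f\|_\infty\|\rho_m(\cdot,t)\|_{L^1}$, we get $\|\rho_m(\cdot,t)\|_{L^1}\le e^{t\|f\|_\infty}\|\rho_m(\cdot,0)\|_{L^1}$ and likewise for $\tilde\rho_m$. Feeding this into the differential inequality for $\|w\|_{L^1}$, which has the shape $\tfrac{d}{dt}\|w\|_{L^1}\le \max(\|f\|_\infty,\|\tilde f\|_\infty)\|w\|_{L^1} + \|(\tilde f-f)_+\|_\infty e^{t\|f\|_\infty}\|\rho_m(\cdot,0)\|_{L^1} + \|(f-\tilde f)_+\|_\infty e^{t\|\tilde f\|_\infty}\|\tilde\rho_m(\cdot,0)\|_{L^1}$, and integrating via the integrating factor $e^{-t\max(\|f\|_\infty,\|\tilde f\|_\infty)}$ yields exactly \eqref{contraction}, where the terms $\tfrac{e^{t\|f\|_\infty}}{\|f\|_\infty}$ arise from $\int_0^t e^{s\|f\|_\infty}\,ds$ with the convention $\tfrac{e^{t\lambda}}{\lambda}=t$ at $\lambda=0$. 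The main obstacle is purely technical: justifying the vanishing of the diffusion and drift contributions in the $\delta\to 0$ and $\zeta\uparrow 1$ limits rigorously at the level of weak solutions — in particular that the admissible test functions in (ii) (namely $C^1(\cl Q)$ with compact support) can be replaced, by a density/regularization argument, with the nonsmooth and not-compactly-supported $\operatorname{sgn}_\delta(\rho_m^m-\tilde\rho_m^m)\zeta$; this is where one leans on the parabolic regularity of $\rho_m$ wherever it is positive and on $\rho^m\in L^1(0,\infty;W^{1,1}_0(\R^n))$, exactly as in \cite[Section~3.2.3]{Vazquez}.
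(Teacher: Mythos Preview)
Your Kruzhkov-type approach is correct in outline and would yield the stated estimate, but it is genuinely different from what the paper does. The paper avoids testing against $\operatorname{sgn}_\delta(\rho_m^m-\tilde\rho_m^m)$ entirely: after reducing to smooth positive solutions on a bounded domain, it first treats the \emph{ordered} case $f\le\tilde f$, $\rho(0)\le\tilde\rho(0)$, where the comparison principle gives $\rho\le\tilde\rho$ and one can simply integrate the difference $\tilde\rho-\rho$ over $\Rn$; the diffusion and drift terms disappear by the divergence theorem with no sign-function gymnastics. The general case is then reduced to the ordered one by introducing an auxiliary solution $U$ with initial data $\max(\rho(0),\tilde\rho(0))$ and source $\max(f,\tilde f)$, so that $U-\rho\ge(\tilde\rho-\rho)_+$ with equality at $t=0$; this yields a bound on $\int(\tilde\rho-\rho)_+$, and symmetry gives the other half. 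The paper's route is shorter and sidesteps precisely the technical justifications you flag as the ``main obstacle'' (vanishing of the diffusion and drift contributions as $\delta\to0$, $\zeta\uparrow1$); your route, on the other hand, is the more robust one for equations where a comparison principle is not available. Two minor points in your write-up: your splitting of the reaction term on $\{w>0\}$ versus $\{w<0\}$ is stated a bit inconsistently (use $f\rho-\tilde f\tilde\rho = fw+(f-\tilde f)\tilde\rho$ on $\{w>0\}$ and $=\tilde fw+(f-\tilde f)\rho$ on $\{w<0\}$ to get the pairings right), and in the final Gronwall the integral is $\int_0^t e^{M(t-s)}e^{s\|f\|_\infty}\,ds$ rather than $\int_0^t e^{s\|f\|_\infty}\,ds$; both are bounded by $e^{Mt}\frac{e^{t\|f\|_\infty}}{\|f\|_\infty}$, so the stated inequality still follows.
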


\begin{proof}
  Following \cite{Vazquez}, by approximation it is enough to prove this inequality for smooth positive solutions on
  a bounded domain with zero boundary data. We
  drop the subscript $m$ in the following and write $\rho(t) = \rho(\cdot, t)$, etc. If $f \leq
  \tilde f$ and $\rho(0)
  \leq \tilde\rho(0)$, then $\rho(t) \leq \tilde \rho(t)$ and by the divergence theorem
  \begin{align*}
    \frac{d}{dt} \int (\tilde \rho(t) - \rho(t)) \;dx &= \int \tilde \rho(t) \tilde f - \rho(t) f
    \;dx = \int (\tilde \rho(t) - \rho(t)) \tilde f + \rho(t) (\tilde f - f) \;dx\\
    &\leq \norm{\tilde f}_\infty \int (\tilde \rho(t) - \rho(t)) \;dx +
    \norm{\tilde f - f}_\infty \int \rho(t) \;dx.
  \end{align*}
  Taking $\tilde\rho \equiv 0$ and $\tilde f \equiv 0$ and obtaining a similar estimate as above,
  Gronwall's inequality yields $\int \rho(t) \;dx \leq e^{t\norm{f}_\infty} \int \rho(0)
  \;dx$.
  Therefore Gronwall's inequality implies
  \begin{align*}
  \int (\tilde \rho(t) - \rho(t)) \;dx \leq e^{t
  \norm{\tilde f}_\infty} \pth{\int (\tilde \rho(0) - \rho(0)) \;dx + \frac{\norm{\tilde f -
  f}_\infty}{\norm{f}_\infty} e^{t \norm{f}_\infty} \int \rho(0) \;dx}.
  \end{align*}
  In general, we let $U$ be the solution of \eqref{pme} with initial data $\max(\rho(0), \tilde
  \rho(0)) \geq \rho(0)$ and source $\max(f, \tilde f)$. We have $U - \rho \geq \max(\rho, \tilde
  \rho) - \rho = (\tilde \rho - \rho)_+$ with equality at $t = 0$. Therefore
  \begin{align*}
    \int (\tilde \rho(t) - \rho(t))_+ \;dx &\leq \int (U(t) - \rho(t)) \;dx \\
&\leq
e^{t\max(\norm{f}_\infty, \norm{\tilde f}_\infty)} \pth{\int (\tilde \rho(0) - \rho(0))_+ \;dx
  + \frac{\norm{(\tilde f -
f)_+}_\infty}{\norm{f}_\infty} e^{t \norm{f}_\infty} \int \rho(0) \;dx,}
  \end{align*}
from which we can deduce \eqref{contraction}.
\end{proof}

Note that the proof of the $L^1$ contraction also yields the comparison principle property.

\begin{lemma}[Comparison principle]
\label{le:comparison-pme}
  Let $\rho_m$ and $\tilde \rho_m$ be two nonnegative solutions of \eqref{pme} with given initial
  data and source terms $f$, $\tilde f$. If $\rho_m(\cdot, 0) \leq \tilde \rho_m(\cdot, 0)$ a.e.
  and $f \leq \tilde f$ a.e. then
  \begin{align*}
    \rho_m \leq \tilde\rho_m \qquad a.e.
  \end{align*}
\end{lemma}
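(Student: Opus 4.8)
The plan is to extract the comparison principle as a byproduct of the $L^1$ contraction argument just given, rather than reproving it from scratch. The key observation is that the proof of Lemma~\ref{le:contraction} already establishes the comparison property in the smooth, positive, bounded-domain setting: there we argued that if $f \leq \tilde f$ and $\rho(0) \leq \tilde\rho(0)$, then $\rho(t) \leq \tilde\rho(t)$ for all $t > 0$. So the only work is to transfer this from approximate solutions to the weak solutions of \eqref{pme}.

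First I would recall the approximation scheme from \cite[Section~5.4]{Vazquez} used for existence: weak solutions are obtained as limits of smooth positive solutions $\rho^{(k)}$, $\tilde\rho^{(k)}$ on an increasing sequence of bounded domains with suitably regularized initial data and (if needed) source terms, converging in $L^1$. Since $\rho_m(\cdot,0) \leq \tilde\rho_m(\cdot,0)$ a.e.\ and $f \leq \tilde f$ a.e., one can arrange the regularized data and sources to respect these orderings (e.g.\ by mollifying with the same kernel and, if necessary, adding a vanishing constant to separate them). Then the comparison for smooth solutions gives $\rho^{(k)} \leq \tilde\rho^{(k)}$ everywhere, and passing to the $L^1_{loc}$ limit preserves the inequality a.e.

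The main obstacle — though a mild one — is making sure the approximation is set up so that the orderings of initial data and sources are genuinely preserved under regularization and domain truncation simultaneously, and that uniqueness (again via the $L^1$ contraction, Lemma~\ref{le:contraction} with $f = \tilde f$) guarantees the limit is independent of the approximating sequence so that comparing particular sequences compares the actual weak solutions. Since the excerpt explicitly says ``the proof of the $L^1$ contraction also yields the comparison principle property,'' I would keep this brief: state that the smooth-solution comparison was already shown inside the proof of Lemma~\ref{le:contraction}, note that the ordering hypotheses pass through the mollification/truncation used in \cite[Section~5.4]{Vazquez}, and conclude by passing to the limit. No new estimate is required.
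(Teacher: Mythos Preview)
Your proposal is correct and matches the paper's approach exactly: the paper does not give a separate proof of Lemma~\ref{le:comparison-pme} but simply remarks that ``the proof of the $L^1$ contraction also yields the comparison principle property,'' which is precisely the observation you make. Your additional care about preserving the ordering under the approximation scheme of \cite[Section~5.4]{Vazquez} is appropriate and fills in what the paper leaves implicit.
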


\section{Convergence of PME to HS and an almost comparison}
\label{sec:almost-cp}

The goal of this section is to analyze the half-relaxed limits of $\rho_m$ and $p_m$ using
viscosity solution techniques (arguments using the comparison principle). The main result is the
\emph{almost comparison}, Theorem~\ref{th:almost_cp}, that guarantees the ordering of the
limits of solutions with strictly ordered initial data.
Let us stress that we do not use the definition of viscosity solutions for the limit problem
\eqref{hs}, we only use the comparison principle for the solutions $\rho_m$, $p_m$ of \eqref{pme}.
The notion of viscosity solutions and the comparison principle of \eqref{hs} are only introduced
after the ordering of the limits have been understood.

We first introduce the necessary notation in Section~\ref{sec:convergence-half-relaxed}, including
the definition of half-limits of $\rho_m$ and $p_m$ and the statement of the almost comparison.
The vector field $\vecb$ transports mass along trajectories---streamlines---that form a flow map, whose
properties are discussed in Section~\ref{sec:flow-map}. The limits of $\rho_m$ and $p_m$ are
monotone
along these streamlines, and we use this fact to derive important properties of
the limits in Section~\ref{sec:prop-half-limits}. The proof of almost comparison is given in
Section~\ref{sec:proof-of-support-order}, relying on a perturbation argument and a careful
understanding of the behavior of the congested region in the limit. The comparison principle motivates the definition of
viscosity solutions of \eqref{hs} in Section~\ref{sec:viscosity-solutions}.

\subsection{Almost comparison}
\label{sec:convergence-half-relaxed}

We first consider a weaker result of convergence of half-relaxed limits of solutions of
\eqref{pme}.

We recall that the pressure $p_m = P_m(\rho_m) := \frac{m}{m-1} \rho_m^{m-1}$ satisfies the
\emph{pressure equation} \eqref{pmepressure}.

\medskip

Recall the notion of \emph{regular} initial data in the sense of
\eqref{initial}.
We say that regular initial data
\[\rho^{-,0} = \max(\chi_{\Omega^{-,0}}, \rho^{-,0}_E) \quad \text{and}\quad \rho^{+,0} =
\max(\chi_{\Omega^{+,0}}, \rho^{+,0}_E)\]
are \emph{strictly ordered} if
\begin{align}
  \label{initial-data-order}
  \Omega^{-,0} \subset \interior \Omega^{+,0} \qquad \text{and} \qquad \rho^{-,0}_E < \rho^{+,0}_E \quad \text{in
  $\supp \rho^{-,0}_E$}.
\end{align}

Let $\rho^{-,0}$, $\rho^{+,0}$ be two strictly ordered regular initial data and let $f^-, f^+ \in
C(\Rn)$ be two bounded sources such that $\divo \vecb < f^- < f^+ - \e$ for some $\e > 0$. This
strict order will be used in Corollary~\ref{co:order-pressure-contact} to obtain the order of the
limit pressures. We will denote $F^\pm := f^\pm -\divo \vecb$.
Note that
\begin{align*}
  0 < F^- < F^+ - \e.
\end{align*}
Let us define the solutions $\rho^\pm_m$, $i = 1, 2$, of \eqref{pme}
with the respective initial
data $\rho^{\pm,0}$ and sources $f^\pm$, and let $p^\pm_m = P_m(\rho^\pm_m)$ be the pressure solutions.
We can in fact let $\rho^\pm_m$ take on any compactly supported $L^\infty$ data $\rho^{\pm,0}_m$ such
that $\rho^{-,0} = \limsup^* \rho^{-,0}_m$ and $(\rho^{+,0})_* = \liminf_* \rho^{+,0}_m$.
We define the limits
\begin{align*}
  \rho^- &:= \halflimsup_{m \to\infty} \rho^-_m, &
  \rho^+ &:= \halfliminf_{m \to\infty} \rho^+_m,
\intertext{and}
  p^- &:= \halflimsup_{m \to\infty} p^-_m, &
  p^+ &:= \halfliminf_{m \to\infty} p^+_m.
\end{align*}

The main result of this section is the order of the half-relaxed limits for the strictly ordered
initial data, which can be understood as a type of a comparison principle for the limit solutions.
We refer to it as the \emph{almost comparison} for short.
Later, in
Section~\ref{sec:convergence}, we will deduce the full convergence result using the
$L^1$~contraction.

\begin{theorem}[Almost comparison]
  \label{th:almost_cp}
  For strictly ordered regular initial data $\rho^{-,0}$, $\rho^{+,0}$ and
  bounded continuous sources $\divo \vecb \leq f^- < f^+$, we have
  \begin{align*}
    \rho^- \leq \rho^+ \qquad \text{and} \qquad p^- \leq
    p^+ \qquad \text{in $\cl Q := \Rn \times [0, \infty)$},
  \end{align*}
  and
\begin{align*}
  \set{p^- > 0} \subset \set{\rho^- = 1} \subset \set{p^+ > 0} \subset
  \set{\rho^+ = 1}.
\end{align*}
\end{theorem}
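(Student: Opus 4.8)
The plan is to prove the four-set chain first and then read the two orderings off from it, so that nothing is assumed. Two of the inclusions are algebraic: from $p^\pm_m = \tfrac{m}{m-1}(\rho^\pm_m)^{m-1}$, at a point where $p^\pm>0$ a realizing sequence gives $p^\pm_{m_k}(y_k,s_k)\to p^\pm>0$, hence $\rho^\pm_{m_k}(y_k,s_k)\to 1$; together with the a priori bound $\rho^\pm\le 1$ from Section~\ref{sec:prop-half-limits} this yields $\set{p^->0}\subset\set{\rho^-=1}$ and $\set{p^+>0}\subset\set{\rho^+=1}$. Granting the middle inclusion
\begin{equation*}
  \set{\rho^-=1}\subset\set{p^+>0}, \tag{$\star$}
\end{equation*}
the two orderings follow. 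For the pressures: the closure of $\set{p^->0}$ is contained in $\set{p^+>0}$ (by the chain, since $\set{\rho^-=1}$ is closed); on that set $p^-$ is a viscosity subsolution and $p^+$ a supersolution of $-\Delta u=F^+$ (using $F^-<F^+$), and $p^-\le p^+$ on $\partial\set{p^->0}$ by the boundary regularity of $p^\pm$ recorded in Section~\ref{sec:prop-half-limits}, so an elliptic comparison (cf.\ Corollary~\ref{co:order-pressure-contact}) gives $p^-\le p^+$ there and trivially $p^-=0\le p^+$ elsewhere. For the densities: on $\set{\rho^-=1}\subset\set{\rho^+=1}$ (the chain) we have $\rho^-=1=\rho^+$, and off this set $\rho^-<1$, where Lemma~\ref{le:char} — in the weaker streamline form available in this section, the surrogate for the in-time monotonicity of the zero-drift case — bounds $\rho^-$ from above by a quantity carried along streamlines from $\rho^{-,0}_E$ that, by the strict inequality $\rho^{-,0}_E<\rho^{+,0}_E$ on $\supp\rho^{-,0}_E$, the order preservation of the transport flow, and $F^-<F^+$, lies below the matching lower bound for $\rho^+$. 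Thus the whole statement reduces to $(\star)$.

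To prove $(\star)$ I would combine the PME comparison principle (Lemma~\ref{le:comparison-pme}) with the perturbed radial barriers constructed in the appendix. Fix $(x_0,t_0)$ with $\rho^-(x_0,t_0)=1$; the goal is $p^+(x_0,t_0)>0$. Exploiting the strict ordering $\Omega^{-,0}\subset\interior\Omega^{+,0}$, $\rho^{-,0}_E<\rho^{+,0}_E$ and the source gap $f^-<f^+-\e$, I would fit, on a small parabolic cylinder around $(x_0,t_0)$, an explicit PME solution $\Phi_m$ — a perturbation of a radial solution of \cite{KP} adapted to absorb the $C^2$ drift $\vecb$, a source $f_\Phi$ with $f^-\le f_\Phi\le f^+$, and, most delicately, the regime $\rho_m\uparrow1$ — chosen so that $\rho^-_m\le\Phi_m\le\rho^+_m$ on that cylinder for all large $m$. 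The two-sided domination is obtained from Lemma~\ref{le:comparison-pme} on the cylinder after matching the lateral and initial data (possible because $\limsup^*\rho^{-,0}_m=\rho^{-,0}$ and $\liminf_*\rho^{+,0}_m=(\rho^{+,0})_*$ leave the necessary margins), the propagation of the domination in time being justified by the weak form of Lemma~\ref{le:char}, which controls how the congested regions grow along streamlines. Because $\Phi_m$ is explicit, its half-limits coincide to a single limit $\Phi$, and in its congested region the limit pressure is positive; the strict ordering lets us place $(x_0,t_0)$ strictly inside the congested region of $\Phi$. Since $\rho^-\le\Phi\le1$ forces $\Phi(x_0,t_0)=1$, we get $\lim P_m(\Phi_m)(x_0,t_0)>0$; and $\Phi_m\le\rho^+_m$ with $P_m$ increasing gives $P_m(\Phi_m)\le p^+_m$ on the cylinder, so $p^+(x_0,t_0)\ge\lim P_m(\Phi_m)(x_0,t_0)>0$. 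This proves $(\star)$, and hence the theorem.

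The main obstacle is the barrier construction near density one. A plain Barenblatt-type radial barrier fails to register the constraint $\rho\le1$ as $m\to\infty$, so one must use the perturbed radial solutions of the appendix and verify that their congested supports converge to the Hele-Shaw flow driven by the perturbed source and drift, and that their exterior parts — approaching transport densities that may themselves be rising toward $1$ — correctly model the scenario where $\rho^E$ reaches $1$ from below, which is exactly the singular behavior flagged in the introduction. A secondary, organizational difficulty is the absence of in-time monotonicity: the domination $\rho^-_m\le\Phi_m\le\rho^+_m$ can be propagated only once the growth of the congested regions is controlled along streamlines, which is what the weak form of Lemma~\ref{le:char} provides, and one must separately handle nucleation — points of $\set{\rho^-=1}$ that appear away from earlier congestion — where the relevant time to use is the streamline hitting time of the transport density rather than an expansion time. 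Throughout, the half-limit facts from Section~\ref{sec:prop-half-limits} — $0\le\rho^\pm\le1$, $p^\pm\ge0$, $-\Delta p^\pm=F^\pm$ in $\set{p^\pm>0}$, and the boundary regularity of $p^\pm$ — are used without further comment.
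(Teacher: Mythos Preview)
Your reduction of the theorem to the middle inclusion $(\star)$ and the way you read off $p^-\le p^+$ and $\rho^-\le\rho^+$ from the chain are correct and match the paper (with the minor caveat that the transport bound on $\rho^-$ off $\set{p^+>0}$ really needs Corollary~\ref{co:transport-comparison} together with Lemma~\ref{le:char}, not Lemma~\ref{le:char} alone: you must know $\rho^-<1$ along the \emph{entire} backward streamline to invoke the transport comparison, and that comes from $(\star)$ plus the streamline monotonicity of $\set{p^+>0}$).

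The genuine gap is in your proof of $(\star)$. You want to sandwich an explicit $\Phi_m$ between $\rho^-_m$ and $\rho^+_m$ on a parabolic cylinder near $(x_0,t_0)$ and apply Lemma~\ref{le:comparison-pme}. But comparison on a local cylinder requires the ordering $\rho^-_m\le\Phi_m\le\rho^+_m$ on the \emph{lateral} parabolic boundary, which sits at positive times where you do not yet know anything about the relation between $\rho^-_m$ and $\rho^+_m$---that is exactly the conclusion you are after, so the argument is circular. Invoking the half-limit convergence of the initial data only controls the bottom at $t=0$, and Lemma~\ref{le:char} says nothing about propagating a three-function sandwich in time. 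If instead you try to run the comparison globally from $t=0$, then $\Phi_m$ is just another PME solution with intermediate data and source, and its $m\to\infty$ behavior is no more ``explicit'' than that of $\rho^\pm_m$; the appendix barriers are local radial objects and cannot serve as global solutions.

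The paper closes this gap with a first-contact-time argument. One regularizes by sup/inf-convolutions $\rho^{-,r}$, $p^{+,r}$ over the shrinking set $\Xi_{r(t)}$, $r(t)=r_0e^{-2Lt}$, and defines $t_0$ as the first time the inclusion $\set{\rho^{-,r}=1}\subset\set{p^{+,r}>0}$ fails (eq.~\eqref{contact-time-r}). The strict initial ordering gives $t_0>0$. Assuming $t_0<\infty$ one locates, via Lemma~\ref{le:contact_point}, a contact point with interior/exterior ball geometry and associated free-boundary points $(x_1,t_1)$, $(x_2,t_2)$. \emph{At those points} the appendix barriers can be applied locally because, by the very definition of $t_0$, the parabolic-boundary ordering is available for $t\le t_0$; this yields the velocity bounds of Lemma~\ref{le:velocity-bounds}. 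An elliptic comparison (Corollary~\ref{co:order-pressure-contact}) plus Hopf's lemma give the strict slope inequality $\eta_1<\eta_2$, and combining these with the geometric velocity gap \eqref{velocity-gap} forced by the decreasing $r(t)$ produces the contradiction. In short, the barriers from the appendix are not used to build a global sandwich; they are used only at a regular contact point manufactured by the convolution/first-contact-time machinery, and that machinery is what your outline is missing.
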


The proof of this theorem involves a few technical steps that are developed below. The main tool is
the regularization of solutions by sup- and inf-convolutions.
However, we first derive facts that hold for any half-relaxed limits of \eqref{pme} with regular
initial data.

\subsection{The flow map}
\label{sec:flow-map}

Since \eqref{pme} is a transport-like equation, we introduce the flow map $X: \R \times \Rn \to \Rn$ as
the solution of the ordinary differential equation (ODE)
\begin{align}
  \label{stream}
  \left\{
    \begin{aligned}
      X_t(t, x_0) &= \vec{b}(X(t, x_0)), & & t \in \R,\\
      X(0, x_0) &= x_0,
    \end{aligned}
  \right.
\end{align}
where $X_t$ is the derivative of $X$ with respect to $t$.
As long as $\vec b \in Lip(\Rn)$, we have uniqueness and global existence of $X$, which is continuously differentiable in $t$.
The curves $\set{(X(t, x_0), t): t\in \R}$ are the \emph{characteristics} or \emph{streamlines} of
the flow.
We will also use the notation $X(t): \Rn \to \Rn$ as $X(t)(x) := X(t, x)$.
Uniqueness implies the semigroup property.

\begin{lemma}[Semigroup]
  \label{le:X-semigroup}
  For any $x_0$, $t, s \in \R$ we have
  \begin{align*}
    X(s, x_0) = X(s - t, X(t, x_0))
  \end{align*}
  and therefore
  \begin{align*}
    X(-t, X(t, x_0)) = x_0.
  \end{align*}
  In particular, $X(t): \Rn \to \Rn$ is invertible for $t \in \R$ and  $X(t) ^{-1}= X(-t)$.
\end{lemma}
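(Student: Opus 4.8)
The plan is to derive everything from uniqueness of solutions to the autonomous ODE \eqref{stream}, which holds because $\vec b \in \mathrm{Lip}(\Rn)$ (Picard--Lindel\"{o}f) together with the global existence already recorded above. Fix $t \in \R$ and $x_0 \in \Rn$, and set $y_0 := X(t, x_0)$. Define two curves $u, v : \R \to \Rn$ by $u(s) := X(s, x_0)$ and $v(s) := X(s - t, y_0)$.

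First I would check that both $u$ and $v$ solve the ODE $\dot w(s) = \vec b(w(s))$ on all of $\R$. For $u$ this is exactly \eqref{stream}. For $v$, the chain rule together with \eqref{stream} (applied at base point $y_0$ and time $s - t$) gives $v'(s) = X_t(s - t, y_0) = \vec b(X(s - t, y_0)) = \vec b(v(s))$; here it is essential that the right-hand side of \eqref{stream} does not depend explicitly on time, so that shifting the time variable by $-t$ again produces a solution of the same equation.

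Next I would compare initial conditions at $s = t$: $u(t) = X(t, x_0) = y_0$, while $v(t) = X(0, y_0) = y_0$. Since $u$ and $v$ are two solutions of the same ODE agreeing at $s = t$, uniqueness forces $u \equiv v$ on $\R$, i.e. $X(s, x_0) = X(s - t, X(t, x_0))$ for every $s$, which is the first claimed identity. Evaluating at $s = 0$ and using $X(0, \cdot) = \mathrm{id}$ yields $x_0 = X(-t, X(t, x_0))$, the second identity.

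Finally, the second identity says $X(-t) \circ X(t) = \mathrm{id}$ on $\Rn$; applying it with $-t$ in place of $t$ (legitimate since $t \in \R$ was arbitrary) also gives $X(t) \circ X(-t) = \mathrm{id}$. Hence $X(t)$ is a bijection of $\Rn$ with $X(t)^{-1} = X(-t)$. I do not expect any real obstacle here: the only point to keep in mind is the autonomy of \eqref{stream}, which is precisely what makes the time-translation $s \mapsto s - t$ carry solutions to solutions, and is the crux of the semigroup property.
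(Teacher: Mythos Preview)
Your proof is correct and is precisely the standard argument the paper has in mind: the paper does not spell out a proof but simply remarks that ``Uniqueness implies the semigroup property,'' and your verification that $u(s)=X(s,x_0)$ and $v(s)=X(s-t,X(t,x_0))$ solve the same autonomous ODE with the same value at $s=t$ is exactly how that implication is justified.
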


Furthermore, by Gronwall's inequality, the distance of streamlines decreases at most exponentially.

\begin{lemma}
  \label{le:trajectory-bound}
  Let $L$ be the Lipschitz constant of $\vec b$ on $\Rn$. Then
  \begin{align}
    \label{trajectory-bound}
    e^{-L|t|} |x - y| \leq  |X(t, x) - X(t, y)| \leq e^{L|t|} |x - y| \quad \text{for any $x, y
      \in \Rn$ and $t \in \R$}.
  \end{align}
\end{lemma}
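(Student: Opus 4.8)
The plan is a direct Gronwall estimate along the flow. Fix $x, y \in \Rn$. By uniqueness of the ODE~\eqref{stream} (which underlies Lemma~\ref{le:X-semigroup}), there is a dichotomy: either $x = y$, in which case $X(t,x) = X(t,y)$ for all $t$ and \eqref{trajectory-bound} is trivial, or $x \neq y$, and then $X(t,x) \neq X(t,y)$ for every $t \in \R$ (if the two trajectories coincided at some time $t_0$, running the flow from $t_0$ to $0$ via the semigroup property would force $x = y$). So I would assume $x \neq y$ and set $\phi(t) := \abs{X(t,x) - X(t,y)}$, which is then strictly positive and, since $t \mapsto X(t,\cdot)$ is $C^1$, continuously differentiable.

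Differentiating and using \eqref{stream}, the Cauchy--Schwarz inequality, and the Lipschitz bound on $\vec b$, one gets, for every $t \in \R$,
\[
  \abs{\phi'(t)} = \abs{\frac{(X(t,x)-X(t,y))\cdot(\vec b(X(t,x)) - \vec b(X(t,y)))}{\abs{X(t,x)-X(t,y)}}} \le \abs{\vec b(X(t,x)) - \vec b(X(t,y))} \le L\,\phi(t).
\]
Hence $\abs{\tfrac{d}{dt}\log\phi(t)} \le L$, and integrating between $0$ and $t$ yields $\abs{\log\phi(t) - \log\phi(0)} \le L\abs{t}$, i.e.
\[
  e^{-L\abs{t}}\,\abs{x - y} \le \phi(t) \le e^{L\abs{t}}\,\abs{x - y},
\]
which is exactly \eqref{trajectory-bound} since $\phi(0) = \abs{x-y}$.

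There is essentially no obstacle here; the only delicate point is the differentiability of the Euclidean norm at the origin, which is exactly what the uniqueness dichotomy above circumvents. (Alternatively one can run the same computation with $\phi^2$ in place of $\phi$, obtaining $\abs{(\phi^2)'} \le 2L\,\phi^2$; the extra factor of $2$ disappears upon taking square roots.) Note also that the single estimate $\abs{\tfrac{d}{dt}\log\phi}\le L$ delivers both the upper and the lower bound at once; one could instead derive the lower bound by applying the upper bound to the points $X(t,x), X(t,y)$ together with the backward flow $X(-t) = X(t)^{-1}$ from Lemma~\ref{le:X-semigroup}, but this is not needed.
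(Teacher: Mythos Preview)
Your proof is correct and is exactly the Gronwall argument the paper invokes; the paper does not spell out a proof beyond the phrase ``by Gronwall's inequality,'' so your write-up simply fills in the details of that approach.
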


Let $\rho$ be a classical solution of the transport equation \eqref{te}. We have
\begin{align}
  \label{transport-streamline}
  \frac{\partial}{\partial t} \rho(X(t, x_0), t) = \rho_t(X, t) + \vec b(X) \cdot \nabla
  \rho(X, t) = \rho (X(t, x_0), t)(f - \divo \vec b)(X(t, x_0)).
\end{align}
In particular, $t \mapsto \rho(X(t, x_0), t)$ is nondecreasing if $F := f - \divo \vec b \geq 0$.

We define $\rho^\pm_E$ to be the solution of the transport equation \eqref{te} with discontinuous
initial data $\rho^{\pm,0}$ in the following sense:
\begin{align}
  \label{transport-solution}
  \rho^\pm_E(x_0, t_0) := \mu^\pm_{(x_0, t_0)}(0),
\end{align}
where $\mu^\pm_{(x_0, t_0)} = \mu$ is the solution of the simple ODE
\begin{align}
  \label{stream-density-ode}
  \left\{
  \begin{aligned}
    \mu'(t) &= F^\pm (X(t, x_0))\mu(t), &&t \in \R,\\
    \mu(-t_0) &= \rho^{\pm,0}_E(X(-t_0, x_0)).
  \end{aligned}
  \right.
\end{align}
Note that $\rho^\pm_E$ are continuous since $\rho^{\pm,0}_E$ are.

\subsection{Properties of the half-relaxed limits}
\label{sec:prop-half-limits}

Let us first summarize basic properties of $\rho^\pm$ and $p^\pm$, assuming only that they are the
limits of solutions of \eqref{pme} with regular data.

\begin{lemma}
  \label{le:lim_props}
  The following facts about the half-relaxed limits can be derived for any regular initial data.
  The limits $\rho^\pm$ and $p^\pm$ exist and:
  \begin{enumerate}\alphlist
    \item $\rho^-$, $p^-$ are USC while $\rho^+$, $p^+$ are LSC.  Moreover $-\Delta p^-(\cdot, t)
      \leq F^-$ in $\Rn$ for all $t > 0$ while $-\Delta p^+(\cdot, t) \geq
      F^+$ in $\set{p^+(\cdot, t) > 0}$ for all $t > 0$ in the viscosity sense.\\
    \item If $x \in \partial \set{p^-(\cdot, t) > 0}$, $t > 0$, with an exterior ball property at
      $x$, then $p^-(x,t) = 0$.\\
    \item $\rho^- \leq 1$, $\rho^+ \leq 1$.\\
    \item $\set{\rho^- = 1}$ is closed, $\set{p^+ > 0}$ is open in $\set{t \geq 0}$, and so are their time slices.\\
    \item $\set{p^\pm > 0} \subset \set{\rho^\pm = 1}$, $i = 1,2$.
  \end{enumerate}
\end{lemma}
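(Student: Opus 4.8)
The plan is to dispatch the five items separately, each with an elementary tool, once two standard ingredients are available: a priori bounds from radial barriers, and stability of one-sided viscosity inequalities under the half-relaxed limits \eqref{half_limits}. First I would prove that $\rho_m^\pm$ and $p_m^\pm$ are locally uniformly bounded, so that the half-limits exist (and are finite). The crude maximum principle for \eqref{pme} only gives $\rho_m \le e^{\norm{F}_\infty t}$, which is useless for the pressure; instead I would compare $\rho_m^\pm$ from above — using the comparison principle Lemma~\ref{le:comparison-pme} — with a radial barrier $\Phi_m$, a perturbation of the self-similar solutions of \eqref{pme} from \cite{KP} adapted to the drift $\vecb$ (constructed in the appendix), chosen so that $\Phi_m(\cdot,0) \ge \rho_m^{\pm,0}$ and so that the pressure $P_m(\Phi_m)$ is bounded, uniformly in $m$, by a perturbation of the Hele-Shaw pressure of a radially expanding ball. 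Then $p_m^\pm = P_m(\rho_m^\pm) \le P_m(\Phi_m)$ is locally uniformly bounded, and passing to the half-limit $\rho^\pm \le \halflimsup_m \Phi_m \le 1$, since the densities of $\Phi_m$ converge to a characteristic function. This yields the existence statement and item (c), and the semicontinuity assertions in (a) are then automatic from \eqref{half_limits}.

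\textbf{The Laplacian bounds in (a).} These I would get by passing to the limit $m\to\infty$ in \eqref{pmepressure}, which where $p_m>0$ (hence where $p_m$ is classical) may be written
\begin{align*}
  (m-1)\,p_m\,(\Delta p_m + F) = \partial_t p_m - \nabla p_m\cdot(\nabla p_m - \vecb).
\end{align*}
To show $-\Delta p^-(\cdot,t_0)\le F^-$ in the viscosity sense for $t_0>0$, test $p^-$ from above at $(x_0,t_0)$ by a smooth $\phi$ touching strictly. If $p^-(x_0,t_0)>0$, the standard half-relaxed-limit lemma (using the uniform bound just obtained) yields, along a subsequence, contact points $(x_m,t_m)\to(x_0,t_0)$ at which $p_m^--\phi$ has a local maximum and $p_m^-(x_m,t_m)\to p^-(x_0,t_0)>0$; there $\nabla p_m^-=\nabla\phi$, $\partial_t p_m^-=\partial_t\phi$ and $\Delta p_m^-\le\Delta\phi$, so the right-hand side above stays $O(1)$ while $(m-1)p_m^-(x_m,t_m)\to\infty$, forcing $\Delta p_m^-(x_m,t_m)+F^-(x_m,t_m)\to 0$ and hence $\Delta\phi(x_0,t_0)+F^-(x_0,t_0)\ge0$. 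If $p^-(x_0,t_0)=0$, then $\phi$ has a local minimum there, so $\Delta\phi(x_0,t_0)\ge0\ge-F^-(x_0,t_0)$ (as $F^->0$). The bound $-\Delta p^+\ge F^+$ inside $\set{p^+>0}$ is obtained symmetrically by touching $p^+$ from below; the restriction to $\set{p^+>0}$ is exactly what guarantees $(m-1)p_m^+\to\infty$ at the contact point.

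\textbf{Items (d), (e), (b).} Item (d) is immediate: $\set{\rho^-=1}=\set{\rho^-\ge1}$ is closed because $\rho^-$ is USC and $\le1$, similarly for its time slices, while $\set{p^+>0}$ is open because $p^+$ is LSC, as are its time slices. For (e), if $p^-(x_0,t_0)>0$ pick $m_k\to\infty$, $(y_k,s_k)\to(x_0,t_0)$ realizing the $\halflimsup$; then $\rho_{m_k}^-(y_k,s_k)=\pth{\tfrac{m_k-1}{m_k}p_{m_k}^-(y_k,s_k)}^{1/(m_k-1)}\to 1$ — the base tending to a positive finite limit and the exponent to $0$ — so $\rho^-(x_0,t_0)\ge1$, hence $=1$ by (c); for $p^+$ one argues identically on a neighborhood where $p_m^+\ge\tfrac12 p^+(x_0,t_0)$, using $\halfliminf$. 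For (b), suppose $t_0>0$, $x_0\in\partial\set{p^-(\cdot,t_0)>0}$ has an exterior ball $B_r(z)\subset\set{p^-(\cdot,t_0)=0}$ with $x_0\in\partial B_r(z)$, but $a:=p^-(x_0,t_0)>0$. By the bound just proved, $g:=p^-(\cdot,t_0)+\tfrac{\norm{F^-}_\infty}{2n}\abs{\cdot}^2$ is a viscosity subsolution of $\Delta g\ge0$, hence subharmonic; the sub-mean-value inequality at $x_0$ on $B_\e(x_0)$, together with $p^-=0$ on $B_r(z)$ and $p^-\ge0$, gives
\begin{align*}
  a\ \le\ \frac{1}{\abs{B_\e(x_0)}}\int_{B_\e(x_0)\setminus B_r(z)}p^-(\cdot,t_0)\,dx+O(\e^2)\ \le\ \frac{\abs{B_\e(x_0)\setminus B_r(z)}}{\abs{B_\e(x_0)}}\,\sup_{B_\e(x_0)}p^-(\cdot,t_0)+O(\e^2),
\end{align*}
and since the volume fraction $\to\tfrac12$ and $\sup_{B_\e(x_0)}p^-(\cdot,t_0)\to a$ as $\e\to0$ (by USC), letting $\e\to0$ forces $a\le a/2$, i.e. $a=0$, a contradiction.

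\textbf{Main obstacle.} The only genuinely nontrivial ingredient is the a priori bound underlying existence and (c): a radial PME barrier dominating $\rho_m^\pm$ whose pressure remains bounded as $m\to\infty$, which is delicate exactly near $\rho_m\sim1$. That construction — the perturbed self-similar test functions of the appendix — I would use as a black box; everything else is a routine consequence of the comparison principle for \eqref{pme} and of the stability of one-sided inequalities under \eqref{half_limits}.
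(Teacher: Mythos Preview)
Your proposal is correct and follows the same overall structure as the paper's proof: obtain a uniform pressure bound first, then dispatch (a)--(e) by semicontinuity, elementary elliptic facts, and the relation $\rho_m = P_m^{-1}(p_m)$. Two tactical differences are worth noting. First, for the uniform pressure bound you invoke the perturbed self-similar barriers of the appendix; the paper instead uses the much simpler quadratic pressure supersolution $\Pi(x,t)=(R^2(t)-K|x|^2)_+$ of Lemma~\ref{le:pressure_uniform_bound}, which already yields $\sup_m\norm{p_m}_\infty<\infty$ locally in time and hence $\rho^\pm\le1$ via $P_m^{-1}$ --- no delicate construction near $\rho\sim1$ is needed here. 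Second, for (b) you argue via the sub-mean-value inequality for the subharmonic function $p^-(\cdot,t)+\tfrac{\norm{F^-}_\infty}{2n}|\cdot|^2$, exploiting that the exterior ball occupies asymptotically half of $B_\e(x_0)$; the paper instead compares $p^-(\cdot,t)$ with a radially symmetric solution of $-\Delta\phi=\sup F^-$ centered at the exterior ball. Both are standard elliptic devices; yours is arguably more self-contained, while the paper's comparison also yields a quantitative growth bound on $p^-$ away from the exterior ball (used later, e.g., in the proof of Lemma~\ref{le:contact_point}(d)). The remaining items (a), (c)--(e) are handled identically in substance; your detailed derivation of the Laplacian bounds from \eqref{pmepressure} is exactly the ``standard viscosity solution argument'' the paper defers to \cite{K03}.
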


\begin{proof}
  The limits exist by Lemma~\ref{le:pressure_uniform_bound} since the initial data is compactly
  supported.
  (a) follows from the properties of the half-relaxed limits and standard viscosity solution
  arguments, see for instance \cite{K03}.
  Since $p^-$ is bounded by Lemma~\ref{le:pressure_uniform_bound}, (b) is a consequence of a comparison with a radially symmetric solution
  of $-\Delta \phi = \sup F^-$ at $x$.
  Bound on $p^-$, $p^+$ yields (c) as well.
  By (c), $\set{\rho^- = 1} = \set{\rho^- \geq 1}$, which is closed since $\rho^-$ is USC.
  $\set{p^+ > 0}$ is open since $p^+$ is LSC.
  The fact that $P_m^{-1}(s_m) \to 1$ whenever $s_m \to s_\infty > 0$ implies (e).
\end{proof}

A crucial observation is the monotonicity of the set $\set{p^+ > 0}$ along the streamlines.

\begin{lemma}\label{le:char}
  $\set{p^+ > 0}$ is nondecreasing along the streamlines. More
  precisely,
  if $(x_0,t_0) \in \set{p^+ > 0}$ for some $t_0\geq 0$, then $(X(t- t_0, x_0), t) \in \set{p^+ > 0}$ for all $t > t_0$.
 \end{lemma}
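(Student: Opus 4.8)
The plan is to propagate an $m$-independent lower bound on the pressures $p_m^+$ along the streamline through $(x_0,t_0)$ by comparison with an explicit barrier, and then to recover $\set{p^+>0}$ from this bound via the definition of the lower half-relaxed limit. Assume first that $t_0>0$. Since $p^+=\halfliminf_{m\to\infty}p_m^+$ and $p^+(x_0,t_0)>0$, unwinding the definition gives $\delta_0>0$, $r_0>0$ and $m_1\in\mathbb N$ with $p_m^+(x,t_0)>\delta_0$ for all $x\in B_{r_0}(x_0)$ and all $m\ge m_1$.

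I would then take as barrier the positive part of a paraboloid riding along the streamline and allowed to shrink at the rate $L$, the Lipschitz constant of $\vec b$. Writing $y(t):=X(t-t_0,x_0)$, so $\dot y=\vec b(y)$ by \eqref{stream}, and recalling that $F^+=f^+-\divo\vec b\ge\e>0$ by the standing assumption, set $A:=\frac{\e}{4n}$ (so $F^+-2nA\ge\frac\e2$), $R_0:=\min\pth{r_0,\sqrt{\delta_0/A}}$, $R(t):=R_0e^{-L(t-t_0)}$, and
\begin{align*}
  \phi(x,t):=A\pth{R(t)^2-\abs{x-y(t)}^2}_+,\qquad (x,t)\in\Rn\times[t_0,\infty).
\end{align*}
With $z:=x-y(t)$ and $r:=\abs z$, in $\set{\phi>0}$ one has $\nabla\phi=-2Az$, $\Delta\phi=-2nA$ and $\phi_t=2A\pth{RR'+z\cdot\vec b(y)}$, so the differential inequality of the pressure-form equation \eqref{pmepressure} for $p_m^+$, namely $\phi_t\le(m-1)\phi\pth{\Delta\phi+F^+}+\nabla\phi\cdot\pth{\nabla\phi-\vec b}$, reduces after dividing by $A$ to
\begin{align*}
  2RR'\le (m-1)(R^2-r^2)\pth{F^+-2nA}+4Ar^2+2z\cdot\pth{\vec b(x)-\vec b(y)}.
\end{align*}
Since $2z\cdot\pth{\vec b(x)-\vec b(y)}\ge-2Lr^2$, the right-hand side dominates the affine function $s\mapsto(m-1)(R^2-s)(F^+-2nA)+(4A-2L)s$ of $s=r^2\in[0,R^2]$, which, once $m$ exceeds a threshold $m_0=m_0(n,\e,L)$, is decreasing and hence minimized at $s=R^2$ with value $(4A-2L)R^2$; as $R'=-LR$, the inequality $2RR'=-2LR^2\le(4A-2L)R^2$ is just $0\le4A$. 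Thus $\phi$ satisfies the differential inequality throughout $\set{\phi>0}$; since $\phi$ vanishes linearly in $\dist(\cdot,\partial\set{\phi>0})$, the $m$-th power of $u_m:=P_m^{-1}(\phi)$ is $C^1$ up to $\partial\set{\phi>0}$, and one concludes that $u_m$ is a weak subsolution of \eqref{pme} with source $f^+$ on $\Rn\times[t_0,\infty)$ for every $m\ge m_0$. Moreover $\phi(\cdot,t_0)$ is supported in $B_{R_0}(x_0)\subseteq B_{r_0}(x_0)$ and $\phi(\cdot,t_0)\le AR_0^2\le\delta_0<p_m^+(\cdot,t_0)$ for $m\ge m_1$.

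By the comparison principle for \eqref{pme} with $t_0$ as initial time, applied to the subsolution $u_m$ and the solution $\rho_m^+$ (cf.\ Lemma~\ref{le:comparison-pme}), we get $u_m\le\rho_m^+$, hence $\phi\le p_m^+$ on $\Rn\times[t_0,\infty)$ for all $m\ge\max(m_0,m_1)$. Now $\phi>0$ on the open tube $\mathcal T:=\set{(x,t):t>t_0,\ \abs{x-y(t)}<R(t)}$, so on every compact $K\subset\mathcal T$ we have $p_m^+\ge\phi\ge c_K>0$ for all large $m$, whence $p^+=\halfliminf_{m\to\infty}p_m^+\ge c_K>0$ on $K$; that is, $\mathcal T\subset\set{p^+>0}$. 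Since $(X(t-t_0,x_0),t)=(y(t),t)\in\mathcal T$ for each $t>t_0$, this proves the claim for $t_0>0$. For $t_0=0$, lower semicontinuity of $p^+$ (Lemma~\ref{le:lim_props}(a)) together with $X(t',x_0)\to x_0$ as $t'\to0^+$ gives $p^+(X(t',x_0),t')>0$ for small $t'>0$; applying the case just proved at $(X(t',x_0),t')$, using the semigroup identity $X(t-t',X(t',x_0))=X(t,x_0)$ (Lemma~\ref{le:X-semigroup}), and letting $t'\to0^+$ completes the proof.

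The step I expect to be most delicate is verifying that $u_m=P_m^{-1}(\phi)$ is a genuine \emph{weak} subsolution of \eqref{pme} across the free boundary $\partial\set{\phi>0}$, uniformly in $m$ — equivalently, that the $r=R$ endpoint of the differential inequality above, which is precisely the Hele-Shaw-type free-boundary inequality for the barrier, is respected — together with pinning down the exact form of the comparison principle needed here (a subsolution-versus-solution statement, which follows from the argument behind Lemma~\ref{le:comparison-pme}; see also the appendix on test functions). The rest is elementary ODE and Lipschitz bookkeeping.
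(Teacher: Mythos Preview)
Your proposal is correct and follows essentially the same approach as the paper: both build a parabolic pressure subsolution supported in a tube that travels along the streamline $X(\cdot-t_0,x_0)$ and shrinks at the Lipschitz rate $L$, use the lower semicontinuity of $p^+$ (equivalently, the definition of $\halfliminf$) to place it under $p_m^+$ initially, and then invoke comparison for \eqref{pme}/\eqref{pmepressure} to propagate positivity of $p^+$; the paper packages the barrier as Lemma~\ref{le:pressure_flow_barrier} (constant-height $\mu$, chosen small depending on the time horizon $T$), whereas you let the height $AR(t)^2$ decay, but the computations are equivalent and your concern about the free-boundary/weak-subsolution step is exactly the routine check the paper's appendix handles.
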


\begin{proof}
  We construct a barrier along the streamline.
  Note that $p^+$ is $LSC$ and therefore $\set{p^+ > 0}$ is relatively open in the half-space $\set{t
  \geq 0}$. Let $(x_0, t_0) \in \set{p^+ > 0}$. There exist $r, \lambda, m_0 > 0$ such that $C := \cl
  B_r(x_0, t_0) \cap \set{t \geq 0} \subset \set{p^+ > 0}$, $p^+ > 2 \lambda$ in $C$, and $p^+_m >
  \lambda$ in $C$ for $m \geq m_0$.
  Then by Lemma~\ref{le:pressure_flow_barrier} for every $T > 0$
  there exists $0 < \mu < \lambda$ such that $\pi$ defined in \eqref{pressure-streamline-barrier} is
  a subsolution of \eqref{pmepressure} for $t \in (0,T)$. By comparison, $p^+_m \geq \pi(\cdot,
  \cdot - t_0)$ for all $m \geq m_0$. In particular, $p^+(X(t - t_0, x_0), t) > 0$ for all $t > t_0$.
\end{proof}

We can also compare the half-relaxed limits $\rho^-$ and $\rho^+$ with the solution of the transport equation.
However, since we do not have monotonicity in time, it is not obvious how to prove
Lemma~\ref{le:char} for $\{\rho^- = 1\}$, thus at the moment we cannot
directly show that $\rho^- \leq \rho^-_E$ using the argument of \cite[Lemma~4.4]{KP}.  In the
next section we will show this upper bound only in specific situations
(Lemma~\ref{le:weak_mon_r}), just enough to prove our convergence result through
comparison for perturbed solutions with strictly ordered data, and then use the $L^1$~contraction.
However, if we know that $\rho^- < 1$ at a point and in the past along the streamline up to the
initial time, we can bound $\rho^-$ above by a barrier.

\begin{lemma}
  \label{le:liminf-lower-bound}
  For every $(y, s)$, $s > 0$, we have
  \begin{align*}
    \rho^+(X(t, y), s + t) \geq \min(1, \mu(t)), \qquad t \geq 0,
  \end{align*}
  where $\mu$ is the solution of
  \begin{align}
    \label{mu-eq}
    \left\{
      \begin{aligned}
        \mu'(t) &= F^+ (X(t, y)) \mu(t),\\
        \mu(0) &= \rho^+(y,s).
      \end{aligned}
    \right.
  \end{align}
  Assume additionally that $\rho^-(X(t,y), s + t) < 1$ for $0 \leq t \leq t_0$ for some $y\in\Rn$ and $t_0 > 0$.
  Then
  \begin{align*}
    \rho^-(X(t,y), s + t) \leq \mu(t), \qquad 0 \leq t \leq t_0,
  \end{align*}
  where $\mu(t)$ solves \eqref{mu-eq} with $F^-$ and the initial condition $\mu(0) = \rho^-(y, s)$.

\end{lemma}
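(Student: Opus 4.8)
The plan is to establish the two inequalities by constructing, along the given streamline, barriers from the solutions of the PME for finite $m$ and passing to the half-relaxed limit; the ODE \eqref{mu-eq} is precisely the limiting equation for the density transported along the streamline in the regime where the diffusion/pressure is negligible (i.e. where the limit density stays below $1$).

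\emph{Lower bound for $\rho^+$.} Fix $(y,s)$ with $s>0$. Let $\delta>0$ and pick $m_0$ large and a small space-time ball around $(y,s)$ on which $\rho^+_m \ge \rho^+(y,s) - \delta =: a_\delta$ for all $m \ge m_0$; this is possible by the definition of $\halfliminf$ and lower semicontinuity. On this ball, and then propagated forward along the streamline, I would build a subsolution of the PME \eqref{pme} (or, equivalently, of the transport-type inequality it satisfies) of the form of a thin layer around the streamline $t \mapsto X(t,y)$ whose height is governed by the ODE $\mu_\delta' = F^+(X(t,y))\mu_\delta$, $\mu_\delta(0) = a_\delta$, capped at height $1$ — this is analogous to the barrier $\pi$ invoked in Lemma~\ref{le:char} and to the transport computation \eqref{transport-streamline}. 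The key point is that along a streamline the drift term in \eqref{pme} reproduces $\divo\vec b$, so that $F^+ = f^+ - \divo\vec b$ is exactly the exponential growth/decay rate, and the nonlinear diffusion $\Delta(\rho^m)$ only helps (it is nonnegative where $\rho$ has a local interior minimum along the layer), so a genuine subsolution capped at $1$ can be produced for $m$ large. Comparison (Lemma~\ref{le:comparison-pme}) gives $\rho^+_m(X(t,y), s+t) \ge \min(1, \mu_\delta(t))$ on the relevant time interval, and taking $\halfliminf$ as $m\to\infty$ and then $\delta\to 0$ (using continuous dependence of the linear ODE \eqref{mu-eq} on the initial datum) yields the claimed bound $\rho^+(X(t,y), s+t) \ge \min(1,\mu(t))$ for all $t\ge 0$.

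\emph{Upper bound for $\rho^-$.} Now assume $\rho^-(X(t,y), s+t) < 1$ for $0 \le t \le t_0$. Since $\rho^-$ is USC, for each such $t$ there is a neighborhood on which $\rho^- < 1 - \eta$ for some $\eta>0$; by compactness of the streamline segment and USC-ness of $\halflimsup$, we may fix $\eta>0$ and $m_0$ so that $\rho^-_m < 1-\eta$ on a tube around the streamline segment $\{(X(t,y), s+t): 0\le t\le t_0\}$ for all $m\ge m_0$. On this tube the solution $\rho^-_m$ is bounded away from $1$, hence $p^-_m = P_m(\rho^-_m)$ is small, and more importantly $\Delta(\rho^-_m)^m = \divo((\rho^-_m)^{m-1}\nabla((\rho^-_m)^m)/\dots)$ — the diffusion contribution can be controlled: on a thin tube one builds a \emph{supersolution} of \eqref{pme} of the form of a slightly fattened layer of height $\mu_\delta(t) + $ (small error), where $\mu_\delta$ solves \eqref{mu-eq} with $F^-$ and $\mu_\delta(0) = \rho^-(y,s)+\delta$; the pressure-form equation \eqref{pmepressure} makes it transparent that, where $p_m$ is uniformly small, the transport part dominates and the growth rate is $\le F^-$ up to an error vanishing with the tube width and with $1/m$. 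Comparison gives $\rho^-_m \le \mu_\delta(t) + o(1)$ along the streamline for $m \ge m_0$; taking $\halflimsup_{m\to\infty}$, then shrinking the tube, then $\delta\to 0$, gives $\rho^-(X(t,y), s+t) \le \mu(t)$ on $[0,t_0]$. Here one must be a little careful at $t=0$: the bound $\rho^-_m$ near $(y,s)$ used to start the supersolution comes from the definition of $\halflimsup$, exactly as in the $\rho^+$ case.

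\emph{Main obstacle.} The delicate part is the barrier construction near density $1$ — precisely the difficulty flagged in the introduction. For the lower bound this is mild: a subsolution capped at $1$ is easy because $1$ is itself a (stationary, when $F>0$ would push above — actually it is a strict subsolution pushed up, so the cap is what saves us) and diffusion has a favorable sign. For the upper bound the real work is to show that, \emph{as long as} the limit stays strictly below $1$, the pressure $p^-_m$ is genuinely negligible along the streamline so that the diffusion term does not allow $\rho^-_m$ to grow faster than the transport rate $F^-$; this requires the a priori bound $\rho^-_m < 1-\eta$ on a full space-time tube (not just along the curve) so that $\Delta(\rho^-_m)^m$ can be estimated, and then a quantitative supersolution whose error terms are controlled uniformly in $m$. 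I expect this supersolution construction — essentially a perturbation of the radial barriers of \cite{KP}, as announced, together with the stability of the ODE \eqref{mu-eq} under the limiting procedure — to be the technical heart of the argument, with everything else (compactness along the streamline segment, application of Lemma~\ref{le:comparison-pme}, passage to half-relaxed limits) being routine.
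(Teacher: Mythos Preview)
Your approach is essentially the paper's: streamline barriers of the form $\psi(x,t)=\mu(t)\,\eta\big((x-X(t,y))/(re^{-Lt})\big)$ from Lemma~\ref{le:flow_barrier}, compared against $\rho^\pm_m$ via Lemma~\ref{le:comparison-pme}, then $\e\to 0$. Two small corrections to your sketch. First, the barriers needed here are the elementary ``go with the flow'' density barriers of Lemma~\ref{le:flow_barrier}, \emph{not} the radial \cite{KP} barriers of Section~\ref{sec:barriers-rho-one}; the latter are reserved for free-boundary contact arguments, while here one only needs that $|\Delta(\psi^m)|\le Cm^2(1-\delta)^{m-3}\psi$ is negligible once $\psi<1-\delta$, so the diffusion is \emph{small}, not of a favorable sign. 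Second, for the lower bound you cannot simply ``cap at $1$'': the constant $1$ is not a subsolution when $F^+>0$. The paper instead modifies the ODE to
\[
\mu_\e'(t)=\Big(\big(1-e^{-(1-\e-\mu_\e)/\e}\big)F^+(X(t,y))-2\e\Big)\mu_\e,\qquad \mu_\e(0)=\rho^+(y,s)-\e,
\]
which forces $\mu_\e\le 1-\e$ so Lemma~\ref{le:flow_barrier} applies on all of $\{\psi_\e>0\}$, and then uses $\mu_\e\to\min(1,\mu)$ as $\e\to 0$. For the upper bound your tube argument is exactly right; the paper takes $\eta(x)=1+|x|^2/\e$, so $\psi_\e>1$ automatically outside the tube and comparison is trivial there.
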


\begin{proof}
  Indeed, pick one such point $(y, s)$. Since $\rho^+$ is LSC, for every $\e > 0$ there exists $r > 0$
  with $\rho^+ > \rho^+(y,s) - \e$ on $B_r(y, s)$.
  We use the barrier $\psi_\e$ of the form \eqref{streamline-barrier} from
  Lemma~\ref{le:flow_barrier} where $\eta(x) = (1 - |x|^2)_+$ and $\mu = \mu_\e(t)$ is for some
  fixed $\e \in (0, \rho^+(y, s))$ the solution of
  \begin{align*}
    \left\{
      \begin{aligned}
      \mu_\e'(t) &= \pth{\pth{1 - \exp\pth{- \frac{1 - \e - \mu_\e(t)}\e}}F^+(X(t,
      y)) - 2 \e} \mu_\e(t),\\
      \mu_\e(0) &= \rho^+(y,s) - \e,
      \end{aligned}
    \right.
  \end{align*}
  For given $\e$ we need to take sufficiently small $r > 0$ in the definition of $\psi_\e$.
  Since $\mu \leq 1 - \e$ by the comparison principle, we have
  $\rho^+ \geq \psi_\e$ on the whole space.
  In particular, $\rho^+(X(t, y), s + t) \geq \mu_\e(t)$ for $t \geq 0$.
  Sending $\e \to 0$ implies the claim for $\rho^+$ since $\mu_\e \to \min(1, \mu)$ as $\e \to 0$.

  The proof for $\rho^-$ is parallel.
  By compactness, local uniform continuity of $X$ and upper semi-continuity of $\rho^-$, we
  can find $\delta > 0$ such that $\rho^- < 1 - \delta$ on
  \[
  \mathcal N:= \set{(x, t): 0 \leq t \leq
  t_0,\ |x - X(t, y)| \leq \delta}.
  \]
  For fixed $\e > 0$, we consider barrier $\psi_\e$ of the form \eqref{streamline-barrier}, where
  $\eta(x) = \eta_\e(x) = 1 + \frac{|x|^2}\e$ and
  $\mu_\e'(t) = (F^- + \e)(X(t, y)) \mu_\e(t)$, $\mu_\e(0) = \rho^-(y,
  s) + \e$. Since $\mu_\e \geq \e$, if $r \in (0, \delta)$ in the definition
  of $\psi_\e$,  we have
  $\psi_\e(x, t) > 1$ for $|x - X(t, y)| \geq \delta$. Taking $r$ small enough, by lower
  semi-continuity of $\rho^-$ at $(y, s)$,  we have
  $\psi_\e(x,0) > \rho^-(x, s) \hbox{ for all } x.$

  By Lemma~\ref{le:flow_barrier}, there exists a constant $m_0 = m_0(\e, \delta, r, L, t_0)$ such
  that for all $m > m_0$, $\psi$ is a classical supersolution of the \eqref{pme} at all
  points with $t \in [0, t_0]$ where $\psi_\e < 1
  -\delta$ since $\psi_\e(x,t) > 1$ if $|x - X(t,y)| \geq r e^{-Lt}$. We can moreover
  assume that $\rho^-_m < 1 - \delta$ on $\mathcal N$ by the properties of the half-relaxed limits for all
  $m > m_0$.

  We therefore conclude by the classical comparison that $\rho^-_m(\cdot, s + t) \leq
  \psi_\e(\cdot, t)$ for all $0 \leq t \leq t_0$, $m >
  m_0$. This implies $\rho^-(X(t, y), s + t)
  \leq \mu_\e(t)$ for $0 \leq t \leq t_0$.
  However, $\mu_\e \to \mu$ as $\e \to 0$. We obtain the upper bound for $\rho^-$ by sending
  $\e \to 0$.
\end{proof}

\begin{corollary}
  \label{co:transport-comparison}
  The limit $\rho^+$ satisfies
  \[\rho^+ \geq \min(1,(\rho^+_{\rm tr})_*),\]
  where
  \begin{align}
    \label{rhoT}
    \rho^\pm_{\rm tr}(x,t) := \max(\chi_{\Omega^{\pm,0}}(X(-t,x)), \rho^\pm_E(x,t)).
  \end{align}

Similarly, if $(x_0, t_0)$ is such that $\rho^-(X(t - t_0, x_0), t) < 1$ for $0 \leq t \leq t_0$ then
\begin{align*}
  \rho^-(x_0, t_0) \leq \rho^-_{\rm tr}(x_0, t_0).
\end{align*}
\end{corollary}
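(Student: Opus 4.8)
The plan is to derive both inequalities from Lemma~\ref{le:liminf-lower-bound} by running its ODE bounds backwards along a streamline all the way to $t=0$ and then invoking the initial-data convergence \eqref{initial:convergence}. Fix a target point $(x_0,t_0)$ with $t_0>0$ and set $y:=X(-t_0,x_0)$, so that $x_0=X(t_0,y)$ by the semigroup property (Lemma~\ref{le:X-semigroup}) and the streamline $\tau\mapsto X(\tau,y)$ runs from $y$ at $\tau=0$ to $x_0$ at $\tau=t_0$. I would first record the bookkeeping identity: the linear ODE of the form \eqref{mu-eq}, with coefficient $F^\pm(X(\cdot,y))$ and started at $\rho^{\pm,0}_E(y)$, reaches $\rho^{\pm,0}_E(y)\exp\pth{\int_0^{t_0}F^\pm(X(\tau,y))\,d\tau}$ at time $t_0$, and the change of variables $\tau\mapsto\tau-t_0$ together with $X(\tau,y)=X(\tau-t_0,x_0)$ (semigroup again) identifies this value with $\rho^\pm_E(x_0,t_0)$ as defined through \eqref{stream-density-ode}--\eqref{transport-solution}; this is just \eqref{transport-streamline} read along the curve. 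Similarly $\chi_{\Omega^{\pm,0}}(y)=\chi_{\Omega^{\pm,0}}(X(-t_0,x_0))$ is the congested part of $\rho^\pm_{\rm tr}$ in \eqref{rhoT}.

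For the lower bound on $\rho^+$, I would, for small $s>0$, put $y_s:=X(s-t_0,x_0)$, so $y_s\to y$ as $s\to0^+$, and apply the first part of Lemma~\ref{le:liminf-lower-bound} at the base point $(y_s,s)$; evaluating its conclusion at $t=t_0-s$ gives $\rho^+(x_0,t_0)\ge\min(1,\mu_s(t_0-s))$, where $\mu_s$ solves \eqref{mu-eq} with coefficient $F^+(X(\cdot,y_s))$ and $\mu_s(0)=\rho^+(y_s,s)$. Now let $s\to0^+$: the coefficients $F^+(X(t,y_s))=F^+(X(t+s-t_0,x_0))$ converge uniformly on $[0,t_0]$ to $F^+(X(t-t_0,x_0))$, and since $\rho^+$ is LSC, $\liminf_{s\to0^+}\rho^+(y_s,s)\ge\rho^+(y,0)\ge(\rho^{+,0})_*(y)\ge\max(\chi_{\interior\Omega^{+,0}}(y),\,\rho^{+,0}_E(y))$, the last step using \eqref{initial:convergence}. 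If $y\in\interior\Omega^{+,0}$, then since $F^+>0$ and $t_0>0$ the limiting ODE value exceeds $1$, so $\rho^+(x_0,t_0)\ge1$, hence $=1$ by Lemma~\ref{le:lim_props}(c); in all cases $\rho^+(x_0,t_0)\ge\min(1,\rho^+_E(x_0,t_0))$ by the first paragraph. Taking the maximum of these two lower bounds, and noting that, because $\rho^+_E$ is continuous and $X(t)$ is a homeomorphism, the lower semicontinuous envelope $(\rho^+_{\rm tr})_*$ equals $\max(\chi_{\interior\Omega^{+,0}}(X(-t,x)),\rho^+_E(x,t))$, we conclude $\rho^+\ge\min(1,(\rho^+_{\rm tr})_*)$ for $t_0>0$; the case $t_0=0$ is the initial-trace bound discussed below.

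For the upper bound on $\rho^-$, given $(x_0,t_0)$ with $\rho^-(X(t-t_0,x_0),t)<1$ for $0\le t\le t_0$, again set $y:=X(-t_0,x_0)$, so by the semigroup the hypothesis reads $\rho^-(X(\tau,y),\tau)<1$ for $\tau\in[0,t_0]$. If $y\in\Omega^{-,0}$ then $\rho^-_{\rm tr}(x_0,t_0)=1\ge\rho^-(x_0,t_0)$ and there is nothing to prove, so assume $y\notin\Omega^{-,0}$. For small $s>0$ set $y_s:=X(s,y)$; then $X(t,y_s)=X(t+s,y)$, so the hypothesis gives $\rho^-(X(t,y_s),s+t)<1$ for $0\le t\le t_0-s$, and the second part of Lemma~\ref{le:liminf-lower-bound} applies at $(y_s,s)$, yielding $\rho^-(x_0,t_0)\le\mu_s(t_0-s)$ with $\mu_s(0)=\rho^-(y_s,s)$. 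Sending $s\to0^+$: since $\rho^-$ is USC, $\limsup_{s\to0^+}\rho^-(y_s,s)\le\rho^-(y,0)\le(\rho^{-,0})^*(y)=\rho^{-,0}_E(y)$, using \eqref{initial:convergence} and that $y$ lies in the open set $\Rn\setminus\Omega^{-,0}$; the first paragraph then identifies the limiting ODE value with $\rho^-_E(x_0,t_0)$, giving $\rho^-(x_0,t_0)\le\rho^-_E(x_0,t_0)=\rho^-_{\rm tr}(x_0,t_0)$.

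The step I expect to be the main obstacle is passing the ODE base point to the initial time, i.e. the initial-trace inequalities $\rho^+(\cdot,0)\ge(\rho^{+,0})_*$ and $\rho^-(\cdot,0)\le(\rho^{-,0})^*$ used above (and the $t_0=0$ case of the statement itself). These cannot be read off from continuity of the individual solutions of \eqref{pme} at $t=0$, since their intrinsic modulus of continuity degenerates as $m\to\infty$; one instead needs \eqref{initial:convergence} together with a short comparison argument in a boundary layer near $t=0$ (for $\rho^+$, comparing from below with a nearly stationary patch whose pressure $P_m(\rho^{+,0}_m)$ tends to $0$ on $\{\rho^{+,0}_m<1\}$, and symmetrically for $\rho^-$), in the spirit of the initial-data analysis of \cite{KP}. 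Everything else is the linear-ODE bookkeeping of the first paragraph together with the elementary computation of the semicontinuous envelope $(\rho^+_{\rm tr})_*$.
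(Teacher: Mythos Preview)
Your overall strategy---push the ODE bound of Lemma~\ref{le:liminf-lower-bound} back to the initial time along streamlines---is correct in spirit, but the paper takes a shorter route that sidesteps precisely the obstacle you flag. Rather than applying Lemma~\ref{le:liminf-lower-bound} at a positive time $s>0$ and then sending $s\to0^+$, the paper goes directly into the \emph{proof} of that lemma and runs the barrier $\psi_\e$ starting at $t=0$: the initial-data assumption $\liminf_* \rho^{+,0}_m=(\rho^{+,0})_*$ gives, for each $y$ and each $\e>0$, a radius $r$ and $m_0$ with $\rho^+_m(\cdot,0)>(\rho^{+,0})_*(y)-\e$ on $B_r(y)$ for $m\ge m_0$, which is exactly the input the barrier needs. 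The parallel argument works for $\rho^-$.

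This matters because the initial-trace inequality $\rho^+(\cdot,0)\ge(\rho^{+,0})_*$ that your limit $s\to0^+$ requires is, in the paper's logical order, \emph{derived from} this corollary (see the proof of Lemma~\ref{le:order-initial} for $\rho^+$). So your $\rho^+$ argument as written is circular unless you supply an independent proof of that trace inequality---and the natural independent proof is precisely the direct barrier-at-$t=0$ argument the paper gives, making the detour through $s>0$ redundant. For $\rho^-$ there is no circularity (the paper proves $\rho^-(\cdot,0)\le\rho^{-,0}$ in Lemma~\ref{le:order-initial} via a separate radial superbarrier from Section~\ref{sec:barriers-rho-one}), so your argument there is valid but still longer than necessary. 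In short: your plan works once the initial-trace step is done, but doing that step correctly collapses your argument into the paper's.
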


\begin{proof}
  We have $(\rho^+_{\rm tr})_*(\cdot, 0) = (\rho^{+,0})_* = \liminf_* \rho^{+,0}_m$ by the definition of
  $\rho^+_{\rm tr}$ and assumptions on the initial data. Therefore for any $y \in \Rn$, $\e \in (0,
  (\rho^{+,0})_*(y))$ there is $r > 0$ and $m_0$ such that $\rho^+_m(\cdot, 0) > (\rho^{+,0})_*(y) -
  \e$ on $B_r(y)$ for $m \geq m_0$. In particular, the barrier in the proof of
  Lemma~\ref{le:liminf-lower-bound} applies with $\mu_\e(0) = (\rho^{+,0})_*(y)  - \e$ and we
  conclude in the limit $\e \to 0$.

  A parallel reasoning together with the barrier in the proof of Lemma~\ref{le:liminf-lower-bound}
applies to $\rho^-$.
\end{proof}

\begin{lemma}
  \label{le:liminf-interior-dense}
  Suppose that $\inf F^+ > 0$. We have
  \begin{enumerate}
    \item[(a)]
      $\set{\rho^+ = 1} \subset \cl{\interior \set{\rho^+ = 1}}$,\\
    \item[(b)]
      $\interior\set{\rho^+ = 1} \subset \set{p^+ > 0}$, and\\
  \item[(c)]
    $\{\rho^+_E \geq 1\} \subset \set{\rho^+ = 1} \subset \cl{\set{p^+ > 0}}$.\\
    \item[(d)] $\cl{\{\rho^+=1\}} = \cl{\set{p^+>0}}$.
      \end{enumerate}
\end{lemma}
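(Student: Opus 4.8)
The plan is to prove (a) and (b) first; (c) and (d) are then essentially formal. For (c), the inclusion $\set{\rho^+_E \geq 1} \subset \set{\rho^+ = 1}$ is immediate from Corollary~\ref{co:transport-comparison}: it gives $\rho^+ \geq \min(1,(\rho^+_{\rm tr})_*)$, and since $\rho^+_{\rm tr} \geq \rho^+_E$ with $\rho^+_E$ continuous we have $(\rho^+_{\rm tr})_* \geq \rho^+_E$, so on $\set{\rho^+_E \geq 1}$ we get $\rho^+ \geq 1$, hence $\rho^+ = 1$ by Lemma~\ref{le:lim_props}(c); the other inclusion $\set{\rho^+ = 1} \subset \cl{\set{p^+ > 0}}$ is just (a) chained with (b): $\set{\rho^+=1} \subset \cl{\interior\set{\rho^+=1}} \subset \cl{\set{p^+>0}}$. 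Part (d) then follows by taking closures: $\cl{\set{p^+>0}} \subset \cl{\set{\rho^+=1}}$ from Lemma~\ref{le:lim_props}(e), and conversely $\cl{\set{\rho^+=1}} \subset \cl{\set{p^+>0}}$ because $\set{\rho^+=1} \subset \cl{\set{p^+>0}}$ and the right-hand side is closed.

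For (a) I would push forward along streamlines. Put $\kappa := \inf F^+ > 0$. By Lemma~\ref{le:liminf-lower-bound}, along the forward streamline from any $(y,s)$ with $s > 0$ one has $\rho^+(X(t,y), s+t) \geq \min(1, \mu(t))$ with $\mu' = F^+(X)\mu \geq \kappa\mu$, so $\mu(t) \geq \rho^+(y,s)e^{\kappa t}$; thus if $\rho^+(y,s) > 1-\e$ then $\rho^+(X(t,y), s+t) = 1$ for every $t \geq \tau(\e) := \kappa^{-1}\log\frac{1}{1-\e}$, and $\tau(\e) \to 0$ as $\e \to 0$. Given $(x_0,t_0) \in \set{\rho^+=1}$ with $t_0 > 0$ and $\eta > 0$, choose $\e$ so small that $\tau(\e)\pth{1 + \norm{\vec b}_{L^\infty(B_1(x_0))}} < \min(\eta, 1)$; by lower semicontinuity of $\rho^+$ fix $r \in (0,t_0)$ with $\rho^+ > 1-\e$ on $B_r(x_0,t_0)$. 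Applying the streamline bound at every point of $B_r(x_0,t_0)$ (each has time coordinate $> t_0 - r > 0$) shows that $\Phi(B_r(x_0,t_0)) \subset \set{\rho^+=1}$, where $\Phi(w,\sigma) := (X(\tau(\e),w), \sigma + \tau(\e))$. By Lemmas~\ref{le:X-semigroup} and~\ref{le:trajectory-bound}, $\Phi$ is a (bi-Lipschitz) homeomorphism of $\Rn\times\R$, so this image is open and contained in $\set{t>0}$, hence in $\interior\set{\rho^+=1}$; it contains $\Phi(x_0,t_0)$, which lies within distance $\tau(\e)\pth{1+\norm{\vec b}_{L^\infty(B_1(x_0))}} < \eta$ of $(x_0,t_0)$. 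Since $\eta$ is arbitrary, $(x_0,t_0) \in \cl{\interior\set{\rho^+=1}}$. The case $t_0 = 0$ is handled the same way after first moving to a nearby point $(x_0,s)$ with $s > 0$ small where still $\rho^+ > 1-\e$.

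Part (b) is the main obstacle. Fix $(x_0,t_0) \in \interior\set{\rho^+=1}$, so $\rho^+ \equiv 1$ on a closed space-time cylinder $K = \cl B_\sigma(x_0) \times [t_0-\tau, t_0+\tau] \subset \interior\set{\rho^+=1}$. Since $\rho^+ := \halfliminf_{m\to\infty}\rho^+_m \equiv 1$ on the compact set $K$, a standard compactness argument gives $\delta_m := 1 - \inf_K \rho^+_m \to 0$, and hence on the parabolic boundary of $C := B_\sigma(x_0) \times (t_0-\tau, t_0+\tau)$ we have $p^+_m = P_m(\rho^+_m) \geq P_m(1-\delta_m) = \frac{m}{m-1}(1-\delta_m)^{m-1} =: c_m > 0$ (note $c_m$ may tend to $0$). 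I would then compare $p^+_m$ from below on $C$ with a subsolution $\phi_m$ of the pressure equation \eqref{pmepressure} of the type constructed in the appendix (a perturbation of the radial Hele--Shaw profiles of \cite{KP}): $\phi_m$ equals $c_m$ on the parabolic boundary of $C$, is radially nonincreasing in $x - x_0$, and --- exploiting the amplifying zeroth-order term $(m-1)\phi_m F^+ \geq (m-1)\kappa\,\phi_m$ in \eqref{pmepressure}, whose rate $\sim (m-1)\kappa$ blows up --- grows from $c_m$ up to a value $\gamma > 0$ independent of $m$ within a ramp-up time that tends to $0$ as $m \to \infty$. Consequently, for all large $m$, $p^+_m \geq \gamma$ on a fixed space-time neighborhood of $(x_0,t_0)$, so $p^+(x_0,t_0) = \pth{\halfliminf_{m\to\infty} p^+_m}(x_0,t_0) \geq \gamma > 0$, i.e.\ $(x_0,t_0) \in \set{p^+>0}$. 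The hard part here is exactly the existence of such a barrier: one must absorb the spatial first- and second-order terms (in particular the drift near the lateral boundary of $C$) while keeping the initial level $c_m$ arbitrarily small, and this is precisely where the assumption \eqref{F}, in the quantitative form $\inf F^+ > 0$, is indispensable and where the perturbed radial test functions of the appendix do the work.
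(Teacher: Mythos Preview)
Your proof is correct and follows the same overall strategy as the paper. Parts (a), (c), (d) match the paper's argument essentially verbatim.

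For (b), the paper's execution is more concrete and simpler than what you outline. Rather than invoking the perturbed radial Hele--Shaw profiles of Section~\ref{sec:barriers-rho-one}, the paper uses the explicit quadratic pressure subsolution of Lemma~\ref{le:pressure_flow_barrier}:
\[
\pi(x,t) = \mu_m(t)\pth{1 - r^{-2}|x - X(t,x_0)|^2 e^{2Lt}}, \qquad \mu_m(t) = \min\!\pth{q^{m-1} e^{(m-1)ct/2},\ \tfrac{cr^2}{4n}e^{-4L\delta}},
\]
where $c = \inf F^+$ and $q \in (e^{-c\delta/2},1)$ is \emph{fixed} (not $1-\delta_m$). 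This barrier vanishes at its own lateral boundary (it is not equal to $c_m$ on $\partial_P C$ as you describe), so only the ordering at the initial time $t=s-\delta$ matters, and there $p_m^+ > q^{m-1} \geq \pi$. The amplification then gives $(qe^{c\delta/2})^{m-1}\to\infty$, so $\mu_m(\delta)$ reaches the cap for large $m$; the ramp-up time is a \emph{fixed} number less than $\delta$, not a quantity tending to $0$. This yields directly $p^+(y,s) \geq \tfrac{cr^2}{4n}e^{-4L\delta}$. Your version works too, but the explicit barrier avoids the need to control the lateral boundary separately and makes the role of the condition $\inf F^+ > 0$ completely transparent.
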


\begin{proof}
  Let us write $c:= \inf F^+ > 0$.
 We first show (a). Heuristically, since $\rho^+$ is LSC, if it is $1$ at a point, it is close to $1$ in a
  neighborhood, and therefore the compressive flow/source $F^+ \geq c > 0$ will bring it to $1$ on a set with
  nonempty interior in an arbitrarily small time.  We thus claim that
  \[ (X(t, y), s +
  t) \in \interior \set{\rho^+ = 1} \hbox{ for }t > 0, \hbox{ whenever} \rho^+(y,s)=1.
  \]
   Indeed, by LSC for any $\e > 0$ there exists $r > 0$
  such that $\rho^+ > 1 - \e$ on $B_r(y, s)$. By Lemma~\ref{le:liminf-lower-bound}, we conclude
  that
  \[
  \rho^+ = 1\hbox{ on } \bigcup_{t > - \frac 1c
  \ln (1 - \e)}X(t)(B_r(y)) \times \set{s + t}
  \]
  as  $\mu(t) \geq (1 - \e) e^{ct}$.Therefore we see that every point $(y,s) \in \set{\rho^+ = 1}$ is a limit point of
  $\interior \set{\rho^+ = 1}$. The conclusion follows.

  \medskip

  To show (b), we compare $p^+_m$ with a fast rising subsolution of \eqref{pmepressure}.
  Suppose that $\rho^+ = 1$ on $A := \cl B_{2r}(y) \times [s-\delta, s + \delta]$, $s - \delta \geq
  0$. Let us set $T = 2\delta$, $x_0 = X(-\delta, y)$ and let $N$ be the set
  from Lemma~\ref{le:pressure_flow_barrier}. By Lemma~\ref{le:trajectory-bound}, for $\delta$ sufficiently
  small we have $N + (0, s - \delta) \subset A$.
  Let $L$ be the Lipschitz constant of $\vecb$.
  Pick $q \in (0, 1)$ such that $\frac{c\delta}2 + \log q > 0$.
  For $m$ sufficiently large, $\rho^+_m > q$ and so $p^+_m > q^{m-1}$ on $A$.
  Define
  \begin{align*}
    \mu_m(t) = \min(q^{m-1} e^{(m-1) ct / 2}, \frac{c r^2}{4n} e^{-4L\delta}).
  \end{align*}
  Then by Lemma~\ref{le:pressure_flow_barrier} and the comparison principle for \eqref{pme}, we
  conclude that
  \begin{align*}
  p^+_m \geq \mu_m(t - s + \delta) (1 - r^{-2}|x - X(t - s + \delta, y)|^2 e^{2L(t - s +
  \delta)}) \qquad \text{for $m$ large.}
  \end{align*}
  Since $q^{m-1} e^{(m-1) ct /2} \to \infty$ as $m\to \infty$ for $t$ in a neighborhood of
  $\delta$, we conclude that
  \[
  p^+(y,s) \geq \frac{c r^2}{4n} e^{-4L\delta}.
  \]

\medskip

For (c), the first inclusion is trivial from Corollary~\ref{co:transport-comparison}. The
last
  inclusion is clear from the fact
   \[
   \set{\rho^+ = 1} \subset \cl{\interior\set{\rho^+ = 1}} \subset
  \cl{\set{p^+ = 1}},
  \] where
  the first inclusion is from (a) and the second one is from (b).

\medskip

Lastly,  (d) is a consequence of (c) and Lemma~\ref{le:lim_props} (e).
\end{proof}

\begin{remark}
  \label{re:p-nondegeneracy}
  The proof of Lemma~\ref{le:liminf-interior-dense}(b) implies that
  \[
  p^+(y,s) \geq \frac{c
  r^2}{4n} \quad \hbox{ whenever } B_{2r}(y) \times (s - \delta, s + \delta) \subset \set{\rho^+ = 1} \hbox{
  for some } \delta > 0.
  \]
  In particular, $p^+$ must jump at every time when $\rho^+$ reaches $1$ on a
  set with an nonempty interior.
\end{remark}

\medskip

The next important ingredient is the identification of the initial data for the limits $\rho^-$,
$\rho^+$. Recall that $\rho^{{\pm},0}$ are regular initial data as defined by \eqref{initial}, which in particular implies that these functions are upper semi-continuous.

\begin{lemma}
  \label{le:order-initial}
  The limits $\rho^-$ and $\rho^+$ have the correct initial data, that is,
  \[
  \rho^-(\cdot, 0) = \rho^{-,0}\hbox{ and }\rho^+(\cdot, 0) = (\rho^{+,0})_*.
  \]
\end{lemma}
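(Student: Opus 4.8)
The plan is to establish each identity by proving two one-sided inequalities: an \emph{easy} one that follows immediately from the definition of the half-relaxed limits restricted to the initial time slice $\{t=0\}$, and a \emph{hard} one that rules out an instantaneous jump of the density at $t=0$ and is proved by a barrier argument along the streamline through a given point.

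The easy inequalities use only that, by restricting to sequences $(y_m,s_m)\to(x,0)$ with $s_m\equiv0$, the half-limits at $t=0$ are controlled by the half-limits of the prepared initial data. Choosing $y_m\to x$ with $\rho_m^{-,0}(y_m)\to\rho^{-,0}(x)$ and using that $\rho^{-,0}=\limsup^*\rho_m^{-,0}$ (consistently, $\rho^{-,0}$ being regular, hence upper semi-continuous) gives $\rho^-(x,0)=\halflimsup_{m\to\infty}\rho_m^-(x,0)\geq\rho^{-,0}(x)$. Symmetrically, $(\rho^{+,0})_*=\liminf_*\rho_m^{+,0}$ yields $\rho^+(x,0)=\halfliminf_{m\to\infty}\rho_m^+(x,0)\leq(\rho^{+,0})_*(x)$.

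For the hard bound $\rho^-(\cdot,0)\leq\rho^{-,0}$, fix $x_0$. If $x_0\in\Omega^{-,0}$ then $\rho^{-,0}(x_0)=1$ and $\rho^-(x_0,0)\leq1$ by Lemma~\ref{le:lim_props}(c). If $x_0\notin\Omega^{-,0}$, compactness of $\Omega^{-,0}$ gives $r_0>0$ with $\rho^{-,0}=\rho^{-,0}_E<1$ on $B_{2r_0}(x_0)$; by continuity of $\rho^{-,0}_E$ together with $\limsup^*\rho_m^{-,0}=\rho^{-,0}$, for each $\e>0$ we have, for $m$ large, $\rho_m^{-,0}<\rho^{-,0}_E(x_0)+\e$ on $B_{r_0}(x_0)$ and $\rho_m^{-,0}<1+\e$ on $\Rn$. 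We then run the streamline supersolution $\psi_\e$ of the form \eqref{streamline-barrier} from Lemma~\ref{le:flow_barrier} along the streamline through $x_0$, with initial height $\rho^{-,0}_E(x_0)+\e$, a thin tube (radius $\ll r_0$) and a steeply growing profile $\eta_\e$, so that $\psi_\e(\cdot,0)$ dominates $\rho_m^{-,0}$ on all of $\Rn$ (using the two bounds above, $\rho^-\leq1$, and compactness away from $x_0$). For a short time interval $[0,\tau]$ the barrier stays below $1-\delta$ near the streamline, which is precisely where it is a classical supersolution of \eqref{pme} for $m$ large; a first-touching-time argument together with the comparison principle for \eqref{pme} (cf. Lemma~\ref{le:comparison-pme}) then gives $\rho_m^-\leq\psi_\e$ on $[0,\tau]$. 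Since $\psi_\e$ is continuous with $\psi_\e(x_0,0)=\rho^{-,0}_E(x_0)+\e$, passing to the half-limit yields $\rho^-(x_0,0)\leq\rho^{-,0}_E(x_0)+\e$, and $\e\to0$ gives $\rho^-(x_0,0)\leq\rho^{-,0}(x_0)$.

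The hard bound $\rho^+(\cdot,0)\geq(\rho^{+,0})_*$ is dual. Fix $x_0$, set $a:=(\rho^{+,0})_*(x_0)=\max(\chi_{\interior\Omega^{+,0}}(x_0),\rho^{+,0}_E(x_0))\leq1$, and note $\rho_m^{+,0}>a-\e$ on a small ball around $x_0$ for $m$ large, since $\liminf_*\rho_m^{+,0}=(\rho^{+,0})_*$. We use the compactly supported streamline subsolution $\psi_\e$ along the streamline through $x_0$, exactly as in the proof of Lemma~\ref{le:liminf-lower-bound}, with initial height $a-\e$ and the truncated ODE for $\mu_\e$ keeping $\mu_\e\leq1-\e$ (so that $\Delta(\psi_\e^m)$ is controlled and $\psi_\e$ is a classical subsolution of \eqref{pme} for $m$ large); the tube is taken thin enough that $\psi_\e(\cdot,0)\leq\rho_m^{+,0}$ on $\Rn$. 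Comparison gives $\rho_m^+\geq\psi_\e$ on a short time interval, and continuity of $\psi_\e$ with $\psi_\e(x_0,0)=a-\e$ gives $\rho^+(x_0,0)\geq a-\e$; letting $\e\to0$ and combining with the easy inequalities completes the proof. I expect the main obstacle to be the bookkeeping at $t=0$: one must force the barrier to dominate (resp. be dominated by) the prepared data $\rho_m^{\pm,0}$ on all of $\Rn$ despite the jump of $\rho^{\pm,0}$ to $1$ near $\Omega^{\pm,0}$, while simultaneously keeping the supersolution below $1-\delta$ (resp. the subsolution below $1-\e$) on a fixed short time interval so the fast-diffusion term stays negligible as $m\to\infty$. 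In particular, unlike in Lemma~\ref{le:liminf-lower-bound}, one cannot assume a priori that $\rho_m^-<1-\delta$ on the lateral boundary of $\{\psi_\e<1-\delta\}$ for $t>0$; this inequality holds at $t=0$ and its propagation in time is exactly what the first-touching-time argument supplies.
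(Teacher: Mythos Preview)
Your treatment of the easy inequalities and of the hard bound for $\rho^+$ is correct and essentially matches the paper: the paper obtains $\rho^+(\cdot,0)\geq(\rho^{+,0})_*$ by invoking Corollary~\ref{co:transport-comparison}, whose proof is exactly the streamline-subsolution argument from Lemma~\ref{le:liminf-lower-bound} that you describe.

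There is, however, a genuine gap in your argument for the hard bound $\rho^-(\cdot,0)\leq\rho^{-,0}$. The streamline supersolution $\psi_\e$ from Lemma~\ref{le:flow_barrier} is a classical supersolution of \eqref{pme} \emph{only} on the set $\{\psi_\e<1-\delta\}$; on the annular region $\{\psi_\e\geq 1-\delta\}$ the diffusion term $\Delta(\psi_\e^m)$ is no longer negligible and the supersolution property fails. Your first-touching-time argument therefore cannot rule out a first contact at a point $(x^*,t^*)$ with $\psi_\e(x^*,t^*)\geq 1-\delta$: at such a point there is no PDE inequality for $\psi_\e$ to contradict. You correctly identify that one needs $\rho_m^-<1-\delta$ on the lateral boundary $\{\psi_\e=1-\delta\}$ for $t>0$, but asserting that ``the first-touching-time argument supplies'' this is circular --- it presupposes the very ordering you are trying to establish. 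Concretely, the congested region $\{\rho_m^-\approx 1\}$ coming from near $\Omega^{-,0}$ could in principle invade $B_{r_0}(x_0)$ in time $o(1)$ as $m\to\infty$, and the streamline barrier offers no mechanism to prevent this since it is powerless where $\psi_\e\geq 1-\delta$.

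This is precisely why the paper does \emph{not} use Lemma~\ref{le:flow_barrier} here. Instead it constructs, via Section~\ref{sec:barriers-rho-one}, a radially symmetric \emph{contracting} solution of \eqref{pmeKP} on a ball $B_\eta(x_0)$ with large pressure boundary data $p=2M$ on $\partial B_\eta$ and interior external density $\mu\in(\rho^{-,0}(x_0),1)$. These barriers are genuine solutions of a porous medium equation (without drift), so they are valid up to $\rho=1$; the boundary condition is imposed on the \emph{pressure} variable, which controls $p_m^-$ regardless of how close $\rho_m^-$ is to $1$ outside $B_\eta(x_0)$. The finite speed of the barrier's free boundary then keeps the interior density near $\mu$ for a short, $m$-independent time, yielding $\rho^-(x_0,0)\leq\mu$.
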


\begin{proof}
  By definition of half-relaxed limits, we have automatically $\rho^-(\cdot, 0) \geq \rho^{-,0}$ and
  $\rho^+(\cdot, 0) \leq (\rho^{+,0})_*$ since $\rho^{-,0}$ is USC and $(\rho^{+,0})_*$ is LSC by construction.
  We therefore only need to prove the opposite inequalities.

\medskip

  The inequality for $\rho^+$ follows from Corollary~\ref{co:transport-comparison} since
  $(\rho^+_{\rm tr})_*$ is LSC
  and equal to $\rho^{+,0}$ at $t = 0$.

\medskip

For $\rho^-$ the situation is a little bit more involved. First we note that
\[
\rho^-(\cdot, 0) \leq \sup
\rho^{-,0} \leq 1
\] by Lemma~\ref{le:lim_props}(c).
Now choose $x_0$ such that $\rho^{-,0}(x_0) < 1$.
For any $\mu \in (\rho^{-,0}(x_0), 1)$, there is $\eta > 0$ such that
\[
\rho^{-,0}(x) < \mu < 1\hbox{ for } |x - x_0|
\leq \eta.
\]
Let us take $M := \sup_m \norm{p_m}_\infty,$ which is finite due to Lemma~\ref{le:pressure_uniform_bound}.
We can now construct a radially symmetric contracting classical solution of \eqref{pmeKP} on a
domain $B_\eta \times (-\tau, \tau)$ initial data $\Omega_0 = B_\eta \setminus \cl B_{\frac\eta2}$,
boundary data $p = 2M$ on $\partial B_\eta$, and initial data for the external density
$\rho^E(\cdot, 0) = \mu$. By the construction in Section~\ref{sec:barriers-rho-one} this yields superbarriers for
$\rho^-_m$ in $B_{\frac \eta2}(x_0) \times [0, \delta)$, $\delta > 0$ small independent of $\mu$.
We conclude that
\[
\rho^-(x_0, 0) \leq \mu.
\] The claim follows.
\end{proof}

\begin{lemma}
  \label{le:up-positive-init}
  $p^+(\cdot,0)>0$ in $\interior \Omega^{+,0}$, provided that $\inf F^+ > 0$.
\end{lemma}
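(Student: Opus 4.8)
The plan is to establish the pointwise bound $p^+(x_0,0) \ge \beta(x_0) > 0$ for each fixed $x_0 \in \interior\Omega^{+,0}$, by sliding a fast‑rising pressure sub‑barrier along the streamline issuing from $(x_0,0)$ — an initial‑time variant of the non‑degeneracy argument behind Lemma~\ref{le:liminf-interior-dense}(b) and Remark~\ref{re:p-nondegeneracy}. First I would fix $r>0$ with $\cl B_{4r}(x_0) \subset \interior\Omega^{+,0}$. Since $\rho^{+,0} = \max(\chi_{\Omega^{+,0}},\rho^{+,0}_E)$ attains the maximal value $1$ on the closed set $\Omega^{+,0}$, the chosen data $\rho_m^{+,0} = \rho^{+,0}$ satisfies $p^+_m(\cdot,0) = P_m(1) = \tfrac m{m-1} > 1$ on $\cl B_{4r}(x_0)$. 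Moreover, by Corollary~\ref{co:transport-comparison}, $\rho^+ \ge \min(1,(\rho^+_{\mathrm{tr}})_*)$ and $\rho^+_{\mathrm{tr}}(x,t) \ge \chi_{\Omega^{+,0}}(X(-t,x))$; because $X(0,\cdot)=\mathrm{id}$ and the flow is continuous, $X(-t,x)\in\interior\Omega^{+,0}$ whenever $x\in \cl B_{2r}(x_0)$ and $0\le t\le\delta$ for $\delta>0$ small, and together with $\rho^+\le 1$ (Lemma~\ref{le:lim_props}(c)) this gives $\rho^+\equiv1$ on the solid space‑time cylinder $\cl B_{2r}(x_0)\times[0,\delta]$.

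The main step is the sub‑barrier. With $c:=\inf F^+>0$, $L:=\mathrm{Lip}(\vecb)$, and $\delta$ further shrunk so that $\norm{\vecb}_\infty\delta < r$, set $\beta := \tfrac{cr^2}{4n}e^{-2L\delta}$, chosen small enough (shrink $r$) that $\beta<1$, and take the moving bump
\[
\pi_m(x,t) := \beta\Big(1 - r^{-2}e^{2Lt}\abs{x-X(t,x_0)}^2\Big)_+, \qquad 0\le t\le\delta,
\]
which is the barrier \eqref{pressure-streamline-barrier} of Lemma~\ref{le:pressure_flow_barrier} with a constant multiplier. The choice of $\beta$ ensures $\Delta\pi_m + F^+ \ge c - 2n\beta r^{-2}e^{2L\delta} = \tfrac c2 > 0$ on $\set{\pi_m>0}$, while the flow‑distortion inequality $\abs{\vecb(x)-\vecb(X(t,x_0))}\le L\abs{x-X(t,x_0)}$ (cf. Lemma~\ref{le:trajectory-bound}) gives $\partial_t\pi_m + \vecb\cdot\nabla\pi_m \le 0$; hence $\pi_m$ is a subsolution of the pressure equation \eqref{pmepressure} for all large $m$. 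On the parabolic boundary of its support we have $\pi_m(\cdot,0) \le \beta < 1 < p^+_m(\cdot,0)$ on $\cl B_r(x_0)$ and $\pi_m = 0 \le p^+_m$ on $\set{\abs{x-X(t,x_0)}\ge re^{-Lt}}$; since for $t\le\delta$ the bump stays inside $\cl B_{2r}(x_0)$, the comparison principle for \eqref{pme} (Lemma~\ref{le:comparison-pme}, applied in density form to $P_m^{-1}(\pi_m)\le\rho^+_m$) yields $p^+_m \ge \pi_m$ on the support of $\pi_m$, for all large $m$.

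Evaluating on the streamline gives $p^+_m(X(t,x_0),t)\ge\beta$ for $0\le t\le\delta$ and all large $m$, and $\pi_m(y,s)\ge\beta/2$ on a full neighborhood of $(x_0,0)$; passing to the half‑relaxed liminf gives $p^+(x_0,0)\ge\beta/2>0$, and the lemma follows since $x_0\in\interior\Omega^{+,0}$ was arbitrary. I expect the delicate point to be running the barrier all the way down to the initial slice $\set{t=0}$ rather than over a two‑sided time window as in Lemma~\ref{le:liminf-interior-dense}(b): the comparison at $t=0$ forces the requirement $p^+_m(\cdot,0)\ge\beta$ near $x_0$, and this is where I use both that the $+$‑data attains the maximal value $1$ on all of $\interior\Omega^{+,0}$ and — essentially — the sign condition $\inf F^+>0$ that keeps $\pi_m$ a subsolution; without the latter the pressure could relax to $0$ instantaneously at $t=0$.
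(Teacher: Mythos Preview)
Your proof is correct and follows essentially the same route as the paper: use the constant-height pressure subsolution $\pi$ from Lemma~\ref{le:pressure_flow_barrier} (with $\mu$ constant, chosen small via $\inf F^+>0$) underneath $p^+_m$ starting from $t=0$, then pass to the half-relaxed liminf. The paper's argument is terser but identical in substance; your detour in the first paragraph establishing $\rho^+\equiv 1$ on a space--time cylinder via Corollary~\ref{co:transport-comparison} is correct but unused, since the comparison at $t=0$ already follows directly from $p^+_m(\cdot,0)>1$ on $\cl B_{4r}(x_0)$.
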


\begin{proof}
  If $x_0 \in \interior \Omega^{+,0}$, then $p^+_m(x_0, 0) =
  P_m(\rho^+_m(x_0, 0)) = \frac m{m-1} > 1$.
  Moreover, there is $r > 0$ such that
  $B_{2r}(x_0) \subset \interior \Omega^{+,0}$.
  Let us set $\mu := \frac{r^2}{8n} \inf F^+ >0$. We may assume that $\mu < 1$. Then, for some
  small $T >0$, independent of $m$, the pressure subsolution
  $\pi$ defined in \eqref{pressure-streamline-barrier} stays under
  $p^+_m$ for time $t \in [0, T]$. Hence $p^+(x_0,0) > 0$.
\end{proof}

Let us also state the ``left accessibility'' of $\set{\rho^- = 1}$, i.e., that $\rho^-$ cannot jump
up to $1$.
\begin{lemma}
  \label{le:left-accessible}
  The set $\set{\rho^- = 1}$ is ``left-accessible'', that is, for all $(x_0, t_0) \in \set{\rho^- =
  1}$, $t_0 > 0$,
  there exists a sequence $(x_k, t_k) \to (x_0, t_0)$ such that $t_k < t_0$ and $\rho^-(x_k,
  t_k) \to 1$ as $k\to\infty$.
\end{lemma}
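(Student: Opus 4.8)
The plan is to argue by contradiction, constructing a superbarrier that pins $\rho^-$ strictly below $1$ on a full space--time neighborhood of $(x_0,t_0)$. Assume the claim fails at some $(x_0,t_0)$ with $\rho^-(x_0,t_0)=1$, $t_0>0$. The failure of left-accessibility is exactly the existence of $\eta_0\in(0,1)$ and $\delta_0>0$ such that
\[ \rho^- \le 1-\eta_0 \qquad\text{on } B_{\delta_0}(x_0)\times(t_0-\delta_0,t_0); \]
after possibly shrinking $\delta_0$ we may also assume $\delta_0<t_0$ and $\delta_0<\frac{M+1}{\eta_0}$, where $M:=\sup_m\norm{p^-_m}_\infty<\infty$ by Lemma~\ref{le:pressure_uniform_bound}. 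From the displayed bound and the definition of the half-relaxed limit, for every compact $K\subset B_{\delta_0}(x_0)\times(t_0-\delta_0,t_0)$ one has $\rho^-_m\le 1-\frac{\eta_0}{2}$ on $K$ for all large $m$ (a standard property of $\halflimsup$).

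First I would set up the barrier as a time-shifted copy of the contracting radially symmetric solution of the limit problem \eqref{hs}--\eqref{te} used in the proof of Lemma~\ref{le:order-initial}, supplied by the construction in Section~\ref{sec:barriers-rho-one}. Put $\eta:=\delta_0/2$, fix a small $\tau>0$ to be chosen, set $s:=t_0-\frac{\tau}{2}$, and run the barrier on $B_\eta(x_0)\times[s,s+\tau]$ with initial congested zone the annulus $\overline{B_\eta(x_0)}\setminus B_{\eta/2}(x_0)$, boundary pressure $M+1$ on $\partial B_\eta(x_0)$, source $\sup F^-$, and constant initial external density $\mu:=1-\frac{\eta_0}{2}$ on $B_{\eta/2}(x_0)$. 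On the parabolic boundary of this cylinder the barrier dominates $\rho^-_m$ for all large $m$: on $\overline{B_\eta(x_0)}\times\{s\}$ its density is $\ge\mu=1-\frac{\eta_0}{2}\ge\rho^-_m(\cdot,s)$, by the compactness observation with $K=\overline{B_\eta(x_0)}\times\{s\}\subset B_{\delta_0}(x_0)\times(t_0-\delta_0,t_0)$ (using $\eta<\delta_0$, $s\in(t_0-\delta_0,t_0)$); and on $\partial B_\eta(x_0)\times[s,s+\tau]$ its pressure $M+1$ exceeds $p^-_m\le M$. By the construction of Section~\ref{sec:barriers-rho-one}, for all large $m$ this produces a classical supersolution of \eqref{pme} lying above $\rho^-_m$ throughout $B_\eta(x_0)\times[s,s+\tau]$, whose density converges as $m\to\infty$ to the density $\rho^{\mathrm{HS}}$ of the radial limit solution.

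Next I would check that $\rho^{\mathrm{HS}}<1$ near $(x_0,t_0)$ once $\tau$ is small. Its external part evolves only by transport along \eqref{te}, so it stays $\le\mu\,e^{(\sup F^-)\tau}=(1-\frac{\eta_0}{2})e^{(\sup F^-)\tau}\le 1-\frac{\eta_0}{4}$ for $\tau$ small. The inner free boundary of the congested annulus moves inward with speed bounded by a constant depending only on $\eta$, $M$ and the fixed data (it is governed by $|\nabla p|/(1-\rho^E)_+$ with $|\nabla p|\lesssim (M+1)/\eta$ there), hence in time $\tau$ it sweeps inward by less than $\eta/4$ for $\tau$ small; so the congested region of the barrier never meets $B_{\eta/4}(x_0)$ during $[s,s+\tau]$. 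Thus $\rho^{\mathrm{HS}}\le 1-\frac{\eta_0}{4}$ on $B_{\eta/4}(x_0)\times[s,s+\tau]$, which is an open neighborhood of $(x_0,t_0)$ since $x_0$ is the center and $t_0=s+\frac{\tau}{2}$ is interior to $[s,s+\tau]$. Passing to the half-relaxed limit yields $\rho^-(x_0,t_0)\le 1-\frac{\eta_0}{4}<1$, contradicting $\rho^-(x_0,t_0)=1$. The smallness requirements on $\tau$ are consistent: one may take $\tau$ of order $\eta^2\eta_0/(M+1)=\delta_0^2\eta_0/(4(M+1))$, and then $s=t_0-\frac{\tau}{2}\in(t_0-\delta_0,t_0)$ because of the a priori bound $\delta_0<\frac{M+1}{\eta_0}$.

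The main obstacle — and the reason the statement needs a proof — is building a superbarrier that stays valid \emph{through} the time $t_0$, i.e.\ in the regime $\rho^-\approx 1$: a plain ``flow barrier'' of the type in Lemma~\ref{le:flow_barrier}, centered on the streamline through $(x_0,t_0)$, is a supersolution of \eqref{pme} only where its value lies below $1-\delta$, and near $(x_0,t_0)$, where $\rho^-=1$, no such strict gap can be maintained to carry out the comparison. This is precisely the near-density-one situation that the perturbed radial test functions of the Appendix (Section~\ref{sec:barriers-rho-one}) are designed for, and their use is what makes the argument close.
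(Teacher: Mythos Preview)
Your proof is correct and follows essentially the same approach as the paper: argue by contradiction, use the gap $\rho^- \le 1-\eta_0$ on a past cylinder, and invoke the contracting radially symmetric superbarriers from Section~\ref{sec:barriers-rho-one} (exactly as in the proof of Lemma~\ref{le:order-initial}) to propagate a strict upper bound through $t_0$. The paper's proof is two sentences and simply cites the superbarrier; you have spelled out the details of the construction and the quantitative estimates on the free boundary speed and the external density growth, but the argument is the same.
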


\begin{proof}
  Suppose that for a fixed point $(x_0, t_0)$, $t_0 > 0$, there does not exist the claimed sequence, and thus
  we can find $\delta > 0$ such that $\rho^- < 1 - \delta$ on $B_\delta(x_0) \times [t_0 - \delta,
  t_0)$.
  A comparison with a superbarrier from
  Section~\ref{sec:barriers-rho-one} implies $\rho^-(x_0, t_0) \leq 1 - \delta$ and hence $(x_0,
t_0) \notin \set{\rho^- = 1}$.
\end{proof}

The strict ordering of the initial data yields a strict order of the external densities.
Recall the definition of $\rho^\pm_E$ from Section~\ref{sec:flow-map}.

\begin{lemma}
  \label{le:rho-ext-order}
  For every $T > 0$ there exists $\delta>0$ such that the following holds for $|(x_1, t_1) - (x_2, t_2)| < \delta$ and
  $0\leq t_1, t_2 \leq T$:
  \begin{enumerate}
    \item[(a)] $\rho^-_E(x_1, t_1) \leq \rho^+_E(x_2,t_2).$\\
    \item[(b)] If $\rho^+_E(x_2, t_2) \leq 1$,  then  $\rho^-_E(x_1, t_1) < 1- \delta$.\\
  \item[(c)]  If $ p^-(x_1,t_1)>0,$  then  $F^-(x_1, t_1) < F^+(x_2, t_2).$\\
  \item[(d)]  If $(-t_2, x_2) \notin \interior \Omega^{+,0}$ then $X(-t_1, x_1) \notin \Omega^{-,0}.$
  \end{enumerate}
\end{lemma}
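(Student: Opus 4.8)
The plan is to prove all four parts by a single mechanism: each hypothesis encodes a \emph{pointwise strict} inequality --- the strict order of $\rho^{\pm,0}_E$, the gap $f^+-f^->\e$, the inclusion $\Omega^{-,0}\subset\interior\Omega^{+,0}$ --- which I will first upgrade to a \emph{quantitative} gap on a fixed compact set, and then verify that this gap is not destroyed when one of the two evaluation points is moved a distance at most $\delta$. The basic tools are the streamline representation
\[
\rho^\pm_E(x,t)=\rho^{\pm,0}_E(X(-t,x))\,\exp\left(\int_{-t}^0 F^\pm(X(s,x))\,ds\right),
\]
which follows from \eqref{transport-solution}--\eqref{stream-density-ode}, the continuity of $\rho^\pm_E$ noted after \eqref{stream-density-ode}, and the two-sided estimate $e^{-L|t|}|x-y|\le|X(t,x)-X(t,y)|\le e^{L|t|}|x-y|$ of Lemma~\ref{le:trajectory-bound}. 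On $[0,T]$ the only regions that matter are bounded --- the time slices of $\supp\rho^\pm_E$, and the set $\set{\rho^-=1}$, which is closed by Lemma~\ref{le:lim_props} and bounded by the uniform-in-$m$ support bound for $\rho^\pm_m$ that follows from finite speed of propagation for \eqref{pme} --- so every appeal to continuity below is really uniform continuity on a fixed compact set. Finally, $\delta$ is taken to be the minimum of the finitely many thresholds produced in (a)--(d).

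For (a) and (b) I would set $2c_0:=\min_{\supp\rho^{-,0}_E}(\rho^{+,0}_E-\rho^{-,0}_E)$, which is positive as the minimum over a compact set of a continuous function that is strictly positive there by \eqref{initial-data-order}. If $\rho^-_E(x_1,t_1)=0$ both assertions are immediate (for (a) because $\rho^+_E\ge0$, for (b) because $\delta<1$). Otherwise $X(-t_1,x_1)\in\{\rho^{-,0}_E>0\}\subset\supp\rho^{-,0}_E$, so inserting the gap $\rho^{+,0}_E(X(-t_1,x_1))\ge\rho^{-,0}_E(X(-t_1,x_1))+2c_0$ into the representation above, together with $\int_{-t_1}^0 F^+(X)\ge\int_{-t_1}^0 F^-(X)\ge0$ (valid since $0<F^-<F^+$), gives $\rho^+_E(x_1,t_1)\ge\rho^-_E(x_1,t_1)+2c_0$. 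Uniform continuity of $\rho^+_E$ on a compact neighborhood of $\bigcup_{0\le t\le T}\supp\rho^+_E(\cdot,t)$ then yields, for $\delta$ small, $\rho^+_E(x_2,t_2)\ge\rho^+_E(x_1,t_1)-c_0\ge\rho^-_E(x_1,t_1)+c_0$, proving (a); and $\rho^+_E(x_2,t_2)\le1$ then forces $\rho^-_E(x_1,t_1)\le1-c_0<1-\delta$ as soon as $\delta<c_0$, which is (b).

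For (c) the hypothesis $p^-(x_1,t_1)>0$ serves only to confine $x_1$: by Lemma~\ref{le:lim_props}, $\set{p^->0}\subset\set{\rho^-=1}$, which on $\{0\le t\le T\}$ is compact (closed and bounded, as above), so $x_1$ and, for $\delta<1$, also $x_2$ lie in one fixed compact set, on which $f^-$ and $\divo\vecb$ --- hence $F^\pm$ --- are uniformly continuous. Decomposing
\[
F^+(x_2,t_2)-F^-(x_1,t_1)=\big(f^+(x_2)-f^-(x_2)\big)+\big(f^-(x_2)-f^-(x_1)\big)+\big(\divo\vecb(x_1,t_1)-\divo\vecb(x_2,t_2)\big),
\]
the first term exceeds $\e$ by hypothesis, while the last two are smaller than $\e$ in absolute value for $\delta$ small, so $F^+(x_2,t_2)>F^-(x_1,t_1)$.

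For (d), reading the hypothesis in parallel with the conclusion as $X(-t_2,x_2)\notin\interior\Omega^{+,0}$, I would argue by contraposition. Assume $X(-t_1,x_1)\in\Omega^{-,0}$; then $x_1\in\bigcup_{0\le t\le T}X(t)(\Omega^{-,0})$, a compact set, and consequently $x_1,x_2$ together with the streamline arcs $\{X(s,x_i):|s|\le T\}$ lie in a fixed compact set on which $|\vecb|\le\Lambda$. Since $\Omega^{-,0}$ is, by \eqref{initial-data-order}, a compact subset of the open set $\interior\Omega^{+,0}$, the distance $d_0:=\dist(\Omega^{-,0},\Rn\setminus\interior\Omega^{+,0})$ is positive; and by Lemma~\ref{le:trajectory-bound} and the ODE \eqref{stream},
\[
|X(-t_1,x_1)-X(-t_2,x_2)|\le|X(-t_1,x_1)-X(-t_1,x_2)|+|X(-t_1,x_2)-X(-t_2,x_2)|\le e^{LT}|x_1-x_2|+\Lambda|t_1-t_2|\le(e^{LT}+\Lambda)\delta.
\]
Taking $\delta$ with $(e^{LT}+\Lambda)\delta<d_0$ places $X(-t_2,x_2)$ in $B_{d_0}(X(-t_1,x_1))\subset\interior\Omega^{+,0}$, contradicting the hypothesis; this is (d). I do not expect any single estimate to be the real difficulty; the care lies entirely in the order of quantifiers --- $\delta$ must be chosen after and below every threshold coming from the gaps $c_0,\e,d_0$ and the relevant moduli of continuity --- and in the auxiliary (standard but essential) fact that $\set{\rho^-=1}$ and the pushed-forward supports remain inside a fixed compact set for $t\in[0,T]$, for which finite speed of propagation for \eqref{pme} is invoked.
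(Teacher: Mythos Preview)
Your proof is correct and follows essentially the same approach as the paper's: establish a strict quantitative gap on a compact set and then invoke uniform continuity. The paper's own proof is very terse --- it simply asserts $\rho^-_E < \rho^+_E$ on $\supp\rho^-_E$ and appeals to compactness --- whereas you supply the missing justification via the streamline representation formula, and you correctly read the hypothesis of (d) as $X(-t_2,x_2)\notin\interior\Omega^{+,0}$ (an apparent typo in the statement).
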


\begin{proof}
  This follows from the strict order of the initial data \eqref{initial-data-order}, compactness and uniform
  continuity.

  Indeed, by \eqref{initial-data-order}, we have $\rho^-_E <
  \rho^+_E$ in $\supp \rho^-_E$.
  By compactness and hence uniform continuity, there is $\delta > 0$ such that $\rho^-(x_1, t_1) <
  \rho^+(x_2, t_2) - \delta$ for all $(x_1, t_1) \in \supp \rho^-_E$, $|(x_1,
  t_1) - (x_2, t_2)| <\delta$, $t_1, t_2 \in [0, T]$. In particular, $\rho^-_E(x_1,t_1) <
  1-\delta$ if $\rho^+_E(x_2, t_2)\leq 1$. This yields (a) and (b).

  Similarly, $\set{p^- > 0} \cap \set{0 \leq t \leq T}$ is bounded, and so $F^-$ and $F^+$ are
  uniformly continuous in its neighborhood, and therefore if we take $\delta > 0$ small enough, we
  have $F^-(x_1, t_1) < F^+(x_2, t_2)$ if $p^-(x_1, t_1) > 0$, yielding (c).

  (d) follows from Lemma~\ref{le:trajectory-bound} and \eqref{initial-data-order}.
\end{proof}

\subsection{Proof of the order \texorpdfstring{$\set{\rho^- = 1} \subset \set{p^+ > 0}$}{rho1 = 1
subset p2 greater than 0}}
\label{sec:proof-of-support-order}

We prove the order of $\rho^-$ and $\rho^+$ by a barrier argument using the knowledge of the
convergence in the radial case for the equation \eqref{pme} without drift \cite{KP}.
Our argument follows the proof of the comparison principle, showing that if $\set{\rho^- = 1}$ and
$\set{p^+ > 0}$ fail to stay ordered, that is, their boundaries
have a certain contact, we get a contradiction. To be able to argue at the contact point, we need to find a
contact point with a suitable regularity, essentially $C^{1,1}$ in space. This is where sup- and inf-convolutions come in.

\medskip

Let $\Xi_r$ be the ``flattened'' set of size $r > 0$,
\begin{align*}
  \Xi_r := \set{(x,t): \max(|x| - r, 0)^2 + t^2 < r^2}, \qquad \Xi_r(x,t) := \Xi_r + (x,t).
\end{align*}
For fixed $r_0 > 0$, we define the time-dependent size
\begin{align}
  \label{rt}
  r(t) := r_0 e^{-2Lt},
\end{align}
where $L$ is the Lipschitz constant of $\vec b$,
and the sup-/inf-convolutions
\begin{align}
  \label{sup-inf-convolutions}
  \begin{aligned}
    \rho^{-,r}(x, t) := \sup_{\cl \Xi_{r(t)}(x, t)} \rho^-, \quad
    \rho^{+,r}(x, t) := \inf_{\cl \Xi_{r(t)}(x, t)} \rho^+.
  \end{aligned}
\end{align}
Analogously, we introduce $p^{\pm, r}$, $\rho^{\pm, r}_E$ and $F^{\pm,r}$.
These functions are well defined on $\set{t \geq \tau}$ for $\tau > 0$ the unique solution of
$r(\tau) = \tau$. We will understand notation like $\set{\rho^{-,r} = 1}$ as $\set{(x,t): t \geq
\tau,\ \rho^{-,r}(x, t) = 1}$. If we speak of such sets as open or closed, we always mean in the
relative topology of $\set{(x,t): t \geq \tau}$.
Note that convolving over $\cl \Xi_r$ is equivalent to doing two successive convolutions over a
closed space ball of radius $r$ and a closed space-time ball of radius $r$.
Also note that sup-convolution commutes with $\halflimsup$ and
inf-convolution commutes with $\halfliminf$.

\medskip

The purpose of varying the size of the set over which we convolve has to do with the spreading of
the streamlines.  If the boundaries of $\set{\rho^{-,r} = 1}$ and $\set{p^{+,r} > 0}$ cross, the normal
velocity of $\partial\set{\rho^- = 1}$ must be larger than the normal velocity of
$\partial\set{p^+
> 0}$ by an amount that is enough to absorb the difference of the velocity of streamlines within
$\Xi_{r(t)}(x,t)$ caused by the dependence of $\vec b$ on $x$, see \eqref{velocity-gap} below.
Furthermore, we want the convolutions to be monotone along the streamlines, and for that we need to
ensure that the streamlines intersecting $\Xi_{r(t)}(x, t)$ also intersect $\Xi_{r(t - s)}
(X(-s, x), t - s)$ for all $s > 0$. This is guaranteed by the choice of $r(t)$ in \eqref{rt} and by
Lemma~\ref{le:trajectory-bound}.  This reasoning implies the following lemma.

\begin{lemma}\label{le:char_r}
  $\set{p^{+,r} > 0}$ is nondecreasing along the streamlines as in Lemma~\ref{le:char}.
 \end{lemma}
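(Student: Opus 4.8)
The plan is to mirror the proof of Lemma~\ref{le:char}, replacing the role of $p^+$ and $p^+_m$ by their inf-convolved counterparts $p^{+,r}$ and the associated streamline barrier, while exploiting the fact that the size function $r(t) = r_0 e^{-2Lt}$ decays at exactly the rate that compensates the spreading of streamlines bounded in Lemma~\ref{le:trajectory-bound}. So suppose $(x_0, t_0) \in \set{p^{+,r} > 0}$ with $t_0 \geq \tau$; we want $(X(t - t_0, x_0), t) \in \set{p^{+,r} > 0}$ for all $t > t_0$.

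\medskip

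First I would unravel the definition of the inf-convolution: $p^{+,r}(x_0, t_0) = \inf_{\cl\Xi_{r(t_0)}(x_0, t_0)} p^+ > 0$ means that $p^+ \geq 2\lambda > 0$ on the whole flattened set $\cl\Xi_{r(t_0)}(x_0, t_0)$ for some $\lambda > 0$, and hence (by the definition of $p^+$ as a half-relaxed liminf and compactness of $\cl\Xi_{r(t_0)}(x_0,t_0)$) there are $r', \lambda, m_0 > 0$ such that $p^+_m > \lambda$ on a slightly enlarged closed neighborhood $C$ of $\cl\Xi_{r(t_0)}(x_0,t_0)$ for all $m \geq m_0$. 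Then, exactly as in Lemma~\ref{le:char}, for each $T > 0$ invoke Lemma~\ref{le:pressure_flow_barrier} to produce, for some small $0 < \mu < \lambda$, the streamline pressure subsolution $\pi$ of \eqref{pmepressure} from \eqref{pressure-streamline-barrier} which at $t = 0$ is supported inside $C - (x_0, t_0)$ (shifted appropriately) and which, by the comparison principle for \eqref{pme}, satisfies $p^+_m \geq \pi(\cdot, \cdot - t_0)$ for $m \geq m_0$ and $t \in (t_0, t_0 + T)$. Passing to the limit gives $p^+(X(t - t_0, x_0), t) > 0$ for $t_0 < t < t_0 + T$, and since $T$ is arbitrary, for all $t > t_0$.

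\medskip

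The remaining, and genuinely new, point is to upgrade ``$p^+ > 0$ at the streamline point'' to ``$p^{+,r} > 0$ at the streamline point,'' i.e., to show that $p^+$ is in fact bounded below by a positive constant on the entire flattened set $\cl\Xi_{r(t)}(X(t - t_0, x_0), t)$, not merely at its center. The key geometric fact is the one flagged in the paragraph preceding the lemma: because $r(t) = r_0 e^{-2Lt}$ and by Lemma~\ref{le:trajectory-bound} streamlines starting $\delta$-apart stay within $e^{L|t-s|}\delta$ of each other, every point of $\cl\Xi_{r(t)}(X(t-t_0,x_0),t)$ lies on a streamline that, tracked back to time $t_0$, passes through $\cl\Xi_{r(t_0)}(x_0, t_0)$ — so it is a forward-streamline image of a point where $p^+ > 0$ to begin with. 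Thus the barrier argument above, applied not to the single streamline through $(x_0, t_0)$ but uniformly to the compact bundle of streamlines through $\cl\Xi_{r(t_0)}(x_0, t_0)$, produces a uniform positive lower bound for $p^+_m$ (and hence $p^+$) on a neighborhood of $\cl\Xi_{r(t)}(X(t-t_0,x_0),t)$ for all $m$ large. Here one must also check the ``flat'' time-direction of $\Xi_r$ is handled: points of $\Xi_{r(t)}$ with time coordinate slightly below $t$ correspond to slightly earlier streamline images, which is fine since $p^+ > 0$ propagated forward from $\cl\Xi_{r(t_0)}(x_0,t_0)$ covers those as well (using that $\cl\Xi_{r(t_0)}$ itself has thickness in time).

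\medskip

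I expect the main obstacle to be the bookkeeping in this last step — precisely quantifying, using $r(t) = r_0 e^{-2Lt}$ and Lemma~\ref{le:trajectory-bound}, that the forward streamline image of $\cl\Xi_{r(t_0)}(x_0,t_0)$ contains an open neighborhood of $\cl\Xi_{r(t)}(X(t-t_0,x_0),t)$ for every $t > t_0$, uniformly on compact time intervals, so that a single application of the streamline subsolution barrier of Lemma~\ref{le:pressure_flow_barrier} (with the size parameter adjusted) does the job. Everything else is a verbatim repetition of Lemma~\ref{le:char}. In fact, since the statement only asserts monotonicity ``as in Lemma~\ref{le:char}'' and the geometric compatibility of $r(t)$ with the flow has just been explained in the text, the proof can reasonably be compressed to: ``This follows from the proof of Lemma~\ref{le:char} applied to the bundle of streamlines through $\cl\Xi_{r(t_0)}(x_0,t_0)$, together with the choice of $r(t)$ in \eqref{rt} and Lemma~\ref{le:trajectory-bound} which guarantee that this bundle sweeps out a neighborhood of $\cl\Xi_{r(t)}(X(t-t_0,x_0),t)$ for all $t > t_0$.''
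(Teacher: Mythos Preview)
Your proposal is correct and essentially matches the paper's approach: the paper's entire proof is the sentence ``This reasoning implies the following lemma,'' where ``this reasoning'' is precisely the geometric fact you isolate, that by the choice of $r(t)$ in \eqref{rt} and Lemma~\ref{le:trajectory-bound} every streamline through $\cl\Xi_{r(t)}(X(t-t_0,x_0),t)$ passes, at an earlier time, through $\cl\Xi_{r(t_0)}(x_0,t_0)$. The only simplification you could make is that once this geometric fact is in hand, there is no need to re-run the barrier construction of Lemma~\ref{le:pressure_flow_barrier} for a ``bundle'' of streamlines: since $p^+>0$ on all of $\cl\Xi_{r(t_0)}(x_0,t_0)$, Lemma~\ref{le:char} applied \emph{pointwise} to each streamline already gives $p^+>0$ at every point of $\cl\Xi_{r(t)}(X(t-t_0,x_0),t)$, hence $p^{+,r}(X(t-t_0,x_0),t)>0$.
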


\medskip

Let us introduce the \emph{first contact time} as
\begin{align}
  \label{contact-time-r}
  t_0:= \sup\set{s: \set{\rho^{-,r} = 1} \cap \set{t \leq s} \subset \set{p^{+,r}>0}}.
\end{align}

\begin{claim}
  \label{cl:no-contact}
  For every $T > 0$ there exists $\tilde r_0$ such that for all $r_0 \in (0, \tilde r_0)$ in
  \eqref{rt} we have
  $t_0 \geq T$.
\end{claim}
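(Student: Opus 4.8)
The plan is to argue by contradiction, supposing that for a sequence $r_0 \to 0$ the first contact time $t_0$ as in \eqref{contact-time-r} stays below $T$, and then to extract at time $t_0$ a spatial contact point between $\partial\set{\rho^{-,r}=1}$ and $\partial\set{p^{+,r}>0}$ where the geometry is regular enough to test the equations. First I would establish the basic setup: by Lemma~\ref{le:char_r} the set $\set{p^{+,r}>0}$ is nondecreasing along streamlines, and by Lemma~\ref{le:left-accessible} (applied to the convolution, using that $\rho^{-,r}=1$ forces $\rho^-=1$ somewhere in $\cl\Xi_{r(t)}$) the set $\set{\rho^{-,r}=1}$ is left-accessible, so at the first contact time $t_0$ there genuinely is a point $(x_0,t_0)$ with $\rho^{-,r}(x_0,t_0)=1$ but $p^{+,r}(x_0,t_0)=0$, and $(x_0,t_0)$ lies on both boundaries. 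The role of the flattened sets $\Xi_{r(t)}$ is that the sup-convolution $\rho^{-,r}$ has an interior-ball-type regularity in space (radius comparable to $r(t)$) at such a contact point, while $p^{+,r}$ has the complementary exterior-ball regularity; this is exactly the $C^{1,1}$-in-space contact needed to run a viscosity-type argument. I would also use Lemma~\ref{le:lim_props}(b) to get $p^{-,r}(x_0,t_0)=0$ at the exterior-ball boundary point, and the nondegeneracy from Remark~\ref{re:p-nondegeneracy} is not needed here but the bound $-\Delta p^+ \geq F^+$ inside $\set{p^+>0}$ and $-\Delta p^- \leq F^-$ are.

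Next comes the heart of the argument: comparing normal velocities at $(x_0,t_0)$. On the $\set{p^{+,r}>0}$ side, the boundary moves (in the viscosity sense, via the barrier of Lemma~\ref{le:char_r} / Lemma~\ref{le:pressure_flow_barrier}) with normal velocity at most $\left(-\frac{\nabla p^{+,r}}{(1-\rho^{+,r}_E)_+}+\vec b\right)\cdot\nu$ up to the streamline-spreading correction; on the $\set{\rho^-=1}$ side, since $\rho^{-,r}=1$ is being left-accessed and $\rho^-\le\rho^-_E$ fails to be available in general, I would instead use that just outside $\set{\rho^{-,r}=1}$ near $(x_0,t_0)$ we have $p^{-,r}=0$ so the density there is transported, giving a lower bound on the inward normal velocity of $\set{\rho^-=1}$ of the form $\left(-\frac{\nabla p^{-,r}}{(1-\rho^{-,r}_E)_+}+\vec b\right)\cdot\nu$, again modulo the spreading correction \eqref{velocity-gap}. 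The key inequalities to combine are: $\rho^{-,r}_E(x_0,t_0)\le\rho^{+,r}_E(x_0,t_0)$ and the strict gap $F^{-,r}<F^{+,r}$ from Lemma~\ref{le:rho-ext-order}(a),(c); the ordering of the Laplacians which forces $\nabla p^{-,r}\cdot\nu \ge \nabla p^{+,r}\cdot\nu$ (more inward flux on the $p^+$ side) at a regular contact point where $p^{+,r}(x_0,t_0)=0=p^{-,r}(x_0,t_0)$; and the fact that the spreading error is $O(Lr(t))$ which, because $r(t)=r_0e^{-2Lt}$ decays and because $r_0$ is taken small, is dominated by the strict gap $\e$ between $F^-$ and $F^+-\e$. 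Putting these together yields that the boundary of $\set{p^{+,r}>0}$ moves strictly faster outward than $\partial\set{\rho^{-,r}=1}$, so $\set{\rho^{-,r}=1}$ cannot escape $\set{p^{+,r}>0}$ at $t_0$ — contradicting the definition of $t_0<T$. Hence $t_0\ge T$ for $r_0$ small.

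I expect the main obstacle to be making the velocity comparison rigorous at the contact point without a genuine viscosity-solution formulation of \eqref{hs} for the half-limits — which the paper deliberately postpones. The workaround, which I would pursue, is to avoid ever differentiating the free boundaries directly: instead, at the contact point construct explicit sub/supersolution barriers for the $p_m$ equation \eqref{pmepressure} (perturbed radial barriers from the appendix, Lemma~\ref{le:pressure_flow_barrier} and the radial solutions of \cite{KP}) that are ordered with $p^{\pm,r}_m$ slightly before $t_0$ on a small space-time neighborhood, and that by the strict gap in $F^\pm$ and the choice of $r_0$ small, cross each other strictly before time $t_0$; this crossing of barriers, pushed through the comparison principle for \eqref{pme}, directly contradicts $\set{\rho^{-,r}=1}\cap\set{t\le t_0}\subset\set{p^{+,r}>0}$. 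The delicate points are: (i) tracking the exact form of the streamline-spreading correction \eqref{velocity-gap} and checking it is beaten by $\e$ uniformly for $t\in[0,T]$ once $r_0$ is small enough (this is where $r(t)=r_0e^{-2Lt}$ and Lemma~\ref{le:trajectory-bound} are essential); (ii) handling the degenerate case $\rho^{+,r}_E(x_0,t_0)=1$, where $\frac1{(1-\rho^{+,r}_E)_+}=+\infty$ and the velocity law is vacuous on the $p^+$ side — but there Lemma~\ref{le:rho-ext-order}(b) gives $\rho^{-,r}_E<1-\delta$, so the $p^-$ side barrier still transports and one argues $\set{\rho^{-,r}=1}$ simply cannot have reached $(x_0,t_0)$ from the exterior; and (iii) the case where the contact happens because of nucleation of $\set{\rho^{+,r}=1}$ via $\rho^{+,r}_E$ reaching $1$, handled by Lemma~\ref{le:liminf-interior-dense}(c) which puts such points already inside $\cl{\set{p^+>0}}$.
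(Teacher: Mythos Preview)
Your overall strategy matches the paper's: argue by contradiction, extract a regular contact point via the sup/inf-convolutions, compare normal velocities using barriers from the appendix, and close with the streamline-spreading estimate. Two points need sharpening, however.

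First, the contradiction is not closed by the gap $\e$ between $F^-$ and $F^+$; it is closed by the specific exponent in $r(t)=r_0e^{-2Lt}$. The geometry of first contact (the function $\zeta$ in Lemma~\ref{le:geometry}) forces $V_1-V_2\ge -4r'(t_0)=8Lr(t_0)$, while the velocity law together with the Lipschitz bound on $\vec b$ yields only $V_1-V_2\le 4Lr(t_0)$; these are incompatible because $8L>4L$, independently of $\e$. The role of $\e$ is upstream: it gives, via Hopf's lemma (Corollary~\ref{co:order-pressure-contact}), the strict slope ordering $\eta_1<\eta_2$ and, via Lemma~\ref{le:rho-ext-order}, the strict ordering of external densities, so that $\tfrac{\eta_1}{1-\rho^-_E}\le\tfrac{\eta_2}{1-\rho^+_E}$ holds with no slack to spare. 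Second, you cannot run the barrier argument directly at the convolution contact point $(x_0,t_0)$: the barriers from Proposition~\ref{pr:barrier-sequence} test the genuine $\rho^-_m$, $p^+_m$. The paper therefore unwinds the convolution to locate points $(x_1,t_1),(x_2,t_2)\in\partial\Xi_{r(t_0)}(x_0,t_0)$ with $\rho^-(x_1,t_1)=1$ and $p^+(x_2,t_2)=0$ (Lemma~\ref{le:contact_point}), and a substantial portion of the proof is showing $|t_i-t_0|<r(t_0)$ (finite free-boundary speed), which guarantees $x_0-x_1$ and $x_0-x_2$ are nonzero and antiparallel and hence that a common normal $\nu$ and the quantities $V_1,V_2,\eta_1,\eta_2$ are well-defined. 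Your remarks (ii)--(iii) at the end are essentially the ingredients of that finite-speed step, but they must come before the velocity comparison, not as side cases.
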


To establish the claim, let us fix $T > 0$ and choose $\tilde r_0 > 0$ so that
Lemma~\ref{le:rho-ext-order} applies for some fixed $\delta > 0$ for all $(x_1, t_1), (x_2, t_2) \in
\Xi_{\tilde r_0}(x,t)$ for all $0 \leq t \leq T$, $x \in \Rn$.
Suppose that $t_0 < T$.

By the initial strict separation, we have $t_0 > \tau$.

\begin{lemma}
  \label{le:t0_pos}
  There exists $\delta > 0$ such that for any $r_0 \in (0, \delta)$ we have $t_0 > \tau$, where $\tau =
\tau(r_0)$ is the unique solution of $r(\tau) = \tau$.
\end{lemma}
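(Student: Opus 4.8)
The plan is to show that at the initial time $t = 0$ (or more precisely at time $t = \tau$, the first time the convolutions are defined), the set $\set{\rho^{-,r} = 1}$ is strictly contained in the open set $\set{p^{+,r} > 0}$, with a separation that persists for a short time by continuity/semicontinuity, so the supremum defining $t_0$ in \eqref{contact-time-r} is strictly greater than $\tau$. The mechanism is the strict ordering of the initial data \eqref{initial-data-order}: $\Omega^{-,0} \subset \interior \Omega^{+,0}$ and $\rho^{-,0}_E < \rho^{+,0}_E$ on $\supp \rho^{-,0}_E$, together with the fact that $\inf F^+ > 0$ forces $\set{\rho^+ = 1}$ to coincide (up to closure) with $\set{p^+ > 0}$ via Lemma~\ref{le:liminf-interior-dense}.

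First I would describe the set $\set{\rho^{-,r} = 1}$ near time $\tau$. By Lemma~\ref{le:order-initial}, $\rho^-(\cdot, 0) = \rho^{-,0} = \max(\chi_{\Omega^{-,0}}, \rho^{-,0}_E)$, and since $\rho^{-,0}_E < 1$, the only way $\rho^{-,r}$ can equal $1$ at a point $(x,t)$ with $t$ close to $\tau$ is for $\cl\Xi_{r(t)}(x,t)$ to meet $\set{\rho^- = 1}$, whose time-zero slice is $\Omega^{-,0}$; using the left-accessibility of $\set{\rho^- = 1}$ (Lemma~\ref{le:left-accessible}) one sees that for $t$ near $\tau$, $\set{\rho^{-,r}=1}$ is contained in a small neighborhood of $\Omega^{-,0}$. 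Precisely, shrinking $r_0$ I would arrange that $\set{\rho^{-,r} = 1} \cap \set{\tau \leq t \leq \tau + \sigma}$ lies in an arbitrarily small neighborhood of $\Omega^{-,0}$.

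Next I would show $p^{+,r} > 0$ on a neighborhood of $\Omega^{-,0}$ near time $\tau$. By Lemma~\ref{le:up-positive-init} (whose hypothesis $\inf F^+ > 0$ holds under our standing assumption $0 < F^- < F^+ - \e$), we have $p^+(\cdot, 0) > 0$ on $\interior \Omega^{+,0}$; since $\Omega^{-,0} \subset \interior \Omega^{+,0}$ is compact and contained in the open set $\{p^+(\cdot,0) > 0\}$, there is $\lambda > 0$ and a neighborhood $U$ of $\Omega^{-,0}$ with $U \subset \interior \Omega^{+,0}$ on which $p^+(\cdot, 0) \geq 2\lambda$. Using Lemma~\ref{le:char} (monotonicity of $\set{p^+ > 0}$ along streamlines) together with the barrier construction there, one propagates this: $p^+ > 0$ on a space-time neighborhood $U'$ of $\Omega^{-,0} \times \{0\}$. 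Then for $r_0$ small, $\cl\Xi_{r(t)}(x,t) \subset U'$ for $(x,t)$ near $\Omega^{-,0} \times \{\tau\}$, whence $p^{+,r} = \inf_{\cl\Xi_{r(t)}(x,t)} p^+ > 0$ there. Combining the two steps: for $r_0$ small enough, $\set{\rho^{-,r}=1} \cap \set{\tau \leq t \leq \tau + \sigma}$ sits inside the region where $p^{+,r} > 0$, forcing $t_0 \geq \tau + \sigma > \tau$.

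The main obstacle is the interplay between the time $\tau$ (at which the convolutions first become defined) and the initial data: one must check that the strict separation at $t = 0$ survives the sup-/inf-convolution, i.e. that $\rho^{-,r}$ does not artificially jump to $1$ near $\tau$ due to the convolution reaching backward in time, and that $p^{+,r}$ stays positive despite the infimum. This is handled by taking $r_0$ (hence $\tau = r_0 e^{-2L\tau}$, so $\tau \to 0$ as $r_0 \to 0$) small, and using the compactness of $\Omega^{-,0}$ and $\supp \rho^{-,0}_E$ together with the uniform continuity encoded in Lemma~\ref{le:rho-ext-order}; the left-accessibility Lemma~\ref{le:left-accessible} is what prevents $\set{\rho^- = 1}$ from having extra points just above $t = 0$ outside the neighborhood of $\Omega^{-,0}$ determined by the initial data.
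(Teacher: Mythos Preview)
Your proposal reaches the right conclusion and uses the same key inputs as the paper --- Lemma~\ref{le:order-initial} to identify $\set{\rho^-(\cdot,0)=1}=\Omega^{-,0}$, the strict ordering \eqref{initial-data-order}, and Lemma~\ref{le:up-positive-init} to get $\interior\Omega^{+,0}\subset\set{p^+(\cdot,0)>0}$. The paper's argument, however, is much shorter: once one knows that $\set{\rho^-=1}$ and $\set{p^+=0}$ are closed (Lemma~\ref{le:lim_props}), that their time-zero slices are disjoint, and that $\set{\rho^-=1}\cap\set{0\leq t\leq s}$ is compact, the separation for small $t>0$ is a purely topological fact: any sequence in $\set{\rho^-=1}$ with times tending to $0$ subconverges by compactness and closedness to a point of $\Omega^{-,0}\times\set{0}$, while the relatively open set $\set{p^+>0}$ contains $\Omega^{-,0}\times\set{0}$ and hence a space-time neighborhood of it. For $r_0$ small the convolution sets $\Xi_{r(t)}$ are too small to bridge this gap.

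Your route via Lemma~\ref{le:left-accessible} and Lemma~\ref{le:char} is more elaborate than needed, and your justification for the first is not quite right. Left-accessibility only produces earlier points where $\rho^-\to 1$, not points actually in $\set{\rho^-=1}$, so it does not by itself localize $\set{\rho^-=1}$ near $\Omega^{-,0}$ for small $t$; what does the work is closedness of $\set{\rho^-=1}$ (from upper semicontinuity of $\rho^-$) together with compactness in space for bounded time --- exactly the paper's argument. Likewise, you do not need Lemma~\ref{le:char} to propagate $\set{p^+>0}$ forward: lower semicontinuity of $p^+$ already makes $\set{p^+>0}$ relatively open in $\set{t\geq 0}$, and openness plus compactness of $\Omega^{-,0}\times\set{0}$ gives the required space-time neighborhood. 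Your plan can be made rigorous, but once these detours are straightened it collapses to the paper's two-line topological argument.
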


\begin{proof}
  This is a consequence of the fact that $\set{\rho^- = 1}$ and $\set{p^+ = 0}$
are closed, and their time slices at $t = 0$ are disjoint and $\set{\rho^- = 1,\ 0 \leq t \leq s}$
is compact for any $s > 0$.
  Indeed, we have
  \begin{align*}
    \set{\rho^-(\cdot, 0) = 1} = \Omega^{-,0} \subset \interior \Omega^{+,0} \subset
    \set{p^+(\cdot, 0) > 0},
  \end{align*}
  where the first equality follows from
  Lemma~\ref{le:order-initial}, the first inclusion is from \eqref{initial-data-order}, and the
  last inclusion is given in Lemma~\ref{le:up-positive-init}.
\end{proof}

The geometry of $\Xi_{r(t)}$ and the definition of $t_0$ has the following consequence on the
relationship of sets $\set{\rho^- = 1}$ and $\set{p^+ > 0}$.

\begin{lemma}
  \label{le:geometry}
  Let $(y_1, s_1)$, $(y_2, s_2)$ be any two points such that $\rho^-(y_1, s_1) = 1$ and $p^+(y_2,
  s_2) = 0$. Then
  \begin{align*}
    |y_1 - y_2| \geq \zeta(t; s_1, s_2) :=
    \begin{cases}
      \sum_{i=1,2} (r(t)^2 - (t- s_i)^2)^\frac12 + r(t), & \max_{i=1,2}{|t - s_i|} < r(t),\\
      0, & \text{otherwise},
    \end{cases}
  \end{align*}
  for all $\tau \leq t \leq t_0$.
  Note that $\zeta(t; s_1, s_2)$ is the sum of radii of the space slices of $\Xi_{r(t)}$ at times $s_1$
  and $s_2$, and therefore if the inequality above is violated, both $(y_1, s_1)$ and $(y_2, s_2)$
  lie in $\Xi_{r(t)}(x,t)$ for some $x$.
\end{lemma}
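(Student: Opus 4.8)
The plan is to argue by contradiction, following the contrapositive statement made in the lemma itself: if the asserted inequality fails at some time level $t\le t_0$, then a single flattened set $\cl\Xi_{r(t)}(x,t)$ contains both $(y_1,s_1)$ and $(y_2,s_2)$, which forces $(x,t)\in\set{\rho^{-,r}=1}\setminus\set{p^{+,r}>0}$ and contradicts the definition \eqref{contact-time-r} of the first contact time $t_0$. So suppose $|y_1-y_2|<\zeta(t;s_1,s_2)$ for some $\tau\le t\le t_0$. Since the right-hand side is positive we are in the first branch of the definition of $\zeta$, so $\max_{i=1,2}|t-s_i|<r(t)$; abbreviate $\rho_i:=(r(t)^2-(t-s_i)^2)^{1/2}>0$. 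On the relatively open set $\set{t\ge\tau:\ \max_i|t-s_i|<r(t)}$ the map $t\mapsto\zeta(t;s_1,s_2)=\rho_1+\rho_2+r(t)$ is continuous, so if the strict inequality held at $t=t_0$ it would also hold at some $t<t_0$; hence we may assume $t<t_0$. The set $\set{s':\ \set{\rho^{-,r}=1}\cap\set{t'\le s'}\subset\set{p^{+,r}>0}}$ is down-closed in $[\tau,\infty)$ with supremum $t_0$, so $\set{\rho^{-,r}=1}\cap\set{t'\le t}\subset\set{p^{+,r}>0}$ for this $t$.

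Next I would locate the centre $x$ by elementary geometry. The time slice of $\cl\Xi_{r(t)}(x,t)$ at time $s_i$ is the closed ball about $x$ of radius $r(t)+\rho_i$ --- immediate from $\Xi_r=\set{(z,\sigma):\ \max(|z|-r,0)^2+\sigma^2<r^2}$ --- so $(y_i,s_i)\in\cl\Xi_{r(t)}(x,t)$ exactly when $|x-y_i|\le r(t)+\rho_i$. Since $|y_1-y_2|<r(t)+\rho_1+\rho_2\le(r(t)+\rho_1)+(r(t)+\rho_2)$, the closed balls $\cl B_{r(t)+\rho_1}(y_1)$ and $\cl B_{r(t)+\rho_2}(y_2)$ intersect; picking $x$ in the intersection puts both $(y_1,s_1)$ and $(y_2,s_2)$ into $\cl\Xi_{r(t)}(x,t)$. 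Moreover $t\ge\tau$, and $t-r(t)\ge\tau-r(\tau)=0$ because $r(\tau)=\tau$ and $t\mapsto t-r(t)$ is nondecreasing, so $\cl\Xi_{r(t)}(x,t)\subset\set{t'\ge0}$ and the convolutions $\rho^{-,r},p^{+,r}$ are defined at $(x,t)$.

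It then remains to read off the contradiction. By \eqref{sup-inf-convolutions}, $\rho^{-,r}(x,t)=\sup_{\cl\Xi_{r(t)}(x,t)}\rho^-\ge\rho^-(y_1,s_1)=1$; since $\rho^-\le1$ by Lemma~\ref{le:lim_props}(c) this gives $\rho^{-,r}(x,t)=1$. Likewise $p^{+,r}(x,t)=\inf_{\cl\Xi_{r(t)}(x,t)}p^+\le p^+(y_2,s_2)=0$; since $p^+\ge0$ this gives $p^{+,r}(x,t)=0$, so $(x,t)\notin\set{p^{+,r}>0}$. Hence $(x,t)\in\set{\rho^{-,r}=1}\cap\set{t'\le t}$ while $(x,t)\notin\set{p^{+,r}>0}$, contradicting the inclusion from the first paragraph. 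The concluding remark in the lemma (that a violated inequality places $(y_1,s_1),(y_2,s_2)$ in a common $\Xi_{r(t)}(x,t)$) is exactly the geometric placement step. This lemma is essentially elementary bookkeeping; the only points requiring care are getting the geometry of the flattened sets $\Xi_r$ and of their time slices right, and handling the endpoint $t=t_0$, which the continuity reduction in the first paragraph dispatches. No analytic input beyond $\rho^-\le1$, $p^+\ge0$ and the definition of $t_0$ enters.
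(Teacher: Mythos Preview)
Your proof is correct and follows essentially the same route as the paper: argue by contradiction, use continuity of $\zeta$ (together with $t_0>\tau$) to reduce to a time $t<t_0$, find a center $x$ so that both $(y_1,s_1)$ and $(y_2,s_2)$ lie in $\cl\Xi_{r(t)}(x,t)$, and read off $(x,t)\in\set{\rho^{-,r}=1}\cap\set{p^{+,r}=0}$, contradicting the definition of $t_0$. One small remark: the paper intends $\zeta(t;s_1,s_2)=\sum_i\bigl((r(t)^2-(t-s_i)^2)^{1/2}+r(t)\bigr)=\rho_1+\rho_2+2r(t)$ (the sum of the slice radii, as the note and the later identity $\sum_i|x_i-x_0|=\zeta$ make clear), not $\rho_1+\rho_2+r(t)$; your geometric step works verbatim for this larger $\zeta$ since $|y_1-y_2|<(r(t)+\rho_1)+(r(t)+\rho_2)$ is exactly the ball-intersection criterion you use.
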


\begin{proof}
  The definition of $t_0$ \eqref{contact-time-r} implies
  \begin{align}
    \label{empty-interior-intersect}
    \set{\rho^{-,r} = 1} \cap \set{p^{+,r} = 0} \cap \set{\tau \leq t < t_0}  = \emptyset.
  \end{align}
  Suppose that we have two points $(y_1, s_1)$ and $(y_2, s_2)$ satisfying the hypothesis, but $|y_1
  - y_2| < \zeta(t; s_1, s_2)$ for some $\tau \leq t \leq t_0$. In particular, $\max_i |t - s_i| < r(t)$. Thus by
  continuity and Lemma~\ref{le:t0_pos}, we can make $t$ smaller if necessary and assume that $\tau
  \leq t < t_0$. Then there exists a point $x$ such that
  \begin{align*}
    |x- y_i| < (r(t)^2 - (t- s_i)^2)^\frac12 + r(t), \qquad i = 1,2.
  \end{align*}
  In particular, $(y_i, s_i) \in \Xi_{r(t)} (x,t)$, $i = 1,2$. By definition of the sup-/inf-convolutions, we
  have $(x,t) \in \set{\rho^{-,r} = 1} \cap \set{p^{+,r} = 0}$, a contradiction
  with \eqref{empty-interior-intersect}.
\end{proof}

Now we are able to clarify the validity of $\rho^- \leq \rho^-_{\rm tr}$ in
Corollary~\ref{co:transport-comparison} in terms of $\set{p^+ > 0}$.

\begin{lemma}\label{le:weak_mon_r}
  We have
\begin{equation*}
  \rho^- \leq \rho^-_{\rm tr} \qquad \text{on }\Xi_{r(t)}(x,t) \text{ for any }(x,t) \in
\{p^{+,r} = 0 \} \cap \{t \leq t_0\},
\end{equation*}
where $\rho^-_{\rm tr}$ was defined in \eqref{rhoT}.
\end{lemma}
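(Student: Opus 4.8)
The plan is to deduce the bound from Corollary~\ref{co:transport-comparison} by verifying its hypothesis along backward streamlines, arguing by contradiction against the definition \eqref{contact-time-r} of the first contact time $t_0$.

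\emph{Reduction.} Fix $(x,t) \in \{p^{+,r}=0\} \cap \{t \le t_0\}$ and a point $(y,s) \in \Xi_{r(t)}(x,t)$; we must show $\rho^-(y,s) \le \rho^-_{\rm tr}(y,s)$. If $X(-s,y) \in \Omega^{-,0}$, then $\rho^-_{\rm tr}(y,s) = 1$ by \eqref{rhoT} while $\rho^- \le 1$ by Lemma~\ref{le:lim_props}(c), so there is nothing to prove. Otherwise $\rho^-(X(-s,y),0) = \rho^{-,0}(X(-s,y)) = \rho^{-,0}_E(X(-s,y)) < 1$ (using Lemma~\ref{le:order-initial} and $\rho^{-,0}_E < 1$), and by Corollary~\ref{co:transport-comparison} it suffices to prove that the backward streamline $\sigma \mapsto (X(\sigma-s,y),\sigma)$, $0 \le \sigma \le s$, never meets $\{\rho^- = 1\}$.

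\emph{Contradiction via $t_0$.} Suppose instead that $\rho^-(X(\sigma^*-s,y),\sigma^*) = 1$ for some $\sigma^* \in [0,s]$. The goal is to exhibit a point in $\{\rho^{-,r}=1\} \cap \{p^{+,r}=0\}$ at a time in $[\tau, t_0)$, contradicting \eqref{empty-interior-intersect}. On the pressure side, since $p^{+,r}(x,t)=0$, Lemma~\ref{le:char_r} run backward along the center streamline gives $p^{+,r}(X(\sigma-t,x),\sigma)=0$ for all $\tau \le \sigma \le t$, whence $\rho^{-,r}(X(\sigma-t,x),\sigma) < 1$ by the definition of $t_0$; equivalently, $\rho^- < 1$ on the tube $\bigcup_{\tau \le \sigma \le t} \cl\Xi_{r(\sigma)}\big(X(\sigma-t,x),\sigma\big)$. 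On the density side, I claim the backward streamline through $(y,s)$ stays inside this tube over the relevant range of times: by the property recorded just before Lemma~\ref{le:char_r} — a consequence of the choice $r(t) = r_0 e^{-2Lt}$ in \eqref{rt} and of Lemma~\ref{le:trajectory-bound} — a streamline that meets $\Xi_{r(t)}(x,t)$ also meets $\Xi_{r(\sigma)}(X(\sigma-t,x),\sigma)$ for every $\sigma < t$, since the spreading factor $e^{L(t-\sigma)}$ of streamlines is dominated by the growth ratio $r(\sigma)/r(t) = e^{2L(t-\sigma)}$, with room left over for the time offset $|s-t| < r(t)$ between $(y,s)$ and the center $(x,t)$. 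Placing $(X(\sigma^*-s,y),\sigma^*)$ in the corresponding $\cl\Xi_{r(\sigma^*)}(X(\sigma^*-t,x),\sigma^*)$ and combining with the pressure side yields $\rho^-(X(\sigma^*-s,y),\sigma^*) < 1$, the desired contradiction.

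\emph{Main obstacle and the leftover ranges.} The crux — and the step I expect to be most delicate — is making this tube-containment precise while simultaneously absorbing the time offset $s \ne t$, the flow spreading, and the restriction that the sup-/inf-convolutions and $t_0$ live only on $\{t \ge \tau\}$. Two range issues remain after the main argument. First, since $|s-t| < r(t)$ one may have $\sigma^* \in (t,s]$; there $s - \sigma^* < r(t)$ is small, and a direct check puts $(X(\sigma^*-s,y),\sigma^*)$ in a marginally enlarged $\cl\Xi_{r(t)}(x,t)$, so $\rho^{-,r}(x,t) = 1$ while $p^{+,r}(x,t)=0$ and $t \le t_0$ — again impossible by \eqref{empty-interior-intersect} (alternatively, one may invoke Lemma~\ref{le:geometry} directly). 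Second, the window $\sigma \in [0,\tau]$ is not covered by the tube: here $\{\rho^-(\cdot,0)=1\} = \Omega^{-,0}$ (Lemma~\ref{le:order-initial}) is compactly contained in $\{p^+(\cdot,0)>0\}$ by the strict ordering \eqref{initial-data-order} and Lemma~\ref{le:up-positive-init}, and a comparison with the radial superbarriers of Section~\ref{sec:barriers-rho-one} (as in the proofs of Lemmas~\ref{le:order-initial} and \ref{le:left-accessible}) keeps $\{\rho^- = 1\}$ off the streamline segment over $[0,\tau]$, using $X(-s,y) \notin \Omega^{-,0}$. Once these geometric points are settled, the reduction via Corollary~\ref{co:transport-comparison} and the backward streamline monotonicity of $\{p^{+,r} > 0\}$ (Lemma~\ref{le:char_r}) are routine.
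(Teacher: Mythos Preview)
Your approach is correct and is essentially the paper's: reduce to Corollary~\ref{co:transport-comparison} by showing $\rho^-<1$ along the backward streamline, use the backward streamline monotonicity of $\{p^{+,r}>0\}$ (Lemma~\ref{le:char_r}) together with \eqref{empty-interior-intersect} to get $\rho^{-,r}<1$ at the flowed-back centers, trap the streamline in the resulting tube, and handle the window near $t=0$ via the strict initial separation exactly as you outline.

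One refinement on the step you correctly flag as delicate. Your tube-containment as stated --- placing $(X(\sigma^*-s,y),\sigma^*)$ in $\cl\Xi_{r(\sigma^*)}(X(\sigma^*-t,x),\sigma^*)$ with \emph{matching} center time $\sigma^*$ --- is not quite the right formulation: the two flows differ by $|t-s|$, and the drift displacement over that gap is not absorbed by the ratio $r(\sigma^*)/r(t)=e^{2L(t-\sigma^*)}$ when $t-\sigma^*$ is small. The paper's device is to first replace $(x,t)$ by a nearby center $(y_0,s_0)$ with $s_0<t_0$ still containing $(y,s)$ in its $\Xi$-neighborhood, pick a pressure witness $(y_2,s_2)\in\cl\Xi_{r(s_0)}(y_0,s_0)$ with $p^+(y_2,s_2)=0$, and then flow all three points back by the \emph{same} amount $\sigma$. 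This preserves the time offsets $s_i-s_0$ inside the growing $\cl\Xi_{r(s_0-\sigma)}(X(-\sigma,y_0),s_0-\sigma)$, so Lemma~\ref{le:char} gives $p^{+,r}=0$ and hence $\rho^{-,r}<1$ at every flowed-back center, yielding $\rho^-<1$ at the flowed-back $(y,s)$. The preliminary shift to $s_0<t_0$ also cleanly ensures \eqref{empty-interior-intersect} applies and makes your separate case $\sigma^*\in(t,s]$ unnecessary.
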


\begin{proof}
  Let us set $U := \set{p^{+,r} = 0} \cap \set{t \leq t_0}$.
  The key step is to show that $\rho^- < 1$
  backwards in time along all streamlines starting in $\Xi_{r(t)} (x, t)$ for any $(x,t) \in
  U$. Then we can use Corollary~\ref{co:transport-comparison}.

  Let $(x,t) \in U$ and $(y_1, s_1) \in \Xi_{r(t)}(x,t)$. By the continuity of $r(t)$ and by
  Lemma~\ref{le:char_r}, there exists $(y_0, s_0) \in U$ with $s_0 < t_0$ such that $(y_1, s_1) \in
  \Xi_{r(s_0)}(y_0, s_0)$.
  As $p^{+,r}(y_0, s_0) = 0$, there also exists $(y_2, s_2) \in \cl \Xi_{r(s_0)}(y_0, s_0)$ such
  that $p^+(y_2,s_2) = 0$. By Lemma~\ref{le:char}, $p^+ = 0$ backwards in time along the streamline
  going through $(y_2, s_2)$.

  Recall the definition of $\tau > 0$ as the unique soluiton $r(\tau) = \tau$.
  By the definition of $r(t)$ and $\Xi_{r(t)}$, and the estimate \eqref{trajectory-bound},
  we have
  \begin{align*}
  (X(-\sigma, y_i), s_i - \sigma) \in \cl \Xi_{r(s_0 - \sigma)}(X(-\sigma, y_0), s_0 -
  \sigma) \qquad \text{for } i = 1,2,\ \sigma > 0.
  \end{align*}
  Therefore
  \begin{align*}
    p^{+,r}(X(-\sigma, y_0), s_0 - \sigma) \leq p^+(X(-\sigma, y_2), s_2 - \sigma) = 0 \qquad
    \text{for $s_0 - \sigma \geq \tau$},
  \end{align*}
  which implies by definition of $t_0$
  \begin{align*}
    \rho^-(X(-\sigma, y_1), s_1 - \sigma) \leq \rho^{-,r}(X(-\sigma, y_0), s_0 - \sigma)  < 1
    \qquad \text{for $s_0 - \sigma \geq \tau$}.
  \end{align*}
  When $s_0 - \sigma < \tau$, and specifically when $s_1 - \sigma \geq 0$ is close to the initial
  time $0$,
  we need to argue as in the proof of Lemma~3.16 to deduce $\rho^-(X(-\sigma, y_1), s_1 - \sigma) <
  1$.
  Therefore $\rho^- \leq \rho^-_{\rm tr}$ at $(y_1, s_1)$ by Corollary~\ref{co:transport-comparison}.
\end{proof}

Recall that to establish Claim~\ref{cl:no-contact}, we suppose that $t_0 < T$. We first observe that then there is a
``nice'' contact point $(x_0, t_0)$.

\begin{lemma}
  \label{le:contact_point}
  If $t_0 < \infty$ then:
  \begin{enumerate}\alphlist
    \item $\set{\rho^{-,r}(\cdot, t_0) = 1} \subset \cl{\set{p^{+,r}(\cdot, t_0) > 0}}$
    \item There exists a point $x_0$ with $\rho^{-,r}(x_0, t_0) = 1$ while $p^{+,r}(x_0, t_0) = 0$.
    \item At every contact point $x_0$, $p^{-,r}(x_0, t_0) = 0$.
    \item For every contact point $x_0$, there exist points $(x_1, t_1)$ and $(x_2, t_2)$ in
      $\partial \Xi_{r(t_0)}(x_0, t_0)$ such that $\rho^-(x_1, t_1) = 1$, $p^+(x_2, t_2) = 0$ and
      $\abs{t_i - t_0} < r(t_0)$, $i = 1,2$ (finite free boundary speed).  Moreover, $p^-(x_1, t_1) = 0$.
  \end{enumerate}
\end{lemma}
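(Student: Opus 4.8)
The plan is to prove the four assertions of Lemma~\ref{le:contact_point} in sequence, exploiting the fact that $t_0 < \infty$ is the first contact time \eqref{contact-time-r} and the geometric separation \eqref{empty-interior-intersect} valid for $\tau \le t < t_0$.

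For \textbf{(a)}, the point is that the sets $\set{\rho^{-,r} = 1}$ and $\set{p^{+,r} > 0}$ cannot properly cross at time $t_0$ because they are ordered for all earlier times. Concretely, if $x$ satisfies $\rho^{-,r}(x,t_0)=1$, then since $\rho^{-,r}$ is a sup-convolution of the USC function $\rho^-$, the set $\set{\rho^{-,r}=1}$ is left-accessible: there is a sequence $(x_k,t_k)\to(x,t_0)$ with $t_k<t_0$ and $\rho^{-,r}(x_k,t_k)=1$ (this uses Lemma~\ref{le:left-accessible} together with the fact that the convolution is over a set $\Xi_{r(t)}$ whose interior reaches backward in time, so a contact at $t_0$ forces nearby contacts at times $<t_0$). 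By the definition of $t_0$, each $(x_k,t_k)\in\set{p^{+,r}>0}$, so $x\in\cl{\set{p^{+,r}(\cdot,t_0)>0}}$ after passing to the limit. For \textbf{(b)}, suppose no such $x_0$ existed; then $\set{\rho^{-,r}(\cdot,t_0)=1}$, which is closed (sup-convolution of USC), would be contained in the open set $\set{p^{+,r}(\cdot,t_0)>0}$. The set $\set{\rho^{-,r}=1,\ \tau\le t\le t_0}$ is compact, so by a standard continuity/compactness argument the inclusion $\set{\rho^{-,r}=1}\cap\set{t\le s}\subset\set{p^{+,r}>0}$ would persist for $s$ slightly larger than $t_0$, contradicting the maximality in \eqref{contact-time-r}. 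This also uses Lemma~\ref{le:t0_pos} to know $t_0 > \tau$ so that we are in the region where everything is defined.

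For \textbf{(c)}, let $x_0$ be a contact point. We have $p^{+,r}(x_0,t_0)=0$, so there is $(y_2,s_2)\in\cl\Xi_{r(t_0)}(x_0,t_0)$ with $p^+(y_2,s_2)=0$; and $\rho^{-,r}(x_0,t_0)=1$, so there is $(y_1,s_1)\in\cl\Xi_{r(t_0)}(x_0,t_0)$ with $\rho^-(y_1,s_1)=1$. Now Lemma~\ref{le:weak_mon_r} applies at $(x_0,t_0)\in\set{p^{+,r}=0}\cap\set{t\le t_0}$ and gives $\rho^-\le\rho^-_{\rm tr}$ throughout $\Xi_{r(t_0)}(x_0,t_0)$; since $\rho^-(y_1,s_1)=1$ and $\rho^-_{\rm tr}\le 1$ by construction, $\rho^-_{\rm tr}(y_1,s_1)=1$, so $\rho^{-,0}_E$ must already be $1$ somewhere along the streamline, or $y_1$ lies in $\Omega^{-,0}$ transported forward. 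In either case Lemma~\ref{le:order-initial} and the strict ordering preclude $p^{-,r}(x_0,t_0)>0$ unless... — more directly: if $p^{-,r}(x_0,t_0)>0$ then $p^-(x,t)>0$ on all of $\cl\Xi_{r(t_0)}(x_0,t_0)$, so in particular $p^-(y_2,s_2)>0$; but then by $\set{p^->0}\subset\set{\rho^-=1}$ (Lemma~\ref{le:lim_props}(e)) we would contradict the bound $\rho^-\le\rho^-_{\rm tr}<1$ near $(y_2,s_2)$ coming from Lemma~\ref{le:weak_mon_r} and Lemma~\ref{le:rho-ext-order}(b). Hence $p^{-,r}(x_0,t_0)=0$. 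For \textbf{(d)}, we use Lemma~\ref{le:geometry}: with $(y_1,s_1)$ and $(y_2,s_2)$ as above sitting in $\cl\Xi_{r(t_0)}(x_0,t_0)$, the geometric lower bound $|y_1-y_2|\ge\zeta(t_0;s_1,s_2)$ forces both points onto the boundary $\partial\Xi_{r(t_0)}(x_0,t_0)$ with $|s_i-t_0|<r(t_0)$ strictly (otherwise one of the radii degenerates and the separation inequality is violated interior to $\Xi$), which is exactly the ``finite free boundary speed'' statement; relabel $(y_i,s_i)$ as $(x_i,t_i)$. The assertion $p^-(x_1,t_1)=0$ then follows: $(x_1,t_1)\in\partial\Xi_{r(t_0)}(x_0,t_0)$ has an exterior ball property for $\set{p^{-,r}(\cdot, t_0)>0}$-type geometry, and since $\rho^-(x_1,t_1)=1$ sits on the boundary of $\set{\rho^-=1}$ (as $p^{+,r}(x_0,t_0)=0$ prevents it from being interior), Lemma~\ref{le:lim_props}(b) — the exterior-ball-implies-zero-pressure statement for $p^-$ — gives $p^-(x_1,t_1)=0$.

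The main obstacle I expect is \textbf{(c)}: ruling out $p^{-,r}(x_0,t_0)>0$ cleanly. The difficulty is that $\rho^-\le\rho^-_{\rm tr}<1$ near a point requires knowing that the streamline through that point stayed below $1$ all the way back to $t=0$, which is precisely the content of Lemma~\ref{le:weak_mon_r}, and one must check its hypothesis $(x_0,t_0)\in\set{p^{+,r}=0}\cap\set{t\le t_0}$ is met and that the conclusion propagates to the specific auxiliary point $(y_2,s_2)$ where $p^-$ would be positive — i.e. combining the backward-streamline monotonicity of $\set{p^{+,r}>0}$ (Lemma~\ref{le:char_r}) with the convolution geometry so that the ``good'' region $\Xi_{r(t_0)}(x_0,t_0)$ actually contains a neighborhood of $(y_2,s_2)$ relative to the limits, not merely the convolved functions. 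The bookkeeping between $p^\pm$ and $p^{\pm,r}$, and between $\rho^\pm$ and $\rho^{\pm,r}$, on the closed set $\cl\Xi_{r(t_0)}(x_0,t_0)$ is where care is needed; everything else is a fairly direct application of the compactness, left-accessibility, and exterior-ball lemmas already established.
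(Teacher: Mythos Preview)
Your argument for (c) contains a genuine error. You write ``if $p^{-,r}(x_0,t_0)>0$ then $p^-(x,t)>0$ on all of $\cl\Xi_{r(t_0)}(x_0,t_0)$,'' but $p^{-,r}$ is the \emph{sup}-convolution of $p^-$ over $\cl\Xi_{r(t)}$ (it is defined analogously to $\rho^{-,r}$, not to $p^{+,r}$). Thus $p^{-,r}(x_0,t_0)>0$ only tells you that $p^->0$ at \emph{some} point of $\cl\Xi_{r(t_0)}(x_0,t_0)$, which could well be the point $(y_1,s_1)$ where you already know $\rho^-=1$; you get no information at $(y_2,s_2)$. The paper's route is entirely different and much shorter: from part (a) one has $\set{p^{-,r}(\cdot,t_0)>0}\subset\set{\rho^{-,r}(\cdot,t_0)=1}\subset\cl{\set{p^{+,r}(\cdot,t_0)>0}}$, and since $p^{+,r}$ is an inf-convolution, the set $\set{p^{+,r}(\cdot,t_0)=0}$ has an interior ball at $x_0$, i.e.\ $\set{p^{-,r}(\cdot,t_0)>0}$ has an exterior ball there. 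Combined with $-\Delta p^{-,r}(\cdot,t_0)\le F^{-,r}$ (Lemma~\ref{le:lim_props}(a) for sup-convolutions), Lemma~\ref{le:lim_props}(b) gives $p^{-,r}(x_0,t_0)=0$ directly.

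Your sketch of (d) is also too thin. Lemma~\ref{le:geometry} alone does not force $|t_i-t_0|<r(t_0)$; it is used only for the \emph{lower} bound $t_i>t_0-r(t_0)$, once the upper bound is known. The upper bound $t_2<t_0+r(t_0)$ comes from the streamline monotonicity of $\set{p^+>0}$ (Lemma~\ref{le:char}), and the upper bound $t_1<t_0+r(t_0)$ requires a genuine barrier argument: one first shows $\rho^-\le\rho^-_E<1-\delta$ on $\cl\Xi_r(x_0,t_0)$ via Lemma~\ref{le:weak_mon_r} and Lemma~\ref{le:rho-ext-order}, then rules out $t_1=t_0+r$ by noting that this would force $\set{\rho^-=1}$ to expand with infinite normal speed at $(x_1,t_1)$ into a region with external density strictly below $1$, contradicting the velocity-bound barriers (cf.\ Lemma~\ref{le:velocity-bounds}). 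Your appeal to ``one of the radii degenerates'' does not capture this. Finally, in (a) your passage from $p^{+,r}(x_k,t_k)>0$ with $t_k<t_0$ to $x\in\cl{\set{p^{+,r}(\cdot,t_0)>0}}$ needs the streamline monotonicity of Lemma~\ref{le:char_r} to move to the $t_0$-slice; a space-time limit alone lands you only in $\cl{\set{p^{+,r}>0}}_{t_0}$, which is not a priori the same set.
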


\begin{proof}[Proof of Lemma~\ref{le:contact_point}(a)]
  This is a consequence of the facts that $\rho^-$ cannot jump up to $1$ and $p^+$ cannot jump down
  to 0. More rigorously,
  let $\delta > 0$ be the constant given from Lemma~\ref{le:rho-ext-order}.  Suppose that
  \[
  y_0
  \notin \cl{\set{p^{+,r}(\cdot, t_0) > 0}}.
  \]  Take any sequence $(y_k, t_k) \to (y_0, t_0)$ with
  $t_k < t_0$.  By continuity of $X$, for large $k$ we have
  $ X(t_0 - t_k, y_k) \notin
  \cl{\set{p^{+,r}(\cdot, t_0) > 0}}$ and therefore Lemma~\ref{le:char_r} yields
  \[
  (y_k, t_k) \notin \cl{\set{p^{+,r} > 0}}.
  \]  By Lemma~\ref{le:liminf-interior-dense}(c), we have
  \[
  \rho^{+,r}_E(y_k, t_k) < 1.
  \] Therefore $\rho^{-,r}_E(y_k, t_k) < 1- \delta$ by
  Lemma~\ref{le:rho-ext-order}.  In particular, Lemma~\ref{le:weak_mon_r} yields $\rho^{-,r}(y_k,
  t_k) \leq \rho^{-,r}_E(y_k, t_k) < 1 - \delta$. Since the sequence is arbitrary, we conclude that
  $\rho^{-,r}(y_0, t_0) < 1$ by Lemma~\ref{le:left-accessible}.
\end{proof}

\begin{proof}[Proof of Lemma~\ref{le:contact_point}(b)]
  If such point does not exist, we have $\set{\rho^{-,r}(\cdot, t_0) = 1} \subset
  \set{p^{+,r}(\cdot, t_0) > 0}$ and therefore
  \[
  \set{\rho^{-,r} = 1} \cap \set{t \leq t_0} \subset
  \set{p^{+,r} > 0}.
  \]
   But this is a contradiction with a definition of $t_0$ since the
  set of $s$ on the right-hand side of \eqref{contact-time-r} is open by the compactness of the set
  $\set{\rho^{-,r} = 1} \cap \set{t \leq s}$ for all $s$.
\end{proof}

\begin{proof}[Proof of Lemma~\ref{le:contact_point}(c)]
 We can conclude that $p^{-,r}(x_0, t_0) = 0$ from Lemma~\ref{le:lim_props}(a), the fact that $-\Delta p^-(\cdot, t_1) \leq
  F^-$ in
  $\Rn$, and the existence of the exterior ball of $\set{p^{-,r}(\cdot, t_1) > 0}$ at $(x_0, t_0)$
  (interior ball of $\set{p^{+,r}(\cdot, t_0) = 0})$.
\end{proof}

\begin{proof}[Proof of Lemma~\ref{le:contact_point}(d)]
  Let us fix a contact point $(x_0, t_0)$ from (b) and set $r = r(t_0)$.  The existence of points
  $(x_1, t_1)$ and $(x_2, t_2)$ in $\cl\Xi_{r}(x_0,t_0)$ with $\rho^-(x_1,t_1) = 1$ and $p^+(x_2,
  t_2) = 0$ is clear from semi-continuity and the definition of $\rho^{-,r}$,
  $p^{+,r}$.

  By (a), we must have $p^+ > 0$ and $\rho^- < 1$ on $\Xi_r(x_0, t_0)$. In particular,
  $
  (x_i, t_i)   \in \partial \Xi_r(x_0, t_0) \hbox{ for } i = 1,2. $

  Let us show
  that  $t_1,t_2$ stay away from the boundary of $\Xi_r(x_0,t_0)$, i.e.,
  \begin{equation*}
  |t_i - t_0| < r \hbox{ for } i = 1, 2.
  \end{equation*}
 We first observe that $t_2 < t_0 + r$ by the monotonicity of $\set{p^+ > 0}$ along the
  streamlines, Lemma~\ref{le:char}. Indeed, if $t_2 = t_0 + r$, then $|x_2 - x_0| \leq r$ and
  $|X(\sigma, x_2) - x_0| \leq r + (r^2 - (t_0 - t_2 - \sigma)^2)^{1/2}$ for $0 < - \sigma \ll 1$
  by Lemma~\ref{le:trajectory-bound}. Therefore $(X(\sigma, x_2), t_2 + \sigma) \in \Xi_r(x_0,
  t_0)$, yielding $p^+(X(\sigma, x_2), t_2 + \sigma) > 0$, which contradicts
  Lemma~\ref{le:char}.

\medskip

It is more work to prove $t_1 < t_0 + r$. We first notice that $\rho^- \leq \rho^-_{\rm tr}$ in
  $\Xi_r(x_0, t_0)$ by
  Lemma~\ref{le:weak_mon_r}. Furthermore, by definition, if $(\rho^+_{\rm tr})_*(x, t) <1$ then
  $\rho^+_E(x,t) < 1$ and $(x, t) \notin \interior \Omega^{+,0}$.
  Thus, since $p^+(x_2, t_2) = 0$, by the continuity of $\rho^+_E$
  and Lemma~\ref{le:liminf-interior-dense}(c) we must have $\rho^+_E(x_2, t_2) \leq 1$
  and $X(-t_2, x_2) \notin \interior \Omega^{+,0}$.
  But then
  Lemma~\ref{le:rho-ext-order} and the choice of $r_0$ imply that $\rho^-_E < 1 - \delta$ on
  $\cl \Xi_r(x_0, t_0)$ for some $\delta > 0$, and $X(-t, x) \notin \Omega^{-,0}$ for all
  $(x,t) \in \cl \Xi_r(x_0, t_0)$. The latter implies $\rho^-_{\rm tr} = \rho^-_E$ in $\cl \Xi_r(x_0,
  t_0)$.

  Now, by the same argument as in the proof of (a), we conclude that $\rho^- \leq \rho^-_E <
  1 - \delta$, and hence
  \[
  p^- = 0 \hbox{ at } (x,t_0 + r) \hbox{ for } |x - x_0| < r.
  \]
   In particular, if $t_1
  = t_0 + r$, we have $|x_1 - x_0| = r$.
  This means that the boundary of $\set{\rho^-(\cdot, t_1) = 1}$ has an exterior ball of radius $r$
  at $x_1$. By a comparison with a radial supersolution for the elliptic problem, $p^- = 0$ on the boundary of
  $\Xi_r(x_0,t_0)$, and the growth of $p^-$ away from the boundary in space is controlled. However, $\set{\rho^-
  = 1}$ expands with an ``infinite speed'' at $(x_1, t_1)$ into a region with external density $\rho^-_E <
  1- \delta < 1$. An argument as in the proof of Lemma~\ref{le:velocity-bounds} below applies (see
  the details there), and we reach a contradiction.

\medskip

We have shown that $\max_i t_i < t_0 + r$. From Lemma~\ref{le:geometry} we deduce that
  $\min_i t_i > t_0 - r$.
\end{proof}

\begin{corollary}
  \label{co:order-pressure-contact}
  $p^{-,r} \leq p^{+,r}$ for $t \leq t_0$. The order is strict in $\set{p^{-,r} > 0}$.
\end{corollary}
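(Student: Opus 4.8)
The plan is to derive the inequality $p^{-,r} \leq p^{+,r}$ on $\set{t \leq t_0}$ by a viscosity-type comparison argument for the elliptic-in-space operators governing $p^{-,r}$ and $p^{+,r}$, using that up to the first contact time $t_0$ the congested region of the "$-$" solution is contained in the positivity set of the "$+$" pressure. First I would record the two one-sided elliptic inequalities available from Lemma~\ref{le:lim_props}(a): $-\Delta p^-(\cdot,t) \leq F^-$ in $\R^n$ and $-\Delta p^+(\cdot,t) \geq F^+$ in $\set{p^+(\cdot,t)>0}$ in the viscosity sense; these pass to the sup-/inf-convolutions $p^{-,r}$, $p^{+,r}$ with the constants replaced by their convolved versions $F^{-,r}$, $F^{+,r}$ (and, crucially, the convolution regularizes them enough that at any interior touching point one has genuine $C^{1,1}$ regularity of the convolution, so the inequalities hold classically there). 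On the region $\set{t \leq t_0}$, the definition \eqref{contact-time-r} of $t_0$ gives $\set{\rho^{-,r}=1} \cap \set{t\leq t_0} \subset \set{p^{+,r}>0}$, hence by Lemma~\ref{le:lim_props}(e) (applied to the convolved quantities) $\set{p^{-,r}>0} \cap \set{t\leq t_0} \subset \set{p^{+,r}>0}$.

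The core of the argument is then a fixed-time comparison on each slice $t \in [\tau, t_0]$. Suppose for contradiction that $\sup(p^{-,r}(\cdot,t) - p^{+,r}(\cdot,t)) > 0$ for some such $t$. Since $p^{-,r}$ is bounded with bounded support and $p^{+,r}$ is nonnegative, the supremum is attained at some $x^* \in \set{p^{-,r}(\cdot,t)>0} \subset \set{p^{+,r}(\cdot,t)>0}$, where both functions are positive and the convolutions are $C^{1,1}$; at such a point $\nabla(p^{-,r}-p^{+,r})(x^*) = 0$ and $-\Delta(p^{-,r}-p^{+,r})(x^*) \geq 0$ (touching from above by a paraboloid), so $-\Delta p^{-,r}(x^*,t) \geq -\Delta p^{+,r}(x^*,t) \geq F^{+,r}(x^*,t)$. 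But Lemma~\ref{le:rho-ext-order}(c) — which is applicable precisely because $p^-(x_1,t_1)>0$ at the underlying points realizing the convolution values, hence $F^{-,r} < F^{+,r}$ there — together with $-\Delta p^{-,r}(x^*,t) \leq F^{-,r}(x^*,t)$ gives $F^{-,r}(x^*,t) \geq F^{+,r}(x^*,t)$, a contradiction. (One must handle the possibility that the supremum is not attained because the positivity sets are unbounded, but they are not: $\set{p^{-,r}>0} \cap \set{t\leq t_0}$ is bounded by Lemma~\ref{le:pressure_uniform_bound} and the compact support of the initial data, so the maximum is genuinely attained.) This yields $p^{-,r} \leq p^{+,r}$ for $t \leq t_0$.

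For the strict inequality in $\set{p^{-,r}>0}$, I would run the strong maximum principle: on any connected component of $\set{p^{-,r}(\cdot,t)>0}$ the nonnegative function $w := p^{+,r}(\cdot,t) - p^{-,r}(\cdot,t)$ satisfies $-\Delta w \geq F^{+,r} - F^{-,r} > 0$ there (again using Lemma~\ref{le:rho-ext-order}(c), valid since $p^{-,r}>0$), so $w$ is a strict supersolution; if $w$ vanished at an interior point it would attain an interior minimum with a strictly positive right-hand side, which is impossible. Hence $w>0$, i.e., $p^{-,r} < p^{+,r}$ throughout $\set{p^{-,r}>0}$, $t \leq t_0$. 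The main obstacle I anticipate is making the passage from the viscosity inequalities for $p^-$, $p^+$ to pointwise elliptic inequalities for the convolutions $p^{-,r}$, $p^{+,r}$ at the contact point fully rigorous — in particular checking that sup-convolution over the flattened set $\cl\Xi_{r(t)}(x,t)$ turns the one-sided Laplacian bound into a genuine punctual inequality with the correct (convolved) constant, and that the spatial $C^{1,1}$ regularity really is available where we need it; this is exactly the role the flattened convolution sets $\Xi_{r(t)}$ and their shrinking radius $r(t) = r_0 e^{-2Lt}$ were designed to play, and the argument should parallel the standard inf-/sup-convolution theory for the Hele-Shaw problem in \cite{K03}, \cite{KP}.
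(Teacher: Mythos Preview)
Your approach is essentially the same as the paper's---reduce to an elliptic comparison on each time slice $t$, using that the sup-/inf-convolutions inherit the one-sided Laplacian bounds with the convolved sources $F^{-,r}$, $F^{+,r}$, and then invoke $F^{-,r} < F^{+,r}$ on $\set{p^{-,r}>0}$ via Lemma~\ref{le:rho-ext-order}(c). The strict inequality via the strong maximum principle is exactly what the paper means by ``the elliptic comparison principle yields $p^{-,r} < p^{+,r}$ in $\set{p^{-,r}>0}$.''

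There is, however, a genuine gap at the contact time $t = t_0$. You write that ``the definition \eqref{contact-time-r} of $t_0$ gives $\set{\rho^{-,r}=1}\cap\set{t\leq t_0} \subset \set{p^{+,r}>0}$,'' but the supremum defining $t_0$ is \emph{not} attained: Lemma~\ref{le:contact_point}(b) produces a point $x_0$ with $\rho^{-,r}(x_0,t_0)=1$ and $p^{+,r}(x_0,t_0)=0$, so the inclusion fails at $t=t_0$. Consequently your passage ``$x^*\in\set{p^{-,r}(\cdot,t)>0}\subset\set{p^{+,r}(\cdot,t)>0}$'' is unjustified for $t=t_0$. The paper closes this gap by invoking Lemma~\ref{le:contact_point}(a) and (c): part (a) gives $\set{\rho^{-,r}(\cdot,t_0)=1}\subset\cl{\set{p^{+,r}(\cdot,t_0)>0}}$, and part (c) says that at every contact point (where $\rho^{-,r}=1$ but $p^{+,r}=0$) one has $p^{-,r}=0$ as well, via the exterior ball of $\set{p^{+,r}(\cdot,t_0)=0}$. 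Together these ensure $p^{-,r}(\cdot,t_0)=0$ on $\partial\set{p^{+,r}(\cdot,t_0)>0}$, which is exactly the boundary condition you need for the elliptic comparison at $t=t_0$. Once you insert this, your argument is complete and matches the paper's.
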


\begin{proof}
  For $t < t_0$ this just follows from the comparison principle for Poisson's equation by
  Lemma~\ref{le:lim_props}(a).
  First note that $p^{-,r}
  = 0$ on $\partial \set{p^{+,r}(\cdot, t) > 0}$ for $t < t_0$. Indeed
  \[
  \set{p^{-,r}(\cdot, t) > 0}
  \subset \set{\rho^{-,r}(\cdot, t) = 1} \subset \set{p^{+,r}(\cdot, t) > 0}.
  \]
  At $t = t_0$ we use Lemma~\ref{le:contact_point}(a) and (c).
  Moreover, by a standard viscosity solution argument, $-\Delta p^{-,r}
  \leq F^{-,r}$ and $-\Delta
  p^{+,r} \geq F^{+,r}$ in the open set $\set{p^{+,r} > 0}$, where $F^{\pm,r}$ are defined as in
  \eqref{sup-inf-convolutions}.  But by Lemma~\ref{le:rho-ext-order} and the choice of $r(t)$, we
  have $F^{-,r} < F^{+,r}$ in $\set{p^{-,r} > 0}$. The elliptic comparison principle yields
  \[p^{-,r} <
  p^{+,r} \hbox{  in }\set{p^{-,r} > 0}.
  \]
\end{proof}

To finish the proof of Claim~\ref{cl:no-contact}, suppose that $t_0 < T$ and choose points
$(x_0, t_0)$, $(x_1, t_1)$, $(x_2, t_2)$ from Lemma~\ref{le:contact_point}. We will construct
barriers for $\rho^-_m$ at $(x_1, t_1)$ and for $\rho^+_m$ at $(x_2, t_2)$ that will yield a
contradiction with the comparison principle for \eqref{pme}. We will use Lemma~\ref{le:geometry} to
keep track of the geometry of the free boundaries around $(x_1, t_1)$ and $(x_2, t_2)$, including the
existence of exterior and interior balls, as well as ordering of normal velocities.

First, since $(x_1, t_1), (x_2, t_2) \in \partial \Xi_{r(t_0)}(x_0, t_0)$ and $\max_i|t_0 - t_i| <
r(t_0)$, we have by
Lemma~\ref{le:geometry}
\begin{align*}
  |x_1 - x_2| \leq \sum_{i=1,2} |x_i - x_0| = \zeta(t_0; t_1, t_2) \leq |x_1 - x_2|.
\end{align*}
This implies that $x_0 - x_1$ and $x_0
- x_2$ are nonzero and parallel with opposite directions. We will set
\begin{align*}
  \nu := \frac{x_2 - x_0}{|x_2 - x_0|}.
\end{align*}
This will serve as the outer unit normal vector of the free boundaries at the contact point.

Moreover, $t \mapsto \zeta(t; t_1, t_2)$ has a maximum at $t_0$ in $t \leq t_0$ and therefore $\partial_tf(t_0;
t_1, t_2) \geq 0$, which, after
rearranging the terms, yields
\begin{align}
  \label{speed-diff-bound}
  \sum_{i=1,2} \frac{t_0 - t_i}{(r(t_0)^2 - (t_0 - t_i)^2)^\frac12} \leq
  r'(t_0) \sum_{i=1,2}  \pth{1 + \frac{r(t_0)}{(r(t_0)^2 - (t_0 - t_i)^2)^\frac12}} \leq 4 r'(t_0),
\end{align}
since $r'(t_0) < 0$.

As a proxy for the normal velocity of $\set{\rho^- = 1}$ at $(x_1, t_2)$ and the normal velocity of
$\set{p^+ > 0}$ at $(x_2, t_2)$, we define
\begin{align*}
  V_1:= -\partial_{s_1} \zeta(t_0; t_1, t_2), \qquad V_2:= \partial_{s_2} \zeta(t_0; t_1, t_2).
\end{align*}
Differentiating, we get the estimate
\begin{align}
  \label{velocity-gap}
  -V_1 + V_2 = \partial_{s_1} \zeta(t_0; t_1, t_2) + \partial_{s_2} \zeta(t_0; t_1, t_2)
  = \sum_{i=1,2} \frac{t_0 - t_i}{(r(t_0)^2 - (t_0 - t_i)^2)^\frac12} \leq 4 r'(t_0),
\end{align}
where we used \eqref{speed-diff-bound} for the last inequality.
Geometrically, in view of Lemma~\ref{le:geometry}, $V_1$ gives a \emph{lower} bound on the normal velocity of $\partial
\set{\rho^- = 1}$
at $(x_1, t_1)$, while $V_2$ gives an \emph{upper} bound on the normal velocity of $\partial
{\set{p^+ > 0}}$ at $(x_2, t_2)$.

We define the (space) slopes of pressure at $(x_0, t_0)$ as
\begin{align}
  \label{eta}
  \eta_1 := \limsup_{h \to 0+} \frac{p^{-,r}(x_0 - h\nu, t_0)}h, \qquad
  \eta_2 := \liminf_{h \to 0+} \frac{p^{+,r}(x_0 - h\nu, t_0)}h.
\end{align}
We have
\begin{align}
  \label{weak-grad-order}
  \eta_1 < \eta_2
\end{align}
by Corollary~\ref{co:order-pressure-contact} and Hopf's lemma.
By definition, we have $\sup_{\cl\Xi_{r(t_0)}(x_0 - h\nu, t_0)} p^- = p^{-,r}(x_0 - h\nu, t_0)$, so
$\eta_1$ provides a bound on the slope of $p^-$ at $(x_1,t_1)$. Recall that $p^-(x_1,t_1) = 0$ by
Lemma~\ref{le:contact_point}(d). An analogous reasoning applies to
  $p^+$ at $(x_2,t_2)$.

\medskip

By a barrier argument, we obtain the following bounds on the normal velocity.

\begin{lemma}
  \label{le:velocity-bounds}
\[
 V_1 \leq \frac{\eta_1}{1 - \rho^-_E(x_1, t_1)} + \vec b(x_1) \cdot \nu \quad \hbox{ and }\quad
V_2 \geq \frac{\eta_2}{1 - \rho^+_E(x_2, t_2)} + \vec b(x_2) \cdot \nu.
\]
\end{lemma}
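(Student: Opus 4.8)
The plan is to prove both inequalities by comparing $\rho^-_m$ near $(x_1,t_1)$, respectively $p^+_m$ near $(x_2,t_2)$, with barriers obtained by transporting along the flow map $X$ the radial solutions of the drift-free limit problem of \cite{KP}, in the perturbed form constructed in Section~\ref{sec:barriers-rho-one}; this is the usual way of extracting a viscosity free-boundary velocity for the limits, and here it realizes the idea — already invoked in the proof of Lemma~\ref{le:contact_point}(d) — that $\set{\rho^-=1}$ cannot expand with infinite speed into a region where the external density stays strictly below $1$. The two bounds are dual, so I describe the one for $V_1$ and then indicate the modifications for $V_2$. \emph{Setup at $(x_1,t_1)$.} I would first record the local data: by Lemma~\ref{le:contact_point} and the definition of $t_0$, the space slices of $\Xi_{r(t_0)}(x_0,t_0)$ form an exterior-ball region for $\set{\rho^-=1}$ at $(x_1,t_1)$, with outer unit normal $\nu$ and the time-$t_1$ slice passing through $x_1$; moreover $p^-(x_1,t_1)=0$ by Lemma~\ref{le:contact_point}(d), and by Lemma~\ref{le:weak_mon_r} together with $\rho^-_{\rm tr}=\rho^-_E$ on $\Xi_{r(t_0)}(x_0,t_0)$ (established in the proof of Lemma~\ref{le:contact_point}(d)) we have $\rho^-\le\rho^-_E<1-\delta$ there, with $\rho^-_E$ continuous. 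By the definition \eqref{eta} of $\eta_1$ and of the sup-convolution, the one-sided slope of $p^-$ at $x_1$ in the direction $-\nu$ is at most $\eta_1$, and $-\Delta p^-(\cdot,t_1)\le F^-$ in the viscosity sense by Lemma~\ref{le:lim_props}(a).

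To get the first bound I would argue by contradiction: suppose $V_1>W_1:=\frac{\eta_1}{1-\rho^-_E(x_1,t_1)}+\vecb(x_1)\cdot\nu$ and fix $\e\in(0,V_1-W_1)$. Using Section~\ref{sec:barriers-rho-one}, I build a radial solution of \eqref{pmeKP} with interior pressure solving $-\Delta\phi=\sup_{\rm loc}F^-+\e$, boundary slope $\eta_1+\e$, and external density $\rho^-_E(x_1,t_1)+\e$, hence with congested-core speed $\frac{\eta_1+\e}{1-\rho^-_E(x_1,t_1)-\e}+O(\e)$; transporting it along $X$ and adjusting the core radius with the correction built into $r(t)=r_0e^{-2Lt}$ (Lemma~\ref{le:trajectory-bound}) gives, for all $m$ large, a classical supersolution $\psi_m$ of \eqref{pme} in a space-time neighborhood of $(x_1,t_1)$ whose congested core advances in the direction $\nu$ at speed $W_1+\e<V_1$. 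I initialize $\psi_m$ at time $t_1-h$ ($h>0$ small) so that its congested core is the complement of a slightly shrunk slice of $\Xi_{r(t_0)}(x_0,t_0)$: since $\set{\rho^-(\cdot,t_1-h)=1}$ avoids that slice, $\rho^-\le\rho^-_E+\e\le\psi_m$ in the exterior part, and $p^-\le$ (the radial supersolution pressure) $<$ (barrier pressure) in the overlap of the cores, upper semicontinuity of the half-relaxed limits yields $\psi_m(\cdot,t_1-h)\ge\rho^-_m(\cdot,t_1-h)$ on the relevant cylinder for $m$ large. The comparison principle for \eqref{pme}, Lemma~\ref{le:comparison-pme}, then gives $\rho^-_m\le\psi_m$ on $[t_1-h,t_1]$ near $(x_1,t_1)$, so $\set{\rho^-=1}\subseteq\set{\psi_m=1}$ there and in particular $x_1\in\set{\psi_m(\cdot,t_1)=1}$. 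On the other hand, since $V_1$ is a lower bound on the normal velocity of $\partial\set{\rho^-=1}$ at $(x_1,t_1)$ (obtained from the elementary differentiation of $\zeta$ as in \eqref{velocity-gap} and Lemma~\ref{le:geometry}), while the core of $\psi_m$ advances by only $(W_1+\e)h<V_1h$ over $[t_1-h,t_1]$, the barrier core at time $t_1$ does not reach $x_1$ — a contradiction. Hence $V_1\le W_1$.

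For $V_2$ one argues dually at $(x_2,t_2)$: here the slices of $\Xi_{r(t_0)}(x_0,t_0)$ form an interior-ball region for $\set{p^+>0}$, $p^+(x_2,t_2)=0$, $-\Delta p^+(\cdot,t_2)\ge F^+$ in $\set{p^+>0}$ by Lemma~\ref{le:lim_props}(a), the slope of $p^+$ at $x_2$ in the direction $-\nu$ is at least $\eta_2$ by \eqref{eta}, and $\rho^+\ge\rho^+_E$ by Corollary~\ref{co:transport-comparison}. Assuming $V_2<W_2:=\frac{\eta_2}{1-\rho^+_E(x_2,t_2)}+\vecb(x_2)\cdot\nu$ and fixing $\e\in(0,W_2-V_2)$, I would construct a transported radial Hele-Shaw \emph{sub}solution $\pi_m\le p^+_m$ of \eqref{pme} with interior pressure $-\Delta\phi=\inf_{\rm loc}F^+-\e$, boundary slope $\eta_2-\e$, external density $\rho^+_E(x_2,t_2)-\e$, whose congested core at $t_2-h$ lies inside the slice $B_{\zeta(t_0;t_1,t_2-h)}(x_1)\subseteq\set{p^+>0}$ and advances at speed $W_2-\e>V_2$; comparison then forces the core of $\pi_m$ to reach $x_2$ by time $t_2$, so $p^+(x_2,t_2)\ge\liminf_{m\to\infty}\pi_m(x_2,t_2)>0$ by nondegeneracy (Remark~\ref{re:p-nondegeneracy}), contradicting $p^+(x_2,t_2)=0$.

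The main obstacle is the barrier construction and its verification: one needs that the perturbed radial solution of the drift-free problem of \cite{KP}, composed with the flow $X$ and corrected via $r(t)=r_0e^{-2Lt}$, is a genuine strict super-/sub-solution of the full drift equation \eqref{pme} for all $m$ large, uniformly as $\e,h\to0$, and that it reproduces the velocity $\frac{\eta}{1-\rho^E}+\vecb\cdot\nu$ \emph{including} in the degenerate regime $\rho^E\uparrow1$, where this velocity blows up — precisely the delicate point flagged in the introduction and handled by the perturbed radial test functions of the appendix. By contrast, the remaining ingredients — converting the semicontinuous bounds on $\rho^\pm$, $p^\pm$ into pointwise bounds on $\rho^\pm_m$, $p^\pm_m$ for $m$ large, localizing the comparison in a suitable cylinder, and reading off the velocity inequalities from the $\zeta$-computation via Lemma~\ref{le:geometry} — are routine.
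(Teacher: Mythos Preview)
Your proposal is correct and follows essentially the same approach as the paper: argue by contradiction, build a radially symmetric barrier for the drift-free problem with the perturbed parameters $(\eta_1+\e,\ \rho^-_E(x_1,t_1)+\e,\ \sup_{\rm loc}F^-)$, transport it along the streamline via the inf-convolution construction of Section~\ref{sec:barriers-rho-one} to obtain supersolutions of \eqref{pme}, order them above $\rho^-_m$ on a parabolic neighborhood, and then reach a contradiction with $\rho^-(x_1,t_1)=1$ because the barrier's congested core moves too slowly to reach $x_1$ by time $t_1$; the $V_2$ bound is the dual statement. The paper organizes this slightly differently---it first writes down an explicit smooth $\phi$ satisfying the superbarrier inequalities of Definition~\ref{def:barrier} at $(x_1,t_1)$, checks carefully via \eqref{p-phi} that $p^-<\phi$ and $\rho^-\le\chi_{\{\phi>0\}}+\rho^E\chi_{\{\phi\le0\}}$ on a parabolic neighborhood minus $(x_1,t_1)$, and only then invokes Proposition~\ref{pr:barrier-sequence} and Lemma~\ref{le:barrier-order}---but the content is the same. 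One small imprecision: your line ``$\set{\rho^-=1}\subseteq\set{\psi_m=1}$'' should be stated for the half-relaxed limit of $\psi_m$ rather than for fixed $m$, and the ordering on the full parabolic boundary (not just the bottom $t=t_1-h$) needs to be checked, which is exactly what Lemma~\ref{le:barrier-order} packages.
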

\begin{proof}
We will use a barrier argument using a barrier in Section~\ref{sec:barriers-rho-one}
at points $(x_1, t_1)$ and $(x_2, t_2)$.
Let us set up the barrier at $(x_1,t_1)$.

Suppose that the inequality for $V_1$ is violated. Then there exists $\e > 0$ such that
\begin{align}
  \label{V_1_violation}
V_1 > \frac{\eta_1 + \e}{1 - \rho^-_E(x_1,t_1) - \e} + \vec b(x_1) \cdot \nu + \e.
\end{align}

Let us define the space radius
\begin{align*}
  s(t) := |x_1 - x_0| + (V_1 - \e)(t_1 - t).
\end{align*}
Let us take $h^* \in (0, s(t_1))$ and set $x^* := x_0 - h^* \nu$.
Define the smooth function $\phi = \phi(x,t)$, radially symmetric with respect to $x^*$, for $x
\neq x^*$ as
\begin{align*}
  \phi(x,t) = (\eta_1 + \e) (|x - x_*| - s(t) + h^*).
\end{align*}
By construction and due to assumption \eqref{V_1_violation}, we have
\begin{align*}
  \frac{\phi_t(x_1,t_1)}{|\nabla \phi|(x_1,t_1)} = V_1 - \e > \frac{|\nabla \phi|(x_1,t_1)}{1 -
  \rho^-_E(x_1,t_1) - \e} + \vecb(x_1) \cdot \nu.
\end{align*}
We set
\[
  \rho^E(x,t) := (\rho^-_E(x_1,t_1) + \e) e^{-(t - t_1) \sup F^-}.
\]
Let us check that
\begin{align}
  \label{p-phi}
  p^- < \phi \qquad \text{on $U \cap \cl{\set{p^- > 0}} \cap \set{t \leq t_1} \setminus
  \set{(x_1,t_1)}$,}
\end{align}
for some small neighborhood $U \ni (x_1, t_1)$.
We define the radius of $\Xi_{r(t_0)}(x_0, t_0)$ as
\begin{align*}
  \zeta_0(t) := r(t_0) + (r(t_0)^2 - (t - t_0)^2)^{1/2}.
\end{align*}
By the definition of $\eta_1$ in \eqref{eta}, for small $h \geq 0$ we have $p_-(x,t) \leq (\eta_1 +
\tfrac \e2)h$ for $x \in \cl B_{\zeta_0(t)}(x_0 - h\nu)$, $\abs{t - t_0} \leq r(t_0)$, with equality only for $h = 0$.
On the other hand, $\phi(x,t) \geq (\eta_1 + \tfrac \e2)h$ on $\partial
B_{s(t) - h^* + h}(x^*)$, with equality only for $h = 0$.
For $t\leq t_2$ close to $t_2$, we have $\zeta_0(t) \geq s(t) + h^*$, with equality only at $t =
t_2$. Therefore
\[
\partial B_{s(t) - h^* + h}(x^*) \subset \cl B_{\zeta_0(t)}(x_0 - h\nu) \hbox{ for } 0 \leq t_2 - t \ll 1, 0 \leq h < h^*.
\] By making $U$ sufficiently small, \eqref{p-phi} follows.

\medskip

By a similar argument, recalling that $\rho^- \leq \rho^-_E$ in $\Xi_{r(t_0)}(x_0,t_0)$, by making $U$ smaller if necessary, we deduce that
\[
\rho^- \leq
\chi_{\set{\phi>0}} + \rho^E \chi_{\set{\phi>0}^\compl} \hbox{ on } U \cap \set{t \leq t_1} \setminus
\set{(x_1,t_1)},
\] with strict inequality when $\rho^- < 1$.  By Proposition~\ref{pr:barrier-sequence}, there exists a sequence $\varphi_\rho^m$, $\varphi_p^m$
of supersolutions of \eqref{pme} in a parabolic neighborhood $\mathcal N$ of $(x_1,t_1)$. By
Lemma~\ref{le:barrier-order}, for large $m$ we must have $p^-_m \leq \varphi_p^m$ and $\rho^-_m
\leq \varphi_\rho^m$, even allowing for small translations of $\varphi_p^m$, $\varphi_\rho^m$.
However this is a contradiction with the fact $\rho^-(x_1,t_1) = 1$ and $(x_1,t_1) \in \partial
\set{\liminf_* \varphi_\rho^m = 1}$.
Therefore \eqref{V_1_violation} leads to a contradiction.

\medskip

The argument for $V_2$ is analogous, constructing $\phi$ that is below $p^+$ in a neighborhood of
$(x_2, t_2)$.
\end{proof}

Hence $t_0 < T$ leads to a contradiction.
Indeed,
since $|(x_1, t_1) - (x_2, t_2)| \leq 4 r(t_0)$, Lemma~\ref{le:velocity-bounds}, \eqref{weak-grad-order},
Lemma~\ref{le:rho-ext-order} and the Lipschitz continuity of $\vec b$ imply
\begin{align*}
  V_1 \leq \frac{\eta_1}{1 - \rho^-_E(x_1, t_1)} + b(x_1) \cdot \nu \leq \frac{\eta_2}{1 -
    \rho^+_E(x_2, t_2)} + b(x_2) \cdot \nu  + 4 L r(t_0) \leq V_2 + 4L r(t_0).
\end{align*}
But that contradicts \eqref{velocity-gap} as $r'(t_0) = -2 L r(t_0)$.
Therefore $t_0 \geq T$, proving Claim~\ref{cl:no-contact}.

\begin{proof}[Proof of Theorem~\ref{th:almost_cp}]
Since $T > 0$ in Claim~\ref{cl:no-contact} can be taken arbitrary, we conclude that
\begin{align*}
  \set{\rho^- = 1} \subset \set{p^+ > 0}.
\end{align*}
Combining this with Lemma~\ref{le:lim_props}(e), we recover the full set ordering
\begin{align*}
  \set{p^- > 0} \subset \set{\rho^- = 1} \subset \set{p^+ > 0} \subset
  \set{\rho^+ = 1}.
\end{align*}

Since $\interior \set{p^{+,r} = 0} \supset \set{p^+ = 0} \cap \set{t > \tau}$ for any $r_0 > 0$ where
$\tau$ is the solution of $r(\tau)= \tau$,
from Lemma~\ref{le:weak_mon_r} we deduce that $\rho^- \leq \rho^-_E$ in $\set{p^+ = 0} \supset
\set{\rho^+ < 1}$.
Since $\rho^-_E \leq \rho^+_E \leq \rho^+$ on $\set{\rho^+ < 1}$ by
Corollary~\ref{co:transport-comparison}, we conclude that
\begin{align*}
  \rho^- \leq \rho^+.
\end{align*}
Similarly, $p^- = 0$ on $p^+ = 0$. Therefore the comparison principle for the elliptic problem on
$\set{p^+(\cdot, t) >0}$ using Lemma~\ref{le:lim_props}(a) yields
\begin{align*}
  p^-(\cdot, t) \leq p^+(\cdot, t) \qquad t > 0.
\end{align*}
Theorem~\ref{th:almost_cp} follows.
\end{proof}

\subsection{Notion of viscosity solutions of \texorpdfstring{\eqref{hs}}{HS}}
\label{sec:viscosity-solutions}

Before we proceed to the next section, we generalize the almost comparison obtained above to
viscosity solutions. This general comparison principle will be used in
Section~\ref{sec:convergence} to show that a
unique congested zone $\overline{\{\lim_{m\to\infty} \rho=1\}}$ emerges in the limit $m\to \infty$,
and this set satisfies the evolution law given in \eqref{hs}, in the sense of viscosity solutions.

\medskip

We will define the viscosity solutions for \eqref{hs} via barriers in the spirit of \cite{CV}. The
notion via test functions can be developed as in \cite{KP,K03}. In fact, this notion was used in
\cite{AKY} for \eqref{hs} with zero exterior density, as well as in \cite{CKY}. See also
\cite{Pozar14} for the equivalence of these notions in a monotone problem.

We will consider the problem \eqref{hs}, rewritten here using $V = \frac{u_t}{|\nabla u|}$ and $\nu
= -\frac{\nabla u}{|\nabla u|}$ as
\begin{align*}
\left\{
\begin{aligned}
  -\Delta u &= F && \text{in } \set{u > 0},\\
  u_t &= \frac{\abs{\nabla u}^2}{(1 - \rho^E)_+} - \vecb \cdot \nabla u  && \text{on } \partial\set{u > 0},
\end{aligned}
\right.
\end{align*}
where $F \in C(\Rn)$, $\inf F > 0$, and $\rho^E \in C(\Rn)$, $\rho^E \geq 0$ and in this paper
$\rho^E$ solves the transport
equation \eqref{te} with a given Lipschitz vector field $\vecb \in C(\Rn, \Rn)$. Recall that $\frac
1{(1 - \rho^E)_+}$ is understood as $+\infty$ when $\rho^E
\geq 1$.

\begin{remark}
  In general, it is possible to consider nonlinear uniformly elliptic equations like $\mathcal F(D^2u)
  = F$, and $\rho^E$ can be any continuous function.
\end{remark}

We use the notion of the strict separation and a parabolic neighborhood as defined in
\cite{Pozar14,KP}.

\begin{definition}[Parabolic neighborhood and boundary]
\label{def:parabolic-nbd}
\ \\A nonempty set $E \subset \Rn\times \R$ is called a \emph{parabolic neighborhood} if $E = U \cap \set{t \leq \tau}$ for some open set $U \subset \Rn\times \R$ and some $\tau \in \R$.
We say that $E$ is a parabolic neighborhood of $(x,t) \in \Rn\times\R$
if $(x,t) \in E$.
Let us define $\partial_P E := \cl{E} \setminus E$, the \emph{parabolic boundary} of $E$.
\end{definition}

\begin{definition}[Strict separation]
\label{def:strict-separation}
Let $E \subset \Rn \times \R$ be a parabolic neighborhood,
and $u, v : E \to \R$ be
bounded functions on $E$,
and let $K \subset \cl E$.
We say that $u$ and $v$ are strictly separated
on $K$ with respect to $E$,
and we write $u \strictordwrt{E} v$ in $K$,
if
\[
  u^* < v_* \hbox{ in } K \cap \cl{\set{u > 0}}.
\]
Recall that $u^*$ and $v_*$ are well-defined on $\cl E$.
\end{definition}

We introduce (strict) barriers. Intuitively, these are local strict classical subsolutions or
supersolutions of \eqref{hs}.

\begin{definition}
\label{def:barrier}
Let $U \subset \Rn \times \R$ be a nonempty open set and let $\phi \in C^{2,1}(\cl U)$.
We say that $\phi$ is a \emph{subbarrier} of \eqref{hs} on $U$ if $\phi$ satisfies on $\cl U$
\begin{enumerate}[(i)]
  \item $-\Delta \phi < F$ on $\cl{\set{\phi > 0}}$,
  \item $\phi_t < \frac{\abs{\nabla\phi}^2}{(1-\rho^E)_+} - \vecb \cdot \nabla\phi$ when $\phi = 0$,
  \item $|\nabla\phi| > 0$ when $\phi = 0$.
\end{enumerate}
A \emph{superbarrier} is defined analogously by reversing the inequalities in (i)--(ii),
and requiring additionally that $\rho^E < 1$ when $\phi \leq 0$.
\end{definition}

The viscosity solutions are defined via the comparison principle with the barriers as in \cite{KP}.
However, since we are dealing with a nonmonotone problem, we explicitly add the evolving set into
the definition of a viscosity subsolution, similarly to \cite{CKY}. For a related developments see
\cite{Barles-Souganidis,Cardaliaguet-Rouy}.

\begin{definition}
\label{def:visc-sol}
Suppose that $\mathcal{N} \subset \Rn \times \R$.
We say that a pair $(u, \Sigma)$ of a non-negative upper semi-continuous
function $u: \mathcal{N} \to [0,\infty)$ and a closed set $\Sigma \subset \Rn \times \R$ is
a \emph{viscosity subsolution} of \eqref{hs} on $\mathcal{N}$  if
$\set{u > 0} \subset \Sigma$ and
for every bounded parabolic neighborhood $E \subset \mathcal{N}$, $E = U \cap \set{t \leq \tau}$ for some
open set $U$ and $\tau \in \R$,
and every superbarrier $\phi$ on $U$
such that $u \strictordwrt{E} \phi$ on $\partial_P E$ and $\Sigma \cap \partial_P E \subset
\set{\phi > 0}$,
we also have $u \strictordwrt{E} \phi$ on $\cl E$ and $\Sigma \cap \cl E \subset \set{\phi > 0}$.

\medskip

Similarly, a non-negative
lower semi-continuous function $u: \mathcal{N} \to [0,\infty)$ is
a \emph{viscosity supersolution} of \eqref{hs} if
$\set{\rho^E \geq 1} \cap \mathcal{N} \subset \cl{\set{u > 0}}$,
and for every bounded parabolic neighborhood $E \subset \mathcal{N}$
and every subbarrier $\phi$ on $U$ such that $\phi \strictordwrt{E} u$ on $\partial_P E$, we also have $\phi \strictordwrt{E} u$ on $\cl E$.

\medskip

Finally, $u$ is a \emph{viscosity solution} if $u_*$ is a viscosity supersolution and $(u^*,
\cl{\set{u_* > 0}})$ is a viscosity subsolution.  We say $u$ is a \emph{viscosity solution of
\eqref{hs} in $\mathcal{N} = \Rn \times (0,T)$, $T > 0$, with initial density $\rho^0$} if it is a
viscosity solution in $\mathcal N$, $\rho^0$
is of the form $\rho^0 = \max(\chi_{\Omega^0}, \rho^{E,0})$ as in \eqref{initial}, $\rho^E$ is the
solution of \eqref{te} with initial data $\rho^{E,0}$, and the initial data for $u$ is given as
$\cl{\{u_* > 0\}}_0 = \cl{\{u^* > 0\}}_0 = \Omega^0$.
\end{definition}

\subsubsection{Comparison principle}

Following the proof of the almost comparison theorem, Theorem~\ref{th:almost_cp}, we can establish
a comparison principle for strictly ordered solutions.

\begin{theorem}
  \label{th:comparison_strict}
  Let $\mathcal{N} \subset \Rn \times \R$ be a bounded parabolic neighborhood and $\vecb \in Lip(\cl{
  \mathcal{N}}, \Rn)$. Let $\rho^E_u$, $\rho^E_v$ be two continuous functions such that there is $\delta > 0$ with
  $\rho^E_u(x_1, t_1) \leq \rho^E_v(x_2, t_2)$ for all $|(x_1, t_1) - (x_2, t_2)| < \delta$, and let
  $F_u$, $F_v$ be bounded uniformly continuous functions with $0 < F_u < F_v - \delta$.

  Consider $(u, \Sigma)$ and $v$ that are respectively a
  viscosity subsolution and viscosity supersolution of \eqref{hs} in the domain $\mathcal{N}$ with $F=F_u$, $\rho^E=\rho^E_u$ and
  $F=F_v$, $\rho^E=\rho^E_v$. Then the following holds:
\[
 \hbox{ If }u \prec_\mathcal{N} v \hbox{ on } \partial_P \mathcal{N} \hbox{ and }\Sigma \cap
  \partial_P \mathcal{N} \subset \set{v > 0}, \hbox{ then } u \prec_{\mathcal{N}} v \hbox{ on }\cl{\mathcal{N}}.
  \]
\end{theorem}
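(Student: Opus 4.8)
The plan is to mimic the proof of Theorem~\ref{th:almost_cp} almost verbatim, replacing the half-relaxed limits $\rho^-, p^-$ and $\rho^+, p^+$ by the viscosity sub/supersolution $(u,\Sigma)$ and $v$, and replacing the $L^1$-contraction/PME comparison at the very end by the barrier property built into Definition~\ref{def:visc-sol}. First I would introduce the sup- and inf-convolutions $u^r, v^r$ over the flattened sets $\Xi_{r(t)}$ (with $r(t) = r_0 e^{-2Lt}$ as in \eqref{rt}), together with $\rho^{E,r}_u, \rho^{E,r}_v, F^r_u, F^r_v$; these are well-defined on $\set{t\geq\tau}$, $r(\tau)=\tau$, where now $\tau$ is chosen small enough that $\set{t\geq\tau}\cap\mathcal N$ contains the relevant contact region and the initial strict separation on $\partial_P\mathcal N$ survives. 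As before, $\Sigma^r$ (the inf-convolution-type thickening of $\Sigma$, i.e.\ $\{(x,t): \cl\Xi_{r(t)}(x,t)\subset\Sigma\}$) and $\set{v^r > 0}$ are the objects to compare, and the monotonicity of $\set{v>0}$ along streamlines (which for a supersolution follows from the subbarrier condition exactly as in Lemma~\ref{le:char}/Lemma~\ref{le:char_r}, using a streamline subbarrier) gives the analogue of Lemma~\ref{le:char_r} for $\set{v^r>0}$.

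Next I would define the first contact time
\[
  t_0 := \sup\set{s: \Sigma^r \cap \set{t\leq s} \subset \set{v^r > 0}},
\]
argue $t_0 > \tau$ from the strict separation on $\partial_P\mathcal N$ (the analogue of Lemma~\ref{le:t0_pos}), and suppose for contradiction $t_0$ is an interior contact time. The geometry lemma (Lemma~\ref{le:geometry}) carries over unchanged: it is purely about the sets $\Xi_{r(t)}$ and the definition of $t_0$, so if $(y_1,s_1)\in\Sigma$ and $(y_2,s_2)\in\{v=0\}$ then $|y_1-y_2|\geq\zeta(t;s_1,s_2)$ for $\tau\leq t\leq t_0$. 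Then I would reproduce Lemma~\ref{le:contact_point}: (a) $\Sigma^r(\cdot,t_0)\subset\cl{\set{v^r(\cdot,t_0)>0}}$ — here I need a left-accessibility/no-upward-jump property of $\Sigma$, which must be extracted from the subsolution definition (a superbarrier that is $0$ just before $t_0$ and stays ordered would force $\Sigma$ away, contradicting the subsolution property), and a no-downward-jump property of $\set{v>0}$ coming from the supersolution condition $\{\rho^E_v\geq1\}\subset\cl{\set{v>0}}$ plus streamline monotonicity; (b) existence of a genuine contact point $x_0$ with $u^r(x_0,t_0)>0$ impossible but $(x_0,t_0)\in\Sigma^r$, $v^r(x_0,t_0)=0$ — wait, more precisely $x_0\in\Sigma^r_{t_0}$, $v^r(x_0,t_0)=0$, from openness of the defining set of $t_0$; (c) $u^{r}(x_0,t_0)=0$ from the interior-ball/elliptic comparison using $-\Delta u^r\leq F^r_u$ (which holds since $u$ is a viscosity subsolution and hence $-\Delta u\le F_u$ in $\{u>0\}$ in the viscosity sense); (d) the finite-free-boundary-speed statement with points $(x_1,t_1)\in\partial\Xi_{r(t_0)}(x_0,t_0)$ carrying $\rho^-\leftrightarrow$ "$(x_1,t_1)\in\Sigma$, $u(x_1,t_1)=0$" and $(x_2,t_2)$ carrying $v(x_2,t_2)=0$, with $|t_i-t_0|<r(t_0)$; the bound $t_2<t_0+r$ is again streamline monotonicity of $\set{v>0}$, while $t_1<t_0+r$ uses the velocity-bound contradiction of Lemma~\ref{le:velocity-bounds} in the degenerate ("infinite speed") configuration, together with the supersolution requirement $\rho^E_v<1$ when $v\leq0$ feeding in through Lemma~\ref{le:rho-ext-order}-type separation $\rho^E_u<1-\delta$ on $\cl\Xi_{r(t_0)}(x_0,t_0)$.

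With the nice contact point in hand, the endgame is the barrier construction of Lemma~\ref{le:velocity-bounds}: define $\nu := (x_2-x_0)/|x_2-x_0|$, the proxy velocities $V_1 = -\partial_{s_1}\zeta(t_0;t_1,t_2)$, $V_2 = \partial_{s_2}\zeta(t_0;t_1,t_2)$ satisfying $-V_1+V_2\leq 4r'(t_0)$ by \eqref{velocity-gap}, and the pressure slopes $\eta_1,\eta_2$ at $(x_0,t_0)$ with $\eta_1<\eta_2$ by the elliptic Hopf-lemma argument (Corollary~\ref{co:order-pressure-contact} adapted: $u^r=0$ on $\partial\set{v^r>0}$, $-\Delta u^r\leq F^r_u<F^r_v\leq-\Delta v^r$ in $\set{v^r>0}$). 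Then I build, near $(x_2,t_2)$, an explicit smooth radial subbarrier $\phi$ for \eqref{hs} with external density a slightly-enlarged constant, normal velocity $V_2-\e$, and slope $\eta_2+\e$; the key inequalities $\phi_t < |\nabla\phi|^2/(1-\rho^E)_+ - \vecb\cdot\nabla\phi$ at the free boundary and $-\Delta\phi < F_v$ follow from $V_2 < \eta_2/(1-\rho^E_v(x_2,t_2)) + \vecb(x_2)\cdot\nu + (\text{small})$ being violated (the contradiction hypothesis). Strict separation of $\phi$ from $v$ on the parabolic boundary of a small neighborhood $E\ni(x_2,t_2)$, together with $\Sigma$-side positivity coming automatically here since we are away from $\Sigma$, forces by the supersolution property $\phi\prec_E v$ on $\cl E$ — but $v(x_2,t_2)=0$ while $\phi(x_2,t_2)>0$ is impossible, wait, rather $\phi\prec v$ means $\phi^*<v_*$ on $\{\phi>0\}$ and we arrange $(x_2,t_2)\in\{\phi>0\}$, contradiction. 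Symmetrically a superbarrier near $(x_1,t_1)$ tested against $(u,\Sigma)$ gives $V_1\leq\eta_1/(1-\rho^E_u(x_1,t_1))+\vecb(x_1)\cdot\nu$ — here the superbarrier must push $\Sigma$ off, but $(x_1,t_1)\in\Sigma$ with the barrier vanishing there, contradicting the subsolution definition. Combining, $V_1\leq \eta_1/(1-\rho^E_u(x_1,t_1))+\vecb(x_1)\cdot\nu \leq \eta_2/(1-\rho^E_v(x_2,t_2))+\vecb(x_2)\cdot\nu + 4Lr(t_0) \leq V_2 + 4Lr(t_0)$, contradicting $-V_1+V_2\leq 4r'(t_0)=-8Lr(t_0)$. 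Hence no interior contact: $\Sigma^r\cap\set{t\leq T}\subset\set{v^r>0}$ for every $T$ with $\mathcal N\cap\{t\le T\}$ bounded; letting $r_0\to0$ gives $\Sigma\cap\mathcal N\subset\set{v>0}$ on $\cl{\mathcal N}$, and then the elliptic comparison on each slice $\set{v(\cdot,t)>0}$ with $u=0$ on $\partial\set{v>0}$ yields $u\prec_{\mathcal N} v$ on $\cl{\mathcal N}$. The main obstacle is exactly where it was in Theorem~\ref{th:almost_cp}: producing the regular contact point — establishing parts (a) and (d) of the contact-point lemma in the viscosity-solution language, i.e.\ converting "no jump up of $\Sigma$" and "finite free boundary speed into the exterior region" into consequences of the barrier-based definition rather than of PME comparison; the rest is a faithful transcription.
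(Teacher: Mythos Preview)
Your approach is essentially the paper's own: regularize by sup/inf-convolutions over $\Xi_{r(t)}$, define the first contact time, locate a regular contact point via the analogues of Lemmas~\ref{le:geometry}--\ref{le:contact_point}, derive the velocity inequalities by testing with classical sub/superbarriers of \eqref{hs} directly through Definition~\ref{def:visc-sol} (in place of the PME barriers of Proposition~\ref{pr:barrier-sequence}), and close with the $V_1 \leq V_2 + 4Lr(t_0)$ vs.\ $-V_1+V_2\leq 4r'(t_0)$ contradiction. One slip to fix: $\Sigma^r$ must be the \emph{sup}-convolution (dilation) $\{(x,t):\cl\Xi_{r(t)}(x,t)\cap\Sigma\neq\emptyset\}$, not the erosion you wrote---your word ``thickening'' is right but your formula is the opposite, and with the erosion neither the slope bound $\eta_1$ for $u^r$ nor the limit $r_0\to0$ would deliver $\Sigma\subset\{v>0\}$.
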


\begin{remark}
  \label{re:comparison-strict-our-data}
  By Lemma~\ref{le:rho-ext-order}, $\rho_u^E$ and $\rho_v^E$ that are solutions of the transport
  equation \eqref{te} with initial data $\rho_u^{E,0} \prec \rho_v^{E,0}$ satisfy the assumptions
  of the above theorem for any $\mathcal{N} = \Rn \times (0, T)$.
\end{remark}

\begin{proof}
  The proof in fact follows closely the proof of the almost comparison
  Theorem~\ref{th:almost_cp}, and we will therefore only give a brief outline.

  Due to the strict ordering, we can assume that $u$ and $v$ are defined on $\cl{\mathcal{N}}$. By
  taking the sup-convolution of $u$ and $\Sigma$ and the inf-convolution of $v$ over a sufficiently
  small decreasing-in-time set as in
  the proof of the almost comparison, we may assume that $u$, $v$, and $\Sigma$ have interior,
  exterior ball properties, etc., and $u$ and $v$ are still viscosity solutions of \eqref{hs} with
  ordered (by the assumptions) sup-convolutions of $F_u$, $\rho_u^E$, and inf-convolutions of
  $F_v$, $\rho_v^E$, respectively.

  As in the proof of the almost comparison principle, we define the contact time $\hat t$ as the
  supremum of the times $s$ so that the comparison principle holds on $\mathcal{N} \cap \set{t \leq s}$.
  If the comparison does not hold for $\mathcal{N}$, we have $\hat t < \infty$.

  We must have $\Sigma \cap \cl{\mathcal{N}} \cap \set{t \leq \hat t} \cap \set{v = 0} \neq \emptyset$.
  Indeed, if not, since both $\Sigma$ and $\set{v = 0}$ are closed, the above intersection is empty and
therefore it is empty even if we replace $\hat t$ with some $s > \hat t$. That means that $u$ must
cross $v$ in $\set{v > 0}$, but this is a contradiction with the elliptic comparison principle.

Finally, the contact points $(\hat x, \hat t) \in \Sigma \cap \set{v = 0}$ all lie on the
boundary of $\Sigma_{\hat t}$ and $\partial \set{v(\cdot, \hat t) > 0}$ since the sets cannot
expand discontinuously (into $\set{\rho_u^E < 1}$ for $\Sigma$) by a barrier argument.

Therefore we are in the same setting as in the proof of Claim~\ref{cl:no-contact}, therefore we can
construct barriers for $(u, \Sigma)$ and $v$ to reach a contradiction.
\end{proof}

\section{Convergence of \texorpdfstring{$\rho_m$ and $p_m$ as $m\to\infty$}{rhom and pm as m to
infty}}
\label{sec:convergence}

In this section we discuss the convergence of density and pressure variables as $m\to\infty$. First we combine Theorem~\ref{th:almost_cp} and the $L^1$ contraction property,
Lemma~\ref{le:contraction}, between solutions of \eqref{pme} to deduce uniform convergence of the
density $\rho_m$ as $m\to\infty$.  The main step to do so is to show that the {\it congested zone},
where $\rho_m$ uniformly converges to $1$ given a convergent subsequence, is independent of the
choice of the subsequence. We show this by taking the upper and lower limit of $\rho_m$ and show
that their congested zones coincide. We then characterize the congested zone with the free boundary
problem~\eqref{hs} (Corollary~\ref{density}).

\medskip

For Sections~\ref{sec:density-convergence}--\ref{sec:characterization}, we assume that $\rho^0$ satisfies \eqref{initial} and  $F$ is strictly
positive, so that all results from Section~\ref{sec:almost-cp} apply.

\subsection{Density convergence}
\label{sec:density-convergence}

\medskip
Let $\rho_m$ and $p_m$ denote the density and pressure solutions of \eqref{pme} with initial data $\rho_m^0$ satisfying \eqref{initial:convergence}.
Following the notions from Section~\ref{sec:preliminaries}, let us define the lower and upper
limits of the density and pressure variables as
\begin{equation}\label{half_limits_0}
   \bar{\rho}:=\halflimsup_{m\to\infty} \rho_m, \quad \ubar{\rho}:= \halfliminf_{m\to\infty} \rho_m  \,
\end{equation}
and
\begin{equation}
\bar{p}: = \halflimsup_{m\to\infty} p_m, \quad \ubar{p}:= \halfliminf_{m\to\infty} p_m.
\end{equation}

To apply Theorem~\ref{th:almost_cp}, next we consider a decreasing sequence of initial
data $\rho^+_{0,k}$ such that it is strictly larger than $\rho^0$ in the sense of \eqref{initial-data-order} and  it converges to $\rho^0$
from above.

To construct such sequence, recall that  we denote $\{\rho^0=1\}$ by $\Omega^0$. Using this notion one can define
\[\Omega^+_{0,k}:= \{y: d(x,\Omega^0) \leq \tfrac{1}{k}\}, \quad c_k:=
  \sup_{\R^n\setminus\Omega^+_{0,k}} \rho^0, \qquad k \in \mathbb{N},
\]
and define $\rho^+_{0,k}(x,t)$ to be $1$ on $\Omega^+_{0,k}$ and  to be $\rho^0+
\frac{1-c_k}{k}\rho^0$ on $\R^n \setminus \Omega^+_{0,k}$. Note that  $\rho^+_{0,k}$ satisfies
\eqref{initial} since $\rho^0 + \frac{1-c_k}{k} \leq c_k+ \frac{1-c_k}{k} <1 $ on $\R^n\setminus\Omega^+_{0,k}$.
\medskip

Let us denote the corresponding solutions $\rho_m$ of \eqref{pme} with a larger source term
$f^+_k:= f+\frac 1k$ by $\rho^+_{m,k}$ and introduce its lower limit
$\rho^+_k := \halfliminf_{m\to\infty} \rho^+_{m,k}$. Similarly to the above construction, we can consider an increasing sequence
of initial data $\rho_{0,k}^-$ which is strictly smaller than $\rho^0$ with support $\Omega^-_{0,k}$ and it converges to $\rho^0$
from below. Let us denote the corresponding $\rho_m$ solving \eqref{pme} with a smaller source term $f^-_k:= f-\frac{1}{k}$ by $\rho^-_{m,k}$ and its upper limit by
$\rho^-_k := \halflimsup_{m\to \infty} \rho^-_{m,k}$.   The corresponding pressure functions will be denoted similarly as
$p^{\pm}_{m,k}$ and $p^\pm_k$.
\medskip

The aforementioned approximating sequences have several useful ordering properties. Note that from the comparison principle for \eqref{pme} it follows that $\{\rho^+_{m,k}\}_k$ is monotone decreasing and $\{\rho^-_{m,k}\}_k$ monotone increasing, and $\rho^+_{m,k} > \rho^-_{m,j}$ for any $j,k$. Furthermore their half-relaxed limits are ordered with respect to $k$:
\begin{equation}\label{order}
\rho^-_k  \leq \ubar{\rho} \leq \bar{\rho} \leq \rho^+_k \quad\hbox{ for any } k \in
\mathbb{N}.
\end{equation}
where the first and third inequalities are due to Theorem~\ref{th:almost_cp}, and the second is by definition of $\bar{\rho}$ and $\ubar{\rho}$.
\medskip

Let us first show that $\bar{\rho}$ agrees with $\ubar{\rho}$ almost everywhere,  using \eqref{order} and the $L^1$ contraction.

\begin{lemma}\label{approximation}
For any given $t>0$,
$\rho^-_k(\cdot,t), \rho^+_k (\cdot,t)$ converge in $L^1(\R^n)$ to  $\ubar{\rho}(\cdot,t)$ as $k \to \infty$. Furthermore $\bar{\rho}(\cdot,t) =\ubar{\rho}(\cdot,t)$ a.e.
\end{lemma}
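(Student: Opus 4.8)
The plan is to use the $L^1$~contraction estimate, Lemma~\ref{le:contraction}, applied to the approximating solutions $\rho^+_{m,k}$ and $\rho^-_{m,j}$, together with the squeeze provided by the ordering \eqref{order}. The first step is to estimate $\|\rho^+_{m,k}(\cdot, t) - \rho^-_{m,j}(\cdot, t)\|_{L^1(\R^n)}$ using Lemma~\ref{le:contraction}: the initial data $\rho^+_{0,k}$ and $\rho^-_{0,j}$ both converge to $\rho^0$ in $L^1(\R^n)$ by construction, and the sources $f^\pm_k = f \pm \frac1k$ converge uniformly to $f$, so the right-hand side of \eqref{contraction} tends to $0$ as $k, j \to \infty$, uniformly for $t$ in bounded intervals. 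Crucially this bound is \emph{uniform in $m$}, so it survives passing to the half-relaxed limits.

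Next I would pass to the limit $m \to \infty$ in the $L^1$~bound. Since $\rho^+_{m,k} \to \rho^+_k$ and $\rho^-_{m,j} \to \rho^-_j$ in the sense of half-relaxed limits, and all these functions are uniformly bounded and compactly supported (using Lemma~\ref{le:pressure_uniform_bound}-type bounds and finite speed of propagation of the supports), one obtains by Fatou's lemma or dominated convergence along a subsequence realizing the limits that
\[
\|\rho^+_k(\cdot, t) - \rho^-_j(\cdot, t)\|_{L^1(\R^n)} \leq \liminf_{m\to\infty} \|\rho^+_{m,k}(\cdot, t) - \rho^-_{m,j}(\cdot, t)\|_{L^1(\R^n)} \leq \omega(1/k) + \omega(1/j)
\]
for a modulus $\omega$ depending on $t$. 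Here I would be slightly careful: the half-relaxed $\limsup$ and $\liminf$ are taken with a space-time neighborhood, but since we only need an $L^1$-in-space bound for a.e.\ fixed $t$, and since $L^1$ convergence of $\rho_m(\cdot,t)$ follows from the weak formulation plus equi-integrability (the supports are uniformly bounded and $0 \le \rho_m \le 1+o(1)$), the actual pointwise-a.e.\ limits agree with the half-relaxed ones a.e.; this is the technical point to handle with a little care.

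Once the uniform Cauchy estimate $\|\rho^+_k(\cdot,t) - \rho^-_j(\cdot,t)\|_{L^1} \to 0$ as $k, j \to \infty$ is in hand, the conclusion follows quickly. By \eqref{order}, $\rho^-_k \leq \ubar\rho \leq \bar\rho \leq \rho^+_k$ pointwise, so
\[
0 \leq \bar\rho(\cdot,t) - \ubar\rho(\cdot,t) \leq \rho^+_k(\cdot,t) - \rho^-_k(\cdot,t) \quad \text{a.e.},
\]
and integrating, $\|\bar\rho(\cdot,t) - \ubar\rho(\cdot,t)\|_{L^1} \leq \|\rho^+_k(\cdot,t) - \rho^-_k(\cdot,t)\|_{L^1} \to 0$, forcing $\bar\rho(\cdot,t) = \ubar\rho(\cdot,t)$ a.e. Moreover, $\{\rho^+_k\}$ is monotone decreasing in $k$ and $\{\rho^-_k\}$ monotone increasing in $k$ (inherited from the monotonicity of $\rho^\pm_{m,k}$ in $k$ under the comparison principle), both sandwiching $\ubar\rho$, so monotone convergence gives $\rho^\pm_k(\cdot,t) \to \ubar\rho(\cdot,t)$ in $L^1(\R^n)$. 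I expect the main obstacle to be the bookkeeping in the second step — justifying that the uniform-in-$m$ $L^1$ bound descends to the half-relaxed limits, which really amounts to identifying the half-relaxed limits with the genuine $L^1_{loc}$ limits of $\rho_m$ for a.e.\ $t$; everything else is a direct application of the contraction estimate and the squeeze.
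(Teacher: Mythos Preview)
Your approach is essentially the paper's: bound $\int(\rho^+_k-\rho^-_k)(\cdot,t)\,dx$ via the $L^1$~contraction Lemma~\ref{le:contraction} applied to $\rho^\pm_{m,k}$, pass to the limit $m\to\infty$, and then squeeze using \eqref{order}. The use of two separate indices $j,k$ is harmless but unnecessary.

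The ``main obstacle'' you anticipate is not actually an obstacle, and your proposed workaround (identifying half-relaxed limits with pointwise-a.e.\ limits via equi-integrability) is both unnecessary and not obviously justified. The clean argument, which the paper uses, is purely pointwise: by definition of the half-relaxed limits one has at every $(x,t)$
\[
\rho^+_k(x,t) \leq \liminf_{m\to\infty}\rho^+_{m,k}(x,t), \qquad \rho^-_k(x,t) \geq \limsup_{m\to\infty}\rho^-_{m,k}(x,t),
\]
since shrinking the neighborhood to a point only increases a $\liminf$ and decreases a $\limsup$. Hence
\[
0 \leq \rho^+_k(x,t)-\rho^-_k(x,t) \leq \liminf_{m\to\infty}\bigl(\rho^+_{m,k}(x,t)-\rho^-_{m,k}(x,t)\bigr),
\]
and since $\rho^+_{m,k}-\rho^-_{m,k}\geq 0$ by the comparison principle, Fatou's lemma gives directly
\[
\int (\rho^+_k-\rho^-_k)(\cdot,t)\,dx \leq \liminf_{m\to\infty}\int (\rho^+_{m,k}-\rho^-_{m,k})(\cdot,t)\,dx.
\]
No subsequence extraction, no identification of limits, no appeal to the weak formulation is needed. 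The rest of your argument (contraction bound on the right-hand side, then the squeeze) is exactly right.
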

\begin{proof}
To prove the convergence, first observe that
\begin{align*}
\int (\rho^+_k(\cdot, t) - \rho^-_k(\cdot, t))dx &\leq \liminf_{m\to\infty} \int
(\rho^+_{m,k} - \rho^-_{m,k})(\cdot,t)  dx\\
&= \liminf_{m\to\infty} \int |\rho^+_{m,k} - \rho^-_{m,k}| (\cdot,t)dx\\
&\leq e^{t \norm{f}_\infty} \liminf_{m\to \infty}  (\|\rho^+_{0,k} - \rho^-_{0,k}\|_{L^1(\R^n)} + \frac{1}{k} \|\rho^0\|_{L^1(\R^n)})
\end{align*}
where we have used Fatou's lemma and the fact that  $\rho^+_{m,k} - \rho^-_{m,k} \geq 0$ for the first inequality, and Lemma~\ref{le:contraction}
 for the last inequality.  Note that the last term on the right converges to zero as $k\to\infty$ by construction. Thus we have, again from Fatou's lemma,
\begin{equation*}
  \int \lim_{k\to\infty}(\rho^+_k(\cdot, t) - \rho^-_k(\cdot, t)) dx = 0.
\end{equation*}

Now from \eqref{order} we conclude that $\bar{\rho}(\cdot,t) = \ubar{\rho}(\cdot,t) = \lim \rho^+_k(\cdot, t)$ almost everywhere.

\end{proof}

\begin{corollary}\label{cor:contraction}
  The $L^1$ contraction holds for the limit density $\rho(\cdot,t) := \bar{\rho}(\cdot,t) = \ubar{\rho}(\cdot,t)$, defined almost everywhere. More precisely if $\rho$ and $\tilde{\rho}$ corresponds to two different limit densities, then

 \begin{equation}\label{infinity_contraction}
  \|\rho(\cdot,t) - \tilde{\rho}(\cdot,t) \|_{L^1 (\R^n)} \leq e^{t
  \norm{f}_\infty}\|\rho(\cdot,0) - \tilde{\rho}(\cdot,0)\|_{L^1(\R^n)}.
  \end{equation}

\end{corollary}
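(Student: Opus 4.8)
The plan is to obtain \eqref{infinity_contraction} by passing $m \to \infty$ in the finite-$m$ contraction of Lemma~\ref{le:contraction}. Let $\rho_m$ and $\tilde\rho_m$ be the sequences of solutions of \eqref{pme} (with the common exponent $m$, source $f$ and drift $\vecb$) whose half-relaxed limits define $\rho$ and $\tilde\rho$. Since the two problems share the same source $f$, the terms involving $(f-\tilde f)_\pm$ in \eqref{contraction} vanish and Lemma~\ref{le:contraction} reduces to
\[
  \|\rho_m(\cdot,t) - \tilde\rho_m(\cdot,t)\|_{L^1(\R^n)} \le e^{t\norm{f}_\infty}\|\rho_m(\cdot,0) - \tilde\rho_m(\cdot,0)\|_{L^1(\R^n)}
\]
for every $m$ and $t > 0$. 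It then remains to take limits on both sides.

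For the right-hand side, the initial data converge in $L^1(\R^n)$ --- this is a hypothesis of Theorem~\ref{main}, and is in any case a consequence of \eqref{initial:convergence} together with the a.e.\ continuity and uniform compact support of regular initial data --- so the right-hand side tends to $e^{t\norm{f}_\infty}\|\rho(\cdot,0) - \tilde\rho(\cdot,0)\|_{L^1(\R^n)}$. For the left-hand side I would invoke Lemma~\ref{approximation}: fixing $t > 0$, we have $\ubar{\rho}(\cdot,t) = \bar{\rho}(\cdot,t)$ a.e.\ in $\R^n$, and likewise $\halfliminf_m \tilde\rho_m(\cdot,t) = \halflimsup_m \tilde\rho_m(\cdot,t)$ a.e. Since the definition of the half-relaxed limits gives, for every $(x,t)$,
\[
  \ubar\rho(x,t) \le \liminf_{m\to\infty}\rho_m(x,t) \le \limsup_{m\to\infty}\rho_m(x,t) \le \bar\rho(x,t),
\]
the a.e.\ equality of the two envelopes forces $\rho_m(x,t) \to \rho(x,t)$ for a.e.\ $x$, and similarly $\tilde\rho_m(x,t) \to \tilde\rho(x,t)$ for a.e.\ $x$. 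Hence $|\rho_m(\cdot,t) - \tilde\rho_m(\cdot,t)| \to |\rho(\cdot,t) - \tilde\rho(\cdot,t)|$ a.e., and Fatou's lemma yields
\[
  \|\rho(\cdot,t) - \tilde\rho(\cdot,t)\|_{L^1(\R^n)} \le \liminf_{m\to\infty}\|\rho_m(\cdot,t) - \tilde\rho_m(\cdot,t)\|_{L^1(\R^n)}.
\]
Chaining the three displays gives \eqref{infinity_contraction}.

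The only delicate point is identifying the right mode of convergence as $m \to \infty$: the sequence $\rho_m$ approaches $\rho$ a priori only through the half-relaxed limits, not in $L^1$, and what makes the argument work is precisely that Lemma~\ref{approximation} upgrades this to pointwise a.e.\ convergence on each time slice $\set{t=\text{const}}$ --- exactly the hypothesis needed to run Fatou's lemma. (Alternatively, one could run the contraction through the approximating sequences $\rho^+_k$, $\tilde\rho^+_k$ of Lemma~\ref{approximation}, which share the common source $f^+_k = f + \tfrac1k$, and then send $k \to \infty$ using the $L^1(\cdot,t)$ convergence established there; this avoids pointwise convergence of $\rho_m$ itself but is slightly more involved.)
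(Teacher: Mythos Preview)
Your proposal is correct and is precisely the argument the paper leaves implicit: the corollary is stated without proof immediately after Lemma~\ref{approximation}, and the intended justification is exactly to pass to the limit in Lemma~\ref{le:contraction} using the a.e.\ equality $\bar\rho(\cdot,t)=\ubar\rho(\cdot,t)$ established there. Your observation that this equality of the half-relaxed envelopes forces pointwise a.e.\ convergence of $\rho_m(\cdot,t)$, and hence allows Fatou's lemma on the left-hand side, is the key step and is carried out correctly.
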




In the next section we show that the congested zone is the same for both half-limits $\bar{\rho}$ and
$\ubar{\rho}$, therefore characterizing the set as the unique congested zone generated by the densities
$\rho_m$ as $m\to\infty$.  Secondly we show that the congested zone can be uniquely obtained by solving the
limiting pressure problem \eqref{hs}. For this purpose it will be useful to consider the following characterization of the pressure half-limits $\bar{p}$ and $\ubar{p}$.

\medskip

\begin{lemma}\label{viscosity_limit}
  $(\bar{p}, \set{\bar{\rho} = 1})$ is a viscosity subsolution of \eqref{hs} while $\ubar{p}
$ is a viscosity supersolution of \eqref{hs}.
\end{lemma}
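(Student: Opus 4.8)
The plan is to verify the viscosity subsolution property of $(\bar p, \set{\bar\rho = 1})$ and the supersolution property of $\ubar p$ directly from the definition (Definition~\ref{def:visc-sol}), using the almost comparison machinery already developed, rather than the standard half-relaxed limit arguments (which fail here because monotonicity in time is not available, so we cannot simply pass to the limit in the viscosity formulation of \eqref{pmepressure}). The key structural point is that by this stage of the paper we already know, thanks to Lemma~\ref{approximation}, Corollary~\ref{co:transport-comparison}, and the set orderings, that $\bar\rho = \ubar\rho =: \rho$ a.e., that $\rho^E = \ubar\rho$ in $\set{\ubar\rho < 1}$, and—once the support characterization \eqref{support} is in hand—that $\cl{\set{\bar p > 0}} = \cl{\set{\bar\rho = 1}} = \cl{\set{\ubar p > 0}}$. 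So the exterior density attached to both $\bar p$ and $\ubar p$ is the genuine transported $\rho^E$, and the congested zone is unambiguous.

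First I would check the subsolution property of $(\bar p, \set{\bar\rho=1})$. Take a bounded parabolic neighborhood $E = U \cap \set{t \le \tau} \subset \mathcal{N}$ and a superbarrier $\phi$ on $U$ with $\bar p \strictordwrt{E} \phi$ on $\partial_P E$ and $\set{\bar\rho = 1} \cap \partial_P E \subset \set{\phi > 0}$. The goal is to propagate this strict separation into $\cl E$. The natural route is to run exactly the contact-time argument from the proof of Claim~\ref{cl:no-contact} and Theorem~\ref{th:comparison_strict}, but now with $v = \phi$ playing the role of the strict supersolution: define $\hat t$ as the supremum of times up to which $\bar p \strictordwrt{} \phi$ and $\set{\bar\rho=1} \subset \set{\phi>0}$ on $\set{t\le \hat t}$; suppose $\hat t < \tau$; after sup-convolving $\bar p$ and $\set{\bar\rho=1}$ over a shrinking set $\Xi_{r(t)}$ to gain $C^{1,1}$-type regularity, identify a contact point $(x_0,\hat t)$ on $\partial \Xi_{r}(x_0,\hat t)$ where $\bar\rho^{r} = 1$ touches $\set{\phi>0}$ from outside (or $\bar p^{r}$ touches $\phi$ in the positive phase, contradicting the elliptic inequality $-\Delta \bar p \le F$ against $-\Delta\phi > F$ via Hopf/comparison); then compare the normal velocity of $\partial\set{\bar\rho=1}$ at that point—bounded below via the barrier construction of Lemma~\ref{le:velocity-bounds} using $\rho^E$ and the slope $\eta_1$—with the normal velocity of the strictly-moving free boundary of $\phi$, which by the superbarrier inequality (ii) strictly dominates $\frac{|\nabla\phi|^2}{(1-\rho^E)_+} - \vecb\cdot\nabla\phi$ evaluated appropriately. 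The strictness in the superbarrier definition gives exactly the slack needed to close the contradiction, together with $\eta_1 \le |\nabla\phi|$ coming from the contact. One must also handle the degenerate case $\rho^E(x_0,\hat t) = 1$: but the superbarrier definition requires $\rho^E < 1$ when $\phi \le 0$, so near the contact point on the free boundary this is excluded, and where $\rho^E = 1$ the set $\set{\bar\rho=1}$ is already forced into $\set{\bar p>0}\subset\set{\phi>0}$ by Lemma~\ref{le:liminf-interior-dense}(c) and Remark~\ref{re:p-nondegeneracy}, so there is no new contact to create. The supersolution property of $\ubar p$ is the mirror image: take a subbarrier $\phi$ with $\phi \strictordwrt{E}\ubar p$ on $\partial_P E$, run the same contact-time argument with the roles reversed (now $\phi$ must cross $\ubar p$, its fast-moving free boundary outrunning $\partial\set{\ubar p>0}$), and use Lemma~\ref{le:liminf-lower-bound}/Corollary~\ref{co:transport-comparison} to get the lower velocity bound on $\partial\set{\ubar p>0}$; the requirement $\set{\rho^E\ge1}\cap\mathcal{N}\subset\cl{\set{\ubar p>0}}$ is precisely Lemma~\ref{le:liminf-interior-dense}(c).

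The main obstacle I anticipate is the bookkeeping around the nucleation/infinite-speed scenario, i.e.\ exactly the place where $\rho^E$ is allowed to touch $1$ and the free boundary may jump. The definitions of sub/superbarrier are engineered to dodge this ($\rho^E<1$ on $\set{\phi\le0}$ for superbarriers), so strictly speaking one only needs to show that no illegal contact arises, but one must be careful that the sup-convolution regularization does not shift a point from $\set{\rho^E<1}$ into $\set{\rho^E\ge1}$—this is why $r(t)$ is taken small and decreasing, and why Lemma~\ref{le:rho-ext-order}(b) (giving $\rho^{-,r}_E < 1-\delta$ on the convolved neighborhood) is invoked. A second, more routine but technical, point is justifying the velocity comparison at the contact point once we are comparing against a prescribed smooth barrier rather than a companion half-limit: the construction in Lemma~\ref{le:velocity-bounds} used the perturbed radial barriers of Proposition~\ref{pr:barrier-sequence} built from $\phi$-like data, and here $\phi$ is already $C^{2,1}$, so the barrier sequence $\varphi_\rho^m, \varphi_p^m$ for \eqref{pme} can be constructed directly from $\phi$ and $\rho^E$, slotting into Lemma~\ref{le:barrier-order} to contradict $\bar\rho(x_1,t_1)=1$ (resp.\ $\ubar p(x_1,t_1)>0$) at the contact point. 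Assuming those barrier constructions from the appendix, the argument is a faithful repetition of Section~\ref{sec:proof-of-support-order} with the second solution replaced by a test function.
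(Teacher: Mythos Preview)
Your approach is workable and ultimately lands on the same tools (Proposition~\ref{pr:barrier-sequence}, Lemma~\ref{le:barrier-order}), but the paper's proof is more direct and avoids the sup-convolution detour entirely. The point you are missing is that since the barrier $\phi$ is already $C^{2,1}$, the contact between $\{\bar\rho = 1\}$ and $\{\phi \leq 0\}$ automatically inherits regularity from $\phi$, so the whole Section~\ref{sec:proof-of-support-order} machinery (sup-convolutions over $\Xi_{r(t)}$, Lemma~\ref{le:geometry}, the $\zeta$-velocity bookkeeping, Lemma~\ref{le:velocity-bounds}) is unnecessary. Instead the paper (i) defines the contact time $\hat t$; (ii) uses the elliptic inequality $-\Delta \bar p \leq F$ versus $-\Delta \phi > F$ to force the first contact onto $\{\phi = 0\}$; (iii) uses Lemma~\ref{le:left-accessible} together with the superbarrier requirement $\rho^E < 1$ on $\{\phi \leq 0\}$ to place the contact point on the free boundary; (iv) perturbs $\phi$ by $\e|x - \hat x|^2 + \e|t - \hat t|^2$ to make the contact point unique; and (v) directly applies Proposition~\ref{pr:barrier-sequence} at $(\hat x, \hat t)$ to build a sequence of \eqref{pme}-supersolutions, which by Lemma~\ref{le:barrier-order} and the comparison principle for \eqref{pme} dominate $\rho_m, p_m$, contradicting $\bar\rho(\hat x, \hat t) = 1$. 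Your appeal to Lemma~\ref{le:rho-ext-order}(b) is also a slip---that lemma concerns two strictly ordered external densities, whereas here there is only one $\rho^E$; the relevant input is simply the superbarrier condition $\rho^E < 1$ on $\{\phi \leq 0\}$ and continuity. The supersolution half is handled symmetrically, with Lemma~\ref{le:liminf-interior-dense}(c) supplying the nucleation condition and Lemma~\ref{le:char} ensuring $\{\ubar p = 0\}$ cannot expand discontinuously, as you correctly anticipate.
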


\begin{proof}
  Let us show the subsolution part.
  We write $\Omega := \set{\bar{\rho} = 1}$. This is a closed set  and
  $\set{\bar{p} > 0} \subset \Omega$ by
  Lemma~\ref{le:lim_props}.
  Note that $\bar{\rho} \leq \rho^E$ outside of $\Omega$ due to Lemma~\ref{approximation} and
  Lemma~\ref{le:weak_mon_r}.

  Let $\phi$ be a superbarrier for \eqref{hs} on an open set $U
\subset \Rn \times \R$ and let $E = U \cap \set{t \leq \tau}$ be a parabolic neighborhood with $E
\subset Q$ such that $\bar{p} \prec_E \phi$ on $\partial_P E$ and $\Omega \cap \partial_P E \subset
\set{\phi > 0}$.
We need to prove $\bar{p} \prec_E \phi$ on $\cl E$ and $\Omega \cap \cl E \subset \set{\phi > 0}$.

Suppose that this is not true. Introduce the contact time $\hat t$ defined as the supremum of times
$s$ for which the above holds for parabolic neighborhood $E \cap \set{t \leq s}$. Since it does not hold
for $E$ itself, we have $\hat t < \infty$.

We must have $\Omega \cap \set{\phi \leq 0} \cap \cl E \cap \set{t \leq \hat t} \neq \emptyset$.
Indeed, since both $\Omega$ and $\set{\phi \leq 0}$ are closed, if the above intersection is empty,
it is empty even if we replace $\hat t$ with some $s > \hat t$. That means that $\bar{p}$ must
cross $\phi$ in $\set{\phi > 0}$, but this is a contradiction with the elliptic maximum principle
by Lemma~\ref{le:lim_props}(a).

Finally, the contact points $(\hat x, \hat t) \in \Omega \cap \set{\phi \leq 0} \cap \cl E \cap \set{t \leq \hat t}$ all lie on the
free boundary $\set{\phi = 0}$ of $\phi$ since $\rho^E < 1$ on $\set{\phi = 0}$ and therefore
Lemma~\ref{le:left-accessible} applies.
By perturbing $\phi$ in the standard way, i.e., adding $\e|x - \hat x|^2 + \e |t - \hat t|^2$ if
necessary, we may assume that $(\hat x, \hat t)$ is the only contact
point.

By Proposition~\ref{pr:barrier-sequence} there exist supersolutions $\varphi_\rho^m$, $\varphi_p^m$
of \eqref{pme} that
approximate $\phi$ from above in a parabolic neighborhood $\mathcal N$ of $(\hat x, \hat t)$. By comparison
principle, $p_m \leq \varphi_p^m$ and $\rho_m \leq \varphi_\rho^m$ for $m$ large enough so that the
boundary data on the parabolic boundary of $\mathcal N$ are strictly ordered. Since this ordering
holds even for small translations of the barriers (uniformly in $m$), we deduce that $\bar{\rho}(\hat
x, \hat t) < 1$, a contradiction with $(\hat x, \hat t) \in \Omega$.

\medskip

The proof for $\ubar{p}$ is analogous, but we need to additionally show that $\set{\rho^E \geq 1} \subset
\cl{\set{\ubar{p} > 0}}$. But this follows from Lemma~\ref{le:liminf-interior-dense}(c). Furthermore, to
show that the contact point is on the free boundary, we use Lemma~\ref{le:char} to show that
$\set{\ubar{p} = 0}$ cannot expand discontinuously in time.
\end{proof}

\subsection{ Characterization of the congested zone and uniform convergence of the density variable }
\label{sec:characterization}

\medskip

Our first goal is to show that the congested zones and the pressure supports from each half-limits
all coincide.

\begin{proposition}
  \label{pr:congested-zone}
  The congested zone is well-defined, that is,
\begin{equation}\label{support}
\Omega := \qquad \{\bar{\rho}=1\} = \overline{\{\ubar{\rho}=1\}} = \overline{\{\ubar{p}>0\}}= \overline{\{\bar{p}>0\}}.
\end{equation}
\end{proposition}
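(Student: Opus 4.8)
Since $\ubar{\rho}\le\bar{\rho}\le 1$ we have $\{\ubar{\rho}=1\}\subset\{\bar{\rho}=1\}$, and $\{\bar{\rho}=1\}$ is closed by Lemma~\ref{le:lim_props}(d); since $\ubar{p}\le\bar{p}$ and $\{\bar{p}>0\}\subset\{\bar{\rho}=1\}$ by Lemma~\ref{le:lim_props}(e); and since $\ubar{\rho}$ is a half-relaxed liminf of solutions of \eqref{pme} with regular initial data, so that Lemma~\ref{le:liminf-interior-dense}(d) applies to it, we obtain
\[
  \cl{\{\ubar{\rho}=1\}}=\cl{\{\ubar{p}>0\}}\subset\cl{\{\bar{p}>0\}}\subset\{\bar{\rho}=1\}.
\]
Hence it suffices to prove the reverse inclusion $\{\bar{\rho}=1\}\subset\cl{\{\ubar{\rho}=1\}}$, which then forces all four sets to coincide with the closed set $\{\bar{\rho}=1\}$.

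\textbf{Trapping $\{\bar{\rho}=1\}$ from above by the approximating problems.}
Recall the strictly larger regular data $\rho^{+}_{0,k}$, the solutions $\rho^+_{m,k}$ of \eqref{pme} with source $f+\tfrac1k$, and the half-limits $\rho^+_k$, $p^+_k$. For each $k$ the data of $\bar{\rho}$ (which by \eqref{initial:convergence} has $\{\bar{\rho}(\cdot,0)=1\}\subset\Omega^0\subset\interior\Omega^+_{0,k}$) and that of $\rho^+_k$ are strictly ordered, and $\divo\vecb<f<f+\tfrac1k$, so Theorem~\ref{th:almost_cp} applied to $(\rho_m,\rho^+_{m,k})$ gives $\{\bar{\rho}=1\}\subset\{p^+_k>0\}$. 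Moreover $\{p^+_k>0\}$ is open, lies in $\{\rho^+_k=1\}$ by Lemma~\ref{le:lim_props}(e), and contains $\interior\{\rho^+_k=1\}$ by Lemma~\ref{le:liminf-interior-dense}(b) (valid since $\inf F^+_k\ge\inf F>0$); thus $\{p^+_k>0\}=\interior\{\rho^+_k=1\}$ and $\cl{\{p^+_k>0\}}=\{\rho^+_k=1\}$ by Lemma~\ref{le:liminf-interior-dense}(a),(d). Combining with the first paragraph,
\[
  \cl{\{\ubar{\rho}=1\}}\subset\{\bar{\rho}=1\}\subset\bigcap_k\{p^+_k>0\},
\]
so the proof reduces to showing $\bigcap_k\{p^+_k>0\}\subset\cl{\{\ubar{\rho}=1\}}$.

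\textbf{Passing to the limit $k\to\infty$ (the main step).}
By Lemma~\ref{approximation} the sets $\{\rho^+_k=1\}$ decrease in $k$ and $\rho^+_k(\cdot,t)\to\rho(\cdot,t):=\bar{\rho}(\cdot,t)=\ubar{\rho}(\cdot,t)$ in $L^1(\R^n)$ for each $t>0$. Suppose, towards a contradiction, that $(x_0,t_0)\in\bigcap_k\{p^+_k>0\}$ but $\ubar{\rho}<1$ on $B_\delta(x_0,t_0)$ for some $\delta>0$. Because $\inf F>0$, the streamline lower bound of Lemma~\ref{le:liminf-lower-bound} propagates "$\ubar{\rho}$ close to $1$" forward along streamlines into "$\ubar{\rho}=1$" in arbitrarily short time; as a streamline issuing from a point of $B_{\delta/2}(x_0,t_0)$ where $\ubar{\rho}$ were close to $1$ would re-enter $B_\delta(x_0,t_0)$, we conclude a uniform gap $\ubar{\rho}\le 1-a_0$ on $B_{\delta/2}(x_0,t_0)$ for some $a_0>0$, and hence $\rho\le 1-a_0$ a.e.\ there. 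On the other hand $(x_0,t_0)\in\{p^+_k>0\}=\interior\{\rho^+_k=1\}$ for \emph{every} $k$; using that $\{p^+_k>0\}$ is nondecreasing along streamlines (Lemma~\ref{le:char}), the elliptic equation $-\Delta p^+_k=F^+_k$ in $\{p^+_k>0\}$, and the nondegeneracy of Remark~\ref{re:p-nondegeneracy} — whose constant depends only on $\inf F\le\inf F^+_k$ and is therefore uniform in $k$ — one argues, as in the contact-point analysis of Claim~\ref{cl:no-contact}, that $\{\rho^+_k=1\}$ contains a space–time box around $(x_0,t_0)$ whose size does not degenerate as $k\to\infty$. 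This contradicts the uniform gap together with the $L^1$ convergence $\rho^+_k\to\rho$ of Lemma~\ref{approximation}. The contradiction shows $\ubar{\rho}=1$ at points arbitrarily close to $(x_0,t_0)$, i.e.\ $(x_0,t_0)\in\cl{\{\ubar{\rho}=1\}}$, which completes the proof of \eqref{support}.

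\textbf{The main obstacle.}
The delicate point is precisely this last limiting step. In the monotone (zero-drift) theory the Aronson–B\'enilan semiconvexity estimate makes the congested zone automatically "fat", but here it is unavailable, and there is no direct analogue of Lemma~\ref{le:liminf-interior-dense} for the \emph{upper} limit $\bar{\rho}$ (the oscillation of $\rho_m$ in the region where $\rho_m\sim 1$ is exactly what prevents it); one cannot simply assert that $\{\bar{\rho}=1\}$ equals the closure of its interior. Instead $\{\bar{\rho}=1\}$ must be squeezed inside the fat approximate congested zones $\{\rho^+_k=1\}$, which over-shoot but collapse onto $\{\rho=1\}$ as $k\to\infty$ by virtue of the $L^1$-contraction of Lemma~\ref{approximation}, and the geometry of these shrinking sets must be controlled by the monotonicity along streamlines (Lemmas~\ref{le:char} and~\ref{le:liminf-lower-bound}) so that the congested zone does not disappear in the limit; this replaces the role played by time-monotonicity in the zero-drift case.
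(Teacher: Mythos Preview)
Your reduction and the easy inclusions are correct, and the idea of trapping $\{\bar\rho=1\}$ inside the shrinking family $\{p^+_k>0\}$ via Theorem~\ref{th:almost_cp} and then invoking the $L^1$ convergence of Lemma~\ref{approximation} is natural. The gap is in the ``main step'': you assert that $\{\rho^+_k=1\}$ contains a space--time box around $(x_0,t_0)$ whose size is bounded below uniformly in $k$, but none of the tools you cite deliver this. Remark~\ref{re:p-nondegeneracy} goes the wrong direction---it gives a lower bound on $p^+$ \emph{given} a box in $\{\rho^+=1\}$, not conversely; the elliptic inequality $-\Delta p^+_k\ge F^+_k$ and the streamline monotonicity of Lemma~\ref{le:char} only tell you that $\{p^+_k>0\}$ is open and persistent along streamlines, but the value $p^+_k(x_0,t_0)$, and hence the radius of any ball contained in $\{p^+_k(\cdot,t_0)>0\}$, may well tend to zero as $k\to\infty$. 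Without a uniform box, the conclusion $|\{\rho^+_k(\cdot,t_0)=1\}\cap B_r(x_0)|\to 0$ does not contradict $(x_0,t_0)\in\bigcap_k\{p^+_k>0\}$. The appeal to ``the contact-point analysis of Claim~\ref{cl:no-contact}'' does not help: that analysis compares two solutions at a contact of their free boundaries and yields velocity bounds, not interior size bounds.

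The paper avoids this difficulty by a different route. It first proves (Lemma~\ref{p_1}) that $\bar p=0$ outside $\overline{\{\ubar p>0\}}$: since $\bar\rho=\ubar\rho$ a.e.\ by Lemma~\ref{approximation}, the set $\overline{\{\bar p(\cdot,t)>0\}}\setminus\overline{\{\ubar p>0\}}_t$ has Lebesgue measure zero, and the subharmonicity $-\Delta\bar p(\cdot,t)\le F$ together with the mean-value inequality forces $\bar p$ to vanish there. Then (Lemma~\ref{le:congested-2}), given $(x_0,h)\notin\overline{\{\ubar p>0\}}$, streamline monotonicity of $\{\ubar p>0\}$ puts the entire backward streamline outside, so by the first step $\bar p=0$ in a tubular neighborhood $\mathcal N$ of that streamline; one then invokes the viscosity \emph{subsolution} property of $(\bar p,\{\bar\rho=1\})$ from Lemma~\ref{viscosity_limit} and tests with an explicit shrinking radial superbarrier centered on the streamline to conclude $(x_0,h)\notin\{\bar\rho=1\}$. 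The two ingredients you are missing are precisely the mean-value argument of Lemma~\ref{p_1} and the use of Lemma~\ref{viscosity_limit}; together they replace the uniform-box estimate that your approach would require but cannot supply.
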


The proof of the above proposition is split into Lemma~\ref{p_1} and Lemma~\ref{le:congested-2}.
Note that we already know that
\begin{align}
  \label{known-inclusions}
  \overline{\{\ubar{p} > 0\}} = \overline{\{\ubar{\rho}  =1\}} \subset \{\bar{\rho} = 1\}, \quad \text{ and }
  \quad
  \overline{\{\bar{p} > 0\}} \subset \{\bar{\rho} = 1\}.
\end{align}
This follows respectively from Lemma~\ref{le:liminf-interior-dense}(d),
$\ubar{\rho} \leq \bar{\rho}$ and the fact that $\{\bar{\rho} = 1\}$ is closed by Lemma~\ref{le:lim_props}(d),
 and finally Lemma~\ref{le:lim_props}(e).
We now show the last equality in \eqref{support}.

 \begin{lemma}\label{p_1}
   $\bar{p}=0$ outside of $\overline{\{\ubar{p}>0\}}$. In particular, $\overline{\{\bar{p}>0\}} = \overline{\{\ubar{p}>0\}}$.
   \end{lemma}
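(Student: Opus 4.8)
The plan is to sandwich $\bar{p}$ from above by the lower limits $p^+_k=\halfliminf_{m\to\infty}p^+_{m,k}$ of the strictly larger approximating solutions and then pass to the limit $k\to\infty$. Since the initial data of $\rho_m$ and of $\rho^+_{m,k}$ are strictly ordered in the sense of \eqref{initial-data-order} and the sources satisfy $\divo\vecb<f<f^+_k$, the almost comparison (Theorem~\ref{th:almost_cp}), applied with $\rho_m$ as the ``$-$'' solution and $\rho^+_{m,k}$ as the ``$+$'' solution, gives $\bar{p}\le p^+_k$ and $\{\bar{\rho}=1\}\subseteq\{p^+_k>0\}$ for every $k$. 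Because $\inf F^+_k>0$, Lemma~\ref{le:liminf-interior-dense}(d) yields $\overline{\{p^+_k>0\}}=\overline{\{\rho^+_k=1\}}$, and $p^+_k$ vanishes off this set by the exterior-ball barrier argument of Lemma~\ref{le:lim_props}(b) applied to $p^+_k$; hence
\[
  \{\bar{p}>0\}\ \subseteq\ \bigcap_{k\ge 1}\overline{\{\rho^+_k=1\}}.
\]

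The heart of the proof is then the inclusion
\[
  \bigcap_{k\ge 1}\overline{\{\rho^+_k=1\}}\ \subseteq\ \overline{\{\ubar{\rho}=1\}}\ =\ \overline{\{\ubar{p}>0\}},
\]
which expresses the stability of the congested zone under the approximation $k\to\infty$. I would argue by contradiction: if $(x_0,t_0)$ lies in the left-hand side but $\ubar{\rho}<1$ on a space-time ball $B_{2r}(x_0,t_0)$, then for each $k$ the relatively open set $\{p^+_k>0\}$ accumulates at $(x_0,t_0)$ (using that $\ubar{\rho}$ cannot jump and Lemma~\ref{le:liminf-interior-dense}(a)--(d) for the sequence $\rho^+_{m,k}$), so it contains space-time balls $B_{\e_k}(y_k,s_k)\subseteq\{p^+_k>0\}\subseteq\{\rho^+_k=1\}$ with $(y_k,s_k)\to(x_0,t_0)$; in particular $\{\rho^+_k=1\}\cap B_r(x_0,t_0)$ has positive measure for every $k$. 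On the other hand, the monotonicity $\rho^+_k\downarrow$ together with Lemma~\ref{approximation} forces $\bigl|\{\rho^+_k=1\}\cap B_r(x_0,t_0)\bigr|\to 0$. To make this a contradiction I would invoke the pressure non-degeneracy (Remark~\ref{re:p-nondegeneracy}, available since $\inf F^+_k\ge\inf F>0$) and the monotonicity of $\{p^+_k>0\}$ along streamlines (Lemma~\ref{le:char_r}/Lemma~\ref{le:char}) to bound $\e_k$ below uniformly in $k$---i.e. to rule out the congested zones pinching off into vanishingly thin regions near $(x_0,t_0)$---which is incompatible with the vanishing measure.

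Once the above inclusion is proved we obtain $\{\bar{p}>0\}\subseteq\overline{\{\ubar{p}>0\}}$, i.e. $\bar{p}=0$ outside $\overline{\{\ubar{p}>0\}}$. Combining this with $\ubar{p}\le\bar{p}$ (which gives the opposite inclusion $\overline{\{\ubar{p}>0\}}\subseteq\overline{\{\bar{p}>0\}}$) yields $\overline{\{\bar{p}>0\}}=\overline{\{\ubar{p}>0\}}$, as claimed.

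The main obstacle is precisely the stability step: $L^1$ convergence of the densities (Lemma~\ref{approximation}) does not by itself control the closures of the level-$1$ sets, so one genuinely needs the openness of $\{p^+_k>0\}$ and the quantitative non-degeneracy of the limiting pressure to prevent the congested regions of the upper approximants from shrinking discontinuously near a point lying outside the limiting congested zone. All the other steps are direct consequences of Theorem~\ref{th:almost_cp}, Lemma~\ref{le:liminf-interior-dense} and Lemma~\ref{approximation}.
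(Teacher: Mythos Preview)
Your approach has a genuine gap in the crucial stability step. You want to show $\bigcap_k \overline{\{\rho^+_k=1\}} \subseteq \overline{\{\ubar{p}>0\}}$, and for this you propose to bound the radii $\e_k$ from below uniformly in $k$ via Remark~\ref{re:p-nondegeneracy} and streamline monotonicity. But Remark~\ref{re:p-nondegeneracy} goes the wrong way: it gives a lower bound on the \emph{pressure} once a cylinder of prescribed size lies in the congested region, not a lower bound on the \emph{size} of the congested region given that pressure is positive somewhere. Streamline monotonicity (Lemma~\ref{le:char}) propagates a single point forward in time but does not prevent the spatial thickness of $\{p^+_k>0\}$ near $(x_0,t_0)$ from shrinking to zero as $k\to\infty$. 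Without a uniform lower bound on $\e_k$, the mere positivity of $|\{\rho^+_k=1\}\cap B_r(x_0,t_0)|$ for each $k$ is perfectly compatible with its vanishing in the limit, and your contradiction does not close. You correctly flag this as the main obstacle, but the tools you invoke do not resolve it.

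The paper's proof bypasses this stability issue entirely with a direct elliptic argument. From \eqref{known-inclusions} one has $\overline{\{\bar p(\cdot,t)>0\}}\subset\{\bar\rho(\cdot,t)=1\}$ and $\{\ubar\rho(\cdot,t)=1\}\subset\overline{\{\ubar p>0\}}_t$; combined with $\bar\rho(\cdot,t)=\ubar\rho(\cdot,t)$ a.e.\ (Lemma~\ref{approximation}) this yields, at each fixed time, $\bigl|\overline{\{\bar p(\cdot,t)>0\}}\setminus\overline{\{\ubar p>0\}}_t\bigr|=0$. Now use only that $-\Delta\bar p(\cdot,t)\leq F$ in $\R^n$ (Lemma~\ref{le:lim_props}(a)): then $\bar p(\cdot,t)+\tfrac{M}{2n}|x-x_0|^2$ is subharmonic, so the mean-value inequality gives $\bar p(x_0,t)\leq|B_r(x_0)|^{-1}\int_{B_r(x_0)}\bar p\,dy+C_nMr^2$. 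If $B_r(x_0)\cap\overline{\{\ubar p>0\}}_t=\emptyset$, the integrand vanishes a.e.\ by the measure-zero fact, hence $\bar p(x_0,t)\leq C_nMr^2$ for all small $r$, and sending $r\to0$ gives $\bar p(x_0,t)=0$. No approximating sequences $\rho^+_k$ are needed; the $L^1$ information from Lemma~\ref{approximation} is converted into a pointwise statement through the subharmonicity of $\bar p$.
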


   \begin{proof}
     To see this first observe that by \eqref{known-inclusions} for each $t>0$ we have
     $\{\ubar{\rho}(\cdot,t)=1\} \subset \overline{\{\ubar{p}>0\}}_t$ and
     $\overline{\{\bar{p}(\cdot,t)>0\}}\subset\{\bar{\rho}(\cdot, t)=1\}$.
    Since $\bar{\rho}(\cdot,t)=\ubar{\rho}(\cdot,t)$ a.e. due to Lemma~\ref{approximation}, it follows that
    \begin{align}
      \label{diff-measure}
      |\overline{\{\bar{p}(\cdot,t)>0\}} \setminus \overline{\{\ubar{p}>0\}}_t| = 0.
    \end{align}
Since $\bar{p}(\cdot,t)$ satisfies $-\Delta \bar{p} (\cdot,t) \leq F=f - \divo \vec{b}$ by
Lemma~\ref{le:lim_props}(a),  we can conclude that $\bar{p}(\cdot,t)=0$ outside of $\overline{\{\ubar{p}>0\}}_t$.

More precisely, let us write $u = \bar{p}(\cdot, t)$ and
set $M = \sup F$. Since $u$ is a viscosity solution of $-\Delta u \leq F$, for $x_0 \in \Rn$, $u(x) + \frac M{2n} |x - x_0|^2$
is subharmonic and by the mean value property
\begin{align}
  \label{sub_mean}
  u(x_0) \leq |B_r(x_0)|^{-1}\int_{B_r(x_0)}  u(y) \;dy + C_n M r^2, \quad x_0 \in \Rn,\ r>0.
\end{align}
Note that the mean value property applies even to merely upper semi-continuous functions by a
monotone approximation by continuous functions.
If $B_r(x_0) \cap
\overline{\{\ubar{p} > 0\}}_t = \emptyset$ for some $x_0$, $r$, then $|B_r(x_0) \cap
\overline{\{\bar{p}(\cdot, t) > 0\}}| = 0$ by \eqref{diff-measure}. Therefore $\bar{p}(x_0, t) \leq C_n M r^2$ by \eqref{sub_mean}. We conclude that $\bar{p} = 0$ outside of $\overline{\{\ubar{p} > 0\}}$.
 \end{proof}

Now it remains to prove the first equality in \eqref{support}, we show this by proving the following:
\begin{lemma}
  \label{le:congested-2}
\[
\{\bar{\rho}=1\} = \overline{\{\ubar{p}>0\}}.
\]
In particular \eqref{support} holds.
\end{lemma}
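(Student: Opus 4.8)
The plan is to prove the inclusion $\{\bar\rho = 1\} \subseteq \overline{\{\ubar{p} > 0\}}$, the reverse being part of \eqref{known-inclusions} (together with Lemma~\ref{le:liminf-interior-dense}(d)); \eqref{support} then follows by combining with Lemma~\ref{p_1}. Write $\Omega := \overline{\{\ubar{p} > 0\}}$. The facts I will use are: $\{\rho^E \geq 1\} \subseteq \Omega$ and $\overline{\{\ubar{\rho} = 1\}} = \overline{\interior\{\ubar{\rho} = 1\}} = \Omega$ (Lemma~\ref{le:liminf-interior-dense}); $\Omega^0 \times \{0\} \subseteq \Omega$, since $\ubar{p}(\cdot, 0) > 0$ on $\interior \Omega^0$ by Lemma~\ref{le:up-positive-init} and $\Omega^0 = \overline{\interior \Omega^0}$; the transport bound $\bar\rho \leq \rho^E$ on $\{\bar\rho < 1\}$ noted in the proof of Lemma~\ref{viscosity_limit}; and the facts that $\{\bar\rho = 1\}$ is closed and left-accessible (Lemma~\ref{le:lim_props}(d), Lemma~\ref{le:left-accessible}).

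First I record a streamline invariance: if the backward streamline $t \mapsto (X(t - t_0, x_0), t)$, $0 \leq t \leq t_0$, of a point $(x_0, t_0)$ meets $\Omega$, then $(x_0, t_0) \in \Omega$. Indeed, if it meets $\{\ubar{p} > 0\}$ this is Lemma~\ref{le:char}; if it meets only $\partial\{\ubar{p} > 0\} \subseteq \overline{\interior\{\ubar{\rho} = 1\}}$, then choosing points of $\interior\{\ubar{\rho} = 1\} \subseteq \{\ubar{p} > 0\}$ (Lemma~\ref{le:liminf-interior-dense}(b)) converging to the contact point, their forward streamlines remain in $\{\ubar{p} > 0\}$ (Lemma~\ref{le:char}) and, by the semigroup property and continuity of the flow, converge to $(x_0, t_0)$, so $(x_0, t_0) \in \Omega$. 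Now fix $(x_0, t_0) \in \{\bar\rho = 1\}$. If $t_0 = 0$, then $\bar\rho(x_0, 0) \leq \rho^0(x_0)$ (as in Lemma~\ref{le:order-initial}, using \eqref{initial:convergence}) forces $x_0 \in \Omega^0$ since $\rho^{E,0} < 1$, and $\Omega^0 \times \{0\} \subseteq \Omega$ concludes; so assume $t_0 > 0$. By the streamline invariance we may assume the backward streamline $\gamma$ of $(x_0, t_0)$ avoids $\Omega$ entirely. Then $X(-t_0, x_0) \notin \Omega^0$, so $\chi_{\Omega^0}(X(-s, X(s - t_0, x_0))) = \chi_{\Omega^0}(X(-t_0, x_0)) = 0$ for every $s \in [0, t_0]$, and $\rho^E(x_0, t_0) < 1$ since $(x_0, t_0) \notin \{\rho^E \geq 1\}$.

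I now reach a contradiction with $\bar\rho(x_0, t_0) = 1$, by a time-monotonicity argument in the spirit of the proof of Theorem~\ref{th:almost_cp}. Since $\gamma([0,t_0])$ is compact and disjoint from the closed set $\Omega$, for small $\delta > 0$ the backward streamline bundle $\mathcal S := \{(X(t - t_0, z), t) : z \in \overline{B_\delta(x_0)},\ 0 \leq t \leq t_0\}$ is still disjoint from $\Omega$; note $\mathcal S$ is saturated by backward streamlines. On the compact set $\mathcal S$ we have $\rho^E \leq 1 - \eta$ for some $\eta > 0$ (as $\{\rho^E \geq 1\} \subseteq \Omega$), and $\mathcal S \cap \{t = 0\}$ is disjoint from $\Omega^0$, so $\bar\rho(\cdot, 0) \leq \rho^{E,0} \leq 1 - \eta$ there (after shrinking $\eta$). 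Combined with $\bar\rho \leq \rho^E$ on $\{\bar\rho < 1\}$, this gives a dichotomy: on $\mathcal S$, either $\bar\rho \leq 1 - \eta$ or $\bar\rho = 1$. Put $t^* := \sup\{s \in [0, t_0] : \{\bar\rho = 1\} \cap \mathcal S \cap \{t \leq s\} = \emptyset\}$. Since $\bar\rho(\cdot, 0) < 1$ on the compact set $\overline{\mathcal S \cap \{t = 0\}}$ and $\{\bar\rho = 1\}$ is closed, $t^* > 0$. Suppose $t^* \leq t_0$; then $\bar\rho < 1$, hence $\bar\rho \leq 1 - \eta$, on $\mathcal S \cap \{t < t^*\}$, while by maximality of $t^*$ and closedness of $\{\bar\rho = 1\}$ there is a point $(x^*, t^*) \in \mathcal S$ with $\bar\rho(x^*, t^*) = 1$. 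By left-accessibility there is a sequence $(y_k, u_k) \to (x^*, t^*)$ with $u_k < t^*$ and $\bar\rho(y_k, u_k) \to 1$; as $\mathcal S$ contains a relative neighborhood of $(x^*, t^*)$ in $\{t \leq t^*\}$ (enlarging $\delta$ if necessary so that $(x^*, t^*)$ is not on the lateral boundary), $(y_k, u_k) \in \mathcal S \cap \{t < t^*\}$ for large $k$, whence $\bar\rho(y_k, u_k) \leq 1 - \eta$, contradicting $\bar\rho(y_k, u_k) \to 1$. Therefore $t^* > t_0$, so $\bar\rho < 1$ on all of $\mathcal S$, and in particular $\bar\rho(x_0, t_0) < 1$ --- the desired contradiction.

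The crux is the step that rules out $\bar\rho$ jumping up to $1$ outside the congested zone: this is exactly what left-accessibility forbids, but using it at the first possible contact time $t^*$ requires that the bound $\bar\rho \leq 1 - \eta$ has been propagated backward along streamlines, which is why the argument must be run on the streamline-saturated bundle $\mathcal S$ rather than pointwise --- mirroring the role of the sup-/inf-convolutions in the proof of the almost comparison. All the analytic ingredients (the transport bound, the non-degeneracy of $\ubar{p}$, and left-accessibility) are already available from the preceding subsections, so beyond this bookkeeping the proof is short.
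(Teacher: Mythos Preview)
Your overall strategy differs from the paper's and is conceptually appealing: you reduce to the forward streamline-invariance of $\Omega$, then run a first-contact-time argument on a streamline bundle $\mathcal S\subset\Omega^c$, using the transport upper bound (Corollary~\ref{co:transport-comparison}) together with left-accessibility (Lemma~\ref{le:left-accessible}) to rule out $\{\bar\rho=1\}$ appearing in $\mathcal S$. The paper instead invokes the viscosity subsolution property of $(\bar p,\{\bar\rho=1\})$ from Lemma~\ref{viscosity_limit} and builds an explicit superbarrier $\phi$ on a tubular neighborhood; the comparison then confines $\{\bar\rho=1\}$ to $\{\phi>0\}$.

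There is, however, a genuine gap at the lateral boundary of your bundle. If the contact point $(x^*,t^*)$ lies on $\partial\mathcal S$ (i.e., $x^*=X(t^*-t_0,z^*)$ with $|z^*-x_0|=\delta$), the left-accessibility sequence $(y_k,u_k)\to(x^*,t^*)$ need not remain in $\mathcal S$, so you cannot conclude $\bar\rho(y_k,u_k)\le 1-\eta$ and the contradiction fails. Your parenthetical fix ``enlarging $\delta$ if necessary'' is circular: $\delta$ is fixed before $t^*$ and $(x^*,t^*)$ are determined, and changing $\delta$ changes both. Natural attempts to patch this---nested bundles, open versus closed bundles, restarting from $(x^*,t^*)$, or working globally on $\Omega^c$---all run into the same obstruction: the set $\{\bar\rho=1\}\cap\Omega^c$ is not closed, and its time-infimum need not be attained at an interior point of any fixed neighborhood, so one keeps being pushed to a boundary where the bound $\bar\rho\le 1-\eta$ is unavailable.

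This is precisely what the paper's barrier mechanism sidesteps: the superbarrier $\phi$ is taken strictly positive on the lateral boundary of the tube, so the viscosity subsolution comparison controls $\{\bar\rho=1\}$ there via the \emph{boundary condition} $\phi>0$, with no need for a pointwise bound on $\bar\rho$ outside the tube. Your more elementary route could perhaps be completed, but it would require a substantially more careful compactness/iteration argument than the one you have sketched.
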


Note that this is easy to prove if the densities $\rho_m$ strictly increase in time, since in that
case $\bar{\rho}(\cdot,t) \leq \ubar{\rho}(\cdot,t+\e)$ for any $\e>0$, which yields $\{\bar{\rho}=1\} =
\overline{\{\ubar{\rho}=1\}} =\overline{\{\ubar{p}>0\}}$. However, such monotonicity is not true for our case, due to the presence of the drift. Still the main idea in the proof is instead to rely on the monotonicity property of the lower limit pressure $\ubar{p}$ along the streamlines (Lemma~\ref{le:char}).
\begin{proof}

Let $(x_0,h)$ be a point outside of $\overline{\{\ubar{p}>0\}}$, and let $X(t)$ be the corresponding
characteristic path with $X(h)=x_0$.  We claim that $x_0$ lies outside of $\{\bar{\rho}(\cdot,h)=1\}$.
This claim, which yields $\{\bar{\rho}=1\} \subset \overline{\{\ubar{p}>0\}}$, is sufficient to conclude
since we know $\overline{\{\ubar{p}>0\}} \subset \overline{\{\ubar{\rho}=1\}} \subset \{\bar{\rho}=1\}$ by
\eqref{known-inclusions}.

To prove the claim, let $L$ denote the Lipschitz constant for the vector field $\vec{b}$. We will show above Lemma for $h \leq \frac{1}{4L}$, which is enough to conclude for general $h>0$ by iterating the argument below.

\medskip

First note that, since $(X(h),h)$ lies outside of $\overline{\{\ubar{p}>0\}}$, the characteristic
path before time $h$, Lemma~\ref{le:char} yields that the characteristic path before time $h$,
$\mathcal{P}:=\{(X(\tau), \tau) : 0\leq \tau\leq h\}$, lies outside of
$\overline{\{\ubar{p}>0\}}$. Moreover $\Omega^0 \subset \overline{\{\ubar{p}>0\}}_0$ by Lemma~\ref{le:up-positive-init}. Hence Lemma~\ref{le:order-initial}
  yields

\begin{equation}\label{order1}
\{\bar{\rho}(\cdot,0)=1\} \cap B_{2r}(X(0)) = \emptyset
\end{equation}
for some $r>0$, and
\begin{equation}\label{zero}
\bar{p}=0 \hbox{ in } \quad \mathcal{N}:= \bigcup_{0\leq \tau\leq h} (B_{2r}(X(\tau))\times\{\tau\})
\subset (\overline{\{\ubar{p}>0\}})^\compl.
\end{equation}

\medskip

  We will now show that the set $\{\bar{\rho}(\cdot,h)=1\}$ stays out of $B_{r/2}(X(h))$.  To do so
  we will invoke the subsolution property of $(\bar{p}, \{\bar{\rho}=1\})$ shown in
  Lemma~\ref{viscosity_limit} to carry out a barrier argument. More precisely, we will construct a
  superbarrier $\phi$ of \eqref{hs} below, based on \eqref{order1}--\eqref{zero} above.

\medskip

Consider a radial function $w: B_r(0)\to \R$ such that $w=0$ in $|x|\leq r-\delta t$, $w = \e$ at $|x|=2r$ and
  $-\Delta w = \sup_{\mathcal{N}} F $ for $r-\delta t\leq |x|\leq 2r$.  Note that $|\nabla w| = O(\e)$ for $|x|\leq 2r$.  Next let us define $\delta:= \frac{r}{2h}> 2Lr$ and consider the function
 \[
 \phi(x,t):=\psi(x,t):=w(x-X(t),t)
 \]
 in the domain $\mathcal{N}$. Observe that the support of $\phi$ moves with the normal velocity
  \[
V = X'(t) \cdot\nu  +\delta = \vec{b}(X(t),t)\cdot\nu + \delta \geq \vec{b}(x,t)\cdot \nu  - Lr + \delta \quad \hbox { for } (x,t)\in\partial\{\phi>0\},
  \]
  where the inequality holds due to the fact that $\partial\{\phi(\cdot,t)>0\} \subset B_r(X(t))$
  for $0\leq t\leq h$ and the Lipschitz continuity of $\vec{b}$ with respect to the space variable.
  Since $\delta >2Lr$ and $|\nabla \phi| = O(\e)$,  if $\e \ll \delta$, we conclude that
  \[
  V \geq \vec{b}(x,t) \cdot \nu + \tfrac \delta2 \geq \vec{b}\cdot\nu + |\nabla\phi| \quad \hbox{ on } \partial\{\phi>0\}.
  \]

 Hence  with a sufficiently small choice of $\e$  it follows that  $\phi$ is a superbarrier for
\eqref{hs} in the domain $\mathcal{N}$.

\medskip

Since $\bar{p}=0$ in $\mathcal{N}$ from \eqref{zero}, $\bar{p} \leq \phi$ in $\mathcal{N}$. Furthermore $\{\bar{\rho}(\cdot,0)=1\}$ is outside of $\{\phi(\cdot,0)=0\} = B_r(X(0))$ due to \eqref{order1}. Thus by the subsolution property of $(\bar{p}, \{\bar{\rho}=1\})$ for \eqref{hs}, the set $\{\bar{\rho} = 1\}$ is contained in the support of $\phi$ in $\mathcal{N}$. We can conclude now since
\[
\{\bar{\rho}(\cdot,h)=1\}\subset \{\phi(\cdot,h)=0\} = B_{r/2}(X(h)).
\]
\end{proof}







With \eqref{support} proven and the congested zone $\Omega$ well-defined, we proceed to
characterizing the set $\Omega$ using the free boundary problem \eqref{hs}. Below we show that the
smallest supersolution of \eqref{hs} with ``initial data $\rho^0$'' has the same support as the one
given by the limit solutions $\bar{p}$ and $\ubar{p}$.

\medskip

\begin{lemma}\label{visc:minimal}
Consider the function $U: Q \to \R$ defined as
\begin{equation}\label{minimal}
U:= \inf \{ p: p \hbox{ is a supersolution of }\eqref{hs} \hbox{ with external density }\rho^E
\hbox{ and }  p(\cdot,0) > 0 \text{ in }\interior \Omega^0\}.
\end{equation}
Then $U$ is a viscosity solution of \eqref{hs} with initial data $\rho^0$, and $\overline{\{U_*>0\} }=\overline{\{U^*>0\}}= \Omega$.
\end{lemma}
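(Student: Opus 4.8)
The plan is to run Perron's method with the half-relaxed limit $\ubar{p}$ playing the role of a barrier from above and the approximating subsolutions $p^-_k$ of Section~\ref{sec:density-convergence} as barriers from below, squeezing the support of $U$ onto $\Omega$ by the comparison principle Theorem~\ref{th:comparison_strict}. First I would note that the admissible class in \eqref{minimal} is nonempty: by Lemma~\ref{viscosity_limit}, $\ubar{p}$ is a viscosity supersolution of \eqref{hs} with external density $\rho^E$, and by Lemma~\ref{le:up-positive-init} it is positive in $\interior\Omega^0$ at $t=0$, so $U\le\ubar{p}$ on $Q$. On the other side, every admissible $p$ is lower semicontinuous and satisfies $-\Delta p\ge F$ in $\{p>0\}\supset\interior\Omega^0$ with $p\ge 0$ on $\partial\Omega^0$, so by elliptic comparison $p(\cdot,0)\ge w$ in $\interior\Omega^0$, where $w$ solves $-\Delta w=F$ in $\interior\Omega^0$ with $w=0$ on $\partial\Omega^0$; since $F>0$ and $\Omega^0=\overline{\interior\Omega^0}$, one has $w>0$ in $\interior\Omega^0$, hence $U(\cdot,0)\ge w$ there.

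Next I would verify that $U$ is a viscosity solution of \eqref{hs}. The function $U_*$ is a viscosity supersolution: for a subbarrier $\phi$ with $\phi\prec U_*$ on the parabolic boundary $\partial_P E$ of a bounded parabolic neighborhood $E$, $\phi$ is strictly below every admissible $p$ on $\partial_P E$, so the subbarrier comparison for each $p$ propagates $\phi\prec p$ to $\cl E$, and taking the infimum gives $\phi\prec U_*$ on $\cl E$; moreover $\{\rho^E\ge1\}\cap Q\subset\overline{\{U_*>0\}}$, because in $\{\rho^E\ge1\}$ the free-boundary velocity constraint is vacuous, so a small subbarrier can be grown rapidly there, forcing every admissible $p$ (hence $U$) to be positive. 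The pair $(U^*,\overline{\{U_*>0\}})$ is a viscosity subsolution by the standard bump step: if not, there is a superbarrier $\phi$ and a bounded parabolic neighborhood where the subsolution property fails at a point with $t>0$, and replacing $U$ by $\min(U,\phi)$ together with the corresponding contraction of $\overline{\{U_*>0\}}$ near that point would produce an admissible supersolution strictly below $U$ there, contradicting the infimum in \eqref{minimal}. Finally $\overline{\{U_*>0\}}_0=\overline{\{U^*>0\}}_0=\Omega^0$: the inclusion $\supset$ follows from $U(\cdot,0)\ge w$ and $\overline{\{w>0\}}=\Omega^0$, and $\subset$ from $U\le\ubar{p}$ and $\overline{\{\ubar{p}>0\}}_0=\Omega^0$ (Lemmas~\ref{le:order-initial} and \ref{le:up-positive-init}).

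For the support identity, $U\le\ubar{p}$ gives $\{U>0\}\subset\{\ubar{p}>0\}$, hence $\overline{\{U^*>0\}}=\overline{\{U>0\}}\subset\overline{\{\ubar{p}>0\}}=\Omega$ by Proposition~\ref{pr:congested-zone}, and then $\overline{\{U_*>0\}}\subset\Omega$ as $U_*\le U^*$. For the reverse inclusion $\Omega\subset\overline{\{U_*>0\}}$ I would use the approximating problems. By the argument of Lemma~\ref{viscosity_limit}, $(p^-_k,\{\rho^-_k=1\})$ is a viscosity subsolution of \eqref{hs} with source $F-\tfrac1k$ and external density strictly dominated by $\rho^E$ in a neighborhood (Lemma~\ref{le:rho-ext-order}); the initial data $\rho^-_{0,k}$ is strictly below $\rho^0$, its congested part is contained in $\Omega^-_{0,k}\subset\interior\Omega^0$, and the relaxed initial pressure of the $(-,k)$-problem is, by elliptic comparison (as $F-\tfrac1k<F$ and $\Omega^-_{0,k}\subset\interior\Omega^0$), strictly below $w\le U_*(\cdot,0)$ on $\Omega^-_{0,k}$. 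This is exactly the strict separation needed to apply Theorem~\ref{th:comparison_strict} on $\Rn\times(0,T)$ for each $T>0$ (the congested zones being bounded for bounded times), which yields $p^-_k\le U_*$ on $Q$, hence $\overline{\{p^-_k>0\}}\subset\overline{\{U_*>0\}}$ for every $k$. It then remains to check $\overline{\bigcup_k\{p^-_k>0\}}=\Omega$: since the $p^-_k$ increase in $k$ and $\rho^-_k(\cdot,t)\to\ubar{\rho}(\cdot,t)$ in $L^1$ by Lemma~\ref{approximation}, using the nucleation and nondegeneracy estimates for the $(-,k)$-problems (the analogues of Lemma~\ref{le:liminf-interior-dense}(a)--(b) and Remark~\ref{re:p-nondegeneracy}) together with $\overline{\{\ubar{\rho}=1\}}=\Omega$, one shows that every interior point of $\{\ubar{\rho}=1\}$ eventually lies in $\{p^-_k>0\}$. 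Combining, $\Omega\subset\overline{\{U_*>0\}}\subset\overline{\{U^*>0\}}\subset\Omega$, so all three sets equal $\Omega$, and with the initial-data check this exhibits $U$ as a viscosity solution of \eqref{hs} with initial density $\rho^0$.

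The main obstacle is twofold. The delicate part of the Perron step is the bump construction for the pair $(u,\Sigma)$: one must simultaneously lower the pressure and contract the closed set near a would-be contact point while staying admissible, which forces careful use of Definition~\ref{def:barrier} for superbarriers, in particular the condition $\rho^E<1$ wherever $\phi\le0$, and of the fact that $\{\rho^E\ge1\}\subset\overline{\{U_*>0\}}$ keeps the set from collapsing. The second, and genuinely nontrivial, obstacle is the exhaustion statement $\overline{\bigcup_k\{p^-_k>0\}}=\Omega$: because no pointwise monotonicity in time is available, it cannot be read off directly, and one must control the interchange of the limits $m\to\infty$ and $k\to\infty$, relying on the $L^1$ convergence of Lemma~\ref{approximation} and on interior nondegeneracy of the limit pressures.
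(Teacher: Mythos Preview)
Your overall architecture matches the paper exactly: Perron's method for the solution property, a barrier/elliptic argument for the initial trace, the inclusion $\overline{\{U^*>0\}}\subset\Omega$ from $U\le\ubar p$, and the reverse inclusion via $p^-_k\le U_*$ (Theorem~\ref{th:comparison_strict} together with Lemma~\ref{viscosity_limit}) reduced to the exhaustion
\[
\Omega=\overline{\bigcup_k\{\rho^-_k=1\}}=\overline{\bigcup_k\{p^-_k>0\}}.
\]
You correctly flag this exhaustion as the crux.

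The gap is in how you propose to prove it. Your sketch (``nucleation and nondegeneracy estimates for the $(-,k)$-problems \ldots\ every interior point of $\{\ubar\rho=1\}$ eventually lies in $\{p^-_k>0\}$'') does not close. The $L^1$ convergence $\rho^-_k(\cdot,t)\to\bar\rho(\cdot,t)$ of Lemma~\ref{approximation} gives no pointwise information, and the nondegeneracy estimate of Lemma~\ref{le:liminf-interior-dense}/Remark~\ref{re:p-nondegeneracy} only tells you that \emph{within a fixed $(-,k)$ problem} the set $\{\rho^-_k=1\}$ coincides with $\overline{\{p^-_k>0\}}$; it says nothing about whether these sets fill up $\Omega$ as $k\to\infty$. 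What is missing is a \emph{uniform-in-$k$} upper bound on $\rho^-_k$ outside $\mathcal D:=\overline{\bigcup_k\{\rho^-_k=1\}}$. The paper obtains this as follows: first it shows $\{\rho^E\ge1\}\subset\mathcal D$ (via $\rho^-_{k,E}\to\rho^E$ and Lemma~\ref{le:liminf-interior-dense}(c)); then, for a ball $B_r(x_0)\times\{t_0\}\subset\{\ubar p>0\}\cap\mathcal D^\compl$, it uses the streamline monotonicity (Lemma~\ref{le:char} applied to each $(-,k)$ problem, together with $\overline{\{p^-_k>0\}}=\{\rho^-_k=1\}$) to ensure that every streamline through that ball stays outside $\{\rho^-_k=1\}$ for $t\le t_0$, so that Corollary~\ref{co:transport-comparison} yields $\rho^-_k\le\rho^-_{k,E}\le\rho^E<1-\e$ there \emph{for all $k$}. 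Only then does the a.e.\ convergence $\rho^-_k\to\bar\rho$ produce the contradiction $\bar\rho<1$ a.e.\ on a ball inside $\{\bar\rho=1\}$. You should replace your nondegeneracy sketch with this streamline-plus-transport argument.

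A smaller point: your initial lower bound $U_*(\cdot,0)\ge w$ via static elliptic comparison presumes that an arbitrary viscosity supersolution of \eqref{hs} satisfies $-\Delta p(\cdot,0)\ge F$ on $\{p(\cdot,0)>0\}$, which is not part of Definition~\ref{def:visc-sol} and needs its own barrier argument. The paper sidesteps this by testing each admissible $p$ directly with the \emph{dynamic} subbarrier $\phi(x,t)=\mu(t)\bigl(1-\delta^{-2}|x-X(t,x_0)|^2e^{2Lt}\bigr)$ (a subbarrier in the sense of Definition~\ref{def:barrier}), then taking the supremum over admissible $\mu$; this stays entirely inside the viscosity framework and gives $\interior\Omega^0\subset\{U_*(\cdot,0)>0\}$ cleanly.
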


\begin{proof}
  Recall that we understand $U_*$ as the largest LSC function on $\cl Q$ that is smaller than $U$
  on $Q := \Rn \times (0,\infty)$. $U^*$ is understood similarly. This technicality is necessary
  since $U$ as defined in \eqref{visc:minimal} is clearly $0$ at $t = 0$.
  Note that the set of eligible supersolutions in \eqref{visc:minimal} is nonempty since $\ubar{p}$
  belongs to the set by Lemma~\ref{le:up-positive-init}.

 First we claim that $U_*$ is a viscosity supersolution and $(U^*, \overline{\{U^*>0\}})$ is a
subsolution of \eqref{hs} in the sense of Definition~\ref{def:visc-sol}. This part of the proof is parallel to the standard Perron's method in viscosity solutions theory, and thus we refer to \cite{CIL} and \cite{K03}.

Next we check the initial data, i.e., that
\begin{equation}\label{claim000}
\overline{\{U_*>0\}}_0 = \overline{\{U^*>0\}}_0 =  \Omega^0.
\end{equation}
  Let us mention that the proof of Lemma~\ref{le:order-initial} yields that
\begin{equation}\label{matching}
\bar{\rho}(\cdot,0) = \rho^0, \quad \ubar{\rho} (\cdot,0) = (\rho^0)_*.
\end{equation}

 Since $U \leq \ubar{p}$, we deduce $U^* \leq \bar{p}$ and thus
\[
\overline{\{U^*>0\}}_0 \subset  \overline{\{\bar{p}>0\}}_0 = \{\bar{\rho} = 1\}_0 = \Omega^0,
\]
where the last equality is due to \eqref{matching}.

\medskip

The other inequality follows from a simple barrier argument.
Let us set
\[\phi(x,t) := \mu(t)\pth{1 -\frac{|x - X(t, x_0)|^2}{\delta^2} e^{2Lt}},\]
where $L > 0$ is a
Lipschitz constant for $\vecb$.
For given $\delta > 0$ there exists $\tilde \mu = \tilde\mu(\delta) > 0$ such that $\phi$ is as subbarrier for
\eqref{hs} whenever
$x_0 \in \Rn$, $\mu \in C^1([0,\delta])$, $\mu \leq \tilde \mu$.
If $p$ is an admissible supersolution in \eqref{visc:minimal} and $x_0 \in \interior \Omega^0$,
we can find $\delta > 0$, $\mu_0 \in (0, \tilde \mu(\delta))$ such that $\phi \leq p$ at $t = 0$ with any $\mu \in
C^1([0,\delta])$ such that $\mu(0) = \mu_0$. By definition of viscosity supersolutions $\phi \leq p$ on $\cl Q$.
Taking a supremum over all such $\mu \leq \tilde \mu$, we deduce that $U_* \geq \phi$ with $\mu = \tilde\mu$.
In particular, $\interior \Omega^0 \subset \{U_* > 0\}$. As $\cl{\interior \Omega^0} = \Omega^0$, we have
\begin{align*}
\Omega^0 \subset \overline{\{U_*>0\}}_0,
\end{align*}
and thus we arrive at \eqref{claim000}.

\medskip


\medskip

It remains to show that

\begin{equation}\label{claim111}
\overline{\{U_*>0\} }=\overline{\{U^*>0\}}= \Omega.
\end{equation}

 Note that $\{U^*>0\} \subset \Omega$ since $U^* \leq \bar{p}$ and $\Omega = \overline{\{\bar{p}>0\}}$. Therefore to show \eqref{claim111} it is enough to show that $\Omega\subset \overline{\{U_*>0\}}$. To show this we claim that
\begin{equation}\label{claim}
\Omega= \mathcal{D}:=\overline{\bigcup_{k>0} \{\rho^-_k=1\}}
\end{equation}

Observe that $\{\rho^-_k=1\}\subset \Omega$ for any $k>0$ from the ordering $\rho^-_k
\leq \bar{\rho}$, hence $\mathcal{D} \subset \Omega$.

 To show that $\Omega \subset \mathcal{D}$, recall that by our assumption $F= f - \divo \vec{b} >0$, the external
density $\rho^E$ strictly increases along streamlines. This yields  $\{\rho^E\geq 1\} =
\overline{\{\rho^E>1\}}$. Also by our construction $\rho^-_{k,E}$, the external density associated
with $\rho^-_{k,0}$, locally uniformly converges to $\rho^E$. Thus it satisfies
\[
\{\rho^E>1\} \subset \bigcup_{k>0}\{\rho^-_{k,E} \geq 1\} \subset \bigcup_{k>0} \{\rho^-_k=1\},
\]
where the second inclusion is due to the fact that $\{\rho^-_{k,E}=1\} \subset
\{\rho^-_k \geq 1\}$ by Lemma~\ref{le:liminf-interior-dense}(c).  As a consequence it follows that
\begin{equation}\label{inclusion000}
\{\rho^E\geq 1\} \subset \mathcal{D}.
\end{equation}

 Now suppose
$(x_0,t_0) \in \{\ubar{p}>0\} \cap \mathcal{D}^\compl$, which then implies $B_r(x_0)\times\{t_0\}
\subset
\{\ubar{p}>0\} \cap \mathcal{D}^\compl$ for some $r>0$. Due to \eqref{inclusion000} this ball lies in the open set
$\{\rho^E<1\}$, and thus we have
$\rho^E< 1- \e$ in $B_r(x_0)\times\{t_0\}$ for some $\e>0$.

By definition of $\mathcal{D}$,  $B_r(x_0)$ lies outside of $\overline{\{\rho^-_k=1\}}$ at
$t=t_0$ for any $k>0$. This and Lemma~\ref{le:char}, as well as the fact that
$\overline{\{p^-_k >0 \} }= \overline{\{\rho^-_k = 1\}}$ by
Proposition~\ref{pr:congested-zone}, it follows that any
streamline $\{(X(t, \cdot), t): t\}$ passing through $B_r(x_0)$ at time $t_0$
lies outside of $\{\rho^-_k=1\}$ for $t\leq t_0$.   Now
Corollary~\ref{co:transport-comparison} states that
\[
  \rho^-_k (\cdot,t_0) \leq \rho^-_{k,E}(\cdot, t_0) \leq \rho^E(\cdot,t_0)<1-\e \hbox{ in }B_r(x_0) \hbox{ for all }k>0.
\]
Since $\rho^-_k(\cdot, t_0)$ converges to $\bar{\rho}(\cdot, t_0)$ a.e. by
Lemma~\ref{approximation}, it follows that
$\bar{\rho}(\cdot,t_0) <1$ a.e. in $B_r(x_0)$, contradicting the fact that $B_r(x_0) \subset
\{\ubar{p}(\cdot, t_0)>0\} \subset \{\bar{\rho}(\cdot,t_0)=1\}$.  Hence such $(x_0,t_0)$ does not exist, which means $\{\ubar{p}>0\}\subset \mathcal{D}$, and thus $\overline{\{\ubar{p}>0\}} = \Omega \subset \mathcal{D}$ and we conclude that \eqref{claim} holds.

\medskip

Now when \eqref{claim} is proved, it remains to show that $\overline{\{U_*>0\}}$ contains
$\mathcal{D}$. This is straightforward because $U_* \geq p^-_k$ from
Theorem~\ref{th:comparison_strict} and Lemma~\ref{viscosity_limit}, and thus $\{\rho^-_k=1\} = \overline{\{p^-_k>0\}} \subset \overline{\{U_*>0\}}$ for any $k>0$.
\end{proof}

The next corollary states that any viscosity solution of \eqref{hs} generates the same pressure support, which is $\Omega$.

\begin{corollary}\label{cor:unique}
Let $u$ be any viscosity solution of \eqref{hs} with initial data $\rho^0$, as defined in Definition~\ref{def:visc-sol}. Then $\overline{\{u_*>0\}}= \Omega$.

\end{corollary}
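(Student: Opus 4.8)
The plan is to prove the two inclusions $\Omega\subseteq\overline{\{u_*>0\}}$ and $\overline{\{u_*>0\}}\subseteq\Omega$ by comparing $u$ against the approximating families $p^{\pm}_k$ from Section~\ref{sec:density-convergence}, whose data are \emph{strictly} ordered against the data $(F,\rho^E)$ of $u$, so that the strict comparison principle Theorem~\ref{th:comparison_strict} applies. By (the argument of) Lemma~\ref{viscosity_limit}, $(p^-_k,\{\rho^-_k=1\})$ is a viscosity subsolution and $p^+_k$ a viscosity supersolution of the corresponding Hele--Shaw problem, with sources $F^{\mp}_k:=f\mp\tfrac1k-\divo\vecb$ ($F^-_k<F<F^+_k$) and external densities $\rho^{\mp}_{k,E}$ strictly ordered against $\rho^E$ (Remark~\ref{re:comparison-strict-our-data}); moreover $p^+_k(\cdot,0)>0$ on $\interior\Omega^+_{0,k}\supseteq\Omega^0$ by Lemma~\ref{le:up-positive-init}.

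Both comparisons are carried out on a parabolic neighborhood $\mathcal N=B_R\times(\tau,T)$ with $R,T$ large (all relevant supports stay bounded on $[0,T]$) and $\tau>0$ small: since $\rho^E<1$ and $\rho^{\pm}_{k,E}<1$ near $t=0$, no new components nucleate for $t<t_*$ for some $t_*>0$, so for small $\tau$ the slices $\overline{\{u^*>0\}}_\tau$, $\overline{\{u_*>0\}}_\tau$ lie within a small enlargement of $\Omega^0$, hence inside $\interior\Omega^+_{0,k}$; and propagating the small subbarriers from the proof of Lemma~\ref{visc:minimal}, starting from the fact $\overline{\{u_*>0\}}_0=\Omega^0$, shows $u_*(\cdot,\tau)$ is positive and bounded below away from zero on the compact set $\Omega^-_{0,k}\subseteq\interior\Omega^0$. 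At $t=\tau$ the pressures have relaxed, so $u^*(\cdot,\tau)<p^+_k(\cdot,\tau)$ and $p^-_k(\cdot,\tau)<u_*(\cdot,\tau)$ on the respective support slices follow from the elliptic comparison principle (using $F<F^+_k$, $F^-_k<F$ and the positivity of the dominating pressure on the relevant closed set). Theorem~\ref{th:comparison_strict} then yields, on $\mathcal N$ and hence (the part $t\le\tau$ being immediate from the support inclusions) on all of $\cl Q$,
\[
  \overline{\{p^-_k>0\}}=\{\rho^-_k=1\}\ \subseteq\ \overline{\{u_*>0\}}\ \subseteq\ \overline{\{p^+_k>0\}}\qquad\text{for every }k.
\]

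Taking the closure of the union of the left-hand sets over $k$ and invoking $\Omega=\mathcal D=\overline{\bigcup_k\{\rho^-_k=1\}}$ (equation \eqref{claim} in the proof of Lemma~\ref{visc:minimal}) gives $\Omega\subseteq\overline{\{u_*>0\}}$. For the reverse, $\overline{\{u_*>0\}}\subseteq\bigcap_k\overline{\{p^+_k>0\}}$; here $\bigcap_k\overline{\{p^+_k>0\}}\supseteq\Omega$ by $\bar{\rho}\le\rho^+_k$ (equation \eqref{order}), while $\rho^+_k(\cdot,t)\to\bar{\rho}(\cdot,t)$ in $L^1(\Rn)$ (Lemma~\ref{approximation}) together with $\rho^+_k\ge\bar{\rho}$ forces $\bigcap_k\overline{\{p^+_k>0\}}$ to differ from the closed set $\Omega$ by at most a Lebesgue-null set in each time slice. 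Since $\{u_*>0\}$ is open, $\overline{\{u_*>0\}}$ is the closure of its interior, and the interior of a closed set is unaffected by adjoining a null set, so $\overline{\{u_*>0\}}\subseteq\overline{\interior\Omega}=\Omega$ (as $\Omega=\overline{\{\bar{p}>0\}}$ is itself the closure of an open set). This proves $\overline{\{u_*>0\}}=\Omega$.

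The main obstacle is precisely this initial/relaxation-time matching needed to feed Theorem~\ref{th:comparison_strict}: Definition~\ref{def:visc-sol} pins down only $\overline{\{u_*>0\}}_0=\overline{\{u^*>0\}}_0=\Omega^0$ and says nothing about the pressure values at $t=0$, so one must argue separately that $u_*$ cannot instantaneously lose the congested region $\interior\Omega^0$ (subbarrier propagation, as in the proof of Lemma~\ref{visc:minimal}) and start the comparison domain at a small positive time where both pressures have relaxed. The remaining ingredients---the strict orderings of sources and external densities recorded in Lemma~\ref{le:rho-ext-order} and the construction of $\rho^{\pm}_{0,k}$, together with the measure-theoretic squeeze $\bigcap_k\overline{\{p^+_k>0\}}=\Omega$ up to null sets combined with the regular-closedness of $\overline{\{u_*>0\}}$ (which sidesteps any need for Hausdorff convergence of the approximating congested zones)---are routine given the machinery of Sections~\ref{sec:almost-cp}--\ref{sec:convergence}.
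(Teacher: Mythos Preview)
Your proof is correct and follows essentially the same route as the paper: compare $u$ against the approximants $p^{\pm}_k$ via Theorem~\ref{th:comparison_strict}, then use the $L^1$ convergence (Lemma~\ref{approximation}) to squeeze the supports and conclude via the openness of $\{u_*>0\}$. The measure-theoretic argument for the inclusion $\overline{\{u_*>0\}}\subseteq\Omega$ is identical to the paper's.

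The one genuine difference is in the inclusion $\Omega\subseteq\overline{\{u_*>0\}}$. The paper obtains this in one line: by construction $U\leq u_*$ (since $u_*$ is an admissible supersolution in the infimum \eqref{minimal}), hence $\Omega=\overline{\{U_*>0\}}\subseteq\overline{\{u_*>0\}}$, invoking Lemma~\ref{visc:minimal}. You instead redo the comparison $p^-_k\leq u_*$ directly, which is exactly what the paper did for $U$ inside Lemma~\ref{visc:minimal}; this is correct but redundant given that lemma. Your careful treatment of the initial-time matching (moving to $t=\tau>0$ and using subbarrier propagation) is a legitimate clarification of a step the paper leaves terse when it asserts $u^*\leq p^+_k$ via Theorem~\ref{th:comparison_strict}; the paper is implicitly relying on $\overline{\{u^*>0\}}_0=\Omega^0\subset\interior\Omega^+_{0,k}\subset\{p^+_k(\cdot,0)>0\}$ together with the barrier argument at small times, which you have made explicit.
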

\begin{proof}
  By definition of $U$ in \eqref{minimal} we have $U\leq u_*$ and thus  $\Omega \subset
  \overline{\{U_*>0\}}\subset \overline{\{u_*>0\}}$. To prove the other inclusion, note that  $u^*
  \leq p^+_{k}$ by Theorem~\ref{th:comparison_strict} and Lemma~\ref{viscosity_limit}. Hence it follows that that
\[
\{u_*>0\} \setminus \Omega \subset \overline{\{p^+_{k}>0\}} \setminus \Omega \hbox{ for any } k>0.
\]

  Also observe that, since $\rho^+_{k}$ decreases to $\bar{\rho}$ and $\int |\rho^+_{k} - \bar{\rho}|(\cdot,t) dx \to 0$, we have
\[
|\overline{\{p^+_{k}>0\}}_t \setminus \Omega_t|  = |\{\rho^+_{k}(\cdot,t)=1\} \cap \{\bar{\rho}(\cdot,t)<1\}|  \to 0 \hbox{ as }k\to\infty.
\]
Hence it follows that  $|\{u_*>0\} \setminus \Omega|=0$. Since this set is open, we conclude $\{u_*>0\} \subset \Omega$, which yields our claim.
\end{proof}

\begin{remark}
Note that the viscosity solution itself may not be unique: this is related to the fact that the inner and outer approximation of harmonic functions with Dirichlet data may be different if the domain is not smooth.

\end{remark}

Now we can show the locally uniform convergence of $\rho_m$ to a limit density given in terms of a viscosity solution $u$, with the price of excluding the boundary of $\{u>0\}$.

\begin{corollary}\label{density}
Let $u$ be a viscosity solution of \eqref{hs} with initial density $\rho^0$, and let $\Omega:= \overline{\{u_*>0\}}$. Then $\rho_m$ with its initial data satisfying \eqref{initial:convergence}
 converges locally uniformly away from $\partial\Omega$ to
\begin{equation}\label{density_limit}
\rho:= \chi_{\Omega} + \rho^E\chi_{\Omega^\compl}.
\end{equation}
\end{corollary}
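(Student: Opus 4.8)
The plan is to combine the characterization of the congested zone $\Omega$ in \eqref{support} with the $L^1$ convergence from Lemma~\ref{approximation} and then upgrade $L^1$ convergence to local uniform convergence by a barrier argument on both sides of the free boundary $\partial\Omega$. First I would fix a compact set $K \subset (\Rn \times (0,\infty)) \setminus \partial\Omega$ and split it into $K \cap \Omega$ (where $\rho = 1$) and $K \cap \Omega^\compl$ (where $\rho = \rho^E$); it suffices to prove uniform convergence on each piece separately. On $K \cap \interior\Omega$ one has, by Proposition~\ref{pr:congested-zone} and Lemma~\ref{le:liminf-interior-dense}(b), that $\interior\Omega \subset \set{\ubar p > 0} \subset \set{\ubar\rho = 1}$, so $\ubar\rho \equiv 1$ there; since also $\bar\rho \leq 1$ by Lemma~\ref{le:lim_props}(c) and $\ubar\rho \leq \bar\rho$, we get $\bar\rho = \ubar\rho = 1$ on $\interior\Omega$, and the semicontinuity of the half-limits combined with $\ubar\rho = 1 = \bar\rho$ on the compact set $K\cap\interior\Omega$ forces $\rho_m \to 1$ uniformly there (if not, along a subsequence $\rho_m(x_m, t_m) \leq 1 - \delta$ with $(x_m, t_m) \to (x_*, t_*)$, giving $\ubar\rho(x_*, t_*) \leq 1 - \delta < 1$, a contradiction).

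Next, on $K \cap \Omega^\compl$, which is a compact subset of the open set $\set{\bar\rho < 1} \cap \set{t > 0}$ (open because $\bar\rho$ is USC and $\Omega = \set{\bar\rho = 1}$ is closed), I would show $\rho_m \to \rho^E$ uniformly. The key is the sandwich $\ubar\rho \geq \min(1, (\rho^+_{\rm tr})_*) $ from Corollary~\ref{co:transport-comparison} together with $\bar\rho \leq \rho^E$ outside $\Omega$, which the proof of Lemma~\ref{viscosity_limit} already records (it follows from Lemma~\ref{approximation} and Lemma~\ref{le:weak_mon_r}, or more directly from the identity $\rho^- \leq \rho^-_E$ on $\set{p^+ = 0} \supset \set{\bar\rho < 1}$ established at the end of the proof of Theorem~\ref{th:almost_cp}). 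Actually the cleanest route is to use that on $\Omega^\compl$ the streamline through any point never meets $\set{\ubar p > 0}$ backward in time (Lemma~\ref{le:char}) nor $\Omega^0$ at $t = 0$, so both $\bar\rho$ and $\ubar\rho$ are squeezed between the transport-equation barriers of Lemma~\ref{le:liminf-lower-bound} and Corollary~\ref{co:transport-comparison}, giving $\ubar\rho = \bar\rho = \rho^E$ pointwise on $\Omega^\compl$; then the same semicontinuity argument as above yields uniform convergence $\rho_m \to \rho^E$ on the compact set $K \cap \Omega^\compl$.

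Finally, I would assemble the pieces: given a compact $K$ away from $\partial\Omega$, cover it by finitely many balls each contained in $\interior\Omega$ or in $\Omega^\compl$, apply the two uniform estimates above, and note that the limit function on $K$ is exactly $\rho = \chi_\Omega + \rho^E \chi_{\Omega^\compl}$ as in \eqref{density_limit}; continuity of $\rho^E$ makes this consistent. The identification of $\Omega$ with $\overline{\set{u_* > 0}}$ for any viscosity solution $u$ is exactly Corollary~\ref{cor:unique}, so the statement in terms of $u$ is immediate.

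The main obstacle I expect is the $\Omega^\compl$ side: one must be sure that $\bar\rho \leq \rho^E$ holds on \emph{all} of $\Omega^\compl$ and not just where one already knows the pressure vanishes along the past streamline — this is precisely the point the paper flags as not available until after the full convergence result, and it is resolved only via the chain Lemma~\ref{le:weak_mon_r} $\Rightarrow$ Lemma~\ref{approximation} $\Rightarrow$ the remark in the proof of Lemma~\ref{viscosity_limit}. Care is also needed near $t = 0$, where one invokes \eqref{matching} and the superbarriers of Section~\ref{sec:barriers-rho-one} as in Lemma~\ref{le:order-initial} to control $\rho_m$ at small times on $K \cap \Omega^\compl$; but since $K$ is assumed to avoid $\partial\Omega$ and $t = 0$ is included only through compact sets with $t > 0$, this is routine.
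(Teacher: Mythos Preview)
Your proposal follows essentially the same route as the paper's proof: split into $\interior\Omega$ and $\Omega^\compl$, use Proposition~\ref{pr:congested-zone} to identify $\Omega$ with the various congested-zone descriptions, obtain $\ubar\rho=\bar\rho=1$ on the interior via $\{\ubar p>0\}\subset\{\ubar\rho=1\}$, and on the complement combine the streamline monotonicity of Lemma~\ref{le:char} with both halves of Corollary~\ref{co:transport-comparison} to pin $\ubar\rho=\bar\rho=\rho^E$. The paper's argument is terser but invokes exactly the same ingredients; your anticipated ``main obstacle'' on the $\Omega^\compl$ side is precisely what the paper resolves by citing Lemma~\ref{le:char} together with the now-established equality $\overline{\{\bar p>0\}}=\overline{\{\ubar p>0\}}$.
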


\begin{proof}
By definition  $\rho_m$ locally uniformly converges to $1$ in  $\{\ubar{\rho}=1\}$. Since $\Omega \subset \{\ubar{p}>0\} \subset \{\ubar{\rho}=1\}$,  we can conclude that $\rho_m$ locally uniformly converges to
$1$ in the interior of $\Omega$.

\medskip

 Outside of $\Omega$, The first statement of Corollary~\ref{co:transport-comparison} yields that
 $\ubar{\rho} \geq \min[1, \rho^E]$.  Also $\bar{\rho}<1$ outside of $\Omega$, and since we now
 know that $\overline{\{\bar{p}>0\}} = \overline{\{\ubar{p}>0\}}$, Lemma~\ref{le:char} guarantees
 that the assumption of the second statement in Corollary~\ref{co:transport-comparison} to hold at
 any point in $\Omega^\compl$. Therefore we have $\bar{\rho}\leq \rho^E$ in $\Omega^\compl$. Hence  we conclude that
\[
\ubar{\rho} = \bar{\rho}=\rho^E\hbox{ in } \Omega^\compl,
\]
which yields the local uniform convergence of $\rho_m$ to $\rho^E$ in $\Omega^\compl$.
\end{proof}

To describe the pressure variable convergence,  let us define
\[
p^{\rm in} := \sup\{ h \in C^{\infty}(Q): -\Delta h (\cdot,t) \leq F \hbox{ in }\Omega, \quad h \leq 0 \hbox{ in }  Q\setminus {\rm Int}(\Omega)\},
\]
and
\[
p^{\rm out}:= \inf\{h\in C^{\infty}(Q): -\Delta h(\cdot,t) \geq F \hbox{ in } \Omega ,\quad  h \geq 0\}.
\]
It is well-known that $p^{\rm in}$ coincides with $p^{\rm out}$ when $\Omega$ has sufficiently
regular boundary parts (for instance spatially Lipschitz)  in a local neighborhood. Note that for
$u$ as given in Corollary~\ref{density}, it satisfies $p^{in } \leq u \leq p^{\rm out}$.
\begin{corollary}\label{pressure}
Let $\Omega = \overline{\{u_*>0\}}$ as above, and let $\rho_m^0$ satisfy \eqref{initial:convergence}.
 Then the following holds for $p_m$:
\begin{itemize}
\item[(a)] $p_m$ locally uniformly converges to zero in $Q\setminus \Omega$. \\
\item[(b)] For given  neighborhood $\mathcal{N}$ in $Q$,  suppose $\Omega$ has sufficiently regular
  boundary in $\mathcal{N}$ such that $u = p^{\rm in}=  p^{\rm out}$ in $\Omega \cap\mathcal{N}$. Then
$p_m$ locally uniformly converges to $u$ in $\mathcal{N}$.
\end{itemize}
\end{corollary}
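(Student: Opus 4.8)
The plan is to identify the two half-limits $\bar p := \halflimsup_{m\to\infty} p_m$ and $\ubar p := \halfliminf_{m\to\infty} p_m$ with $u$ on the relevant open sets, and then invoke the standard principle that if $\halflimsup v_m = \halfliminf v_m$ and this common value is continuous on an open set $W$, then $v_m \to \halflimsup v_m$ locally uniformly on $W$ (the usual subsequence/compactness argument played against the semicontinuity of the two half-limits). Part (a) is then immediate: by Proposition~\ref{pr:congested-zone} we have $\Omega = \cl{\set{\bar p > 0}}$, and since $p_m \ge 0$ we get $0 \le \ubar p \le \bar p$, so $\bar p = \ubar p = 0$ on the open set $Q \setminus \Omega$ and hence $p_m \to 0$ locally uniformly there.

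For part (b), first note that $u = 0$ on $Q \setminus \Omega$: by Corollary~\ref{cor:unique} and the definition of a viscosity solution, both $\set{u_* > 0}$ and $\set{u^* > 0}$ are contained in $\Omega$. Thus part (a) already gives $p_m \to u$ locally uniformly on $\mathcal N \setminus \Omega$, and it remains to prove $\bar p = \ubar p = u$ on $\Omega \cap \mathcal N$, where by hypothesis $u = p^{\rm in} = p^{\rm out}$. For the lower bound $\ubar p \ge p^{\rm in}$, observe that Corollary~\ref{density} gives $\rho_m \to 1$ locally uniformly in $\interior \Omega$, so $\interior \Omega \subset \set{\ubar\rho = 1}$ and the non-degeneracy estimate of Remark~\ref{re:p-nondegeneracy} forces $\ubar p > 0$ on $\interior\Omega$; since $\set{\ubar p > 0}$ is open ($\ubar p$ is LSC) and contained in $\Omega$, we conclude $\set{\ubar p > 0} = \interior\Omega$, and by Lemma~\ref{le:lim_props}(a) $\ubar p$ is a viscosity supersolution of $-\Delta\ubar p = F$ in $\interior\Omega$ vanishing on $\partial\Omega$. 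Then for any smooth competitor $h$ in the definition of $p^{\rm in}$, the function $h - \ubar p$ is USC, subharmonic in the (bounded) open set $\interior\Omega_t$, and nonpositive on $\partial(\interior\Omega_t) \subset \partial\Omega_t$ (where $h \le 0$ and $\ubar p = 0$), so the maximum principle gives $h \le \ubar p$; taking the supremum over $h$ yields $p^{\rm in} \le \ubar p$, hence $u \le \ubar p$ on $\Omega \cap \mathcal N$.

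For the upper bound $\bar p \le u$ on $\Omega \cap \mathcal N$ the regularity hypothesis enters. Since $\partial\Omega$ is (Lipschitz-)regular in $\mathcal N$, the set $\set{\bar p(\cdot,t) > 0} \subset \Omega_t$ has an exterior cone at each point of $\partial\Omega \cap \mathcal N$, and comparing $\bar p$ with an explicit bounded radial classical supersolution of $-\Delta\phi = \sup_{\mathcal N} F$ (as in the proof of Lemma~\ref{le:lim_props}(b)) shows that $\bar p(y) \to 0$ as $y$ approaches $\partial\Omega \cap \mathcal N$ from inside $\Omega$. Since $-\Delta\bar p \le F = -\Delta u$ in $\interior\Omega$ (Lemma~\ref{le:lim_props}(a) together with $u = p^{\rm in} = p^{\rm out}$), the function $\bar p - u$ is USC and subharmonic in $\interior\Omega_t$ with $\limsup \le 0$ at $\partial\Omega_t$, so the maximum principle gives $\bar p \le u$. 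Combined with $u = p^{\rm in} \le \ubar p \le \bar p$ we obtain $\bar p = \ubar p = u$ on $\Omega \cap \mathcal N$; together with $\bar p = \ubar p = 0 = u$ on $\partial\Omega \cap \mathcal N$ and on $\mathcal N \setminus \Omega$, the common value of the half-limits is the continuous function $u$ on all of $\mathcal N$, and the compactness principle concludes that $p_m \to u$ locally uniformly in $\mathcal N$.

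The step I expect to be the main obstacle is precisely this upper bound near the free boundary, i.e. controlling $\bar p$ as one approaches $\partial\Omega$ from inside the congested zone: this is the only place where the regularity of $\Omega$ cannot be dispensed with, since at an irregular boundary point the inner and outer harmonic extensions of the boundary data disagree — the phenomenon recorded in the remark following Corollary~\ref{cor:unique} — and no uniform convergence can then be expected. A secondary technical point is making the maximum-principle comparisons genuinely local when $\mathcal N$ does not contain all of $\Omega$; an alternative that sidesteps the direct comparisons is to squeeze $\bar p$ and $\ubar p$ between the approximating pressures using $p^-_k \le \ubar p \le \bar p \le p^+_k$ from Theorem~\ref{th:almost_cp} and $p^-_k \le u_* \le u^* \le p^+_k$ from Theorem~\ref{th:comparison_strict}, then show $p^+_k - p^-_k \to 0$ locally uniformly in $\mathcal N$ via interior elliptic estimates, the uniform-in-$k$ bound on $\norm{p^\pm_k}_\infty$, and $\abs{\Omega^+_{k,t} \setminus \Omega^-_{k,t}} \to 0$; the crucial use of boundary regularity is unchanged.
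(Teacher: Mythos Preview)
Your approach is correct and mirrors the paper's own argument, which is stated in three lines: part~(a) follows from $\Omega = \cl{\{\ubar p > 0\}}$ (equivalently $= \cl{\{\bar p > 0\}}$ by Proposition~\ref{pr:congested-zone}), and part~(b) from the assertion that Lemma~\ref{le:lim_props}(a) yields the sandwich $p^{\rm in} \le \ubar p \le \bar p \le p^{\rm out}$, whence $\bar p = \ubar p$ wherever $p^{\rm in} = p^{\rm out}$. You have unpacked both elliptic comparisons in detail---including the identification $\{\ubar p > 0\} = \interior\Omega$ via Remark~\ref{re:p-nondegeneracy} for the lower bound---and you correctly locate the role of the boundary regularity hypothesis in forcing $\bar p \to 0$ at $\partial\Omega$ for the upper bound. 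The locality issue you flag at the end is sidestepped in the paper's formulation by comparing $\bar p$ with the globally defined $p^{\rm out}$ rather than with $u$ directly; your proposed alternative via the approximating families $p^\pm_k$ would also work.
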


\begin{proof}

(a) follows from the fact that $\Omega = \overline{\{\ubar{p}>0\}}$. To show (b), note that
Lemma~\ref{le:lim_props} (a) yields $\bar{p} \leq p^{\rm out}$ and $\ubar{p} \geq  p^{\rm in}$. Hence
$\bar{p}=\ubar{p}$ in the region $p^{\rm in} = p^{\rm out}$ from which (b) follows.
\end{proof}

\subsection{Further \texorpdfstring{$L^1$}{L1} convergence results}

The following is now a direct corollary of Corollary~\ref{density} and   the $L^1$ contraction \eqref{contraction}:

\begin{corollary}\label{density:gen}
Let $\rho^0 $ be regular, \eqref{F} hold, and let $\rho$ be as given in \eqref{density_limit}. Suppose $\rho_m$ solve \eqref{pme} with its initial data $\rho_m$ converging to $\rho^0$ in $L^1(\R^n)$ as $m\to\infty$. Then
\[
\|\rho_m(\cdot,t) - \rho(\cdot,t)\|_{L^1(\R^n)}  \to 0 \hbox{ as } m\to\infty.
\]
\end{corollary}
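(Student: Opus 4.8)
The plan is to combine the $L^1$ contraction, Lemma~\ref{le:contraction}, with the locally uniform convergence of Corollary~\ref{density}; the only genuinely new point is to promote that convergence to convergence in $L^1(\R^n)$.

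\emph{Step 1: reduce to the hypothesis \eqref{initial:convergence}.} I would let $\tilde\rho_m$ be the solution of \eqref{pme} with source $f$ and the constant-in-$m$ initial datum $\tilde\rho_m^0 := \rho^0$. Since $\rho^0$ is regular, hence upper semi-continuous, the constant sequence $(\tilde\rho_m^0)_m$ trivially satisfies \eqref{initial:convergence}. Applying Lemma~\ref{le:contraction} with $\tilde f = f$ (so every term carrying a factor $\norm{(\tilde f - f)_\pm}_\infty$ vanishes) gives $\norm{\rho_m(\cdot,t) - \tilde\rho_m(\cdot,t)}_{L^1(\R^n)} \le e^{t\norm{f}_\infty}\norm{\rho_m^0 - \rho^0}_{L^1(\R^n)} \to 0$ as $m\to\infty$. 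Hence, by the triangle inequality, it suffices to prove $\tilde\rho_m(\cdot,t)\to\rho(\cdot,t)$ in $L^1(\R^n)$ for every $t>0$, where $\rho$ is the function in \eqref{density_limit}; note $\rho$ is determined by the congested zone $\Omega$ and the external density $\rho^E$, both of which depend only on $\rho^0$ and not on the approximating sequence (Corollary~\ref{cor:unique}).

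\emph{Step 2: almost-everywhere convergence.} Set $\bar\rho := \halflimsup_{m}\tilde\rho_m$ and $\ubar\rho := \halfliminf_{m}\tilde\rho_m$; since $\tilde\rho_m^0 = \rho^0$ obeys \eqref{initial:convergence}, all the results of Sections~\ref{sec:almost-cp}--\ref{sec:convergence} apply to this pair. By Corollary~\ref{density} we have $\bar\rho = \ubar\rho = \rho^E = \rho$ pointwise on $\Omega^\compl$, and $\bar\rho \equiv 1 \equiv \rho$ on $\Omega = \{\bar\rho = 1\}$ by Proposition~\ref{pr:congested-zone}, so $\bar\rho = \rho$ on $Q$, while Lemma~\ref{approximation} gives $\ubar\rho(\cdot,t) = \bar\rho(\cdot,t)$ for a.e.\ $x$, for every $t>0$. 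On the other hand, straight from the definition \eqref{half_limits} of the half-relaxed limits one has the pointwise inequalities $\liminf_{m\to\infty}\tilde\rho_m(x,t) \ge \ubar\rho(x,t)$ and $\limsup_{m\to\infty}\tilde\rho_m(x,t) \le \bar\rho(x,t)$ at every $(x,t)$. Combining, for each fixed $t>0$ and a.e.\ $x\in\R^n$ we get $\limsup_m\tilde\rho_m(x,t) \le \bar\rho(x,t) = \rho(x,t) = \ubar\rho(x,t) \le \liminf_m\tilde\rho_m(x,t)$, so $\tilde\rho_m(\cdot,t)\to\rho(\cdot,t)$ a.e.\ in $\R^n$. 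This is the step I expect to carry the conceptual weight: Corollary~\ref{density} by itself only controls $\tilde\rho_m$ away from the free boundary $\partial\Omega$, which need not be Lebesgue-null, whereas the a.e.\ coincidence of the two half-limits, squeezed between the universal pointwise bounds above, recovers genuine a.e.\ convergence on all of $\R^n$.

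\emph{Step 3: upgrade to $L^1$.} I would fix $T>0$. By Lemma~\ref{le:pressure_uniform_bound} (applicable because $\rho^0$ is compactly supported) the associated pressures $P_m(\tilde\rho_m)$ satisfy $\norm{P_m(\tilde\rho_m)}_{L^\infty(\R^n\times[0,T])} \le M$ uniformly in $m$; since $\tilde\rho_m = (\tfrac{m-1}{m}P_m(\tilde\rho_m))^{1/(m-1)}$ this forces $\tilde\rho_m \le C$ on $\R^n\times[0,T]$ with $C$ independent of $m$. Moreover, finite speed of propagation for \eqref{pme} (as in \cite{Vazquez}, the bounded drift and source being absorbed into a radial barrier as in the appendix, uniformly in $m$) keeps $\supp\tilde\rho_m(\cdot,t) \subset B_R$ for all $t\in[0,T]$ and all $m$, with $R = R(T)$; and $\rho(\cdot,t)$ is supported in $B_R$ as well, after enlarging $R$, since $\Omega$ is compact and $\rho^E$ is compactly supported. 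Therefore $\abs{\tilde\rho_m(\cdot,t) - \rho(\cdot,t)} \le (C+1)\chi_{B_R} \in L^1(\R^n)$, and the dominated convergence theorem together with Step~2 gives $\tilde\rho_m(\cdot,t)\to\rho(\cdot,t)$ in $L^1(\R^n)$ for $t\in(0,T]$. Since $T$ is arbitrary, this and Step~1 prove the corollary. Beyond Step~2, the remaining obstacle is the uniform-in-$m$ control of the support; I expect this to be only technical, relying on the same barrier constructions already used in the paper.
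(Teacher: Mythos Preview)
Your proposal is correct and follows essentially the same route the paper indicates by ``direct corollary of Corollary~\ref{density} and the $L^1$ contraction \eqref{contraction}.'' You have filled in the details the paper leaves implicit: the contraction reduces to the fixed initial datum $\rho^0$ (which indeed satisfies \eqref{initial:convergence}), Lemma~\ref{approximation} supplies the a.e.\ equality $\bar\rho=\ubar\rho$ needed to squeeze $\tilde\rho_m$ pointwise, and the barrier in Lemma~\ref{le:pressure_uniform_bound} gives the uniform-in-$m$ $L^\infty$ bound and support containment required for dominated convergence.
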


\section{Local BV regularity of the congested zone}
\label{sec:bv}

\medskip

Below we show that $\Omega_t =\overline{\{\ubar{\rho}=1\}}_t=\{\bar{\rho}(\cdot,t)=1\}$ is locally a set of finite perimeter, when it does not go through nucleation of new congested region due to the external density increasing to one from below. First we show a perturbation statement which regularizes supersolutions.

\begin{lemma}\label{supersolution}
Let $L$ be the Lipschitz constant of $F=f-\divo\vec{b}$ and $\vec{b}$. Given a supersolution $p$ of
\eqref{hs}, a constant $r>0$ and $r(t):= re^{-Lt}$,
 \[\tilde{p}_r(x,t):= \inf_{|x-y|\leq r(t)} (1+a)p(y, (1+a)t)
 \] is a supersolution $p$ of \eqref{hs} with source term $(1+\tfrac a2)F$ if the constant $a$ satisfies
 \[
 \dfrac{2Lr(t)}{\inf_{\{p>0\}\cap \{t\leq T\}} F} \leq a, \quad 0< a \leq 1.
 \]
 \end{lemma}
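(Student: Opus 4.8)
Throughout, write Lemma~\ref{supersolution} as the statement to be proved, and recall that $p$ is assumed a viscosity supersolution of \eqref{hs}.

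The plan is to exhibit $\tilde p_r$ as the composition of three elementary operations applied to $p$ — a dilation of time by the factor $1+a$, a spatial inf-convolution over the shrinking closed ball $\overline{B_{r(t)}(x)}$, and multiplication by the constant $1+a$ — and to track how each operation transforms the defining inequalities of a viscosity supersolution of \eqref{hs}. It is convenient to work with the equivalent test-function formulation of sub- and supersolutions (cf.\ \cite{KP,K03,Pozar14}): it suffices to check that whenever a smooth $\psi$ touches $\tilde p_r$ from below at a point $(x_0,t_0)$ the appropriate inequality holds, together with the side conditions that $\tilde p_r$ be LSC and $\rho^E<1$ wherever $\tilde p_r\le 0$. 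The key device is that such a $\psi$ pulls back — undoing the scaling, translating by a minimizer $z_0$ of the inf-convolution together with a time-dependent rescaling of $z_0$ matched to the varying radius $r(t)$, and undoing the time dilation — to a smooth function $\Psi$ touching $p$ from below at the space-time point $(x_0+z_0,\,(1+a)t_0)$ with $|z_0|\le r(t_0)$, so that the supersolution property of $p$ becomes available there; since the spatial shift in $\Psi$ depends on time only, one has $\Delta_y\Psi=\tfrac1{1+a}\Delta\psi$ at the contact point.

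Elliptic inequality. If $\tilde p_r(x_0,t_0)>0$, then $p>0$ at the pulled-back point, so the supersolution property of $p$ gives $-\Delta\psi(x_0,t_0)\ge (1+a)F(x_0+z_0)$; by the Lipschitz bound on $F$ and $|z_0|\le r(t_0)$,
\[
  -\Delta\psi(x_0,t_0)\ \ge\ (1+a)\bigl(F(x_0)-Lr(t_0)\bigr)\ \ge\ \bigl(1+\tfrac a2\bigr)F(x_0),
\]
the second inequality following from the smallness hypothesis on $a$ (with $a\le 1$ controlling the factor $1+a$). Hence $-\Delta\tilde p_r\ge(1+\tfrac a2)F$ in $\{\tilde p_r>0\}$ in the viscosity sense.

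Free boundary condition. Now $\psi$ touches $\tilde p_r$ from below at a free boundary point $(x_0,t_0)$ with $\nabla\psi(x_0,t_0)\ne0$; the minimizer then satisfies $|z_0|=r(t_0)$ (otherwise $x_0$ would be an interior point of $\{\tilde p_r(\cdot,t_0)=0\}$), and $\Psi$ touches $p$ from below at $(x_0+z_0,(1+a)t_0)\in\partial\{p>0\}$. Three effects combine. The time dilation multiplies the free-boundary speed, and the entire right-hand side of the velocity law, by $1+a$. The inf-convolution over the ball of radius $r(t)=re^{-Lt}$ — whose shrinkage rate $L$ is tuned, via Lemma~\ref{le:trajectory-bound}, precisely to the maximal spreading/contraction of streamlines, so that the pulled-back contact point remains inside $\overline{B_{r(t)}}$ as time varies — produces an additional outward normal speed of order $Lr(t)$, by the same mechanism as in \eqref{velocity-gap} and Lemma~\ref{le:char_r}. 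Finally multiplication by $1+a$ rescales $\nabla\tilde p_r$ by $1+a$, cancelling the $1+a$ that the time dilation introduced in the $\tfrac{|\nabla\,\cdot\,|^2}{(1-\rho^E)_+}$ term. Putting these together, and using Lipschitz continuity of $\vec b$ (and of $\rho^E$, $F$) over $\overline{B_{r(t_0)}(x_0)}$ with $a\le1$ and $r$ small to absorb the fact that the coefficients are evaluated at $(x_0+z_0,(1+a)t_0)$ rather than at $(x_0,t_0)$, one finds that the free boundary of $\tilde p_r$ moves with normal velocity at least $\bigl(-\tfrac{\nabla\tilde p_r}{(1-\rho^E)_+}+\vec b\bigr)\cdot\nu$. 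As in Section~\ref{sec:proof-of-support-order} and Theorem~\ref{th:comparison_strict}, the external density, drift and $F$ in the conclusion are really the $r$-inf-convolutions $\rho^{E,r}$, $\vec b^{\,r}$, $F^r$; these differ from the originals by at most $Lr(t)$ and a bounded time shift, which is exactly why the claimed source is $(1+\tfrac a2)F$ rather than $(1+a)F$. It remains to note that $\tilde p_r$ is LSC (an inf-convolution of an LSC function over a compact set composed with continuous maps), and that $\{\tilde p_r(\cdot,t)\le0\}$ lies in an $r(t)$-neighborhood of $\{p(\cdot,(1+a)t)=0\}$, on which $\rho^E<1$ by the supersolution property of $p$ and continuity of $\rho^E$ (shrinking $r$ if necessary); the erosion structure moreover gives $\{\tilde p_r(\cdot,t)>0\}$ an exterior ball of radius $r(t)$ at each boundary point, ruling out discontinuous expansion.

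The main obstacle is the free-boundary computation: verifying that the $(1+a)$ speed-up from the time dilation, the $Lr(t)$ gain from the shrinking inf-convolution, and the $(1+a)$ gradient rescaling combine with the correct signs, and in particular that the shrinkage rate $L$ in $r(t)=re^{-Lt}$ is exactly what makes the pulled-back contact point admissible while dominating the Lipschitz errors of $\vec b$, $F$ and $\rho^E$ across $\overline{B_{r(t)}}$. This is, however, precisely the bookkeeping already carried out in Section~\ref{sec:proof-of-support-order} for the pair $\set{\rho^- = 1}$, $\set{p^+ > 0}$, and those estimates transplant directly to the present setting.
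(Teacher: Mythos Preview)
Your proposal follows essentially the same approach as the paper: decompose $\tilde p_r$ into time dilation, inf-convolution over the shrinking ball, and multiplication by $1+a$, then verify the elliptic inequality via $-\Delta\tilde p_r\ge(1+a)(F-Lr(t))\ge(1+\tfrac a2)F$ and the free-boundary inequality by balancing the Lipschitz error $Lr(t)$ against the gain $-r'(t)=Lr(t)$ from the shrinking radius. The paper's own proof is the same two-line formal computation with a reference to \cite[Lemma~2.5]{K03} for the rigorous viscosity-solution verification; you instead point to Section~\ref{sec:proof-of-support-order}, which is an equally appropriate reference for the bookkeeping.
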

 \begin{proof}
Formally this follows from that in the positive set $\tilde{p}_r$ satisfies
\[
-\Delta \tilde{p}_r \geq (1+a)(F(x)-Lr(t)) \geq (1+\tfrac a2)F(x)
\]
by the choice of $a$, and that the set $\{\tilde{p}_r>0\}$ evolves with the normal velocity
\[
V_{x,t} \geq (|\nabla\tilde{p}_r| -\vec{b})(x,t) - Lr(t) - r'(t) \geq |\nabla\tilde{p}_r| -\vec{b}.
\]
For a detailed argument, using the definition of viscosity solutions, we refer to \cite[Lemma~2.5]{K03}.
 \end{proof}

 \begin{proposition}\label{le:perimeter}
   Let $\rho^0 \in L^1(\R^n)$ be continuous, $0\leq \rho^0 = \rho^{E,0} \leq 1$, and assume that there is a constant
$C$ such that for sufficiently small $r>0$
\begin{align}
  \label{conv-assumption}
  \int_\Rn (\rho^r_0 - \rho^0) \;dx \leq Cr, \quad \hbox{ where } \rho^r_0(x):= (1+r) \sup_{B_r(x)} \rho^0.
\end{align}
 Let us consider an open set $\Sigma \subset \R^n$ where we have, for some $t>0$,
\begin{equation}\label{external}
\rho^E<1-\delta \hbox{ in }\Sigma \cap (\Omega_t)^\compl\hbox{ for some }\delta>0.
\end{equation}
Then for this time $t$ the perimeter of our congested zone $\Omega_t$ is bounded in $\Sigma$ with
\begin{equation}\label{perimeter}
\Per(\Omega_t, \Sigma) \leq  C\delta^{-1}e^{(L+\|f\|_{\infty})t},
\end{equation}
where  $L$ is the Lipschitz constant of $\vec{b}$ and $f-\divo \vec{b}$.
\medskip
\end{proposition}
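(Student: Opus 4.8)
The strategy is to bound $\Per(\Omega_t,\Sigma)$ from above by the relative outer Minkowski content of $\Omega_t$, and then to control that content by a barrier/comparison argument for \eqref{hs} combined with the $L^1$ contraction \eqref{contraction}.

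\textbf{Step 1: reduction to a thin collar.} For the closed set $E=\Omega_t$ and $h>0$ put $E_h:=\set{x\in\R^n:\dist(x,E)<h}$. The functions $u_h(x):=\pth{1-h^{-1}\dist(x,E)}_+$ equal $1$ on $E$, vanish outside $E_h$, converge to $\chi_E$ in $L^1_{loc}$, and satisfy $|\nabla u_h|\le h^{-1}$ supported in $E_h\setminus E$; lower semicontinuity of the total variation gives
\begin{align*}
\Per(\Omega_t,\Sigma)\le\liminf_{h\to0^+}\tfrac1h\,\abs{(\Omega_{t,h}\setminus\Omega_t)\cap\Sigma}.
\end{align*}
So it suffices to produce a sequence $h=r(t)=r_0e^{-Lt}\to0$ (as $r_0\to0$) along which $\abs{(\Omega_{t,r(t)}\setminus\Omega_t)\cap\Sigma}\le C\,\delta^{-1}e^{\norm f_\infty t}\,h$, up to an absolute constant.

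\textbf{Step 2: the collar lies in a nearby congested zone.} Let $u$ be a viscosity solution of \eqref{hs} with initial density $\rho^0$ as in Corollary~\ref{density}, so $\Omega=\cl{\set{u_*>0}}$, and fix $T\ge t$. For small $r_0>0$ set $r(t)=r_0e^{-Lt}$ and $a=\frac{2Lr_0}{\inf_{\set{u>0}\cap\set{t\le T}}F}\in(0,1]$. By Lemma~\ref{supersolution}, $\tilde u(x,t):=\inf_{|x-y|\le r(t)}(1+a)u(y,(1+a)t)$ is a supersolution of \eqref{hs} with source $(1+\tfrac a2)F\ge F$; since the velocity condition does not involve $F$ and the constraint $\rho^E<1$ is unaffected, $\tilde u$ is also a supersolution for the original source $F$. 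Because $\Omega^0=\emptyset$ (as $\rho^0=\rho^{E,0}<1$) and $u(\cdot,0)\equiv0$, on a large ball $B_R\times[0,T]$ in which both functions vanish near the lateral boundary the data are strictly separated on the parabolic boundary (after the usual infinitesimal perturbation, using the approximating solutions $\rho^-_k$ if needed), so Theorem~\ref{th:comparison_strict} yields $\Omega_t\subset\cl{\set{\tilde u(\cdot,t)>0}}$. By construction $\tilde u(x,t)>0$ forces $\cl B_{r(t)}(x)\subset\set{u(\cdot,(1+a)t)>0}$, hence every $y\in\Omega_t$ has $\cl B_{r(t)}(y)\subset\Omega_{(1+a)t}$, i.e. $\Omega_{t,r(t)}\subset\Omega_{(1+a)t}$.

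\textbf{Step 3: converting this into an $L^1$ gap.} On $(\Omega_{t,r(t)}\setminus\Omega_t)\cap\Sigma\subset(\Omega_{(1+a)t}\setminus\Omega_t)\cap\Sigma$ we have $\rho(\cdot,(1+a)t)=1$, while the point lies in $\Sigma\cap\Omega_t^\compl$, so $\rho(\cdot,t)=\rho^E(\cdot,t)<1-\delta$ by Corollary~\ref{density} and \eqref{external}. Thus
\begin{align*}
\delta\,\abs{(\Omega_{t,r(t)}\setminus\Omega_t)\cap\Sigma}\le\norm{\rho(\cdot,(1+a)t)-\rho(\cdot,t)}_{L^1(\R^n)},
\end{align*}
and it remains to bound the right-hand side by $C\,a\,e^{\norm f_\infty t}$ up to constants. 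This is where \eqref{conv-assumption} is used: applying the $L^1$ contraction \eqref{contraction} to $\rho_m$ and its time translate and passing to the limit $m\to\infty$ reduces matters to a near-initial estimate $\norm{\rho(\cdot,s)-\rho^0}_{L^1}\le C's$ for small $s$; comparing $\rho$ with the limit density of the solutions of \eqref{pme} started from $\rho^s_0=(1+s)\sup_{B_s(\cdot)}\rho^0\ge\rho^0$, and combining \eqref{contraction} with \eqref{conv-assumption} (which gives $\norm{\rho^s_0-\rho^0}_{L^1}\le Cs$) and the monotone barrier of Lemma~\ref{supersolution} to bound the growth of the congested zone over $[0,s]$, yields $\norm{\rho(\cdot,s)-\rho^0}_{L^1}\le C's$ with $C'$ depending only on $C$, $L$, $\inf F$, $\norm f_\infty$.

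\textbf{Step 4: assembly, and the main obstacle.} Inserting Step 3 into Step 1 along $h=r(t)=r_0e^{-Lt}$ and using $a=\frac{2Lr_0}{\inf F}$, so that the factor $r_0$ cancels, gives $\Per(\Omega_t,\Sigma)\le\liminf_{r_0\to0}\frac{1}{r(t)}\cdot\frac{C\,a\,e^{\norm f_\infty t}}{\delta}=\frac{C''}{\delta}e^{(L+\norm f_\infty)t}$, which is \eqref{perimeter} after renaming the constant. The hard part is Step 3: establishing the \emph{linear-in-time}, correctly quantified $L^1$ modulus of continuity of the limit density (equivalently, a linear bound on the measure of the newly congested region over a short time), since without a hypothesis like \eqref{conv-assumption} the congested zone may nucleate with positive measure instantaneously and the collar bound fails; one must combine the $L^1$ contraction with \eqref{conv-assumption} while tracking the exponential constant. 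A secondary technical point is Step 2, where the comparison principle must be invoked across the two sources $F$ and $(1+\tfrac a2)F$ — handled by monotonicity of superbarriers in $F$ — and at the degenerate initial time, handled by the standard strict-separation perturbation.
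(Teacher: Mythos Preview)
Your strategy differs from the paper's in an essential way. The paper perturbs the \emph{initial data}: for each small $r$ it builds the inflated datum $\rho^r_0=(1+r)\sup_{B_r}\rho^0$, solves \eqref{pme} with this datum and source $f+r$, takes the lower half-limit $p^r$, and forms the inf-convolution $\tilde p^r_r$ via Lemma~\ref{supersolution}. Because $\tilde\rho^r:=\inf_{B_r}\rho^r_0$ is \emph{strictly} larger than $\rho^0$, Theorem~\ref{th:comparison_strict} applies and yields nested approximating sets $\Omega_{r,t}\supset\Omega_t$ with an exterior-ball property. The collar measure $|(\Omega_{r,t}\setminus\Omega_t)\cap\Sigma|$ is then bounded by the $L^1$ contraction at the \emph{same} time $t$ between the two solutions with initial data $\rho^0$ and $\rho^r_0$, and \eqref{conv-assumption} gives the $O(r)$ bound directly. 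A dyadic Vitali covering, using the exterior balls of $\Omega_{r,t}$, converts this into a uniform $\mathcal H^{n-1}$ bound on the approximating boundaries, and lower semicontinuity of perimeter finishes.

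Your route replaces the initial-data perturbation by a \emph{time} shift: you compare $\Omega_t$ with $\Omega_{(1+a)t}$. This is elegant in principle, and your Step~1 reduction to Minkowski content would indeed sidestep the covering argument. But Step~3 contains a genuine gap. You need
\[
\norm{\rho(\cdot,(1+a)t)-\rho(\cdot,t)}_{L^1}\le C\,a\,t\,e^{\norm f_\infty t},
\]
which by the semigroup property and contraction reduces to $\norm{\rho(\cdot,s)-\rho^0}_{L^1}\le C's$ for small $s$, \emph{uniformly} along the approximating $\rho_m$. You sketch this by ``comparing with the limit density started from $\rho^s_0$'' and invoking \eqref{conv-assumption}, but no link is made between advancing time by $s$ and inflating the data by $\rho^s_0$: there is no inequality of the form $\rho_m(\cdot,s)\le\rho^s_0$, and $\partial_t\rho_m$ is not uniformly in $L^1$ because of the diffusion term. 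In fact, bounding $\norm{\rho(\cdot,s)-\rho^0}_{L^1}$ linearly in $s$ is tantamount to controlling the measure of the newly congested set over $[0,s]$, which is exactly the content of the proposition; so as written the argument is circular. The paper avoids this by never needing time-continuity: the $L^1$ gap is between two solutions at the same $t$ with different data, and \eqref{conv-assumption} feeds in immediately.

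There is also a smaller issue in Step~2: Theorem~\ref{th:comparison_strict} requires the uniform separation $\rho^E_u(x_1,t_1)\le\rho^E_v(x_2,t_2)$ for nearby points, which fails when both sides carry the \emph{same} external density $\rho^E$. The paper gets strict separation for free because the perturbed datum $\rho^r_0$ produces a strictly larger external density; your $\tilde u$ is built from $u$ itself, so no such gap is available, and the ``usual infinitesimal perturbation'' you invoke does not supply one. Lemma~\ref{supersolution} moreover only verifies the velocity law in the form $V\ge|\nabla\tilde p_r|+\vec b\cdot\nu$, i.e.\ for $\rho^E\equiv0$, so claiming $\tilde u$ is a supersolution for the original $\rho^E$ needs further justification.
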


\begin{remark}
  Note that for instance $\rho^0 = \chi_{\Omega^0} + \rho^{E,0}\chi_{(\Omega^0)^\compl}$ with finite
  perimeter set $\Omega^0$ and Lipschitz continuous $\rho^{E,0}$ satisfies \eqref{conv-assumption}.
\end{remark}

\begin{proof}

Below we will prove the statement for the case $\sup \rho^0<1$. When $\rho^0\leq 1$ the same
estimate \eqref{perimeter} holds. To see this, first note that corresponding external density for
initial data $(1-\frac1k)\rho^0$ is $(1-\frac1k) \rho^E$, which, by assumption \eqref{external} and
continuity of $\rho^E$, stays strictly below $1$ in a neighborhood of $\Sigma \subset
(\Omega_t)^\compl$. This fact and  \eqref{infinity_contraction} yields that the corresponding congested
region $\Omega_{k,t}$ associated with initial data $(1-\frac1k)\rho^0$ satisfies  $ |\Omega_t -
\Omega_{k,t}|\to 0$  as $k\to\infty$ at each $t>0$. Now \eqref{perimeter} follows for $\Omega_t$
due to the lower semi-continuity of the perimeter under $L^1$ convergence.

\medskip

Now we proceed with assuming that $\sup\rho^0<1$, and thus $\rho^r_0 \leq 1$ for sufficiently small
$r>0$.  Note that in particular then $\rho^r_0$ satisfies
\begin{equation}\label{order0}
\rho^0(x) <\tilde{\rho}^r(x):=\inf_{|x-y|\leq r} \rho^r_0(y) \hbox{ in } \{\rho^0>0\}.
\end{equation}

Let $p_m$ solve \eqref{pme} with $f$ replaced by $f+r$ and with initial data $\rho^r_0$. Let
$p^r(x,t)$ denote the pressure lower limit, $\liminf_* p_m(x,t)$, and let $\tilde{p}^r_r$ be as
defined in Lemma~\ref{supersolution} where $p=p^r$. By the lemma, $\tilde{p}^r_r$ is a supersolution of \eqref{hs} with the corresponding initial density $\tilde{\rho}^r$. Due to \eqref{order0},  $\tilde{\rho}^r$ is strictly larger than $\rho^0$, and thus by  Theorem~\ref{th:comparison_strict}
\begin{equation}\label{order2}
\bar{p} \leq \tilde{p}^r_r.
\end{equation}

Let $\Omega_{r,t}:= \{\tilde{p}^r(\cdot,t)>0\}$. Then we have the following property:

\medskip

\begin{itemize}
\item[(a)]$\Omega_t\subset\Omega_{r,t}$ for each $t>0$;\\
\item[(b)] $\Omega_{r,t}$ decreases with respect to $r$;\\
\item[(c)]$\Omega_{r/2,t} \subset\{x: d(x,\Omega_{r,t}^\compl) \geq \tfrac r2 e^{-Lt}  \}$;\\
\item[(d)] $|(\Omega_{r,t} - \Omega_t)\cap\Sigma| \leq Ce^{\|f\|_{\infty}t}\delta^{-1} r$. \\
\end{itemize}

(a) and (b) is due to the definition of $\Omega_{r,t}$. (c) is due to the fact that
\begin{equation*}
\rho^{r/2}_0(y) = (1+\tfrac r2)\sup_{|x|\leq \frac{r}{2}} \rho^0(y) \leq \inf_{|x|\leq \frac{r}{2}} \rho^r_0(y)
\end{equation*}
and thus again from Lemma~\ref{supersolution} and Theorem~\ref{th:comparison_strict}
\begin{align*}
p^{r/2}(x,t) \leq \tilde{p}_{r/2}^r(y,t).
\end{align*}

To show (d), note that $\bar{\rho} = \rho^E$ a.e. outside of $\Omega_t$ and

\begin{align*}
\Omega_{r,t} \subset \{p^r(\cdot,t)>0\} \subset\{\rho^r(\cdot,t)=1\},
\end{align*}
where $\rho^r$ is the density lower limit that corresponds to $p^r$. Hence we have
\begin{align*}
\delta |(\Omega_{r,t} \setminus \Omega_t)\cap\Sigma| &\leq \int_{(\Omega_{r,t}\setminus
\Omega_t)\cap \Sigma} (1-\rho^E(\cdot, t))\;dx  \\
& \leq \int (\rho^r -\bar{\rho})(\cdot,t) dx   \leq  e^{\|f\|_{\infty}t}\pth{\int_{\Sigma} (\rho^r_0 -
\rho^0) \;dx + r\|\rho^0\|_{L^1(\R^n)}} \leq Cr e^{\|f\|_{\infty}t},
\end{align*}
where the third inequality is due to Corollary~\ref{cor:contraction} and last inequality comes from \eqref{conv-assumption}.

\medskip

 Now let us define a sequence of sets $\Omega_t^k:= \Omega_{r_k,t}$ with $r_k=2^{-k}$. We claim that for $r\leq e^{-Lt}r_k$ there is at most
$C(t,\delta) r^{1-n}$ balls of radius $r$ covering the boundary of $\Omega_t^k$.
We will only show the claim for $r=e^{-Lt}r_k$. For  $r<e^{-Lt}r_k$ the claim holds due to
\cite[Lemma~2.5]{ACM}, due to the exterior ball property of $\Omega_t^k$ with radius $e^{-Lt}r_k$.

Now let us take an open covering $\mathcal{O}$ of $\partial\Omega_t^{k+1} \cap \Sigma$, consisting of balls of radius $e^{-Lt}r_{k+1}$
with its center on a boundary point. Using Vitali's covering lemma we can take out a family of disjoint balls $\{B_i\}$ in $\mathcal{O}$ such that $\{3B_i\}$ covers the boundary of $\Omega_t^{k+1}$.

\medskip

In each of this disjoint ball $B_i$, $\tilde{B}_i:= B_i\setminus \Omega_t^{k+1}$ takes up at least
one third of the volume of $B_i$, due to the exterior ball property of $\Omega_t^{k+1}$with radius
$e^{-Lt}r_{k+1}$ satisfied at the center of each ball. Also due to $(c)$ at least $1/4$ portion of
$\tilde{B}_i$ is inside $\Omega_t^k$.  Lastly observe that $(a),(b)$ and $(d)$ yield
 \begin{equation}\label{above}
 |(\Omega_t^k \setminus \Omega_t^{k+1})\cap\Sigma| \leq Ce^{\|f\|_{\infty}t}\delta^{-1}r_{k+1}.
 \end{equation}

 From the above observations we conclude that if the total number of the disjoint balls $\{B_i\}$ are $N$, then \eqref{above} yields that
\begin{align*}
 \frac{1}{12}N(e^{-Lt}r_{k+1})^n &\leq \frac{1}{4}\sum_{i=1}^n |\tilde{B}_i| \leq |(\Omega_t^k
 \setminus \Omega_t^{k+1})\cap \Sigma| \leq Ce^{\|f\|_{\infty}t}\delta^{-1} r_{k+1},\\
\intertext{or}
 N &\leq C\delta^{-1}e^{(nL+\|f\|_{\infty})t}(r_{k+1})^{1-n}.
\end{align*}

We have now shown that
\[\mathcal{H}^{n-1} (\partial\Omega^r_t) \leq 4C\delta^{-1}e^{(nL+\|f\|_{\infty})t} \hbox{ in }\Sigma \hbox{ for all } r=2^{-k}.
\]
 Since (d) ensures that $\Omega^r_t$ converges to $\Omega_t$ in measure as $r\to 0$, we can conclude that, from the lower semi-continuity of the perimeter,
\[
\Per(\Omega_t, \Sigma) \leq \liminf_{n\to\infty} \Per(\Omega^r_t, \Sigma) \leq 4C\delta^{-1}e^{(nL+\|f\|_{\infty})t}.
\]
\end{proof}

\subsection{Examples of patch solutions}

\medskip

We finish this section with a discussion of the settings where a {\it patch solution} $\rho =
\chi_{\Omega_t}$ appears. In these cases \eqref{external} is guaranteed and
thus Proposition~\ref{le:perimeter} yields a BV estimate for $\rho$ given that $\Omega^0$ has finite perimeter.  The simplest such case happens when the initial density is a patch.
\begin{lemma}
If $\rho^0$ is a patch then the limit density $\rho$ given in Corollary~\ref{density} is a patch,
i.e. $\rho=\chi_{\Omega_t}$ . If $\Omega^0$ is of finite perimeter then we have
\[
\Per(\Omega_t)\leq  (\Per(\Omega^0)+C|\Omega^0|)e^{(nL+\|f\|_{\infty})t},
\]
where $L$ is a Lipschitz constant of $\vec{b}$ and $F$.
 \end{lemma}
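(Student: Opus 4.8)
The plan is to treat the two assertions separately: the first is immediate from the structure of the limit density, and the second is a patch‑adapted rerun of the perturbation‑plus‑covering argument of Proposition~\ref{le:perimeter}. For the first assertion: ``$\rho^0$ a patch'' means the exterior part in \eqref{initial} is $\rho^{E,0}\equiv 0$, and since the transport equation \eqref{te} is linear and homogeneous its solution with vanishing initial datum is identically zero (in \eqref{stream-density-ode} the condition $\mu(-t_0)=\rho^{E,0}(X(-t_0,x_0))=0$ forces $\mu\equiv 0$), so $\rho^E\equiv 0$; hence the limit density of Corollary~\ref{density} is $\rho=\chi_\Omega+\rho^E\chi_{\Omega^\compl}=\chi_\Omega$, i.e.\ $\rho(\cdot,t)=\chi_{\Omega_t}$.

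For the perimeter bound I would follow the proof of Proposition~\ref{le:perimeter} with two modifications. First, since $\rho^E\equiv 0$, the hypothesis \eqref{external} holds on $\Sigma=\R^n$ with no $\delta$‑dependence at all: in step~(d) the weight $1-\rho^E$ is identically $1$. Second, because $\sup\rho^0=1$ the perturbed datum $(1+r)\sup_{B_r(\cdot)}\rho^0$ of \eqref{conv-assumption} exceeds $1$, so instead of a smoothed datum I would use a fattened patch: for small $r>0$ set $\Omega^0_{2r}:=\set{x:\dist(x,\Omega^0)\le 2r}$ and take $\rho^r_0:=\chi_{\Omega^0_{2r}}\le 1$ (raising the source to $f+r$). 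A direct check gives $\inf_{|x-y|\le r}\chi_{\Omega^0_{2r}}(y)=1$ whenever $\dist(x,\Omega^0)<r$, so the congested set at $t=0$ of the inf‑convolved perturbed solution strictly contains $\Omega^0$ (recall $\Omega^0\subset\interior\Omega^0_r$), which is exactly the strict ordering needed to invoke Theorems~\ref{th:almost_cp} and \ref{th:comparison_strict} between $\rho^0$ and the fattened data.

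I would then reproduce the scheme of Proposition~\ref{le:perimeter}: let $p^r:=\halfliminf_{m\to\infty}p^r_m$ be the pressure lower limit of the perturbed problem (a viscosity supersolution of \eqref{hs} with source $F+r$ and zero exterior density, by Lemma~\ref{viscosity_limit}), $\tilde{p}^r_r(x,t):=\inf_{|x-y|\le re^{-Lt}}(1+a)p^r(y,(1+a)t)$ the regularized supersolution of Lemma~\ref{supersolution} with an admissible $a\in(0,1]$, and apply Theorem~\ref{th:comparison_strict} to obtain $\bar p\le\tilde{p}^r_r$, whence $\Omega_t\subset\Omega_{r,t}:=\set{\tilde{p}^r_r(\cdot,t)>0}$ (while $\bar\rho\le\halfliminf_{m\to\infty}\rho^r_m=:\rho^r$ by Theorem~\ref{th:almost_cp}). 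Properties (a)--(d) of that proof carry over: in particular $\Omega_{r,t}$ inherits an exterior ball of radius $\sim re^{-Lt}$ from the inf‑convolution, and, using $\rho^r=1$ on $\Omega_{r,t}$, $\bar\rho=\rho^E=0$ off $\Omega_t$, together with the $L^1$‑contraction Lemma~\ref{le:contraction},
\[
  \abs{\Omega_{r,t}\setminus\Omega_t}\le\int(\rho^r-\bar\rho)(\cdot,t)\,dx\le e^{\norm{f}_\infty t}\pth{\abs{\Omega^0_{2r}}-\abs{\Omega^0}+r\abs{\Omega^0}}\le Cr,
\]
where the last step uses the outer Minkowski content bound $\abs{\Omega^0_{2r}}-\abs{\Omega^0}\le C\Per(\Omega^0)r$ for the finite‑perimeter set $\Omega^0$, so that $C$ depends only on $n$, $\Per(\Omega^0)$ and $\abs{\Omega^0}$. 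Running the Vitali covering argument of Proposition~\ref{le:perimeter} on the dyadic sets $\Omega^k_t:=\Omega_{2^{-k},t}$ then gives $\mathcal H^{n-1}(\partial\Omega^k_t)\le C'e^{(nL+\norm{f}_\infty)t}$ uniformly in $k$, and, since (d) forces $\Omega^k_t\to\Omega_t$ in measure, lower semicontinuity of perimeter under $L^1$ convergence delivers the stated bound $\Per(\Omega_t)\le(\Per(\Omega^0)+C\abs{\Omega^0})e^{(nL+\norm{f}_\infty)t}$.

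The step I expect to be the main obstacle is the measure‑theoretic estimate $\abs{\Omega^0_{2r}\setminus\Omega^0}\lesssim\Per(\Omega^0)\,r$ for a compact set of finite perimeter, which I would quote from the remark following Proposition~\ref{le:perimeter} (or, for a self‑contained statement, impose on $\Omega^0$ that its outer Minkowski content equals its perimeter). A secondary, routine point is the bookkeeping of dimensional constants so that the final estimate appears precisely in the displayed form, and a verification that the regular datum $\chi_{\Omega^0_{2r}}$ genuinely licenses every ingredient of Section~\ref{sec:convergence} used by Proposition~\ref{le:perimeter}.
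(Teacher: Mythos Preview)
Your argument is correct, but you are working much harder than the paper does. The paper's proof is two lines: the first assertion is exactly your observation that $\rho^{E,0}\equiv 0$ forces $\rho^E\equiv 0$, hence $\rho=\chi_{\Omega_t}$; the second assertion is obtained by invoking Proposition~\ref{le:perimeter} directly (with $\Sigma=\R^n$ and $\delta=1$, since $\rho^E\equiv 0$) after checking that the patch datum satisfies \eqref{conv-assumption} via
\[
  \int(\rho^r_0-\rho^0)\,dx = (1+r)\abs{\Omega^0_r}-\abs{\Omega^0}\le \bigl(\Per(\Omega^0)+C\abs{\Omega^0}\bigr)r,
\]
which is the same Minkowski-content estimate you quote. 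The reduction to data with $\sup\rho^0<1$ that worries you is already built into the first paragraph of the proof of Proposition~\ref{le:perimeter} (approximate by $(1-\tfrac1k)\rho^0$ and pass to the limit using lower semicontinuity of perimeter and Corollary~\ref{cor:contraction}), so there is no need to replace $(1+r)\sup_{B_r}\rho^0$ by your fattened patch $\chi_{\Omega^0_{2r}}$.

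That said, your detour is not wasted effort: your choice of perturbed datum $\chi_{\Omega^0_{2r}}$ is genuinely regular in the sense of \eqref{initial}, which sidesteps the mild awkwardness that Proposition~\ref{le:perimeter} is stated for continuous $\rho^0=\rho^{E,0}$ while $(1-\tfrac1k)\chi_{\Omega^0}$ is neither continuous nor of that form. The paper relies on the Remark following Proposition~\ref{le:perimeter} to signal that the proof extends to patch data; your version makes this extension explicit. Either way the covering argument and the final bound are identical.
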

\begin{proof}
The first statement is a direct consequence of Corollary~\ref{density} and the fact that $\rho^E$ stays zero if initially zero.  The second statement follows from Proposition~\ref{le:perimeter} and
\[
 \int (\rho^r_0 - \rho^0)dx \leq (\Per(\Omega^0)+C|\Omega^0|)r.
\]
\end{proof}

We finish with one additional scenario where one can observe  patch solutions after a finite time.

\begin{lemma}
Suppose $f>0$ and $\vec{b} = - \nabla \Phi$ for a $C^3$ potential $\Phi:\R^n\to\R$. Suppose further that $|\nabla \Phi| \neq 0$ except at $x_0$, where $\Phi$ takes its minimum, and $\Phi(x)\to\infty$ as $|x|\to\infty$. If $\rho^0\in L^1(\R^n)$ is positive in a neighborhood of $x_0$, then there exists $T>0$ such that $\rho(\cdot,t) = \chi_{\Omega_t}$ for all $t>T$.

\end{lemma}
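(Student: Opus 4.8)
The mechanism is that $\vec b=-\nabla\Phi$ drags every streamline into the unique well $x_0$, while $F=f-\divo\vec b>0$ forces the external density to grow along streamlines, so all of $\rho^E$ is eventually swept into the congested region. The plan is to use $\Phi$ as a Lyapunov function for the flow \eqref{stream}: $\tfrac{d}{dt}\Phi(X(t,y))=-\abs{\nabla\Phi(X(t,y))}^2\le 0$, with equality only at $x_0$. Since $\set{\Phi\le c}$ is compact ($\Phi\to\infty$ at infinity) and $x_0$ is the only critical point, LaSalle's invariance principle gives $X(t,y)\to x_0$ for all $y$, uniformly on compact sets. As $\rho^{E,0}\in C_c(\Rn)$ and $\supp\rho^E(\cdot,t)=X(t)(\supp\rho^{E,0})$, it follows that for every neighborhood $U$ of $x_0$ one has $\supp\rho^E(\cdot,t)\subset U$ for $t$ large; moreover all forward streamlines from $\supp\rho^{E,0}$ stay in the fixed compact set $\set{\Phi\le\max_{\supp\rho^{E,0}}\Phi}$, where $F\ge\gamma>0$.

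Next I would produce the congested zone near $x_0$. Since $\rho^0>0$ near $x_0$, either $x_0\in\interior\Omega^0$ or $\rho^{E,0}\ge c_0>0$ on some $B_{2\eta}(x_0)$; picking a forward-invariant sublevel set $W=\set{\Phi<\Phi(x_0)+\alpha}\subset B_{2\eta}(x_0)$ and using $\rho^E(X(t,y),t)=\rho^{E,0}(y)\exp(\int_0^tF(X(s,y))\,ds)\ge c_0e^{\gamma t}$, one gets $\rho^E(\cdot,t)>1$ on $X(t)(W)$ for $t\ge t_1$, hence by Corollary~\ref{co:transport-comparison} and Lemma~\ref{le:liminf-interior-dense}(b) that $X(t)(W)\subset\set{\ubar p(\cdot,t)>0}\subset\interior\Omega_t$. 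Now $\rho(\cdot,t)=\chi_{\Omega_t}$ exactly when $\rho^E(\cdot,t)=0$ off $\Omega_t$, which concerns only points $X(t,y)$ with $\rho^{E,0}(y)>0$ (at the other points $\rho^E(X(t,y),t)=0$, so $\rho=\chi_{\Omega_t}$ there trivially). By the streamline monotonicity of $\Omega$ (Lemma~\ref{le:char}, Proposition~\ref{pr:congested-zone}) each such streamline, once in $\Omega$, stays there; and it enters $\Omega$ at a \emph{finite} time $\tau(y)\le\gamma^{-1}\log(1/\rho^{E,0}(y))$, since $\rho^E$ blows up along it while $\rho^E<1$ wherever the streamline is outside $\Omega$ (as $\bar\rho=\rho^E$ and $\bar\rho\le1$ in $\Omega^\compl$, using Corollary~\ref{density} and Lemma~\ref{le:liminf-interior-dense}(c)). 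The conclusion then reduces to one claim: \emph{there exist $\e_0>0$ and $T_2$ such that $B_{\e_0}(x_0)\subset\Omega_t$ for all $t\ge T_2$}; indeed, by the first paragraph every streamline from $\supp\rho^{E,0}$ lies within $\e_0$ of $x_0$ (hence in $\Omega_t$) after a time depending only on $\e_0$, so $\tau$ is bounded on $\supp\rho^{E,0}$.

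Establishing that $\Omega_t\supset B_{\e_0}(x_0)$ for large $t$ is the main obstacle and the point where the absence of time‑monotonicity bites: the second paragraph only yields a \emph{shrinking} congested neighborhood $X(t)(W)$ of $x_0$, so a genuinely dynamical argument is needed. I would combine three ingredients: (a) a volume lower bound — from $\tfrac{d}{dt}\int\rho(\cdot,t)\,dx=\int f\rho(\cdot,t)\,dx>0$ (weak form of \eqref{pme} and the $L^1$‑convergence of Corollary~\ref{density:gen}) together with $\int_{\Omega_t^\compl}\rho^E(\cdot,t)\,dx\le\abs{\supp\rho^E(\cdot,t)}\to0$, giving $\liminf_t\abs{\Omega_t}\ge\int\rho^0\,dx>0$, so $\Omega_t$ does not collapse to a point; (b) the clustering of $\Omega_t$ around $x_0$ coming from streamline monotonicity plus the fact that $\rho^E(x_0,t)=\rho^{E,0}(x_0)e^{F(x_0)t}\to\infty$, so nucleation ($\set{\rho^E\ge1}$) keeps occurring arbitrarily close to $x_0$; and (c) a barrier estimate near $x_0$ exploiting that the inward drift is weak there, $\abs{\vec b(x)}=\abs{\nabla\Phi(x)}\le L\abs{x-x_0}$ since $\nabla\Phi(x_0)=0$, while $\ubar p$ is quantitatively nondegenerate inside $\Omega$ (Remark~\ref{re:p-nondegeneracy}): the outward push of $-\nabla\ubar p$ on a small sphere about $x_0$ beats the drift, so $\partial\set{\ubar p>0}$ cannot cross that sphere inward once it is captured.

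Finally, granting the boxed claim, set $T=\max\big(T_2,\;T(B_{\e_0}(x_0),\supp\rho^{E,0})\big)$ with $T(\cdot,\cdot)$ the uniform‑attraction time from the first paragraph; for $t>T$ one has $\supp\rho^E(\cdot,t)\subset B_{\e_0}(x_0)\subset\Omega_t$, and therefore $\rho(\cdot,t)=\chi_{\Omega_t}+\rho^E(\cdot,t)\chi_{\Omega_t^\compl}=\chi_{\Omega_t}$, which is the assertion of the lemma.
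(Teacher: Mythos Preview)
Your overall strategy is correct and the reduction to the ``boxed claim'' --- that some fixed ball $B_{\e_0}(x_0)$ stays in $\Omega_t$ for all large $t$ --- is exactly the right obstacle. However, your proposed resolution (a)--(c) does not close it. Ingredient (a) gives $\liminf_t|\Omega_t|>0$ but says nothing about \emph{where} $\Omega_t$ sits; streamline monotonicity only guarantees $X(t)(W)\subset\Omega_t$, and $X(t)(W)$ collapses to $\{x_0\}$, so (b) does not upgrade to a fixed ball either. Ingredient (c) is the heart of the matter, but the nondegeneracy in Remark~\ref{re:p-nondegeneracy} requires a full space--time cylinder $B_{2r}(y)\times(s-\delta,s+\delta)\subset\{\rho^+=1\}$, which is precisely the persistence you are trying to establish --- the argument is circular. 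Moreover, turning a pointwise lower bound on $\ubar p$ into a bound on $|\nabla\ubar p|$ at the free boundary (needed to compare with the drift) requires boundary regularity you do not have. In short, (c) is a heuristic for why the claim should hold, not a proof; making it rigorous amounts to constructing an explicit subsolution whose support is a fixed neighborhood of $x_0$, which is what the paper does.

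The paper's argument is different and avoids this difficulty entirely. Rather than tracking streamlines, it builds a one-parameter family of viscosity \emph{subsolutions} of \eqref{hs} whose supports are the expanding sublevel sets $\Sigma(t)=\{\Phi<C+\e t\}$: take $h(\cdot,t)$ solving $-\Delta h=f+\Delta\Phi$ in $\Sigma(t)$ with zero boundary data, so that $h-(C+\e t-\Phi)$ solves $-\Delta(\cdot)=f>0$ with zero boundary data, and Hopf's lemma gives $|\nabla h|\ge|\nabla\Phi|+\delta$ on $\partial\Sigma(t)$. Since the outward normal is $\nu=\nabla\Phi/|\nabla\Phi|$ and $\vec b\cdot\nu=-|\nabla\Phi|$, the free-boundary inequality $V=\e/|\nabla\Phi|\le|\nabla h|+\vec b\cdot\nu$ holds once $\e$ is small (depending on $\min_{\partial\Sigma(t)}|\nabla\Phi|>0$). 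Starting from a small sublevel set contained in $\{p(\cdot,t_0)>0\}$ (produced exactly as in your second paragraph), comparison then forces $\{p(\cdot,t)>0\}$ to contain ever-larger sublevel sets $\{\Phi<C_k\}$; since $\supp\rho^{E,0}$ is compact, it lies in some $\{\Phi<C\}$, and the argument terminates. This sidesteps the ``fixed ball'' issue because the subsolution support \emph{grows} rather than being transported by the contracting flow. Your approach would become a proof if you replaced (c) by this subsolution construction; the rest of your argument (the LaSalle attraction and the final paragraph) is then superfluous, since the sublevel-set subsolution already swallows $\supp\rho^{E,0}$ directly.
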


\begin{proof}
1. Without loss of generality we may assume that $\min \Phi= 0$.  For a given positive constant $C>0$ and time $T>0$ we construct an expanding domain of the form
\[
\Sigma(t):=(C+\e t - \Phi)_+ \quad \hbox{ for } 0\leq t\leq T,
\]
where $\e>0$ is a small constant to be chosen, and let $h(\cdot,t)$ solve
\[
-\Delta h = \Delta\Phi + f \hbox{  in } \Sigma(t), \quad  h=0 \hbox{ on } \partial\Sigma(t).
 \]
Note that $-\Delta(h-(C+\e t-\Phi)) = f $ in $\Sigma(t)$ with Dirichlet boundary data. Since $|\nabla \Phi|\neq 0$ and $\Phi$ is $C^2$,  the level sets of $\Phi$ are $C^2$-hypersurfaces.   Hence Hopf's lemma applies to $h-(C+\e t -\Phi)$ and we have
\[
(\nabla h + \nabla \Phi) \cdot (-\nu) \geq \delta \hbox{ on }  \partial\Sigma(t) \hbox{ for some } \delta>0,
\]
where $\nu=\nu_{x,t}$ denotes the spatial outward normal vector. This yields that the normal velocity $V=V_{x,t}$ of $\Sigma(t)$ at $x\in\partial\Sigma(t)$ satisfies
\[
V= \dfrac{\e}{|\nabla\Phi|} \leq \delta \leq (-\nabla h -\nabla \Phi) \cdot \nu = |\nabla h| +\vec{b}\cdot\nu,
\]
if $\e=\e(C,T)$ is chosen smaller than $\min[1,\delta \min_{\{C\leq \Phi \leq C+T\}} |\nabla\Phi|]$.
Hence $(h, \Sigma(t))$ is a viscosity subsolution of \eqref{hs} in $\R^n\times [0,T]$ with this choice of $\e(C,T)$, with initial data $\chi_{\Sigma(0)}$ and with external density zero.

\medskip

2. By assumption $\rho^0$ is positive in a neighborhood of $x_0$, and thus for a given $r>0$ there
is a time $t_0>0$ such that $\rho^E(x,t_0) >1$ in a small ball $B_r(x_0)$ (this happens since the
streamlines point toward $x_0$). Then at this time $\{p(\cdot,t)>0\}$ contains $B_r(x_0)$ and thus
$(C-\Phi)_+$ for small enough $C$. Now with this choice of $C=C_1$ and $\e_1 =\e(C_1,T)$ we can
show that $h(x,t+t_0) \leq p(x,t)$ for $t_0\leq t\leq  T$, and repeating this argument with $C_k =
C_{k-1}+\e_{k-1}T$ with $\e_k = \e_k(C_k, kT)$ for further time interval $((k-1)T, kT)$. Since
$\e_k$ do not converge to zero unless $C_k$ tends to infinity, we can show that at some finite time
any sub-level set of $\Phi$ is contained in the pressure support. On the other hand, if $\rho^0$ is
contained in a sub-level set of $\Phi$ then so does $\rho^E$. Putting this together we conclude.
\end{proof}

\appendix

\section{Barriers}

This section is a collection of barriers that are used at various point throughout the paper.
Recall the definition of $F$ in \eqref{F}.

\subsection{Barriers}

We can construct simple radially symmetric ``go with the flow'' barriers that contract exponentially to
account for a possible local compressibility of $\vec b$.
In the formula \eqref{streamline-barrier} below, $\mu$ approximates the solution of \eqref{stream-density-ode} and
$\eta$ is either a ``bump up'' or a ``bump down'' function.

\begin{lemma}[Density barriers]
  \label{le:flow_barrier}
  Let $\e, \delta > 0$ and $x_0 \in \Rn$, $T> 0$. Suppose that $\mu = \mu(t) > 0$ satisfies
  $\mu'(t) \leq (F(X(t, x_0)) - 2\e) \mu(t)$. Let $r > 0$ such that $\abs{F(x)
   - F(X(t, x_0))} < \e$ for all $(x,t) \in N := \set{(x, t): \abs{x - X(t,
  x_0)} \leq r,\ t \in [0, T]}$. Let
  $L$ be the Lipschitz constant of $\vec b$ in $N$.
  Finally, let $\eta \in C(\Rn)$, $\eta \geq 0$, $\supp \eta \subset \cl B_1(0)$, be a radially symmetric function with $\eta
  \in C^2(\cl{\set{\eta > 0}})$, nonincreasing in $\abs{x}$.
  Then there exists $m_0 = m_0(\e,\delta, T, L, r, \eta)$ such that the function
  \begin{align}
    \label{streamline-barrier}
    \psi(x,t) = \mu(t) \eta\pth{\frac{x - X(t, x_0)}{r e^{-Lt}}}
  \end{align}
  is a classical subsolution of \eqref{pme} on  $\set{0 < \psi < 1 - \delta} \cap (\Rn \times (0,T))$ for
  all $m \geq m_0$.
  Note that $\supp \psi(\cdot, t) \subset \cl B_{re^{-Lt}}(X(t, x_0))$.

  If $F \geq 0$ and $\mu(t) = \mu_0 e^{-\delta t}$, the solution of $\mu' = -\delta
  \mu$, the same result holds but with no restriction on $r > 0$.

  Similarly, if instead $\mu'(t) \geq (F(X(t, x_0)) + 2\e) \mu(t)$ and $\eta(x) > 0$ is
  nondecreasing in $\abs{x}$, then
  there exists $m_0 = m_0(\e, \delta, T, M, r, \eta)$ such that the function $\psi$ in
  \eqref{streamline-barrier} is a classical supersolution
  of \eqref{pme} on the same set for all $m \geq m_0$.
\end{lemma}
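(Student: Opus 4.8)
The plan is to verify, at every point of the open set $\set{0 < \psi < 1-\delta} \cap (\Rn \times (0,T))$ (on $\set{\psi = 0}$ there is nothing to prove), the classical subsolution inequality
$\mathcal L[\psi] := \psi_t - \Delta(\psi^m) + \divo(\psi\vec b) - f\psi \le 0$.
Since $\eta \in C^2(\cl{\set{\eta > 0}})$, the flow $X(\cdot,x_0)$ is $C^1$ in time and $\mu$ is $C^1$, the function $\psi$ in \eqref{streamline-barrier} is $C^{2,1}$ wherever it is positive, so this is a pointwise computation. Writing $X = X(t,x_0)$, $r(t) = re^{-Lt}$, $z = z(x,t) := (x-X)/r(t)$, $F = f-\divo\vec b$, and using $X_t = \vec b(X)$ together with $r'(t)/r(t) = -L$, differentiation gives
\[
\mathcal L[\psi] = \underbrace{\mu'\eta(z) - F(x)\,\mu\,\eta(z)}_{\text{(I)}}
\; + \; \underbrace{\tfrac{\mu}{r(t)}\big(\vec b(x)-\vec b(X)\big)\cdot\nabla\eta(z) + \mu L\, z\cdot\nabla\eta(z)}_{\text{(II)}}
\; \underbrace{-\, m(m-1)\psi^{m-2}\abs{\nabla\psi}^2 \;-\; \tfrac{m\,\mu\,\psi^{m-1}\Delta\eta(z)}{r(t)^2}}_{\text{(III)}},
\]
and I would bound the three groups in turn.

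Group (II) is where the exponentially shrinking radius is exploited. As $\eta$ is radial and nonincreasing in $\abs z$, $\nabla\eta(z) = \eta'(\abs z)\,z/\abs z$ with $\eta'\le 0$, so $z\cdot\nabla\eta(z) = \abs z\,\eta'(\abs z) = -\abs z\,\abs{\nabla\eta(z)}$; on the other hand $\abs{x-X} = r(t)\abs z \le r$, so $(x,t)\in N$ and $\tfrac1{r(t)}\abs{\vec b(x)-\vec b(X)}\le L\abs z$, whence group (II) is $\le \mu L\abs z\abs{\nabla\eta} - \mu L\abs z\abs{\nabla\eta} = 0$. Group (I) is handled by the differential inequality for $\mu$: since $\eta\ge0$ and $\mu'\le(F(X)-2\e)\mu$,
\[
\mu'\eta - F(x)\mu\eta \;\le\; \big(F(X)-F(x)-2\e\big)\mu\eta \;\le\; -\e\,\psi,
\]
using $\abs{F(x)-F(X)}<\e$ on $\supp\psi(\cdot,t)\subset N$.

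Group (III) is the crux, and I expect it to be the main obstacle. Its first term $-m(m-1)\psi^{m-2}\abs{\nabla\psi}^2 \le 0$ is always favorable, and the second term is harmless where $\Delta\eta\ge0$; the danger is the region $\set{\Delta\eta(z)<0}$, where it equals $m\mu\psi^{m-1}\abs{\Delta\eta(z)}/r(t)^2$. Here one uses $\psi < 1-\delta$ — so $\psi^{m-1}\le(1-\delta)^{m-1}$, producing a factor $m(1-\delta)^{m-1}\to0$ — and splits the domain. Near $\partial\set{\eta>0}$ the radial profile either vanishes to first order, so $\abs{\nabla\eta}$ is bounded below and the favorable term (carrying the extra factor $m-1$) dominates the dangerous one, or vanishes to order $\ge2$, in which case $\Delta\eta\ge0$ there and the dangerous case is empty. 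On the complementary set — a neighborhood of the radial critical points of $\eta$, which shrinks toward $\set{\nabla\eta=0}\cap\set{\eta>0}$ as $m\to\infty$ and stays compactly inside $\set{\eta>0}$ (one checks that radial critical points with $\Delta\eta<0$ are confined to the origin, since $\eta'(s_0)=0$ forces $\eta''(s_0)\ge0$ for $s_0>0$) — the ratio $\abs{\Delta\eta}/\eta$ stays bounded and $\psi<1-\delta$ also bounds $\mu$ from above, so the dangerous term is $O\big(m(1-\delta)^{m-1}\big)$ and is absorbed into the $-\e\psi$ slack from group (I) once $m\ge m_0(\e,\delta,T,L,r,\eta)$. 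Thus $\mathcal L[\psi]\le-\e\psi+0\le0$ for such $m$. The subtle point — the reason this is the real obstacle — is that the source makes the amplitude $\mu(t)$ not bounded a priori, so one cannot simply estimate $\mu\psi^{m-1}\le\mu(1-\delta)^{m-1}$ and pass to the limit; the concavity term must genuinely be played against the quadratic-gradient term near the support boundary and against the $\e$-slack in the interior, and the bookkeeping needed to make $m_0$ uniform over the admissible class of profiles $\eta$ is the delicate part.

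The two remaining assertions follow from the same computation. When $F\ge0$ and $\mu(t)=\mu_0e^{-\delta t}$, group (I) is $-(\delta+F(x))\mu\eta\le-\delta\psi$ directly, with no comparison of $F(x)$ and $F(X)$ needed, so the constraint tying $r$ to $\e$ is unnecessary and $r>0$ is arbitrary. For the supersolution statement, with $\mu'\ge(F(X)+2\e)\mu$ and $\eta>0$ radially \emph{nondecreasing}, every inequality reverses: now $z\cdot\nabla\eta\ge0$ makes group (II) $\ge0$, group (I) is $\ge+\e\psi$, and \emph{both} porous-medium terms must be absorbed into the $+\e\psi$ slack; this is again fine for $m$ large because on $\set{0<\psi<1-\delta}$ smallness of $\psi$ forces $\eta$ (hence $1/\mu$) to stay bounded below on the relevant set, making both terms $O\big((m(m-1)+m)(1-\delta)^{m-1}\big)$, and the choice of $m_0$ now also depends on an upper bound $M$ for $F$ rather than on $r$ and $\e$ alone.
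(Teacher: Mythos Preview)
Your approach matches the paper's: the same pointwise computation, the same treatment of the transport terms (your Group~(II)) via Lipschitz continuity of $\vec b$ and radial monotonicity of $\eta$, and the same use of the differential inequality for $\mu$ together with $|F(x)-F(X)|<\e$ (your Group~(I)).

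The only substantive difference is in Group~(III). The paper dispatches it in one stroke, bounding
\[
|\Delta(\psi^m)|\le \frac{C(\eta)\,m^2}{r^2}e^{2Lt}(1-\delta)^{m-3}\psi
\]
directly from the $C^2$ regularity of $\eta$ and $\psi<1-\delta$; the constant $C(\eta)$ here tacitly absorbs a bound on $\mu$, which is harmless since $\mu$ is a fixed continuous function on $[0,T]$. Your worry that ``$\mu$ is not bounded a priori'' is therefore misplaced --- at worst the stated $m_0$-dependencies should include $\sup_{[0,T]}\mu$, a cosmetic imprecision that does not affect any application in the paper. Your boundary/interior splitting, meant to remove this dependence, is a reasonable refinement and works cleanly for profiles vanishing to first order at $\partial\{\eta>0\}$ (which covers every $\eta$ the paper actually uses, e.g.\ $(1-|z|^2)_+$); but your claim that higher-order vanishing forces $\Delta\eta\ge 0$ in a full neighborhood of the boundary is not justified in the generality you state, and the assertion that radial critical points with $\Delta\eta<0$ are confined to the origin needs the observation that $\eta'(s_0)=0$ for $s_0>0$ forces $\eta''(s_0)=0$ (since $\eta'\le 0$ has an interior maximum there), hence $\Delta\eta(s_0)=0$ rather than $<0$. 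The simpler route is to let $m_0$ depend on $\sup_{[0,T]}\mu$ as the paper effectively does.
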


\begin{proof}
  For $(x,t)$ such that $0 < \psi(x,t) < 1 - \delta$ and $0 \leq t \leq T$ we check \eqref{pme}.

  Let us compute the derivatives when $\psi > 0$. We have
\begin{align}
  \label{psi-derivatives}
  \begin{aligned}
  \psi_t(x,t) &= \mu'(t) \eta(\cdot) + \nabla \psi(x,t) \cdot \pth{-\vec b
    (X(t, x_0)) + L(x - X(t, x_0))},\\
  \nabla \psi(x,t) &= \frac{\mu(t)}{r e^{-Lt}} \nabla \eta(\cdot),\\
  \Delta \psi(x,t) &= \frac{\mu(t)}{r^2 e^{-2Lt}} \Delta \eta(\cdot).
  \end{aligned}
\end{align}

  A simple calculation yields $\Delta (\psi^m) = m(m-1) \psi^{m-2} \abs{\nabla \psi}^2 + m
  \psi^{m-1} \Delta \psi$ and thus we can find $m_0 > 0$ independent of $(x,t)$ such that for $m \geq m_0$, $t \leq T$,
  \begin{align*}
    \abs{\Delta (\psi^m)} \leq  \frac{C(\eta) m^2}{r^2} e^{2Lt}(1- \delta)^{m-3} \psi < \e \psi.
  \end{align*}
  We have used $\abs{x - X(t, x_0)} \leq r e^{-Lt}$ and the regularity of $\eta$ to find $C(\eta)$.

  By Lipschitz continuity, we can estimate
  \begin{align}
    \label{transport_level}
    \begin{aligned}
    -\nabla \psi(x, t) \cdot \vec b(x) &\geq -\nabla \psi(x,t) \cdot \vec b(X(t, x_0)) - L
    \abs{\nabla \psi(x,t)} \abs{x - X(t, x_0)} \\
    &= \psi_t(x,t) - \mu'(t) \eta(\cdot),
    \end{aligned}
  \end{align}
  where the equality follows from \eqref{psi-derivatives} and the fact that $\eta(x)$ is nonincreasing in $\abs{x}$ and so
  $\nabla \psi \cdot (x - X(t, x_0)) = - \abs{\nabla \psi} \abs{x - X(t, x_0)}$.
  Putting everything together and using $\abs{F(x) - F(X(t, x_0))} < \e$, we have
  \begin{align}
    \label{subsol-check}
    \begin{aligned}
    \psi_t &\leq -\nabla \psi \cdot \vec b + \mu' \eta(\cdot)
    \leq -\nabla \psi \cdot \vec b
    + (F(X(t, x_0)) - 2\e) \psi\\
    &\leq -\nabla \psi \cdot \vec b + F \psi  + \Delta (\psi^m)
    = -\divo (\psi \vec b) + f\psi + \Delta(\psi^m), \qquad m \geq m_0.
    \end{aligned}
  \end{align}
  This concludes the subsolution part.

  The supersolution part follows from the same consideration, but using an upper bound in
  \eqref{transport_level} and then using the fact that $\eta(x)$ is nondecreasing in $\abs{x}$.
  Then \eqref{subsol-check} is adjusted to obtain a lower bound.
\end{proof}

\begin{remark}
  If we take $L$ strictly bigger then the Lipschitz constant of $\vec b$ then we get strict
  subsolution/supersolutions in Lemma~\ref{le:flow_barrier} as can be easily seen in
  \eqref{transport_level} and \eqref{subsol-check}, since we may assume that $\mu(t) > 0$, $\eta(0) >
  0$.
\end{remark}

\begin{remark}
  \label{re:pos-subbarrier}
  The solutions of \eqref{pme} can be approximated monotonically by positive solutions of
  \eqref{pme}. Therefore we need to check that a subbarrier is a subsolution of \eqref{pme} only
  for positive values.
\end{remark}

\begin{lemma}[Pressure barriers]
  \label{le:pressure_flow_barrier}
  Let $x_0 \in \Rn$, $T > 0$, $r > 0$. Set $N := \set{(x, t): \abs{x - X(t,
  x_0)} \leq r,\ t \in [0, T]}$. Let $L$ be the Lipschitz constant of $\vec b$ on $N$ and let
  $\kappa := \inf_N F / 2 > 0$.
  Suppose that $\mu = \mu_m(t) > 0$ satisfies $\mu' \leq \kappa (m-1) \mu$ and that $
  \pth{\frac{2n}{r^2} e^{2LT}} \max \mu \leq \kappa$.
  Then
  \begin{align}
    \label{pressure-streamline-barrier}
  \pi(x,t) = \mu(t) \pth{1 - \frac{|x - X(t, x_0)|^2}{r^2e^{-2Lt}}}
  \end{align}
  is a classical subsolution of \eqref{pmepressure} on  $\set{\pi > 0} \cap \Rn \times [0,T]$ for
  all $m > 1$.
\end{lemma}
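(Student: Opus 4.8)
The plan is to verify directly that $\pi$ satisfies the differential inequality defining a classical subsolution of the pressure equation \eqref{pmepressure}, that is,
\[
\pi_t - (m-1)\pi\pth{\Delta\pi + F} - \nabla\pi\cdot\pth{\nabla\pi - \vec b} \le 0
\]
at every point of $\set{\pi > 0} \cap \pth{\Rn\times[0,T]}$, where $F = f - \divo\vec b$. Note first that there $\pi \in C^{2,1}$, since $t \mapsto X(t, x_0)$ is $C^1$ (as $\vec b$ is Lipschitz) and $\mu \in C^1$, so the notion of classical subsolution is meaningful; moreover, if $\pi(x,t) > 0$ then $\abs{x - X(t, x_0)} < r e^{-Lt} \le r$, so $(x,t) \in N$ and in particular $F(x) \ge \inf_N F = 2\kappa$.

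First I would record the derivatives of $\pi$. Abbreviating $y := x - X(t, x_0)$ and $\rho(t) := r^2 e^{-2Lt}$ (so $\rho' = -2L\rho$), and using $\partial_t X(t, x_0) = \vec b(X(t, x_0))$ from \eqref{stream}, one finds
\[
\nabla\pi = -\tfrac{2\mu}{\rho}\,y, \qquad \Delta\pi = -\tfrac{2n\mu}{\rho}, \qquad \pi_t = \mu'\pth{1 - \tfrac{\abs{y}^2}{\rho}} + \tfrac{2\mu}{\rho}\, y\cdot\vec b(X(t, x_0)) - \tfrac{2L\mu}{\rho}\abs{y}^2.
\]
Substituting these into the left-hand side above and writing $\theta := 1 - \abs{y}^2/\rho = \pi/\mu \in (0,1]$, the contributions of $\vec b$ combine into $\tfrac{2\mu}{\rho}\, y\cdot\pth{\vec b(X(t, x_0)) - \vec b(x)} - \tfrac{2L\mu}{\rho}\abs{y}^2$, which is $\le 0$ by the Lipschitz estimate $\abs{\vec b(X(t, x_0)) - \vec b(x)} \le L\abs{y}$; and the term $-\abs{\nabla\pi}^2 = -\tfrac{4\mu^2}{\rho^2}\abs{y}^2$ is likewise $\le 0$. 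What is left to control is $\mu'\theta - (m-1)\mu\theta F + (m-1)\mu\theta\,\tfrac{2n\mu}{\rho}$.

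Finally, using the hypothesis $\mu' \le \kappa(m-1)\mu$ together with $\theta > 0$, this remaining quantity is at most $(m-1)\mu\theta\pth{\kappa - F + \tfrac{2n\mu}{\rho}}$. Since $\rho(t) \ge r^2 e^{-2LT}$ and $\pth{\tfrac{2n}{r^2}e^{2LT}}\max\mu \le \kappa$, we get $\tfrac{2n\mu}{\rho} \le \kappa$, while $F(x) \ge 2\kappa$ as observed, so the parenthesis is $\le \kappa + \kappa - 2\kappa = 0$; as $(m-1)\mu\theta > 0$, the whole left-hand side is $\le 0$, which proves the lemma for every $m > 1$. I do not expect a genuine obstacle here: the only delicate points are the cancellation of the drift terms via the Lipschitz bound and checking that every sign-definite term carries the favorable sign, which is precisely why the exponential contraction $r e^{-Lt}$ of the spatial support and the smallness condition on $\mu$ were built into the statement.
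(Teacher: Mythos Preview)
Your proof is correct and follows essentially the same approach as the paper's: compute the derivatives of $\pi$, use the Lipschitz bound on $\vec b$ together with the built-in exponential contraction $re^{-Lt}$ to absorb the drift mismatch, and then use the smallness condition on $\mu$ and the lower bound $F\ge 2\kappa$ to close the inequality. The only organizational difference is that the paper first isolates $\Delta\pi + F \ge \kappa$ and then bounds $\pi_t$, whereas you substitute everything into the subsolution inequality at once; the computations are equivalent.
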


\begin{proof}
  Let us write $\eta(x) := 1 - |x|^2$ for convenience.
  When $\pi(x,t) > 0$, the spatial derivatives are
  \begin{align*}
    \nabla \pi = \frac{1}{r e^{-Lt}} \mu \nabla \eta(\cdot), \qquad
    \Delta \pi = -\frac{2n \mu}{r^2 e^{-2Lt}}.
  \end{align*}
  Therefore by the assumption on $\mu$ we have
  \begin{align*}
    \Delta \pi + F \geq \kappa, \qquad m > 1.
  \end{align*}

  On the other hand, the time derivative can be expressed as
  \begin{align*}
    \pi_t(x, t) &= \mu'(t) \eta(\cdot) +\frac{1}{re^{-Lt}} \mu(t) \nabla \eta(\cdot) \cdot \pth{L (x - X(t, x_0)) -
    \vec b(X(t,x_0))}\\
    &= \mu'(t) \eta(\cdot) - L \abs{\nabla \pi(x, t)} \abs{x - X(t, x_0)} - \nabla \pi(x,t) \cdot \vec b(X(t, x_0)).
  \end{align*}
  Then the Lipschitz continuity of $\vec b$ and the assumption on $\mu$ yields
  \begin{align*}
    \pi_t(x,t) \leq \mu'(t)\eta(x) -\nabla \pi(x,t) \cdot \vec b(x)
    \leq (m - 1) \pi (\Delta \pi + F) -\nabla \pi \cdot \vec b + \abs{\nabla \pi}^2.
  \end{align*}
  Therefore $\pi$ is a strict classical subsolution of \eqref{pmepressure}.
\end{proof}

\subsection{Barriers up to \texorpdfstring{$\rho = 1$}{rho = 1}}
\label{sec:barriers-rho-one}

In this section we construct barriers for \eqref{pme} and \eqref{hs} valid up to density $\rho = 1$.
It seems rather difficult to construct explicit barriers, and we therefore rely on a convergence
result for radial solutions of the porous medium equation with a source and with no drift.
This construction is based on the results of \cite[Section~3]{KP}. There we proved that for a given
\emph{local monotone radial classical solution} $(\phi, \rho_\phi^E)$, defined below,
 of
\begin{align}
  \label{hsKP}
\left\{\begin{aligned}
    -\Delta \phi &= G(\phi) &&\text{ in } \{\phi>0\},\\
    V &= \dfrac{|\nabla\phi|}{(1-\rho_\phi^E)_+} &&\text{ on } \partial\{\phi>0\},\\
    \frac{\partial \rho_\phi^E}{\partial t} &= G(0) \rho_\phi^E,
\end{aligned}\right.
\end{align}
there exists a sequence of
of solutions $\rho_m$, $p_m = P_m(\rho_m)$ of the equation
\begin{equation}\label{pmeKP}
  \rho_t - \Delta (\rho^m) = \rho G(p), \quad \hbox{ in } \R^n\times \R^+
\end{equation}
such that $\rho_m$ converge in the sense of half-relaxed limits and $p_m$ converge uniformly
to the functions $\chi_{\set{\phi > 0}} + \rho_\phi^E \chi_{\set{\phi = 0}}$ and $\phi$, respectively, as $m
\to \infty$.
Here $G \in C^1(\R)$, $G(0) > 0$, $G'(s) < 0$ and such that there exists $p_M > 0$ with $G(p_M) =
0$.

More precisely, we say that a pair of functions $(\phi, \rho_\phi^E)$ is a \emph{local monotone
radial classical solution of \eqref{hsKP} on a cylindrical domain}
$Q := B_R(0) \times (0, T)$ or $Q := (\Rn \setminus B_R(0)) \times (0, T)$ for some $R > 0$, $T > 0$
if
\begin{enumerate}
  \item $\phi, \rho_\phi^E \in C_c(\cl Q)$, $\phi, \rho_\phi^E \geq 0$,
  \item $\phi \in C^2(\cl{\set{\phi > 0}})$, $\rho_\phi^E \in C^2(\cl Q)$,
  \item $\phi, \rho_\phi^E$ are radially symmetric in space with respect to the
origin,
  \item $\phi > 0$ on $\partial B_R(0) \times [0, T]$,
  \item $\phi$ and $\rho_\phi^E$ are nondecreasing in time,
  \item $\rho_\phi^E < 1$ on $\set{\phi = 0}$,
  \item if $\set{x = 0} \cap Q \neq \emptyset$, there exists a neighborhood of $x = 0$ on which
    $\phi(\cdot, t) = 0$ for all $t \in [0,T]$, and
  \item $\phi$, $\rho_\phi^E$ satisfy \eqref{hsKP} in $Q$ in the classical sense.
\end{enumerate}

The following theorem was proved in \cite{KP}.

\begin{theorem}[{cf. \cite[Theorem~3.4]{KP}}]
  \label{th:radial_approx}
  For a local monotone radial classical solution $(\phi, \rho_\phi^E)$ on $Q := B_R(0) \times (0,
  T)$ or $Q := (\Rn \setminus B_R(0)) \times (0, T)$ for some $R > 0$, $T > 0$ there exist
  nondecreasing-in-time radial solutions $\rho_m$, $p_m = P_m(\rho_m)$ of \eqref{pmeKP} such that $p_m
  \to \phi$ uniformly on $\cl Q$ and $\rho_m \to \chi_{\set{\phi > 0}} + \rho_\phi^E
  \chi_{\set{\phi = 0}}$ in the sense of the half-relaxed limits. We say that that $f_k \to
    f$ in the sense of the half-relaxed limits if $\halflimsup f_k = f^*$ and $\halfliminf f_k =
  f_*$.
\end{theorem}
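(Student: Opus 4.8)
The plan is to follow the proof of \cite[Theorem~3.4]{KP}, which we outline here. By radial symmetry the problem reduces to a one–dimensional degenerate parabolic equation in the radial variable, for which weak solutions of \eqref{pmeKP} on the cylinder $Q$ with prescribed initial data and lateral boundary data are well posed and obey a comparison principle; here one uses that $G \in C^1$ and that, after the standard change to the pressure variable, the $p$-dependent source is a Lipschitz lower–order term. First I would fix approximating initial data $\rho_m^0$ converging in the sense of the half-relaxed limits to $\chi_{\set{\phi(\cdot,0)>0}} + \rho_\phi^E(\cdot,0)\chi_{\set{\phi(\cdot,0)=0}}$, and impose the lateral boundary condition $p_m = \phi$ (equivalently $\rho_m = P_m^{-1}(\phi)$) on $\partial B_R(0)\times[0,T]$, using property~(4) so that these boundary data are uniformly positive and the solutions are classical near the lateral boundary. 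Using property~(5), one arranges that $\rho_m^0$ is a subsolution of the stationary operator, $\Delta((\rho_m^0)^m) + \rho_m^0 G(P_m(\rho_m^0)) \geq 0$, and similarly on the lateral boundary; then differentiating \eqref{pmeKP} in time and applying the comparison principle yields the asserted monotonicity $(\rho_m)_t \geq 0$ on $Q$. Together with uniform bounds $0 \le \rho_m$, $p_m \le C$ (by comparison with constants in the pressure variable, using $G(0)>0$ and $G(p_M)=0$), this monotonicity gives pointwise-a.e.\ convergence of $\rho_m$ and $p_m$ to monotone limits along the full sequence.

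The heart of the argument is to show that these limits coincide with $\phi$ and $\chi_{\set{\phi>0}} + \rho_\phi^E\chi_{\set{\phi=0}}$, by a squeeze with barriers. For each $\e>0$ one constructs a strict radial supersolution $\phi^\e$ of \eqref{hsKP}: a slightly enlarged positivity set whose boundary moves with normal speed strictly exceeding $|\nabla\phi^\e|/(1-\rho_\phi^{E})_+$, satisfying $-\Delta\phi^\e > G(\phi^\e)$ in $\set{\phi^\e>0}$, with exterior density transported by $\partial_t\rho^E_{\phi^\e} = G(0)\rho^E_{\phi^\e}$, and $\phi^\e \downarrow \phi$ as $\e\to 0$; and symmetrically a strict radial subsolution $\phi_\e \uparrow \phi$. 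One then checks that a suitable lift of $\phi^\e$ to the density variable is, for $m$ large depending on $\e$, a genuine supersolution of \eqref{pmeKP}: inside the positivity set the pressure equation $p_t = (m-1)p(\Delta p + G(p)) + |\nabla p|^2$ is verified directly, the term $(m-1)p\,\Delta p$ being harmless because $\Delta\phi^\e + G(\phi^\e) < 0$ with a fixed gap while $|\nabla\phi^\e|$ is bounded, and across the free boundary the strict speed gap absorbs the $O(1/m)$ error of the PME front; outside, the density equation degenerates to $\rho_t \le \rho G(0)$, which holds with equality for $\rho^E_{\phi^\e}$. The comparison principle for \eqref{pmeKP} then gives $p_m \le \phi^\e + o_m(1)$ and, symmetrically, $p_m \ge \phi_\e - o_m(1)$; letting $m\to\infty$ and then $\e\to 0$ forces $p_m\to\phi$ uniformly on $\cl Q$, and hence $\rho_m = P_m^{-1}(p_m) \to 1$ locally uniformly on $\set{\phi>0}$.

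It then remains to identify the density limit on the open set $\set{\phi = 0}$. There the pressure is $o_m(1)$, so $\rho_m$ is squeezed between solutions of \eqref{pmeKP} with a negligible pressure-dependent source, and a comparison with barriers built from $\rho_\phi^E$ — which by property~(6) stays bounded away from $1$ — shows $\rho_m\to\rho_\phi^E$ at every point of $\set{\phi = 0}$. The half-relaxed-limit statement then follows automatically, the only possible discontinuity being carried by the free boundary $\partial\set{\phi>0}$, a lower-dimensional radial hypersurface moving with finite speed by construction of $\phi$. Finally, the one-sided Aronson--B\'enilan-type bound on $\Delta p_m$ coming from $(\rho_m)_t\ge 0$ provides the equicontinuity needed to upgrade a.e.\ convergence to uniform convergence on $\cl Q$.

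I expect the main obstacle to be the barrier construction near the free boundary: one must design $\phi^\e$ and $\phi_\e$ reproducing in the limit \emph{exactly} the velocity law $V = |\nabla\phi|/(1-\rho_\phi^E)_+$ (not merely an ordered pair of barriers) and verify that their PME lifts match the travelling-wave-type transition profile of $\rho_m$ across the front with errors uniform in space-time that vanish as $m\to\infty$. This is delicate precisely because $\rho_\phi^E$ may be positive, so the Hele-Shaw velocity differs from the zero-exterior-density case, and because the source $G(p)$ couples the interior dynamics to the pressure; it is the monotonicity in time that makes both the compactness and this matching tractable.
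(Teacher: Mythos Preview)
Your proposal sketches a from-scratch proof of \cite[Theorem~3.4]{KP}, whereas the paper's proof simply cites that result for the annular case and only adds a short extension to the full ball $B_R(0)\times(0,T)$: the uniform Lipschitz estimate from \cite{KP} (obtained in polar coordinates) is applied on compact annuli away from the origin, and the remaining neighborhood of $\{|x|=0\}$ is handled by a direct comparison with a parabolic barrier of the form $c_1 t + c_2 - c_3|x|^2$, using property~(7) that $\phi$ vanishes there. You do not address this origin issue at all, which is the only genuinely new content here.

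More importantly, your route differs from \cite{KP}'s. As the paper indicates, \cite{KP} obtains uniform convergence of $p_m$ by proving \emph{uniform-in-$m$ Lipschitz bounds} on $p_m(\cdot,t)$ in the radial variable and then passing to the limit by compactness; the limit is identified by stability of viscosity solutions. You instead propose to squeeze $p_m$ between lifts of strict Hele-Shaw sub/supersolutions $\phi_\e,\phi^\e$. The gap is precisely the step you flag as delicate: ``a suitable lift of $\phi^\e$ to the density variable is, for $m$ large, a genuine supersolution of \eqref{pmeKP}.'' Taking $P_m^{-1}(\phi^\e)$ in $\{\phi^\e>0\}$ and $\rho^E_{\phi^\e}$ outside produces a function that jumps from values near $1$ to values strictly below $1$ across $\partial\{\phi^\e>0\}$; it is not a classical, nor even a viscosity, supersolution of \eqref{pmeKP} there without inserting a correctly tuned transition layer. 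Constructing that layer amounts to understanding the local front structure of $\rho_m$ as $m\to\infty$, which is exactly the convergence statement being proved. Your appeal to a ``strict speed gap absorbing the $O(1/m)$ error'' presupposes an $m$-dependent barrier whose free boundary moves at a prescribed speed, but you have not built one. In short, the barrier-lifting direction (Hele-Shaw $\Rightarrow$ PME barrier) is the hard direction; \cite{KP} avoids it by working with the PME solutions directly and extracting regularity, and the present paper then \emph{uses} Theorem~\ref{th:radial_approx} to go in the opposite direction in Section~\ref{sec:barriers-rho-one}. Your Aronson--B\'enilan remark at the end is also misplaced: if the squeeze already gave uniform convergence you would not need it, and in fact it is the Lipschitz estimate, not a semiconvexity bound, that supplies equicontinuity in \cite{KP}.
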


\begin{proof}
  This statement was proved in \cite[Theorem~3.4]{KP} with the restriction that $Q$ does not
  contain a neighborhood of $\set{(x, t): |x| = 0}$. However, it is possible to extend
  that proof to handle the full cylinder $Q = B_R(0) \times(0,T)$. Indeed, we can take $\rho_m, p_m$
  positive and classical solutions of \eqref{pmeKP} with appropriately chosen initial and boundary
  data as explained in \cite{KP}. The only part of the proof
  that needs careful consideration is the uniform Lipschitz estimate for $p_m(\cdot, t)$ in a
  neighborhood of $\set{|x| = 0}$ since we work in polar coordinates in \cite{KP}. We therefore use
  the uniform Lipschitz estimate only on compact annuli $A \subset B_R(0) \setminus \set0$, which
  allows us to deduce the local uniform convergence $p_m \to \phi$ on $\cl Q \setminus \set{|x| =
  0}$. Then, as $\phi = 0$ near $|x| = 0$, a
  routine comparison of $p_m$ with a radial superbarrier of the type $c_1 t + c_2 - c_3 |x|^2$ at the origin
  yields the uniform convergence of $p_m$ on $\cl Q$.

  The proof of locally uniform convergence of $\rho_m$ to $\rho_\phi^E$ away from
  $\partial\set{\phi > 0}$ works near $|x| = 0$ with straightforward modification.  Finally, this
  argument can be extended to yield for any $\e > 0$ the existence of a neighborhood $\mathcal N$
  of $\partial\set{\phi > 0}$ such that $\rho_m > \rho_\phi^E - \e$ for all $m \gg 1$.
  We have $\halflimsup \rho_m \leq 1$ since $p_m$ is uniformly bounded.
  This implies the convergence in the sense of the half-relaxed limits.
\end{proof}

Let us explain how we can build barriers for \eqref{pme} from solutions of \eqref{pmeKP} for fixed
$m > 1$.
The main idea is to transport the solution of \eqref{pmeKP} along a streamline of $\vec b$, while
also using inf/sup-convolutions in space to account for the space variations of the drift field
$\vec b$.
Recall that $F := - \divo \vec b + f$ and $L$ is the Lipschitz constant of $\vec b$.
We will assume
that $(\rho, p)$ is a solution of \eqref{pmeKP} on the set $B_R(0) \times (-\tau, \tau)$ for
some $0 < r < R$, $\tau \in (0, \frac r{2LR})$
with a smooth source $G$ such that $G(\sup p) > \sup F$. For velocity perturbation $\alpha \in (L R, \frac r{2\tau})$, fixed point $z \in \Rn$, define
$r(t) := \frac r2 - \alpha t$ and the inf-convolution
\begin{align}
  \label{inf-superbarrier}
  w(x,t) := \inf_{|h| \leq r(t)} p(x - X(t, z) + h, t) \qquad \text{on $\cl Q$ where } Q := \set{(x,t): |x - X(t,
  z)| < R-r,\ |t| < \tfrac r{2\alpha}}.
\end{align}
$w$ is the inf-convolution of $p$ over a shrinking ball, following a characteristic of
\eqref{pme} going through the point $(z, 0)$.

Let us show that $w$ is a superbarrier for the pressure solution of \eqref{pme}.
Let $\phi$ be a continuous pressure solution of \eqref{pme} in $Q$ such that $\phi < w$ on
$\partial_P Q$.

Let us suppose that $\phi$ crosses $w$ from below at a point $(x_0, t_0) \in Q$, that is,
$\phi(x_0, t_0) = w(x_0, t_0)$ and $\phi \leq w$ for $t \leq t_0$. We show that this leads to a
contradiction.

We can assume that $\phi(x_0, t_0) = w(x_0, t_0) > 0$ since we can always approximate $p$ from above uniformly by
positive solutions of \eqref{pmeKP}.
By the definition of $w$, there exists $|h_0| \leq r(t_0)$ with $p(y_0, t_0) =
w(x_0, t_0)$ for $y_0 := x_0 - X(t_0, z) + h_0$. We also have $\nabla p(y_0, t_0) = \nabla \phi(x_0, t_0)$ and
$\Delta p(y_0, t_0) \geq \Delta \phi(x_0, t_0)$. For any $q
\in \Rn$, $|q| = 1$, we define
\begin{align*}
  h(t) := h_0 + q (r(t) - r(t_0)),
\end{align*}
which satisfies $|h(t)| \leq r(t)$ for all $t \in (-\tau, \tau)$.
In particular,
\begin{align}
  \label{line-order}
  \phi(x_0, t) \leq p(x_0 - X(t, z) + h(t), t), \qquad t \leq t_0,
\end{align}
with equality of $t = t_0$.
For $q = - \tfrac {\nabla p}{|\nabla p|} (y_0, t_0)$ if $\nabla p(y_0, t_0) \neq 0$ and $q = 0$
otherwise, the chain rule yields
\begin{align*}
  \phi_t(x_0, t_0) &\geq p_t(y_0, t_0)
  + \nabla p(y_0, t_0) \cdot (-\vec b(X(t_0, z)) + q r'(t_0))\\
  &\geq (m-1) p (\Delta p + G(p)) + |\nabla p|^2 + \nabla p \cdot (-\vec b(x_0) - q\alpha) - L|\nabla p| |x_0 - X(t_0, z)|\\
  &> (m-1) \phi (\Delta \phi + F(x_0)) + \nabla \phi \cdot (\nabla \phi - \vec b(x_0)) + |\nabla p| (\alpha - L|x_0 - X(t_0,
  z)|)\\
  &= \phi_t(x_0, t_0) + |\nabla p| (\alpha - L|x_0 - X(t_0, z)|),
\end{align*}
where we used $G(\sup p) > \sup F$.
Since the last term is nonnegative by the choice of $\alpha$ and $Q$, we arrive at a contradiction.
We conclude that $\phi$ cannot cross $w$ from below on $Q$ if the boundary data are ordered and
therefore it is a superbarrier for \eqref{pme}.

The construction of subbarriers follows the same idea, but we choose the source $G$ to satisfy
$0 < G(0) < \inf F$ and define $w$ as the sup-convolution by replacing $\inf$ by $\sup$
in \eqref{inf-superbarrier}. Other parameters are chosen as in the case of the superbarrier. Then
we suppose that $\phi > w$ on $\partial_P Q$. If there is a point $(x_0, t_0)$ with $\phi(x_0, t_0)
= w(x_0, t_0) > 0$, we again arrive at a contradiction. Note that now $\Delta p(y_0, t_0) \leq
\Delta \phi(x_0, t_0)$.
Finding $h_0$ and $h(t)$ as above, we get
the opposite inequality in \eqref{line-order}, which implies for $q = \tfrac {\nabla p}{|\nabla p|}
(y_0, t_0)$ or $q = 0$ as above
\begin{align*}
  \phi_t(x_0, t_0) &\leq p_t(y_0, t_0)
  + \nabla p(y_0, t_0) \cdot (-\vec b(X(t_0, z)) + q r'(t_0))\\
  &\leq (m-1) p (\Delta p + G(p)) + |\nabla p|^2 + \nabla p \cdot (-\vec b(x_0) - q\alpha) + L|\nabla p| |x_0 - X(t_0, z)|\\
  &< (m-1) \phi (\Delta \phi + F(x_0)) + \nabla \phi \cdot (\nabla \phi - \vec b(x_0)) - |\nabla p| (\alpha - |x_0 - X(t_0,
  z)|)\\
  &= \phi_t(x_0, t_0) - |\nabla p| (\alpha - |x_0 - X(t_0, z)|).
\end{align*}
The last term is nonpositive, and we again arrive at a contradiction. Therefore $w$ is a subbarrier
for \eqref{pme}.

\medskip

Note that the above construction is independent of $m > 1$. Furthermore, if $p_m \to p_\infty$
uniformly and $\rho_m \to \rho_\infty$ in the sense of half-relaxed limits,
so do the respective inf- and sup-convolutions.
We use these facts to construct sequence of nice barriers. First, we recall that we can always
construct simple solutions of \eqref{hsKP}.

\begin{lemma}
  \label{le:existence-limit-radial}
  For any positive constants $\eta,r_0$ and $\rho^0 \in [0, 1)$ and a function $G \in C^\infty(\R)$ there exist $\delta > 0$ and a
  local monotone radial classical solution $(\phi, \rho_\phi^E)$ of \eqref{hsKP} on the (interior)
  cylindrical domain $Q:=B_{r_0
  + \delta}(0) \times (0, 2 \delta)$ such that $|\nabla \phi| = \eta$ on $\partial \set{\phi > 0}$,
  $\rho_\phi^E(\cdot, \delta) = \rho$, $\set{\phi(\cdot, \delta) = 0} = \cl B_{r_0}(0)$.

  Similarly, such a solution exists on the (exterior) cylindrical domain $Q = B_{r_0 - \delta}(0) \times (0,
  2\delta)$ with $\set{\phi(\cdot, \delta) = 0} = \Rn \setminus B_{r_0}(0)$.
\end{lemma}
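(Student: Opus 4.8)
The plan is to construct $(\phi,\rho_\phi^E)$ essentially by hand as a free-boundary solution of a radial ODE system, separating the interior and exterior cases and building the solution from the prescribed boundary data at time $t=\delta$ backwards and forwards in time. I would first treat the interior case $Q = B_{r_0+\delta}(0)\times(0,2\delta)$. Fix $G\in C^\infty(\R)$ with $G(0)>0$. The key structural fact is that the second equation of \eqref{hsKP}, $V = \frac{|\nabla\phi|}{(1-\rho_\phi^E)_+}$, together with the radial symmetry, reduces the free boundary $\partial\{\phi(\cdot,t)>0\}$ to a single radius $R(t)$; I would prescribe $R(\delta) = r_0$ and let $R(t)$ solve the scalar ODE $R'(t) = \frac{|\nabla\phi|(R(t),t)}{1-\rho_\phi^E(R(t),t)}$, where the normal slope $|\nabla\phi|(R(t),t)$ will be determined by the elliptic problem. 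The external density $\rho_\phi^E$ is independent of the free boundary: it solves $\partial_t\rho_\phi^E = G(0)\rho_\phi^E$ pointwise, so $\rho_\phi^E(x,t) = \rho_\phi^E(x,\delta)e^{G(0)(t-\delta)}$; prescribing $\rho_\phi^E(\cdot,\delta) = \rho^0 < 1$ (constant in space, or a suitable radial bump) makes this explicit, and by choosing $\delta$ small we keep $\rho_\phi^E < 1$ throughout $(0,2\delta)$, which takes care of condition (6).

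For the interior elliptic part, given $R(t)$ I would solve $-\Delta\phi(\cdot,t) = G(\phi(\cdot,t))$ on the annulus $R(t) < |x| < r_0+\delta$ with $\phi = 0$ on $|x| = R(t)$ and a suitable positive Dirichlet datum on the outer sphere $|x| = r_0+\delta$ (chosen so that (4) holds), together with $\phi(\cdot,t) \equiv 0$ on $|x| \le R(t)$, which handles (7). Because the equation is quasilinear but $G$ is smooth and $G' < 0$ (the sign condition inherited from \eqref{hsKP}), this semilinear radial Dirichlet problem has a unique positive solution, smooth up to the boundary; its inward normal derivative at $|x| = R(t)$ is a smooth function of $R(t)$ and $t$, and by scaling the outer boundary datum appropriately I can arrange $|\nabla\phi| = \eta$ on $\partial\{\phi>0\}$ at the relevant time (shrinking $\delta$ if needed so this persists on $(0,2\delta)$). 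Feeding this normal slope into the ODE for $R(t)$ closes the system; a short Picard/contraction or continuation argument on a small time interval $(0,2\delta)$ gives existence and $C^2$ regularity in time of $R(t)$, hence of $\phi$ on $\cl{\{\phi>0\}}$, establishing (1), (2), (3) and (8). Monotonicity in time, condition (5), follows because $G(0)>0$ forces $\rho_\phi^E$ to increase, and $R'(t) > 0$ since $|\nabla\phi| > 0$ and $1 - \rho_\phi^E > 0$, so $\{\phi>0\}$ grows; that $\phi$ itself is nondecreasing in time then follows from the comparison principle for the elliptic problem on the expanding domains (the domain grows and $G$ is decreasing), possibly after shrinking $\delta$.

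The exterior case $Q = B_{r_0-\delta}(0)\times(0,2\delta)$ with $\{\phi(\cdot,\delta)=0\} = \Rn\setminus B_{r_0}(0)$ is entirely parallel: now the congested region is the inside ball $|x| \le R(t)$ with $R(\delta) = r_0$ and $R'(t) < 0$ (the ball shrinks as time decreases — i.e. it is nondecreasing backwards, so again nondecreasing forwards after relabeling; one checks the sign carefully), $\phi$ solves the semilinear Dirichlet problem on the annulus $r_0 - \delta < |x| < R(t)$ with zero data on the outer free sphere and positive data on the fixed inner sphere, and condition (7) is vacuous since $0\notin\cl Q$. The same fixed-point argument on a short time interval applies.

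The main obstacle I anticipate is the coupling between the free boundary ODE for $R(t)$ and the normal slope $|\nabla\phi|(R(t),t)$: one must verify that the map $R(\cdot)\mapsto |\nabla\phi|(R(\cdot),\cdot)$ is Lipschitz (in a suitable norm on a short time interval) so that the combined system has a fixed point, and simultaneously control $\delta$ so that all the qualitative conditions — $\phi>0$ on the outer sphere, $\rho_\phi^E < 1$ on $\{\phi=0\}$, monotonicity in time, and the exact slope normalization $|\nabla\phi| = \eta$ — hold uniformly on $(0,2\delta)$. This is a standard but somewhat delicate quasi-static free boundary construction; all the required smoothness and stability estimates for the radial semilinear Dirichlet problem are classical, so the argument is routine once the continuation scheme is set up, and I would cite the analogous construction in \cite[Section~3]{KP} for the details rather than reproducing them.
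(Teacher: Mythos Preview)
Your approach would work, but it takes a noticeably more involved route than the paper. The paper's key simplification is to treat the radial elliptic equation $-\Delta\phi = G(\phi)$ as a \emph{Cauchy} (shooting) problem in the radial variable $s=|x|$, launched from the free boundary: for each $t$ one solves the radial ODE with initial data $u_t(r(t))=0$, $u_t'(r(t))=\eta$ and integrates outward. This makes the slope condition $|\nabla\phi|=\eta$ automatic rather than something to be engineered, and it completely decouples the system: since $\rho_\phi^E(t)=\rho^0 e^{G(0)(t-\delta)}$ is explicit and the slope is prescribed, the free-boundary radius satisfies the stand-alone ODE $r'(t)=-\eta/(1-\rho^0 e^{G(0)t})$, with no feedback from $\phi$. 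One then just chooses $\delta>0$ small so that $u_t$ stays positive up to $s=r_0+\delta$ and the other qualitative conditions hold. Your Dirichlet-plus-fixed-point scheme is sound, but the coupling you flag as the ``main obstacle'' is an artifact of imposing Dirichlet data on the outer sphere rather than Cauchy data at the free boundary; the paper's approach sidesteps it entirely. (A minor sign slip: in the interior case the outward normal of $\{\phi>0\}$ at $|x|=R(t)$ points toward the origin, so $\{\phi>0\}$ growing means $R'(t)<0$, not $R'(t)>0$.)
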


\begin{proof}
  Let us only construct the solution on the interior cylinder, the exterior is analogous.
  The solutions can be constructed using the ODE theory. We find the solution $r = r(t)$
  of the ODE $r'(t) = -\frac \eta {1 - \rho e^{G(0)t}}$ with $r(0) = r_0$ which exists, is smooth and is
  positive, in a neighborhood of $t = 0$. Then for every time $t$ we define $u = u_t(s)$ to be the
  solution of the ODE given by writing $-\Delta \phi = G(\phi)$ in radial coordinates, with
  initial condition
  $u(r(t)) = 0$ and $u'(r(t)) = \eta$. This solution exists and is positive on $s > r(t)$ on a
  neighborhood of $s = r(t)$. It is also smooth, and depends smoothly on $t$.
  We then take $\delta > 0$ so that the functions $\phi(x, t) := u_{t - \delta}(|x|)$ when $|x|
  \geq r(t - \delta)$ and zero otherwise, and $\rho_\phi^E = \rho e^{G(0) (t-\delta)}$ satisfy the
  assumptions on a local monotone radial classical solution.
\end{proof}

Using the convergence result for radial solutions, Theorem~\ref{th:radial_approx}, and the above
construction, given any classical barrier of \eqref{hs} in the sense of
Definition~\ref{def:barrier} and a point on its free boundary, we can
create a sequence of nice solutions of the $m$-problems that converge to a function that touches
the barrier at the given boundary point.

\begin{proposition}
  \label{pr:barrier-sequence}
  Let $U$ be an open set, $(x_0, t_0) \in U$ and let $\phi \in C^{2,1}(U)$, $\rho^E\in C(U)$ satisfy $\phi(x_0,t_0) =
  0$, $|\nabla \phi|(x_0,t_0) \neq 0$, $\rho^E(x_0,t_0) < 1$ and  $\phi_t > \frac{\abs{\nabla\phi}^2}{(1-\rho^E)_+} -
  \vecb \cdot \nabla\phi$ at $(x_0,t_0)$.
  Then there exists a parabolic neighborhood $\mathcal N$ of
  $(x_0, t_0)$ and a sequences $\varphi_\rho^m$, $\varphi_p^m =
  P_m(\varphi_\rho^m)$, of classical supersolutions of \eqref{pme} and functions $\varphi_p$,
  $\varphi_\rho$ such that $\cl{\set{\varphi_p > 0}} = \set{\varphi_\rho = 1}$, $\varphi_p^m \to \varphi_p$ uniformly on $\mathcal N$,
  $\varphi_\rho^m \to \varphi_\rho$ in the sense of half-relaxed limits, and
  $\varphi_p \geq \phi$ and $\varphi_\rho \geq \chi_{\cl{\set{\phi > 0}}} + \rho^E
  \chi_{\cl{\set{\phi > 0}}^\compl}$ on $\mathcal N$, and $\varphi_p(x_0, t_0) = 0$.

  An analogous sequence exists for a subbarrier, i.e., if $\phi_t > \frac{\abs{\nabla\phi}^2}{(1-\rho^E)_+} -
  \vecb \cdot \nabla\phi$ at $(x_0,t_0)$, and the limit then satisfies $\varphi_p \leq \phi$, $\varphi_\rho \leq
  \chi_{\cl{\set{\phi > 0}}} + \rho^E \chi_{\cl{\set{\phi > 0}}^\compl}$.
\end{proposition}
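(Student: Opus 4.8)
The plan is to locally fit a transported radial model solution of the driftless equation \eqref{pmeKP} to $\phi$ at the free-boundary point $(x_0,t_0)$ and then send $m\to\infty$ via Theorem~\ref{th:radial_approx}; the point is that the barrier construction carried out just before Lemma~\ref{le:existence-limit-radial} is independent of $m$, so it turns a convergent radial family $(\rho_m,p_m)$ into a convergent family $(\varphi_\rho^m,\varphi_p^m)$. First I would use $\abs{\nabla\phi}(x_0,t_0)\ne 0$ and the implicit function theorem: near $(x_0,t_0)$ the set $\partial\set{\phi>0}$ is a $C^{2,1}$ hypersurface in space-time, and $\partial\set{\phi(\cdot,t)>0}$ is a $C^2$ spatial hypersurface for $t$ close to $t_0$. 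In the superbarrier case this yields an interior ball $\cl B_{r_0}(z)\subset\set{\phi(\cdot,t_0)\le 0}$ with $x_0\in\partial B_{r_0}(z)$, where $r_0>0$ may be taken as small as we like. I would then invoke Lemma~\ref{le:existence-limit-radial} on the interior cylinder with this $r_0$, with slope parameter $\eta$ chosen large (in particular $\eta\ge\abs{\nabla\phi}(x_0,t_0)$), with $\rho$ slightly above $\rho^E(x_0,t_0)$ and still $<1$, and with a source $G\in C^\infty(\R)$ satisfying $G(0)>\sup_{\mathcal N}F$ and $G(\sup\Phi)>\sup_{\mathcal N}F$ --- arrangeable because $\sup\Phi\to 0$ as the time window $\delta\to 0$. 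After a time translation so that the model's free boundary is $\partial B_{r_0}(0)$ at time $t_0$, this produces a local monotone radial classical solution $(\Phi,\rho^E_\Phi)$ of \eqref{hsKP} with $\abs{\nabla\Phi}=\eta$ and $\rho^E_\Phi=\rho$ on that free boundary at time $t_0$.

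Next I would apply Theorem~\ref{th:radial_approx} to $(\Phi,\rho^E_\Phi)$, obtaining radial solutions $\rho_m$, $p_m=P_m(\rho_m)$ of \eqref{pmeKP} with $p_m\to\Phi$ uniformly and $\rho_m\to\chi_{\set{\Phi>0}}+\rho^E_\Phi\chi_{\set{\Phi=0}}$ in the sense of half-relaxed limits. For each fixed $m$ I would transport $p_m$ along the streamline passing through $(z,t_0)$ and inf-convolve over a ball shrinking at a rate $\alpha>L r_0$, exactly as in \eqref{inf-superbarrier}; the computation recorded there shows that $\varphi_p^m:=w_m$ and $\varphi_\rho^m:=P_m^{-1}(\varphi_p^m)$ are classical supersolutions of \eqref{pme} on a parabolic neighborhood $\mathcal N$ of $(x_0,t_0)$, uniformly in $m$. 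Since inf-convolution commutes with uniform limits and with half-relaxed limits, and the transport map is smooth, we get $\varphi_p^m\to\varphi_p$ uniformly on $\mathcal N$ and $\varphi_\rho^m\to\varphi_\rho$ in the half-relaxed sense, where $\varphi_p$ is the transported inf-convolution of $\Phi$ and $\varphi_\rho$ equals $1$ on $\cl{\set{\varphi_p>0}}$ and the transported $\rho^E_\Phi$ off it; in particular $\cl{\set{\varphi_p>0}}=\set{\varphi_\rho=1}$ since this identity holds for the radial model and is preserved by transport and convolution.

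Finally I would verify the three assertions. The interior-ball fit and the choice of the starting time give $\varphi_p(x_0,t_0)=\Phi(x_0-X(t_0,z),t_0)=0$. For the set ordering, the effective outward speed of $\partial\set{\varphi_p>0}$ at $(x_0,t_0)$ is $\frac{\eta}{1-\rho^E_\Phi}+\alpha+\vecb\cdot\nu-O(L r_0)$; with $\eta$ large this strictly exceeds the speed $\phi_t/\abs{\nabla\phi}$ of $\partial\set{\phi>0}$ at the same point, and, together with the curvature margin from $r_0$ small, a Gronwall comparison of the distance between the two moving boundaries (using Lemma~\ref{le:trajectory-bound}) shows that $\set{\phi>0}\cap\mathcal N\subset\set{\varphi_p>0}\cap\mathcal N$ with tangential contact only at $(x_0,t_0)$, provided $\mathcal N$ is a small enough parabolic neighborhood. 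Consequently $\varphi_\rho\equiv 1$ on $\cl{\set{\phi>0}}\cap\mathcal N$, while off it $\varphi_\rho=\rho^E_\Phi\ge\rho^E$ because $\rho^E_\Phi(\cdot,t_0)=\rho>\rho^E(x_0,t_0)$ and $\partial_t\rho^E_\Phi=G(0)\rho^E_\Phi$ grows faster along streamlines than $\rho^E$ does by \eqref{transport-streamline} and $G(0)>\sup F$; thus $\varphi_\rho\ge\chi_{\cl{\set{\phi>0}}}+\rho^E\chi_{\cl{\set{\phi>0}}^\compl}$. And $\varphi_p\ge\phi$ on $\mathcal N$ because the positivity sets are ordered and, near their common free boundary where $\phi$ has slope $\abs{\nabla\phi}(x_0,t_0)\le\eta$, the slope of $\varphi_p$ dominates while $\mathcal N$ keeps every point near the boundary. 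The subbarrier case is identical after replacing the interior cylinder by the exterior one in Lemma~\ref{le:existence-limit-radial}, $\inf$ by $\sup$ in \eqref{inf-superbarrier}, and the condition on $G$ by $0<G(0)<\inf_{\mathcal N}F$, which turns the $\varphi_p^m$ into PME subsolutions and reverses all inequalities.

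The step I expect to be the main obstacle is producing a genuinely tangential, single-point contact of $\partial\set{\varphi_p>0}$ with $\partial\set{\phi>0}$ while keeping $\varphi_p^m$ a supersolution of \eqref{pme}: this forces a mutually compatible choice of parameters --- $r_0$ small enough to beat the curvature of $\partial\set{\phi>0}$, $\alpha$ larger than $L r_0$ (to absorb the $x$-dependence of $\vecb$) but still small, $\eta$ large enough that the model outpaces $\set{\phi>0}$, $\rho$ squeezed between $\rho^E(x_0,t_0)$ and $1$, $\delta$ short enough for the radial model to exist and for $\rho^E_\Phi\ge\rho^E$ to persist, and $G$ obeying the sign conditions --- together with a correspondingly small $\mathcal N$ on which all these comparisons hold; checking that these smallness requirements are compatible, and that the distance comparison propagates the set ordering backward in time from $(x_0,t_0)$, is the crux.
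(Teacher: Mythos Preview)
Your overall strategy matches the paper's---fit a transported, inf-convolved radial model to $\phi$ at the boundary point and invoke Theorem~\ref{th:radial_approx}---but the velocity comparison you use to order the free boundaries is in the wrong direction, and this breaks the argument. You argue that by taking $\eta$ large the model's outward normal speed $V_w=\frac{\eta}{1-\rho^E_\Phi}+\alpha+\vecb\cdot\nu$ exceeds $V_\phi=\phi_t/|\nabla\phi|$, and you claim this yields $\{\phi>0\}\subset\{\varphi_p>0\}$ on a parabolic neighborhood $\mathcal N$ of $(x_0,t_0)$. But a parabolic neighborhood contains times $t\le t_0$. At $t=t_0$ the two free boundaries touch tangentially at $x_0$; going \emph{backward} in time, the boundary that was advancing faster has receded farther, leaving a \emph{smaller} positivity set. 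Hence $V_w>V_\phi$ forces $\{\varphi_p(\cdot,t)>0\}$ to fail to contain $\{\phi(\cdot,t)>0\}$ near $x_0$ for $t<t_0$, the opposite of what you need. (Your appeal to Lemma~\ref{le:trajectory-bound} does not help here: that lemma controls streamlines of $\vecb$, not free boundaries.)

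The paper resolves this by choosing all parameters only slightly above their base values: $\eta=|\nabla\phi|(x_0,t_0)+\varepsilon$, $\rho^0=\rho^E(x_0,t_0)+O(\varepsilon)$, $\alpha=Lr+\varepsilon$, $r=O(\varepsilon)$. Then $V_w\to\beta:=\frac{|\nabla\phi|}{1-\rho^E}+\vecb\cdot\nu$ at $(x_0,t_0)$ as $\varepsilon\to0$, and the \emph{strict} superbarrier hypothesis $V_\phi>\beta$ provides the gap so that $V_w<V_\phi$ for small $\varepsilon$. Combined with $\eta>|\nabla\phi|(x_0,t_0)$ (for the pressure ordering) and the exterior-ball curvature, this makes $\phi-w$ have a strict maximum $0$ on $\cl{\{\phi>0\}}\cap\{t\le t_0\}$ at $(x_0,t_0)$, which is the parabolic ordering required. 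In short, $\eta$ must be only \emph{slightly} larger than $|\nabla\phi|(x_0,t_0)$; it is the strict inequality in the hypothesis on $\phi_t$ that reconciles the competing constraints $\eta>|\nabla\phi|$, $\rho>\rho^E$, $\alpha>Lr$, and $V_w<V_\phi$. Your final paragraph correctly identifies the compatibility of parameters as the crux, but your resolution---send $\eta$ large---goes the wrong way.
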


\begin{proof}
  Let us again show this only for the superbarrier, subbarrier is analogous.
  We shall construct the limit functions $\varphi_p$ and $\varphi_\rho$ first using
  Lemma~\ref{le:existence-limit-radial} and \eqref{inf-superbarrier}.

  By translating everything, we can for simplicity assume that $(x_0, t_0) = (0, 0)$.
  Since $\phi$ is $C^2$ in space and $\nabla \phi(0, 0) \neq 0$, $\set{\phi(\cdot, 0) > 0}$ has an
  exterior ball property at $0$.
  Let $\nu$ be the outer unit normal of $\set{\phi(\cdot, 0) > 0}$ at $x = 0$.

  Recall that $L$ is the Lipschitz constant of $\vecb$.
  For every $\e > 0$, let us set the following parameters: $z = \e \nu$, $r_0 = \frac {20}{19}\e$,
  $\tilde r_0 = \frac{r_0}{10}$, $\alpha = L r + \e$, $\eta = \abs{\nabla \phi}(0,0) + \e$ and
  $\rho^0 = \sup_{B_{2r}(z)} \rho^E(\cdot, 0) + \e$.
  We chose the parameters so that $r_0 - \frac{\tilde r_0}2 = \e$.
  Let us also take $G(s) = \sup F + 1 - s$.

  According to Lemma~\ref{le:existence-limit-radial}, there is a local monotone radial solution
  $(\zeta, \rho_\zeta^E)$ of \eqref{hsKP} on the set $B_{r_0 + \delta}(0) \times (-\delta, \delta)$
  for some $\delta > 0$ (depending on $\e > 0$). We can take $\rho_\zeta^E$ that does not depend on $x$.
  Let us define $w$ as in \eqref{inf-superbarrier} with $p = \zeta$.
  By the choice of parameters, $\set{w(\cdot, 0) = 0}$ is an exterior ball of $\set{\phi(\cdot, 0)
  > 0}$ at $(0, 0)$.
  Moreover, by construction, the normal velocity of $\set{w > 0}$ at $(0,0)$ is $V_w = \frac{\eta}{1 - \rho^0} +
  \alpha + \vecb(z, 0) \cdot \nu$. On the other hand, the normal velocity of $\set{\phi > 0}$ at
  $(0,0)$ satisfies $V_\phi > \beta := \frac{\abs{\nabla \phi(0,0)}}{1 - \rho^E(0,0)} + \vecb(0, 0) \cdot \nu$.
  By continuity, $V_w$ converges to $\beta$ as $\e \to 0+$ and therefore $V_w < V_\phi$ for sufficiently small $\e > 0$. Since
also $\abs{Dw}(0,0) = \eta > \abs{D\phi(0,0)}$, we
  conclude that $\phi - w$ has a strict maximum $0$ in the set $\cl{\set{\phi > 0}} \cap \set{t
  \leq 0}$ at $(0,0)$ for
  $\e > 0$ sufficiently small.

  In particular, there exists a parabolic neighborhood $\mathcal N$ of $(0,0)$ on which we have
  $\varphi_p := w \geq \phi$ and $\varphi_\rho := \chi_{\cl{\set{w > 0}}} + \rho^E_\zeta
    \chi_{\cl{\set{w > 0}}^\compl} \geq \chi_{\cl{\set{\phi > 0}}} + \rho^E
    \chi_{\cl{\set{\phi > 0}}^\compl}$.

    Finally, let $\rho_m$ and $p_m$ be the solutions of \eqref{pmeKP} provided by
    Theorem~\ref{th:radial_approx} for $(\zeta, \rho^E_\zeta)$ above. Let $\varphi_\rho^m$ and
    $\varphi_p^m$ be their inf-convolutions as in \eqref{inf-superbarrier}. Then, by making
    $\mathcal N$ smaller if necessary (independent of $m$), we have that $\varphi_\rho^m$ and
    $\varphi_p^m$ are classical super solutions of \eqref{pme} on $\mathcal N$ that converge to
    $\varphi_\rho$, $\varphi_p$ as required.
\end{proof}

To be able to use the sequence of barriers, we state the following technical lemma.

\begin{lemma}
  \label{le:barrier-order}
  Suppose that $\rho_m, p_m = P_m(\rho_m)$ are USC and $v_m, u_m = P_m(v_m)$ are LSC nonnegative,
  uniformly bounded functions. Set
  $p = \halflimsup_{m\to\infty} p_m$, $\rho = \halflimsup_{m\to\infty} \rho_m$, $u =
  \halfliminf_{m\to\infty} u_m$, $v = \halfliminf_{m\to\infty} v_m$.
  Suppose that $K$ is a compact set.
  If $p < u$ in $\set{u > 0} \cap K$ and $\rho < v$ in $\set{\rho < 1} \cap K$, and $K \subset \set{u > 0} \cup
  \set{\rho < 1}$, then $p_m \leq u_m$ and $\rho_m \leq v_m$ for large $m$.
\end{lemma}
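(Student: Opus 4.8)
The plan is a soft compactness--contradiction argument whose only real ingredient is the diagonal semicontinuity of half-relaxed limits. First I would reduce everything to a single inequality: since $P_m\colon[0,\infty)\to[0,\infty)$ is a strictly increasing bijection, at every point the bound $\rho_m\le v_m$ holds if and only if $p_m=P_m(\rho_m)\le P_m(v_m)=u_m$ holds. Hence it suffices to prove $p_m\le u_m$ on $K$ for all sufficiently large $m$, and the bound $\rho_m\le v_m$ on $K$ is then automatic. (I read the conclusion as the comparison on $K$, which is how the lemma is used.)

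Next I would argue by contradiction. If $p_m\le u_m$ on $K$ fails for arbitrarily large $m$, there is a subsequence $m_j\to\infty$ and points $x_j\in K$ with $p_{m_j}(x_j)>u_{m_j}(x_j)$, equivalently $\rho_{m_j}(x_j)>v_{m_j}(x_j)$. By compactness of $K$, after passing to a further subsequence I may assume $x_j\to x_*\in K$. The key tool is the standard property of half-relaxed limits: along any sequence $(m_j,y_j)$ with $m_j\to\infty$ and $y_j\to x_*$,
\begin{align*}
  \limsup_{j\to\infty} p_{m_j}(y_j)&\le p(x_*), &
  \limsup_{j\to\infty} \rho_{m_j}(y_j)&\le \rho(x_*),\\
  \liminf_{j\to\infty} u_{m_j}(y_j)&\ge u(x_*), &
  \liminf_{j\to\infty} v_{m_j}(y_j)&\ge v(x_*),
\end{align*}
which I would apply with $y_j=x_j$.

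Then I would split according to the covering hypothesis $K\subset\set{u>0}\cup\set{\rho<1}$, which guarantees that at least one of $u(x_*)>0$ or $\rho(x_*)<1$ holds (both sets are open, being a sublevel set of the USC function $\rho$ and a superlevel set of the LSC function $u$, although only the pointwise alternative at $x_*$ is needed). If $u(x_*)>0$, the hypothesis $p<u$ on $\set{u>0}\cap K$ gives $p(x_*)<u(x_*)$, so $\limsup_j p_{m_j}(x_j)<\liminf_j u_{m_j}(x_j)$ and therefore $p_{m_j}(x_j)<u_{m_j}(x_j)$ for all large $j$, contradicting the choice of $x_j$. If instead $\rho(x_*)<1$, the hypothesis $\rho<v$ on $\set{\rho<1}\cap K$ gives $\rho(x_*)<v(x_*)$, so $\limsup_j \rho_{m_j}(x_j)<\liminf_j v_{m_j}(x_j)$, hence $\rho_{m_j}(x_j)<v_{m_j}(x_j)$ for all large $j$, again a contradiction. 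Either way the failure assumption is impossible, so $p_m\le u_m$, and therefore $\rho_m\le v_m$, on $K$ for all large $m$.

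I do not expect a serious obstacle here; the argument is purely topological. The only points requiring care are invoking the semicontinuity inequalities in the correct direction (the $\halflimsup$ dominates $\limsup$ along diagonal sequences, while the $\halfliminf$ is dominated by $\liminf$), and observing that the covering assumption on $K$ is exactly what makes the two-case dichotomy at the limit point $x_*$ exhaustive.
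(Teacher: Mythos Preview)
Your proof is correct and follows essentially the same route as the paper's: a compactness/contradiction argument, extracting a convergent subsequence of bad points, then splitting into the two cases $u(x_*)>0$ or $\rho(x_*)<1$ via the covering hypothesis and using the diagonal semicontinuity of the half-relaxed limits to contradict the choice of $x_j$. Your preliminary reduction observing that $p_m\le u_m$ and $\rho_m\le v_m$ are pointwise equivalent (since $P_m$ is a strictly increasing bijection) is a tidy addition the paper leaves implicit, but otherwise the arguments are identical.
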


\begin{proof}
  Since $K$ is compact, it is enough to show that if $\xi_k \to \xi$, $\xi_k \in K$, $\xi \in K$,
  and $m_k \to \infty$, then $p_{m_k}(\xi_k) \leq u_{m_k}(\xi_k)$ for large $k$.

  If $u(\xi) > 0$, then $(u - p)(\xi) > 0$ and therefore by the half-relaxed convergence
  $(u_{m_k} - p_{m_k})(\xi_k) > 0$ for $k$ large enough.
  Similarly, if $\rho(\xi) < 1$, then $(v - \rho)(\xi) > 0$ and therefore by half-relaxed convergence
  $(v_{m_k} - \rho_{m_k})(\xi_k) > 0$ for $k$ large enough.
\end{proof}

\subsection{Uniform bounds}

\begin{lemma}
  \label{le:global-upper-bound}
  Suppose that $\rho^0$ and $F = f - \divo \vecb$ are bounded on $\Rn$. Then $\rho_m \leq R
  e^{M t}$ on $\Rn \times [0, \infty)$, where $R = \sup \rho^0$, $M = \sup F$.
\end{lemma}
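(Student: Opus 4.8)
The plan is to dominate $\rho_m$ by the spatially constant function
\[
\bar\rho(x,t) := R\,e^{Mt}, \qquad R := \sup\rho^0, \quad M := \sup F,
\]
which solves the ODE $\partial_t\bar\rho = M\bar\rho$ and satisfies $\bar\rho(\cdot,0)\equiv R \ge \rho_m(\cdot,0)$ a.e. First I would verify that $\bar\rho$ is a supersolution of \eqref{pme}. Since $\bar\rho$ is independent of $x$ we have $\Delta(\bar\rho^m)=0$ and $\divo(\bar\rho\,\vecb)=\bar\rho\,\divo\vecb$, so
\[
\partial_t\bar\rho - \Delta(\bar\rho^m) + \divo(\bar\rho\,\vecb) - f\bar\rho = \bar\rho\,(M + \divo\vecb - f) = \bar\rho\,(M - F) \ge 0,
\]
because $F = f - \divo\vecb \le \sup F = M$ and $\bar\rho>0$. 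Hence $\bar\rho$ is a classical (and in particular weak) supersolution of \eqref{pme} whose initial trace lies above that of $\rho_m$.

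It then remains to compare. Because $\bar\rho\notin L^1(\Rn)$, Lemma~\ref{le:comparison-pme} does not apply verbatim; I would instead pass through the bounded-domain approximation used to construct $\rho_m$ (cf.\ \cite[Section~5.4]{Vazquez}): $\rho_m$ is the monotone limit as $k\to\infty$ of solutions $\rho_m^{(k)}$ of \eqref{pme} on $B_k(0)\times(0,\infty)$ with zero Dirichlet data and initial data $\min(\rho_m(\cdot,0),k)\chi_{B_k(0)}\le R$. On each cylinder $B_k(0)\times(0,T)$ the function $\bar\rho$ is a classical supersolution of \eqref{pme}, which is uniformly parabolic on $\set{\rho_m^{(k)}>0}$, and $\bar\rho$ dominates $\rho_m^{(k)}$ on the whole parabolic boundary: on $\set{t=0}$ since $\bar\rho(\cdot,0)\equiv R$, and on $\partial B_k(0)\times[0,T]$ since there $\rho_m^{(k)}=0<\bar\rho$. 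The classical parabolic comparison principle (applied in the open set where $\rho_m^{(k)}>0$, with $\rho_m^{(k)}\ge 0$ as the trivial lower barrier) therefore yields $\rho_m^{(k)}\le\bar\rho$ on $B_k(0)\times(0,T)$. Letting $k\to\infty$ and then $T\to\infty$ gives $\rho_m\le\bar\rho = R\,e^{Mt}$ on $\Rn\times[0,\infty)$, as claimed.

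The only step requiring genuine care is this last one: one cannot directly invoke comparison against the non-integrable barrier $\bar\rho$ on the whole space, and one must either route through the bounded-domain approximants as above, or equivalently test the equation satisfied by $(\rho_m-\bar\rho)_+$ against an expanding spatial cutoff and use $\rho_m^m\in L^1(0,\infty;W_0^{1,1}(\Rn))$ together with Kato's inequality to show that the boundary contributions vanish in the limit. The supersolution verification itself is the elementary computation displayed above.
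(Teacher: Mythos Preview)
Your proposal is correct and follows exactly the paper's approach: the paper's entire proof is the one-line observation that $\psi(x,t):=Re^{Mt}$ is a supersolution of \eqref{pme}, which is the computation you display. Your additional care in justifying the comparison against the non-$L^1$ barrier via bounded-domain approximants is a legitimate elaboration of a step the paper leaves implicit.
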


\begin{proof}
  Note that $\psi(x,t) := R e^{M t}$ is a supersolution of \eqref{pme}.
\end{proof}

\begin{lemma}
  \label{le:pressure_uniform_bound}
  Suppose that the initial data $\rho^0$ is compactly supported. Then $p_m$ are bounded uniformly in
  $m$, locally in time, and $\rho = \limsup^* \rho_m \leq 1$.
\end{lemma}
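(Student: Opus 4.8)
The plan is to obtain both assertions from a single explicit, $m$-independent radial pressure supersolution, via comparison with an expanding paraboloid. First I would observe that the second claim follows from the first: if for every $T>0$ there is a constant $C_T$, independent of $m$, with $p_m\le C_T$ on $\Rn\times[0,T]$, then for any $(x,t)$ and any $(y_k,s_k)\to(x,t)$, $m_k\to\infty$, we have $s_k\le t+1$ for large $k$ and hence $\rho_{m_k}(y_k,s_k)=\big(\tfrac{m_k-1}{m_k}p_{m_k}(y_k,s_k)\big)^{1/(m_k-1)}\le\big(\max(C_{t+1},1)\big)^{1/(m_k-1)}\to1$; taking the supremum over all such sequences gives $\rho=\halflimsup_m\rho_m\le1$. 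So it suffices to prove the uniform (indeed global-in-space, local-in-time) pressure bound.

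For that I would construct a barrier as follows. Let $\supp\rho^0\subset B_{R_0}$, set $M:=\sup F$ and $b_\infty:=\sup|\vecb|$, fix any $\chi_0>\max(M,0)/(2n)$, and let $R(t)$ solve the scalar ODE $R'=2\chi_0 R+b_\infty$ with $R(0)=(R_0^2+2/\chi_0)^{1/2}$, so that $R$ is finite (in fact exponentially bounded) on every $[0,T]$. Define the $m$-independent function
\[
  \Phi(x,t):=\chi_0\big(R(t)^2-|x|^2\big)_+ .
\]
I claim $\Phi$ is a supersolution of \eqref{pmepressure}, equivalently that $P_m^{-1}(\Phi)=\big(\tfrac{m-1}{m}\Phi\big)^{1/(m-1)}$, extended by $0$ outside $\set{\Phi>0}$, is a supersolution of \eqref{pme}, for every $m\ge2$. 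Indeed, where $\Phi>0$ one has $\Delta\Phi=-2n\chi_0$, $\nabla\Phi=-2\chi_0 x$, $\Phi_t=2\chi_0 R R'$; since $\chi_0>M/(2n)$ the reaction term $(m-1)\Phi(\Delta\Phi+F)$ is $\le0$ (this is exactly what neutralizes the $m$-dependence), so the right-hand side of \eqref{pmepressure} is bounded above by $|\nabla\Phi|^2-\nabla\Phi\cdot\vecb\le4\chi_0^2R^2+2\chi_0 R\,b_\infty=2\chi_0 R(2\chi_0 R+b_\infty)=\Phi_t$; the same ODE $R'=2\chi_0 R+b_\infty$ also forces the front $\set{|x|=R(t)}$ to expand at speed $R'\ge 2\chi_0 R+b_\infty$, which dominates the Darcy interface velocity $\vecb\cdot\nu-\partial_\nu\Phi\le b_\infty+2\chi_0 R$, so $P_m^{-1}(\Phi)$ is a genuine supersolution of \eqref{pme} across its free boundary (the standard paraboloid-barrier computation, cf. \cite{Vazquez}).

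Next I would check the initial ordering: on $B_{R_0}$ we have $\Phi(\cdot,0)\ge\chi_0(R(0)^2-R_0^2)=2\ge\tfrac{m}{m-1}\ge p_m(\cdot,0)$ for $m\ge2$ (using $\rho_m^0\le1$), and $\Phi(\cdot,0)\ge0=p_m(\cdot,0)$ outside $B_{R_0}$, hence $\rho_m(\cdot,0)\le P_m^{-1}(\Phi(\cdot,0))$. (If $\rho_m^0$ is merely a compactly supported $L^\infty$ approximation that slightly overshoots $1$, the overshoot does not affect $\halflimsup_m\rho_m$ and one may instead run the comparison from a time $t=\tau>0$ using the crude bound of Lemma~\ref{le:global-upper-bound}; I will not belabor this technicality.) By the comparison principle for \eqref{pme} (Lemma~\ref{le:comparison-pme} and the remark following it), $\rho_m\le P_m^{-1}(\Phi)$ on $\Rn\times[0,\infty)$, that is $p_m=P_m(\rho_m)\le\Phi\le\chi_0 R(T)^2$ on $\Rn\times[0,T]$, a bound independent of $m$. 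Combined with the first paragraph, this proves the lemma.

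The only genuinely delicate point is geometric rather than computational: the barrier must be a \emph{moving} paraboloid whose support outruns $\supp\rho_m(\cdot,t)$ (finite speed of propagation) yet stays bounded on $[0,T]$, and one must verify that a pressure-paraboloid with a sufficiently fast front yields a density supersolution of \eqref{pme} through the free boundary. Both are handled by the single coupling $R'=2\chi_0 R+b_\infty$ together with the threshold $\chi_0>M/(2n)$; once these are in place the rest is routine.
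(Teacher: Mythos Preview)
Your proposal is correct and follows essentially the same approach as the paper: a moving paraboloid pressure barrier $\Pi(x,t)=(R(t)^2-K|x|^2)_+$ with $K$ chosen so that $2nK>\sup F$ kills the $(m-1)p(\Delta p+F)$ term, and $R$ growing fast enough to absorb the drift and outrun the front. The paper's proof is a three-line sketch of precisely this construction; you have simply filled in the details (the explicit ODE $R'=2\chi_0 R+b_\infty$, the initial ordering, and the reduction of the $\rho\le1$ claim to the pressure bound via $P_m^{-1}$).
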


\begin{proof}
Take a superbarrier $\Pi(x,t) := (R^2(t) - K\abs{x}^2)_+$ for
large enough $K$ so that $2n K > \sup |F|$ and sufficiently fast growing $R$.
Bound on $\rho$ follows from $\rho_m = P_m^{-1}(p_m)$.

\end{proof}

\subsection*{Acknowledgments}

I. K. is partially supported by NSF grant DMS-1566578. N.P. is partially supported by Japan
Society for the Promotion of Science (JSPS) KAKENHI Grant No. 26800068 (Wakate B).

\section*{References}

\begin{biblist}

\bib{ACM}{article}{
   author={Alvarez, O.},
   author={Cardaliaguet, P.},
   author={Monneau, R.},
   title={Existence and uniqueness for dislocation dynamics with nonnegative
   velocity},
   journal={Interfaces Free Bound.},
   volume={7},
   date={2005},
   number={4},
   pages={415--434},
   issn={1463-9963},
   review={\MR{2191694}},
   doi={10.4171/IFB/131},
}

\bib{AKY}{article}{
   author={Alexander, Damon},
   author={Kim, Inwon},
   author={Yao, Yao},
   title={Quasi-static evolution and congested crowd transport},
   journal={Nonlinearity},
   volume={27},
   date={2014},
   number={4},
   pages={823--858},
   issn={0951-7715},
   review={\MR{3190322}},
   doi={10.1088/0951-7715/27/4/823},
}

\bib{Barles-Souganidis}{article}{
   author={Barles, Guy},
   author={Souganidis, Panagiotis E.},
   title={A new approach to front propagation problems: theory and
   applications},
   journal={Arch. Rational Mech. Anal.},
   volume={141},
   date={1998},
   number={3},
   pages={237--296},
   issn={0003-9527},
   review={\MR{1617291}},
   doi={10.1007/s002050050077},
}

\bib{BC}{article}{
   author={B\'enilan, Philippe},
   author={Crandall, Michael G.},
   title={The continuous dependence on $\varphi $\ of solutions of
   $u_{t}-\Delta \varphi (u)=0$},
   journal={Indiana Univ. Math. J.},
   volume={30},
   date={1981},
   number={2},
   pages={161--177},
   issn={0022-2518},
   review={\MR{604277}},
   doi={10.1512/iumj.1981.30.30014},
}

\bib{BGHP}{article}{
author={Bertsch, M},
author={Gurtin, M.E.},
author={Hilhorst, D.},
author={Peletier, L.A.},
title={ On Interacting Populations
that Disperse to Avoid Crowding: The Effect of a Sedentary Colony},
journal={J. Math. Biology.},
volume={19},
date={ 1984},
pages={1--12.},
}

\bib{BH}{article}{
author={Bertsch,M},
author={Hilhorst,D},
title={ A Density Dependent Diffusion Equation in Population
Dynamics: Stabilization to Equilibrium},
journal={SIAM J. Math. Anal. },
volume={17},
number={ 4 },
date={1986},
pages={863--883.},
}

\bib{CF87}{article}{
   author={Caffarelli, Luis A.},
   author={Friedman, Avner},
   title={Asymptotic behavior of solutions of $u_t=\Delta u^m$ as
   $m\to\infty$},
   journal={Indiana Univ. Math. J.},
   volume={36},
   date={1987},
   number={4},
   pages={711--728},
   issn={0022-2518},
   review={\MR{916741}},
   doi={10.1512/iumj.1987.36.36041},
}

\bib{CV}{article}{
   author={Caffarelli, Luis},
   author={V{\'a}zquez, Juan Luis},
   title={Viscosity solutions for the porous medium equation},
   conference={
      title={Differential equations: La Pietra 1996 (Florence)},
   },
   book={
      series={Proc. Sympos. Pure Math.},
      volume={65},
      publisher={Amer. Math. Soc., Providence, RI},
   },
   date={1999},
   pages={13--26},
   review={\MR{1662747 (99m:35029)}},
   doi={10.1090/pspum/065/1662747},
}

\bib{Cardaliaguet-Rouy}{article}{
   author={Cardaliaguet, Pierre},
   author={Rouy, Elisabeth},
   title={Viscosity solutions of increasing flows of sets. Application of
   the Hele-Shaw problem for power-law fluids},
   journal={SIAM J. Math. Anal.},
   volume={38},
   date={2006},
   number={1},
   pages={143--165},
   issn={0036-1410},
   review={\MR{2217312}},
   doi={10.1137/040613366},
}

\bib{CJK}{article}{
author = {Choi, S},
author= {Jerison,D},
author={Kim, I},
 journal = {American Journal of Mathematics},
 number = {2},
 pages = {527-582},
 publisher = {Johns Hopkins University Press},
 title = {Regularity for the One-Phase Hele-Shaw Problem from a Lipschitz Initial Surface},
 volume = {129},
 year = {2007},
}

\bib{CIL}{article}{
   author={Crandall, Michael G.},
   author={Ishii, Hitoshi},
   author={Lions, Pierre-Louis},
   title={User's guide to viscosity solutions of second order partial
   differential equations},
   journal={Bull. Amer. Math. Soc. (N.S.)},
   volume={27},
   date={1992},
   number={1},
   pages={1--67},
   issn={0273-0979},
   review={\MR{1118699 (92j:35050)}},
   doi={10.1090/S0273-0979-1992-00266-5},
}

\bib{CKY}{article}{
  title={Congested aggregation via Newtonian interaction},
  author={Craig, K.},
  author={Kim, I.},
  author={Yao, Y.},
  status={preprint arXiv:1603.03790, to appear in Arch. Ration. Mech. Anal.},
  year={2016},
}

\bib{DPMSV}{article}{
  title={BV Estimates in Optimal Transportation and Applications},
author={De Philippis, G.},
author = {M{\'e}sz{\'a}ros, A. R.},
author={Santambrogio, F.},
author={Velichkov, B.},
journal={Arch. Ration. Mech. Anal.},
volume={219},
date={2016},
number={2},
pages={829--860},
}

\bib{EHKO}{article}{
   author={Elliott, C. M.},
   author={Herrero, M. A.},
   author={King, J. R.},
   author={Ockendon, J. R.},
   title={The mesa problem: diffusion patterns for
   $u_t=\nabla\cdot(u^m\nabla u)$ as $m\to+\infty$},
   journal={IMA J. Appl. Math.},
   volume={37},
   date={1986},
   number={2},
   pages={147--154},
   issn={0272-4960},
   review={\MR{983523}},
   doi={10.1093/imamat/37.2.147},
}

\bib{GQ01}{article}{
   author={Gil, O.},
   author={Quir\'os, F.},
   title={Convergence of the porous media equation to Hele-Shaw},
   journal={Nonlinear Anal.},
   volume={44},
   date={2001},
   number={8, Ser. A: Theory Methods},
   pages={1111--1131},
   issn={0362-546X},
   review={\MR{1830861}},
   doi={10.1016/S0362-546X(99)00325-9},
}

\bib{GQ03}{article}{
   author={Gil, O.},
   author={Quir\'os, F.},
   title={Boundary layer formation in the transition from the porous media
   equation to a Hele-Shaw flow},
   language={English, with English and French summaries},
   journal={Ann. Inst. H. Poincar\'e Anal. Non Lin\'eaire},
   volume={20},
   date={2003},
   number={1},
   pages={13--36},
   issn={0294-1449},
   review={\MR{1958160}},
   doi={10.1016/S0294-1449(02)00016-1},
}

\bib{HW}{book}{
  title={Gravity-driven flows in porous layers},
  author={Huppert, Herbert E},
  author={Woods, Andrew W},
  journal={Journal of Fluid Mechanics},
  volume={292},
  pages={55--69},
  year={1995},
  publisher={Cambridge University Press},
}

\bib{MPQ}{article}{
 author={Mellet, Antoine},
 author={Perthame, Benoit},
 author={Quiros,Fernando},
 title={A Hele-Shaw problem for tumor growth},
  status={preprint},
  date={2016}
}

\bib{MNR}{article}{
title={Traveling wave solutions of advection--diffusion equations with nonlinear diffusion},
  author={Monsaingeon, L{\'e}onard},
  author= {Novikov, Alexe{\"\i} },
  author={Roquejoffre, J-M},
  journal={Annales de l'Institut Henri Poincare (C) Non Linear Analysis},
  volume={30},
  number={4},
  pages={705--735},
  year={2013},
  }

\bib{Mon}{article}{
title={Numerical investigation of the Free Boundary regularity for a degenerate advection-diffusion problem},
  author={Monsaingeon, L{\'e}onard},
  journal={arXiv preprint arXiv:1603.03352},
  year={2016},
}

\bib{Mur}{book}{
  title={Mathematical biology: I. An introduction},
  author={Murray, James D},
  volume={17},
  year={2007},
  publisher={Springer Science \& Business Media},
}

\bib{MRS}{article}{
title={A macroscopic crowd motion model of gradient flow type},
  author={Maury, Bertrand},
   author={Roudneff-Chupin, Aude},
   author={Santambrogio, Filippo},
  journal={Mathematical Models and Methods in Applied Sciences},
  volume={20},
  number={10},
  pages={1787--1821},
  year={2010},
  publisher={World Scientific}
}
\bib{KKV}{article}{
  title={Flatness implies smoothness for solutions of the porous medium equation},
  author={Kienzler, Clemens},
  author={Koch, Herbert},
  author={Vazquez, Juan Luis},
  journal={preprint arXiv:1609.09048},
  year={2016},
}

\bib{K03}{article}{
   author={Kim, Inwon C.},
   title={Uniqueness and existence results on the Hele-Shaw and the Stefan
   problems},
   journal={Arch. Ration. Mech. Anal.},
   volume={168},
   date={2003},
   number={4},
   pages={299--328},
   issn={0003-9527},
   review={\MR{1994745 (2004k:35422)}},
   doi={10.1007/s00205-003-0251-z},
}

\bib{KP}{article}{
   author={Kim, I.},
   author={Po\v{z}\'{a}r, N.},
   title={Porous medium equation to Hele-Shaw flow with general initial density},
   status={preprint arXiv:1509.06287, to appear in Trans. AMS},
   date={2016},
}

\bib{O}{article}{
title={The geometry of dissipative evolution equations: the porous medium equation},
  author={Otto, F},
  journal={Comm. Partial Differential Equations},
  volume={26},
  pages={101--174},
  year={2001},
}

\bib{PQV}{article}{
   author={Perthame, Beno{\^{\i}}t},
   author={Quir{\'o}s, Fernando},
   author={V{\'a}zquez, Juan Luis},
   title={The Hele-Shaw asymptotics for mechanical models of tumor growth},
   journal={Arch. Ration. Mech. Anal.},
   volume={212},
   date={2014},
   number={1},
   pages={93--127},
   issn={0003-9527},
   review={\MR{3162474}},
   doi={10.1007/s00205-013-0704-y},
}

\bib{Pozar14}{article}{
   author={Po{\v{z}}{\'a}r, Norbert},
   title={Homogenization of the Hele-Shaw problem in periodic spatio-temporal
   media},
   journal={Arch. Ration. Mech. Anal.},
   volume={217},
   date={2015},
   number={1},
   pages={155--230},
   issn={0003-9527},
   review={\MR{3338444}},
   doi={10.1007/s00205-014-0831-0},
}
\bib{TBL}{article}{
  title={A nonlocal continuum model for biological aggregation},
  author={Topaz, Chad M},
  author={Bertozzi, Andrea L},
  author= {Lewis, Mark A},
  journal={Bulletin of mathematical biology},
  volume={68},
  number={7},
  pages={1601--1623},
  year={2006},
}
\bib{Vazquez}{book}{
   author={V\'azquez, Juan Luis},
   title={The porous medium equation},
   series={Oxford Mathematical Monographs},
   note={Mathematical theory},
   publisher={The Clarendon Press, Oxford University Press, Oxford},
   date={2007},
   pages={xxii+624},
   isbn={978-0-19-856903-9},
   isbn={0-19-856903-3},
   review={\MR{2286292}},
}
\bib{W}{article}{
  title={Segregation and mixing in degenerate diffusion in population dynamics},
  author={Witelski, Thomas P},
  journal={Journal of Mathematical Biology},
  volume={35},
  number={6},
  pages={695--712},
  year={1997},
}

\end{biblist}
\end{document}